\begin{document}

\theoremstyle{plain}
\newtheorem{theorem}{Theorem}[section]
\newtheorem{main}{Main Theorem}
\newtheorem{proposition}[theorem]{Proposition}
\newtheorem{corollary}[theorem]{Corollary}
\newtheorem{lemma}[theorem]{Lemma}
\newtheorem{conjecture}[theorem]{Conjecture}
\newtheorem{claim}[theorem]{Claim}
\newtheorem{fact}[theorem]{Fact}
\newtheorem{question}[theorem]{Question}
\newtheorem*{st}{Statements}	

\numberwithin{equation}{section}

\theoremstyle{definition}
\newtheorem{definition}[theorem]{Definition}
\newtheorem{notation}[theorem]{Notation}
\newtheorem{convention}[theorem]{Convention}
\newtheorem{example}[theorem]{Example}
\newtheorem{remark}[theorem]{Remark}
\newtheorem{strategy}[theorem]{Strategy}
\newtheorem*{ac}{Acknowledgements}	

\newcommand{\ev}{\mathrm{ev}}
\newcommand{\coev}{\mathrm{coev}}
\newcommand{\Hom}{\mathrm{Hom}}
\newcommand{\PSL}{\mathrm{PSL}}
\newcommand{\FPdim}{\mathrm{FPdim}}
\newcommand{\Rep}{\mathrm{Rep}}
\newcommand{\VVec}{\mathrm{Vec}}
\newcommand{\End}{\mathrm{End}}
\newcommand\Warning{%
 \makebox[1.4em][c]{%
 \makebox[0pt][c]{\raisebox{.1em}{\footnotesize \textup{!}}}%
 \makebox[0pt][c]{\Large$\bigtriangleup$}}}%

\newcommand{\mC}{\mathcal{C}}
\newcommand{\mB}{B}
\newcommand{\hc}{\Hom_{\mC}}
\newcommand{\one}{{\bf 1}} 
\newcommand{\id}{\mathrm{id}} 
\newcommand{\tr}{\mathrm{tr}} 
\newcommand{\spec}{i_0} 
\newcommand{\Sp}{i_0} 
\newcommand{\SpecS}{I_{s}} 
\newcommand{\SpS}{I_{s}'} 
\newcommand{\field}{\mathbbm{k}} 
\newcommand{\red}{\textcolor{red}{\bullet}} 
\newcommand{\black}{\bullet} 

\newcommand{\TP}[9]{\begin{tikzpicture}[scale=1]
	\draw (0,2)--(3,2);
	\draw (1,1)--(2,1);
	\draw (0,0)--(3,0);
	\draw (0,0) -- (0,2) -- (1,1) -- (0,0);
	\draw (3,0)--(3,2)--(2,1)--(3,0);
	\draw[->] (0,2) -- (1.5,2) node [above] {$#1$};
	\draw[->] (1,1) -- (1.5,1) node [above] {$#2$};
	\draw[->] (0,0) -- (1.5,0) node [above] {$#3$};
	\draw[->] (0,2) --++ (.5,-.5) node [right] {$#4$};
	\draw[->] (1,1) --++ (-.5,-.5) node [right] {$#5$};
	\draw[->] (0,0) -- (0,1) node [left] {$#6$};
	\draw[->] (3,2) --++ (-.5,-.5) node [left] {$#7$};
	\draw[->] (2,1) --++ (.5,-.5) node [left] {$#8$};
	\draw[->] (3,0) -- (3,1) node [right] {$#9$};
	\end{tikzpicture}}
\newcommand{\UTP}[9]{\begin{tikzpicture}[scale=1]
	\draw (0,2)--(3,2);
	\draw (1,1)--(2,1);
	\draw (0,0)--(3,0);
	\draw (0,0) -- (0,2) -- (1,1) -- (0,0);
	\draw (3,0)--(3,2)--(2,1)--(3,0);
	\draw [fill=black] (.15,.3) circle [radius=.03];
	\draw [fill=black] (.15,1.7) circle [radius=.03];
	\draw [fill=black] (.8,1) circle [radius=.03];
	\draw [fill=black] (3-.15,.3) circle [radius=.03];
	\draw [fill=black] (3-.15,1.7) circle [radius=.03];
	\draw [fill=black] (3-.8,1) circle [radius=.03];
	\draw (0,2) -- (1.5,2) node [above] {$#1$};
	\draw (1,1) -- (1.5,1) node [above] {$#2$};
	\draw (0,0) -- (1.5,0) node [above] {$#3$};
	\draw (0,2) --++ (.5,-.5) node [right] {$#4$};
	\draw (1,1) --++ (-.5,-.5) node [right] {$#5$};
	\draw (0,0) -- (0,1) node [left] {$#6$};
	\draw (3,2) --++ (-.5,-.5) node [left] {$#7$};
	\draw (2,1) --++ (.5,-.5) node [left] {$#8$};
	\draw (3,0) -- (3,1) node [right] {$#9$};
	\end{tikzpicture}}
\newcommand{\UUTP}[9]{\begin{tikzpicture}[scale=1]
	\draw (0,2)--(3,2);
	\draw (1,1)--(2,1);
	\draw (0,0)--(3,0);
	\draw (0,0) -- (0,2) -- (1,1) -- (0,0);
	\draw (3,0)--(3,2)--(2,1)--(3,0);
	\draw [fill=black] (.15,.3) circle [radius=.03];
	\draw [fill=black] (3-.15,.3) circle [radius=.03];
	\draw (0,2) -- (1.5,2) node [above] {$#1$};
	\draw (1,1) -- (1.5,1) node [above] {$#2$};
	\draw (0,0) -- (1.5,0) node [above] {$#3$};
	\draw (0,2) --++ (.5,-.5) node [right] {$#4$};
	\draw (1,1) --++ (-.5,-.5) node [right] {$#5$};
	\draw (0,0) -- (0,1) node [left] {$#6$};
	\draw (3,2) --++ (-.5,-.5) node [left] {$#7$};
	\draw (2,1) --++ (.5,-.5) node [left] {$#8$};
	\draw (3,0) -- (3,1) node [right] {$#9$};
	\end{tikzpicture}}
\newcommand{\Fsym}[6]{
\raisebox{0cm}{
\scalebox{.66}{
\begin{tikzpicture}[scale=1.5]
\draw (0,0)--(2,0)--(1,1.732)--(0,0)--(1,0.577)--(1,1.732);
\draw (1,0.577)--(2,0);
\node at (1+.12,1) 		{$#1$};
\node at (1,.1) 			{$#2$};
\node at (.5-.1,0.866)	[above]	{$#3$};
\node at (1.5,0.2887+.2)	{$#4$};
\node at (1.5+.1,0.866) [above]	{$#5$};
\node at (.5,0.2887+.2) 	{$#6$};
\end{tikzpicture}}}}
\newcommand{\Fsymm}[6]{
	\raisebox{0cm}{
		\scalebox{.66}{
			\begin{tikzpicture}[scale=1.5]
			\draw (0,0)--(2,0)--(1,1.732)--(0,0)--(1,0.577)--(1,1.732);
			\draw (1,0.577)--(2,0);
			\node at (1+.12,1) 		{$#1$};
			\node at (1,.1) 			{$#2$};
			\node at (.5-.1,0.866)	[above]	{$#3$};
			\node at (1.5,0.2887+.2)	{$#4$};
			\node at (1.5+.1,0.866) [above]	{$#5$};
			\node at (.5,0.2887+.2) 	{$#6$};
			\draw [fill=black] (.2,.05) circle [radius=.03];
			\draw [fill=black] (1.8,.05) circle [radius=.03];
			\draw [fill=black] (1,1.832) circle [radius=.03];
			\draw [fill=black] (1,.45) circle [radius=.03];
			\end{tikzpicture}}}}

\newcommand\T[1]{%
    \def\temp10{#1}%
    \Tcontinued
}
\newcommand\Tcontinued[9]{%
\raisebox{-1cm}{
\begin{tikzpicture}[scale=1.3]
\draw (0,0) node at (.5,.1) {\tiny $\textcolor{orange}{#7}$} -- (1,0) node [below] {\small ${#3}$};
\draw[<-] (1,0) -- (2,0);
\draw (2,0) node at (1.5,.1) {\tiny $\textcolor{orange}{#8 '}$} -- (1.5,.866) 	node [right] {\small ${#5}$};
\draw[<-] (1.5,.866) -- (1,1.732);
\draw (1,1.732)	node at (1.15,1.2) {\tiny $\textcolor{orange}{#9 '}$} 	-- (.5,.866) node [left] {\small ${#4}$};
\draw[<-] (.5,.866) -- (0,0);
\draw (0,0) -- (.5,.2885) 	node [above] {\small ${#2}$};
\draw[<-] (.5,.2885) -- (1,0.577);
\draw (1,0.577) node [right] {\tiny  $\textcolor{orange}{#6}$} -- (1,1.1545) node at (.85,1.1545) {\small ${\temp10}$};
\draw[<-] (1,1.1545) -- (1,1.732);
\draw (1,0.577) -- (1.5,.2885) 	node [above] {\small ${#1}$};
\draw[<-] (1.5,.2885) -- (2,0);
\end{tikzpicture}}
}

\newcommand{\MoveSymbol}{
    \begin{tikzpicture}
        \node[fill, circle, inner sep=0.5pt] (a) at (0,.05) {};
        \node[fill, circle, inner sep=0.5pt] (b) at (0,-.05) {};
        \draw (a) -- ++(-0.15,0.15);
        \draw (a) -- ++(0,0.15);
        \draw (a) -- ++(0.15,0.15);
        \draw (b) -- ++(-0.15,-0.15);
        \draw (b) -- ++(0,-0.15);
        \draw (b) -- ++(0.15,-0.15);
    \end{tikzpicture}
}
\newcommand{\ThreeSymbol}{\hspace*{-.1cm}\raisebox{-0.8ex}{\MoveSymbol}}

\newcommand{\zhengwei}[1]{\textcolor{orange}{#1 - Zhengwei}}
\newcommand{\sebastien}[1]{\textcolor{blue}{#1 - Sebastien}}
\newcommand{\yunxiang}[1]{\textcolor{magenta}{#1 - Yunxiang}}

\title[Triangular prism equations and categorification]{Triangular prism equations and categorification}

\author{Zhengwei Liu}
\address{Z. Liu, Yau Mathematical Sciences Center and Department of Mathematics, Tsinghua University, and Beijing Institute of Mathematical Sciences and Applications, Huairou District, Beijing, China}
\email{liuzhengwei@mail.tsinghua.edu.cn}

\author{Sebastien Palcoux}
\address{S. Palcoux, Beijing Institute of Mathematical Sciences and Applications, Huairou District, Beijing, China}
\email{sebastien.palcoux@gmail.com}
\urladdr{https://sites.google.com/view/sebastienpalcoux}

\author{Yunxiang Ren}
\address{Y. Ren, Department of Physics, Harvard University, Cambridge, 02138, USA}
\email{renyunxiang@gmail.com}

\author{Gert Vercleyen}
\address{G. Vercleyen, Department of Mathematics, Purdue University, West Lafayette, IN 47907, USA}
\email{gvercley@purdue.edu}

\maketitle

\begin{abstract}
We introduce the triangular prism equations (TPE) for fusion categories, obtained by evaluating triangular prisms in terms of tetrahedra. Using an oriented graphical calculus, we show that the geometric symmetries of the regular tetrahedron are preserved. In the spherical case, we prove that the TPE are equivalent to the pentagon equations after a suitable change of basis. These equations provide new insight for managing complexity via localization. As a consequence, and using the Fuchs-Runkel-Schweigert theorem on the second Frobenius-Schur indicator, we obtain new categorification criteria. As an application, we solve all remaining open cases to complete the classification of unitary 1-Frobenius simple integral fusion categories up to rank~8 and up to Frobenius-Perron dimension~20000.
\end{abstract}


\section{Introduction} \label{sec:intro}
The notion of a \emph{based ring} was introduced by Lusztig in \cite{Lus}. A \emph{fusion ring} is a unital based ring of finite rank; a detailed treatment can be found in \cite[Chapter 4]{EGNO15}. Its categorical analogue, the \emph{fusion category}, is systematically developed by Etingof, Nikshych, and Ostrik in \cite{ENO05}. Within category theory, associativity is captured by the \emph{pentagon equations} (PE), as formalized in \cite{MacLane}. The \emph{Grothendieck ring} of a fusion category naturally forms a fusion ring that is said to be \emph{categorifiable}. Conversely, a categorification of a fusion ring (if it exists) is encoded by a set of \emph{F-symbols}—quantum analogues of the $SU(2)$ $6j$-symbols—that satisfy the PE (see \cite{Bon07, DaHaWa, wang}).

A central problem in research on fusion categories is to determine which fusion rings admit categorifications. While a theoretical solution exists via the PE, explicit computation is typically extremely challenging. In practice, most studies of fusion categories avoid direct computation of F-symbols, instead constructing categories through alternative methods that often leave the F-symbols unspecified.

Nevertheless, explicit knowledge of F-symbols is essential in certain contexts. For example, from a spherical fusion category $\mathcal{C}$, one can construct a $(2+1)$-dimensional topological quantum field theory (TQFT), known as the \emph{Turaev-Viro invariant} of 3-manifolds \cite{TurVir92}. In this framework, 1-cells correspond to simple objects, 2-cells to morphisms, and the values associated to 3-cells, or \emph{tetrahedra}, are determined by F-symbols. The pentagon equation manifests as the 3-cocycle condition. Hence, the spherical categorification of a fusion ring is equivalent to constructing this TQFT, which in turn reduces to solving the PE for the variables specified by spherically invariant F-symbols.

The PE take the following schematic form:
\begin{equation}\tag{PE}\label{Equ:PE}
\sum_{m} \star\,\star \;=\; \sum_{s}\sum_{m} \star\,\star\,\star,
\end{equation}
where \(\star\) serves as a placeholder for the appropriate \(F\)-symbols; \(\sum_{s}\) runs over simple objects, and \(\sum_{m}\) runs over a chosen basis of the relevant Hom-spaces. The full, explicit form of the PE is provided in \S \ref{sec:PE}.

The simplest planar trivalent graphs are the tetrahedral and triangular prism graphs, as illustrated below.
\[
\scalebox{.7}{
\begin{tikzpicture}[scale=1]
\draw (0,0) node {\tiny $\bullet$} --(2,0) node {\tiny $\bullet$} --(1,1.732) node {\tiny $\bullet$} -- (0,0) --(1,.577)--(1,1.732) node {\tiny $\bullet$};
\draw (1,.577) node {\tiny $\bullet$}--(2,0);
\end{tikzpicture}} \hspace{3cm}
\scalebox{.7}{
\begin{tikzpicture}[scale=1]
\draw (0,0) node {\tiny $\bullet$} --(2,0) node {\tiny $\bullet$} --(1,1.732) node {\tiny $\bullet$} --(0,0);
\draw (0,0)--(.5,.2885) node {\tiny $\bullet$} ;
\draw (1,1.732)--(1,1.1545) node {\tiny $\bullet$} ;
\draw (2,0)--(1.5,.2885)--(1,1.1545)--(.5,.2885)--(1.5,.2885) node {\tiny $\bullet$} ;
\end{tikzpicture}}
\]
In \S\ref{sec:monotetra} and \S\ref{sec:monoTPE}, we introduce monoidal-category analogues of these graphs, now labelled by morphisms. In \S\ref{bitetra}, we show that, under suitable additional assumptions, they retain their usual symmetries (Proposition \ref{prop:A4Sym}) using an oriented graphical calculus (see \S\ref{sec:bio}). Within a fusion category, a triangular prism can be evaluated in two distinct ways using these tetrahedra, giving rise to the \emph{triangular prism equations} (TPE) studied in this work (see Theorems \ref{thm:TPE}, \ref{thm:sTPE}, and \ref{thm:sTPE2}). This idea—illustrated below by using  Thurston’s graph-theoretic picture \cite[Figure 4]{thu}—is realized categorically through the \emph{resolution of the identity} (i.e., the categorical analogue of the move $||| \mapsto \ThreeSymbol$; see \S\ref{sub:reso} and \S\ref{sec:monoTPE}).

\begin{center}
\includegraphics[scale=.7]{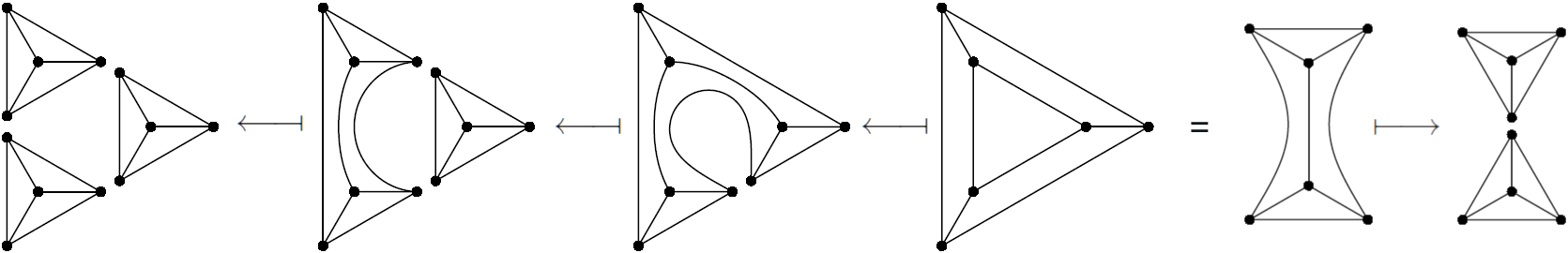}
\end{center}

In these equations, the tetrahedra represent F-symbols and serve as variables. In \S \ref{sec:TPEvsPE}, we establish the following result:

\begin{theorem} \label{thm:main}
In the spherical case, the pentagon equations are equivalent to the triangular prism equations.
\end{theorem}
A precise formulation, including the relevant change of basis, is given in Theorem~\ref{thm:PE-TPE}. The large number of variables and equations typically renders these systems computationally difficult; for instance, computing a Gröbner basis for the pentagon equations already exhibits double-exponential complexity in the number of F-symbols. Nevertheless, the equivalence above provides useful insight for managing the localization strategy outlined in \S\ref{sec:loc}. A first iteration of this strategy—incorporating insights from the TPE and applying the Fuchs–Runkel–Schweigert theorem on the second Frobenius–Schur indicator (Theorem~\ref{thm:FrobSchur})—yields a new localization criterion, presented in \S\ref{sec:FirstLoc} (see Theorem~\ref{thm:loc}, Corollary~\ref{cor:loc}, and Theorem~\ref{thm:locextra}).

In \S \ref{sec:SpeCrit}, the use of the PE with a \emph{small spectrum} leads to two general categorification criteria, valid over any field, called the zero-spectrum and one-spectrum criteria (see Theorems \ref{Thm: Ob-0} and \ref{Thm: Ob-1}). These criteria have already proved essential in the classification of multiplicity-free fusion categories of rank at most 7 (see \cite{gvfuscat,gjwiki}).

Kaplansky's sixth conjecture \cite{Kap75} asserts that for every finite-dimensional semisimple Hopf algebra $H$ over $\mathbb{C}$, the integral fusion category $\Rep(H)$ is $1$-Frobenius. If $H$ carries a $*$-structure (i.e., it is a Kac algebra), then $\Rep(H)$ is unitary.  An initial step toward this conjecture can be found in \cite[Theorem 2]{Kac72}. Its extension to all fusion categories over $\mathbb{C}$ remains open \cite[Question 1]{ENO11}.

A central open problem in fusion category theory is whether there exists an integral fusion category that is not weakly group-theoretical (WGT) \cite[Question~2]{ENO11}. By \cite[Proposition~9.11]{ENO11}, any non-pointed WGT simple fusion category is equivalent to $\Rep(G)$ for some non-abelian finite simple group $G$ (although it remains unknown whether a non-WGT fusion category can be Grothendieck equivalent to some $\Rep(G)$). Consequently, a simple integral fusion category not Grothendieck equivalent to any $\Rep(G)$ would necessarily be non-WGT.

In \cite{LPW20}, Wu and two authors of this paper applied the Schur product theorem from subfactor theory (\cite[Theorem 4.1]{Liuex}) as an effective unitary categorification criterion, called the \emph{Schur product criterion}. For a recent refinement, see the primary 3-criterion in \cite{HLPW23}. An exhaustive classification of all 1-Frobenius simple integral fusion rings within specified bounds was presented in \cite{LPW20}, which have since been updated in \cite{BP24} as follows:
\[
\begin{array}{c|cccccccc}
\text{Rank} & \le 5 & 6 & 7 & 8 & 9 & 10 & 11 & 12 \\ \hline
\FPdim \le  & 10^7 & 10^6 & 10^5 & 20000 & 10000 & 5000 & 3000 & 1000
\end{array}
\]
There are exactly $505$ non-pointed examples, of which $8$ are character rings of groups, while $477$ are excluded by the primary $3$-criterion. Among the remaining $20$ fusion rings, only two have rank at most $8$. In \cite[Question 1.1]{LPW20}, it is asked whether these two, denoted $\mathcal{F}_{210}$ and $\mathcal{F}_{660}$, admit a unitary categorification. This paper answers negatively: \S \ref{sec:app} shows that the localization criterion (from \S \ref{sec:FirstLoc}) rules out a categorification of $\mathcal{F}_{210}$ in characteristic $0$ (Theorem \ref{thm: F210NoCat}) and also in positive characteristic in the pivotal case (Corollary \ref{cor:PosChar}) via lifting theory (Theorem \ref{thm:lift}); see also Remark \ref{rk:ind}. Using the small-spectrum criteria (from \S \ref{sec:SpeCrit}), we further show that $\mathcal{F}_{660}$ admits no categorification over any field. We thus conclude:

\begin{theorem} \label{thm:ClassSimpleIntegral}
Every non-pointed, unitary, $1$-Frobenius, simple integral fusion category of rank at most $8$ with $\FPdim \le 20000$ is Grothendieck equivalent to $\Rep(\PSL(2,q))$, where $4 \le q \le 11$ and $q$ is a prime power.
\end{theorem}

A conceptual explanation—based on interpolated character rings \cite{LPRinter}—for why $\mathcal{F}_{210}$ cannot be excluded by standard criteria is given in Remark~\ref{rk:interpolated}. See also Remark \ref{rk:504} for updates on the next open candidate, of rank $9$.

We hope that applying the TPE will eventually allow the construction of a non-pointed, simple integral fusion category that is not the representation category of a group, particularly within the interpolated family in \cite{LPRinter}.

The TPE has already been used to classify multiplicity-free Grothendieck rings; see \cite[\S 4.1]{LPRmult1} and \cite{SliVer}. In a forthcoming work \cite{LLPRnear}, TPE is employed to derive a slight variant of Izumi’s equations for the near-group categories $G + |G|$ via a purely categorical approach, including the non-unitary case.


\begin{ac}
The authors express their gratitude to Yilong Wang and Tinhinane A. Azzouz for their valuable discussions. The first author acknowledges support from Tsinghua University (Grant 04200100122), the Templeton Religion Trust (Grant TRT 0159), and the National Key Projects (NKPs, Grant 2020YFA0713000). The second author received funding from the BIMSA Start-up Research Fund, the Foreign Youth Talent Program by the Ministry of Science and Technology of China (Grant QN2021001001L), and the National Natural Science Foundation of China (NSFC, Grant 12471031). The third author was supported by the Templeton Religion Trust (Grant TRT 0159) and the Army Research Office (ARO) Grants W911NF-19-1-0302 and W911NF-20-1-0082.
\end{ac}

\tableofcontents

\section{Graphical calculus}  \label{sec:pre}

Let us revisit some fundamental concepts from the theory of monoidal categories, as presented in \cite{EGNO15}, together with graphical calculus. By Mac Lane's strictness theorem, we may, without loss of generality, assume that our monoidal categories are \emph{strict}, which generally simplifies the use of graphical calculus. The field $\field$ will be assumed algebraically closed whenever necessary.

\subsection{Unoriented graphical calculus}
Consider a monoidal category $\mC$ with a unit object $\one$. A \emph{left dual} of an object $X$ in $\mC$ consists of an object $X^*$ and two maps, namely the evaluation map $\ev_X: X^* \otimes X \to \one$ and the coevaluation map $\coev_X: \one \to X \otimes X^*$, which are depicted graphically as follows:
$$\ev_X=
\scalebox{1}{\raisebox{-.35cm}{
\begin{tikzpicture}
\draw (0,.5) arc (-180:0:.5);
\node at (1,0) {\tiny $X$};
\end{tikzpicture}}} \
\text{ and } \
\coev_X=
\scalebox{1}{\raisebox{-.35cm}{
\begin{tikzpicture}
\draw (0,0) arc (180:0:.5);
\node at (-.15,0) {\tiny $X$};
\end{tikzpicture}}},
$$
and satisfying the zigzag relations
$$\scalebox{1}{\raisebox{-1cm}{
\begin{tikzpicture}
\draw (0,0) node [left]{\tiny $X$} -- (0,1) arc (180:0:.5) arc (-180:0:.5) --++ (0,1) node [right]{\tiny $X$} ;
\end{tikzpicture}}}
 =
\scalebox{1}{\raisebox{-.85cm}{
\begin{tikzpicture}
\draw (0,0) -- (0,2) node [right]{\tiny $X$} ;
\end{tikzpicture}}}
= \id_X \
\text{ and }
\scalebox{1}{\raisebox{-1cm}{
\begin{tikzpicture}
\draw (0,0) node [left]{\tiny $X^*$} -- (0,-1) arc (-180:0:.5) arc (180:0:.5) --++ (0,-1) node [right]{\tiny $X^*$} ;
\end{tikzpicture}}}
 =
\scalebox{1}{\raisebox{-.85cm}{
\begin{tikzpicture}
\draw (0,0) -- (0,2) node [right]{\tiny $X^*$} ;
\end{tikzpicture}}}
= \id_{X^*}.
$$

A \emph{right dual} of an object $X$ in a monoidal category $\mC$ is an object ${}^*X$ with morphisms
\(
\mathrm{ev}'_X: X \otimes {}^*X \to \one, \
\mathrm{coev}'_X: \one \to {}^*X \otimes X,
\)
satisfying the zigzag identities. As noted in~\cite[Remark~2.10.3]{EGNO15}, there are isomorphisms $({}^*X)^* \simeq X \simeq {}^*(X^*)$. A monoidal category $\mC$ is said to be \emph{rigid} if every object in $\mC$ admits both a left and a right dual.

Let $X,Y$ be two objects in $\mC$ with left duals. The left dual of a morphism $f $ in $\Hom_{\mC}(X,Y)$ is defined as

$$ f^* :=
\scalebox{1}{\raisebox{-1cm}{
\begin{tikzpicture}
\draw (0,0) node [left]{\tiny $Y^*$} -- (0,-1) arc (-180:0:.5) --++ (0,.5) node [draw,fill=white] {$f$} --++ (0,.5) arc (180:0:.5) --++ (0,-1) node [right]{\tiny $X^*$} ;
\end{tikzpicture}}}  \in  \Hom_{\mC}(Y^*,X^*). $$

\noindent Observe by zigzag relations that $(\id_X)^* = \id_{X^*}$, $(f \circ g)^* = g^* \circ f^*$, and if $f$ is an isomorphism, $(f^{-1})^* = (f^*)^{-1}$.

\begin{lemma} \label{lem:LeftRight} Let $\mC$ be a monoidal category. Let $X,Y$ be two objects in $\mC$ with left duals. Let $f$ be a morphism in $\Hom_{\mC}(X,Y)$. Then
$$
\raisebox{-.8cm}{
\begin{tikzpicture}
\draw (0,0) node [left]{\tiny $Y$} --++ (0,.5) node [draw,fill=white] {$f$} --++ (0,.5) node [left]{\tiny $X$}  arc (180:0:.5) --++ (0,-1) node [right]{\tiny $X^*$};
\end{tikzpicture}}
=
\raisebox{-.8cm}{
\begin{tikzpicture}
\draw (0,0) node [left]{\tiny $Y$} --++ (0,1) arc (180:0:.5)  node [right]{\tiny $Y^*$} --++ (0,-.5) node [draw,fill=white] {$f^*$} --++ (0,-.5) node [right]{\tiny $X^*$};
\end{tikzpicture}}
\hspace{1cm} \text{and} \hspace{1cm}
\raisebox{-.8cm}{
\begin{tikzpicture}
\draw (0,0) node [right]{\tiny $X$} --++ (0,-1) arc (0:-180:.5)  node [left]{\tiny $X^*$} --++ (0,.5) node [draw,fill=white] {$f^*$} --++ (0,.5) node [left]{\tiny $Y^*$};
\end{tikzpicture}}
=
\raisebox{-.8cm}{
\begin{tikzpicture}
\draw (0,0) node [right]{\tiny $X$} --++ (0,-.5) node [draw,fill=white] {$f$} --++ (0,-.5) node [right]{\tiny $Y$}  arc (0:-180:.5) --++ (0,1) node [left]{\tiny $Y^*$};
\end{tikzpicture}}.
$$
\end{lemma}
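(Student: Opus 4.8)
The two identities are mirror images of one another --- each says that bending the domain leg of $f$ around a cap (respectively a cup) produces $f^*$ bent around the complementary leg --- so the plan is to prove the first one in detail and then run the identical argument for the second with the roles of $coev$/$ev$ and of $X$/$Y$ interchanged. I would work in a strict monoidal category (legitimate by Mac Lane's coherence theorem), so that the only ingredients in play are the interchange law for $\otimes$, the two zigzag relations, and the definition of $f^*$, which the picture above unwinds to
$$f^* = (ev_Y \otimes \id_{X^*})\circ(\id_{Y^*}\otimes f \otimes \id_{X^*})\circ(\id_{Y^*}\otimes coev_X).$$

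For the first identity, reading the two diagrams from the cap downward identifies the claim as
$$(f\otimes\id_{X^*})\circ coev_X \;=\; (\id_Y\otimes f^*)\circ coev_Y \qquad \text{in } \hom_{\mC}(\one,\, Y\otimes X^*).$$
Starting from the right-hand side, I would substitute the displayed formula for $f^*$ and then use the interchange law repeatedly to reorganize the resulting composite so that it reads as $[\,\text{a zigzag on the }Y\text{-strand}\,]\circ(f\otimes\id_{X^*})\circ coev_X$; each commutation step is an instance of $(g\otimes\id)\circ(\id\otimes h)=g\otimes h=(\id\otimes h)\circ(g\otimes\id)$, valid because the morphisms being exchanged act on disjoint tensor factors. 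Concretely the right-hand side becomes
$$\Bigl[(\id_Y\otimes ev_Y\otimes\id_{X^*})\circ(coev_Y\otimes\id_{Y\otimes X^*})\Bigr]\circ(f\otimes\id_{X^*})\circ coev_X,$$
and the bracketed morphism equals $\bigl[(\id_Y\otimes ev_Y)\circ(coev_Y\otimes\id_Y)\bigr]\otimes\id_{X^*}$, which collapses to $\id_{Y\otimes X^*}$ by the zigzag relation for $Y$. What remains is exactly the left-hand side.

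The second identity, which reads $ev_X\circ(f^*\otimes\id_X)=ev_Y\circ(\id_{Y^*}\otimes f)$ in $\hom_{\mC}(Y^*\otimes X,\,\one)$, is handled the same way: substitute the formula for $f^*$, use the interchange law to pull $ev_Y$ and $\id_{Y^*}\otimes f$ to the outside, and recognize the inner sub-composite that is left over as $\id_{Y^*}\otimes\bigl[(\id_X\otimes ev_X)\circ(coev_X\otimes\id_X)\bigr]=\id_{Y^*\otimes X}$, again by a zigzag relation, now for $X$. I do not expect any genuine obstacle here: the argument is entirely formal, and the only point requiring care is the bookkeeping --- keeping track of which tensor slot each of $coev_X,coev_Y,ev_X,ev_Y,f$ occupies while commuting them, and invoking the zigzag relation for the correct object ($Y$ in the first identity, $X$ in the second). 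Purely diagrammatically this is just the familiar move of inserting the zigzag that exhibits $f^*$ inside the picture and then pulling the strand straight; that rendering is shorter to draw and logically identical, and I would likely present it in that form once the bookkeeping above confirms it.
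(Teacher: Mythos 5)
Your proof is correct and is essentially the paper's argument read in the opposite direction: the paper inserts a zigzag into the left-hand diagram and recognizes the definition of $f^*$, while you expand $f^*$ in the right-hand side and collapse the resulting zigzag, using exactly the same ingredients (the definition of $f^*$, one zigzag relation per identity, and the interchange law that the graphical calculus encodes). No gaps.
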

\begin{proof}
Apply one zigzag relation (colored below) and the definition of $f^*$:
$$
\raisebox{-.8cm}{
\begin{tikzpicture}
\draw[color=red] (0,0) node [left]{\tiny $Y$} --++ (0,.5);
\draw (0,.5) node [draw,fill=white] {$f$} --++ (0,.5) node [left]{\tiny $X$}  arc (180:0:.5) --++ (0,-1) node [right]{\tiny $X^*$};
\end{tikzpicture}}
=
\raisebox{-1.1cm}{
\begin{tikzpicture}
\draw[color=red] (-2,0) node [left]{\tiny $Y$}  --++ (0,1) arc (180:0:.5) --++ (0,-1) arc (-180:0:.5) --++ (0,.5);
\draw (0,.5) node [draw,fill=white] {$f$} --++ (0,.5) node [left]{\tiny $X$}  arc (180:0:.5) --++ (0,-1) node [right]{\tiny $X^*$};
\end{tikzpicture}}
=
\raisebox{-.8cm}{
\begin{tikzpicture}
\draw (0,0) node [left]{\tiny $Y$} --++ (0,1) arc (180:0:.5)  node [right]{\tiny $Y^*$} --++ (0,-.5) node [draw,fill=white] {$f^*$} --++ (0,-.5) node [right]{\tiny $X^*$};
\end{tikzpicture}}.
$$
The second equality can be proved similarly.
\end{proof}

\begin{lemma} \label{lem:ev*} Let $\mC$ be a monoidal category with left duals. Then $(\ev_X)^* = \coev_{X^*}$ and $(\coev_X)^* = \ev_{X^*}$.
\end{lemma}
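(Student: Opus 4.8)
**Plan for proving Lemma \ref{lem:ev*}.**

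The statement is that $(ev_X)^* = coev_{X^*}$ and $(coev_X)^* = ev_{X^*}$. The plan is to unwind the definition of the dual of a morphism applied to $f = ev_X \in \hom_{\mC}(X^* \otimes X, \one)$, then use the zigzag relations to simplify the resulting diagram. Recall that for $f \in \hom_{\mC}(A, B)$ the dual $f^*$ is obtained by capping off with $coev_A$ on the bottom and $ev_B$ on the top (read through the picture in the excerpt). Here $A = X^* \otimes X$ and $B = \one$; the left dual of $A = X^* \otimes X$ is $A^* = X^* \otimes X^{**}$, and the left dual of $\one$ is $\one$ itself (with $ev_\one = coev_\one = \id_\one$). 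So $(ev_X)^* \in \hom_{\mC}(\one, X^* \otimes X^{**})$, which is the right space for $coev_{X^*}$ since $(X^*)^* = X^{**}$.

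First I would draw $(ev_X)^*$ according to the definition: it is the composite that takes $\one \xrightarrow{coev_{X^* \otimes X}} (X^* \otimes X) \otimes (X^* \otimes X^{**})$, applies $ev_X$ on the first tensor factor (the $X^* \otimes X$ part), and lands in $\one \otimes (X^* \otimes X^{**}) = X^* \otimes X^{**}$; the top cap by $ev_\one = \id_\one$ does nothing. Graphically, $coev_{X^* \otimes X}$ is a nested pair of cups — an outer $X^*$-$X^{**}$ cup with an inner $X$-$X^*$ cup — and $ev_X$ is the innermost cap pairing the $X$-strand of the outer structure with... more carefully, one should note $coev_{X^*\otimes X}$ can be presented as the $X^*$-labeled half of $coev_{X^*}$ nested around $X$: precisely, the standard identity $coev_{A\otimes B}=(\id_A\otimes coev_B\otimes\id_{A^*})\circ coev_A$ type decomposition. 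Then the $ev_X$ cap contracts against the inner $coev$-cup via a zigzag (colored in the picture, as in Lemma \ref{lem:LeftRight}), straightening the $X$ and $X^*$ strands and leaving exactly $coev_{X^*}: \one \to X^* \otimes X^{**}$. The second identity, $(coev_X)^* = ev_{X^*}$, is proved by the mirror-image argument: write out $(coev_X)^*$ for $coev_X \in \hom_{\mC}(\one, X \otimes X^*)$, cap with $coev_\one = \id_\one$ at the bottom and $ev_{X \otimes X^*}$ at the top, and straighten one zigzag to obtain $ev_{X^*}$.

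The only thing requiring care — and the main (mild) obstacle — is bookkeeping the left dual of a tensor product, i.e. which strand is $X^*$, which is $X^{**}$, and making sure the nested cup-cap configuration coming from $coev_{X^*\otimes X}$ really does reduce to a single zigzag against $ev_X$. This is a standard graphical-calculus computation with no conceptual content; once the diagram is drawn correctly, a single application of a zigzag relation finishes it, exactly in the style of the proof of Lemma \ref{lem:LeftRight}. I would present it as one labeled picture for each of the two identities, with the strand being straightened highlighted.
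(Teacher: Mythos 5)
Your proposal is correct and follows essentially the same route as the paper's proof: unwind the definition of $(ev_X)^*$ using $\one^* = \one$ and $(X^*\otimes X)^* = X^*\otimes X^{**}$, draw the nested cups of $coev_{X^*\otimes X}$ against the $ev_X$ cap, and straighten one zigzag to obtain $coev_{X^*}$, with the second identity handled by the mirror argument. The bookkeeping you flag (which strand is $X^*$ versus $X^{**}$) is exactly the point the paper also takes care of, and your decomposition of $coev_{A\otimes B}$ is the one implicit in the paper's picture.
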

\begin{proof}
Recall that we can take $\one^*=\one$ and $(X \otimes Y)^* = Y^* \otimes X^*$, so by a zigzag relation:
$$(\ev_X)^* =
\scalebox{1}{\raisebox{-1cm}{
\begin{tikzpicture}
\draw (0,0) node [left]{\tiny $\one$} -- (0,-1) arc (-180:0:.5) --++ (0,.5) node [draw,fill=white] {$\ev_X$} --++ (0,.5) arc (180:0:.5) --++ (0,-1) node [right]{\tiny $X^* \otimes X^{**}$} ;
\end{tikzpicture}}}
= \scalebox{.5}{\raisebox{-1cm}{
\begin{tikzpicture}
\draw[color=red] (0,0) arc (-180:0:.5) arc (180:0:.5) --++ (0,-1) node [right]{\small $X^*$} ;
\draw (0,0) arc (180:0:1.5) --++ (0,-1) node [right]{\small $X^{**}$} ;
\end{tikzpicture}}}
=
\scalebox{.5}{\raisebox{-1cm}{
\begin{tikzpicture}
\draw[color=red] (0,0)  --++ (0,-1) node [right]{\small $X^*$};
\draw (0,0) arc (180:0:1.5) --++ (0,-1) node [right]{\small $X^{**}$} ;
\end{tikzpicture}}}
=
\scalebox{1}{\raisebox{-.35cm}{
\begin{tikzpicture}
\draw (0,0) arc (180:0:.5);
\node at (-.2,0) {\tiny $X^*$};
\end{tikzpicture}}}
=
\coev_{X^*}.
$$
The second equality has a similar proof.
\end{proof}

Observe that if we define $f^{**}:=(f^*)^*$, then $(\ev_X)^{**} = \ev_{X^{**}}$ and $(\coev_X)^{**} = \coev_{X^{**}}$.
\begin{definition}[Pivotal]
A monoidal category $\mC$ with left duals is called \emph{pivotal} if there is an isomorphism of monoidal functors between $\id_{\mC}$ and $(\_)^{**}$, i.e. there is a collection of isomorphisms (pivotal structure) $a_X: X \to X^{**}$ such that for all objects $X,Y$ in $\mC$ and for all morphisms $f$ in $\Hom_{\mC}(X,Y)$
$$
  a_{X \otimes Y} = a_X \otimes a_Y \ \text{ and } \  f = a_Y^{-1} \circ f^{**} \circ a_X,
$$
which is depicted as follows
$$ \scalebox{1}{\raisebox{-1cm}{
\begin{tikzpicture}
\draw (0,0) node [left]{\tiny $X^{**} \otimes Y^{**}$} --++ (0,.8) node [draw,fill=white] {$a_{X \otimes Y}$} --++ (0,.8) node [left]{\tiny $X \otimes Y$} ;
\end{tikzpicture}}}
= \hspace{-.5cm}
\scalebox{1}{\raisebox{-1cm}{
\begin{tikzpicture}
\draw (0,0) node [left]{\tiny $X^{**}$} --++ (0,.8) node [draw,fill=white] {$a_{X}$} --++ (0,.8) node [left]{\tiny $X$} ;
\end{tikzpicture}
\begin{tikzpicture}
\draw (0,0) node [left]{\tiny $Y^{**}$} --++ (0,.8) node [draw,fill=white] {$a_{Y}$} --++ (0,.8) node [left]{\tiny $Y$} ;
\end{tikzpicture}}}
\ \ \text{ and } \ \
f = \hspace{-.5cm}
\scalebox{1}{\raisebox{-1.5cm}{
\begin{tikzpicture}
\draw (0,0) node [left]{\tiny $X$} --++ (0,-.7) node [draw,fill=white] {\small $a_{X}$} --++ (0,-1.3) arc (-180:0:1) --++ (0,1) arc (0:180:.35) --++ (0,-.5) node [draw,fill=white] {$f$} --++ (0,-.5) arc (0:-180:.35) --++ (0,1) arc (180:0:1) --++ (0,-1.3)  node [draw,fill=white] {\small $a_{Y}^{-1}$} --++ (0,-.7) node [right]{\tiny $Y$};
\end{tikzpicture}}}.
$$
\end{definition}
Recall that every pivotal monoidal category is rigid.

\begin{lemma} \label{lem:trans}
Let $\mC$ be a monoidal category. Let $X,Y$ be objects in $\mC$. Let $\alpha$ be a morphism in $\Hom_{\mC}(Y,X^*)$ and let $\beta$ be a morphism in $\Hom_{\mC}(\one,X)$. Then the following equality holds
$$\scalebox{1}{\raisebox{-.5cm}{
\begin{tikzpicture}
\draw (0,0) --++ (0,.4) node [left] {\tiny $Y$};
\draw (0,0) node [draw,fill=white] {\small $\alpha$}  --++ (0,-.5) node [left] {\tiny $X^{*}$} arc (-180:0:.5) node [right] {\tiny $X$} --++ (0,.5) node [draw,fill=white] {\small $\beta$};
\end{tikzpicture}}}
=
\scalebox{1}{\raisebox{-.5cm}{
\begin{tikzpicture}
\draw (0,0) node [draw,fill=white] {\small $\beta^{**}$}  --++ (0,-.5) node [left=-.1cm] {\tiny $X^{**}$} arc (-180:0:.5) node [right] {\tiny $X^*$} --++ (0,.5) node [draw,fill=white] {\small $\alpha$} --++ (0,.4) node [right] {\tiny $Y$};
\end{tikzpicture}}}.
$$
Moreover if $\mC$ is pivotal, with pivotal structure $a$, then
$$
\scalebox{1}{\raisebox{-.5cm}{
\begin{tikzpicture}
\draw (0,0) --++ (0,.4) node [left] {\tiny $Y$};
\draw (0,0) node [draw,fill=white] {\small $\alpha$}  --++ (0,-.5) node [left] {\tiny $X^*$} arc (-180:0:.5) node [right] {\tiny $X$} --++ (0,.5) node [draw,fill=white] {\small $\beta$};
\end{tikzpicture}}}
=
\scalebox{1}{\raisebox{-1cm}{
\begin{tikzpicture}
\draw (0,0) node [draw,fill=white] {\small $\beta$}  --++ (0,-.4) node [left] {\tiny $X$} --++ (0,-.4) node [draw,fill=white] {\tiny $a_X$} --++ (0,-.4) node [left] {\tiny $X^{**}$} arc (-180:0:.5) --++ (0,.6) node [right] {\tiny $X^*$} --++ (0,.6) node [draw,fill=white] {\small $\alpha$} --++ (0,.4) node [right] {\tiny $Y$};
\end{tikzpicture}}}.
$$
\end{lemma}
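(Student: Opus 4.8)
The plan is to prove the first identity by manipulating the graphical calculus: transform the left-hand picture, which contains the evaluation/coevaluation cap connecting $X^*$ to $X$ at the bottom, into the right-hand picture by ``rotating'' the morphism $\beta$ around the cap, at the cost of replacing it by $\beta^{**}$. Concretely, first I would insert a pair of zigzag configurations on the $X$-strand below $\beta$, so that the cap joining $\alpha$ (on $X^*$) and $\beta$ (on $X$) gets split through the identity $\id_X = (\text{zigzag on } X)$; this is the same move used in the proof of Lemma \ref{lem:LeftRight}. After the insertion, the portion of the diagram hanging off $\beta$ becomes, by the very definition of the left dual of a morphism (the box picture defining $f^*$ displayed just before Lemma \ref{lem:LeftRight}), the box $\beta^*$ sitting on strands $\one^* \otimes X^{*}$; since $\one^* = \one$ (recalled in the proof of Lemma \ref{lem:ev*}), this is $\beta^* : \one \to X^*$ — wait, more carefully, $\beta \in \hom_{\mC}(\one, X)$ so $\beta^* \in \hom_{\mC}(X^*, \one)$, and one more application of the same rotation (or a direct second use of the definition of dual-of-a-morphism) turns the remaining cap-and-$\alpha$ configuration into $\beta^{**} \in \hom_{\mC}(\one, X^{**})$ capped against $\alpha$ via $ev_{X^*} : X^{**} \otimes X^* \to \one$. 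Reading off the resulting picture gives exactly the right-hand side. I expect the main obstacle to be purely bookkeeping: keeping track of which cap is an $ev$ versus a $coev$ and on which dual (using Lemma \ref{lem:ev*} that $(ev_X)^* = coev_{X^*}$ etc.), and making sure the two successive rotations compose to $(\ )^{**}$ rather than the identity or $(\ )^{*}$.

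For the ``moreover'' part, assume $\mC$ is pivotal with pivotal structure $a$. The plan is to start from the right-hand side of the first identity, namely $\beta^{**}$ capped against $\alpha$ through $ev_{X^*}$, and substitute $\beta^{**} = a_X \circ \beta$ — this is exactly the naturality/defining relation of the pivotal structure applied to the morphism $\beta \in \hom_{\mC}(\one, X)$, i.e. $\beta = a_X^{-1} \circ \beta^{**} \circ a_{\one}$ together with $a_{\one} = \id_{\one}$ (which follows from $a_{X \otimes Y} = a_X \otimes a_Y$ by setting $X = Y = \one$). Rewriting $\beta^{**} = a_X \circ \beta$ and feeding it into the capped picture produces precisely the claimed right-hand diagram: $\beta : \one \to X$, followed by $a_X : X \to X^{**}$, then the cap $ev_{X^*}$ joining $X^{**}$ to $X^*$, against which $\alpha$ is attached. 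So this part is a one-line substitution once the first identity is in hand; the only subtlety is justifying $a_{\one} = \id_{\one}$, which I would state explicitly.

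A cleaner alternative for the whole lemma, if the diagram-chase above proves fiddly to typeset, is to argue algebraically: both sides of the first identity are morphisms in $\hom_{\mC}(\one, Y)$, and one can compute each as a composite $\one \xrightarrow{coev} X \otimes X^* \to \cdots$ and check equality using only the zigzag relations, the functoriality $(f \circ g)^* = g^* \circ f^*$, and $(X \otimes Y)^* = Y^* \otimes X^*$ — all recalled in the excerpt. I would present the graphical proof as the main one (it is shortest and matches the style of the surrounding lemmas) and expect it to occupy only three or four displayed pictures, mirroring the proof of Lemma \ref{lem:LeftRight}.
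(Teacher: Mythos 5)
Your proposal is correct and takes essentially the same route as the paper: the paper's proof of the first identity is precisely the zigzag insertion that drags $\beta$ around the cap until the resulting sub-diagram is the defining picture of $\beta^{**}$ (the paper performs the two rotations in one combined move on the $X^*$ leg below $\alpha$ rather than on the $X$ leg below $\beta$, which is immaterial), and the pivotal part is the same one-line substitution $\beta^{**}=a_X\circ\beta$ using $a_{\one}=\id_{\one}$. No gaps.
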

\begin{proof}
By zigzag relation
$$\hspace*{2cm} \scalebox{1}{\raisebox{-.5cm}{
\begin{tikzpicture}
\draw (0,0) --++ (0,.4) node [left] {\tiny $Y$};
\draw[color=red] (0,0) --++ (0,-.5) node [left] {\tiny $X^*$};
\draw (0,0) node [draw,fill=white] {\small $\alpha$};
\draw (0,-.5) arc (-180:0:.5) node [right] {\tiny $X$} --++ (0,.5) node [draw,fill=white] {\small $\beta$};
\end{tikzpicture}}}
=
\scalebox{1}{\raisebox{-.5cm}{
\begin{tikzpicture}
\draw (0,0) --++ (0,.4) node [left] {\tiny $Y$};
\draw[color=red] (0,0) --++ (0,-.5) node [left] {\tiny $X^*$} arc (0:-180:.5) arc (0:180:1);
\draw (0,0) node [draw,fill=white] {\small $\alpha$};
\draw (-3,-.5) arc (-180:0:.5) node [right] {\tiny $X$} --++ (0,.5) node [draw,fill=white] {\small $\beta$};
\end{tikzpicture}}}
=
\scalebox{1}{\raisebox{-.5cm}{
\begin{tikzpicture}
\draw (0,0) node [draw,fill=white] {\small $\beta^{**}$}  --++ (0,-.5) node [left=-.1cm] {\tiny $X^{**}$} arc (-180:0:.5) node [right] {\tiny $X^*$} --++ (0,.5) node [draw,fill=white] {\small $\alpha$} --++ (0,.4) node [right] {\tiny $Y$};
\end{tikzpicture}}}.
$$

Now if $\mC$ is pivotal then $ \beta^{**} = a_X \circ \beta$ (using that $a_\mathbf{1} = \id_\mathbf{1}$), so
$$
\scalebox{1}{\raisebox{-.5cm}{
\begin{tikzpicture}
\draw (0,0) node [draw,fill=white] {\small $\beta^{**}$}  --++ (0,-.5) node [left=-.1cm] {\tiny $X^{**}$} arc (-180:0:.5) node [right] {\tiny $X^*$} --++ (0,.5) node [draw,fill=white] {\small $\alpha$} --++ (0,.4) node [right] {\tiny $Y$};
\end{tikzpicture}}}
=
\scalebox{1}{\raisebox{-1cm}{
\begin{tikzpicture}
\draw (0,0) node [draw,fill=white] {\small $\beta$}  --++ (0,-.4) node [left] {\tiny $X$} --++ (0,-.4) node [draw,fill=white] {\tiny $a_X$} --++ (0,-.1) arc (-180:0:.5) --++ (0,.4) node [right] {\tiny $X^*$} --++ (0,.5) node [draw,fill=white] {\small $\alpha$} --++ (0,.4) node [right] {\tiny $Y$};
\end{tikzpicture}}} \hspace*{2cm} \qedhere
$$
\end{proof}

\begin{lemma} \label{lem:switch}
Let $\mC$ be a pivotal monoidal category. Let $a$ be the pivotal structure. Let $X$ be an object in $\mC$. Then
$$
\raisebox{-.7cm}{
\begin{tikzpicture}
\draw (0,0) node [left] {\tiny $X^*$} --++ (0,-.5) node [draw,fill=white] {\tiny $a_{X^*}$}  --++ (0,-.3) arc (-180:0:.5) --++ (0,.8) node [right] {\tiny $X^{**}$};
\end{tikzpicture}}
=
\raisebox{-.7cm}{
\begin{tikzpicture}
\draw (0,0) node [left] {\tiny $X^*$} --++ (0,-.8) arc (-180:0:.5) --++ (0,.3)  node [draw,fill=white] {\tiny $a^{-1}_{X}$} --++ (0,.5) node [right] {\tiny $X^{**}$};
\end{tikzpicture}}
\ \ \text{ and } \ \
\raisebox{-.7cm}{
\begin{tikzpicture}
\draw (0,0) node [left] {\tiny $X^{**}$} --++ (0,.5) node [draw,fill=white] {\tiny $a_X$}  --++ (0,.3) arc (180:0:.5) --++ (0,-.8) node [right] {\tiny $X^*$};
\end{tikzpicture}}
=
\raisebox{-.7cm}{
\begin{tikzpicture}
\draw (0,0) node [left] {\tiny $X^{**}$} --++ (0,.8) arc (180:0:.5) --++ (0,-.3)  node [draw,fill=white] {\tiny $a^{-1}_{X^*}$} --++ (0,-.5) node [right] {\tiny $X^*$};
\end{tikzpicture}}.
$$
\end{lemma}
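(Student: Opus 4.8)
The plan is not to manipulate the pictures directly, but to read off what each diagram asserts as an equality of morphisms and then deduce everything from the pivotal axiom $f = a_Y^{-1}\circ f^{**}\circ a_X$ applied to the evaluation and coevaluation maps. First I would record the elementary facts already on the table: $a_{\one}=\id_{\one}$ (forced by $a_{X\otimes\one}=a_X\otimes a_{\one}$, and already used in the proof of Lemma~\ref{lem:trans}), the monoidality equalities $a_{X^*\otimes X}=a_{X^*}\otimes a_X$ and $a_{X\otimes X^*}=a_X\otimes a_{X^*}$, and the identities $(ev_X)^{**}=ev_{X^{**}}$, $(coev_X)^{**}=coev_{X^{**}}$ observed right after Lemma~\ref{lem:ev*}.

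Next I would rewrite the two pictures as equalities of morphisms. Reading the diagrams as in Lemma~\ref{lem:LeftRight}, the first claimed identity is $ev_{X^{**}}\circ(a_{X^*}\otimes\id_{X^{**}})=ev_X\circ(\id_{X^*}\otimes a_X^{-1})$ as morphisms $X^*\otimes X^{**}\to\one$, and the second is $(a_X\otimes\id_{X^*})\circ coev_X=(\id_{X^{**}}\otimes a_{X^*}^{-1})\circ coev_{X^{**}}$ as morphisms $\one\to X^{**}\otimes X^*$. Composing the first on the right with the isomorphism $\id_{X^*}\otimes a_X$, and the second on the left with the isomorphism $\id_{X^{**}}\otimes a_{X^*}$, these are equivalent to the cleaner statements
$$ev_X=ev_{X^{**}}\circ(a_{X^*}\otimes a_X)\qquad\text{and}\qquad coev_{X^{**}}=(a_X\otimes a_{X^*})\circ coev_X .$$

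The substance is then a single application of the pivotal axiom in each case. For $f=ev_X\colon X^*\otimes X\to\one$ it reads $ev_X=a_{\one}^{-1}\circ(ev_X)^{**}\circ a_{X^*\otimes X}=ev_{X^{**}}\circ(a_{X^*}\otimes a_X)$, using the bookkeeping facts above; for $f=coev_X\colon\one\to X\otimes X^*$ it reads $coev_X=a_{X\otimes X^*}^{-1}\circ(coev_X)^{**}\circ a_{\one}=(a_X\otimes a_{X^*})^{-1}\circ coev_{X^{**}}$, hence $coev_{X^{**}}=(a_X\otimes a_{X^*})\circ coev_X$. Reversing the two reductions of the previous paragraph recovers the displayed identities of the lemma.

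I do not expect a real obstacle here: the only delicate point is keeping the dictionary between the (unoriented, top-to-bottom) string pictures and the algebra exactly right — which bend is an evaluation and which a coevaluation, which strand carries $X^*$, $X^{**}$ or $X^{***}$, and which tensor factor each map acts on — since a slip there would flip an inverse or transpose two factors. Should a purely graphical proof be wanted instead, one can substitute the picture of the pivotal axiom for $ev_X$ (resp. $coev_X$) into the left-hand side and collapse the resulting snakes using the zigzag relations together with Lemma~\ref{lem:LeftRight}; but routing everything through the pivotal axiom as above is shorter and introduces no new diagrams.
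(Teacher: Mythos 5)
Your proof is correct and follows essentially the same route as the paper: both reduce the diagrams to the identity $ev_X = ev_{X^{**}}\circ(a_{X^*}\otimes a_X)$ (and its coevaluation analogue), obtained from the pivotal axiom applied to $ev_X$ together with $(ev_X)^{**}=ev_{X^{**}}$ from Lemma~\ref{lem:ev*} and monoidality of $a$. The only cosmetic difference is that the paper carries out the cancellation $a_X\circ a_X^{-1}=\id_X$ graphically, while you perform the equivalent composition with $\id\otimes a_X$ algebraically.
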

\begin{proof}
By Lemma \ref{lem:ev*} $\ev_{X}^{**} = \ev_{X^{**}}$ and by pivotal structure $\ev_{X}^{**} \circ a_{X^* \otimes X} = \ev_X$ and $a_{X^* \otimes X} = a_{X^*} \otimes  a_{X}$, so:
$$
\raisebox{-.7cm}{
\begin{tikzpicture}
\draw (0,0) node [left] {\tiny $X^*$} --++ (0,-.5) node [draw,fill=white] {\tiny $a_{X^*}$}  --++ (0,-.3) arc (-180:0:.5) --++ (0,.8) node [right] {\tiny $X^{**}$};
\end{tikzpicture}}
=
\raisebox{-.7cm}{
\begin{tikzpicture}
\draw (0,0) node [left] {\tiny $X^*$} --++ (0,-1.1) node [draw,fill=white] {\tiny $a_{X^*}$}  --++ (0,-.2)  arc (-180:0:.5) --++ (0,.2) node [draw,fill=white] {\tiny $a_{X}$}  --++ (0,.6) node [draw,fill=white] {\tiny $a^{-1}_{X}$} --++ (0,.5) node [right] {\tiny $X^{**}$};
\end{tikzpicture}}
=
\raisebox{-.7cm}{
\begin{tikzpicture}
\draw (0,0) node [left] {\tiny $X^*$} --++ (0,-.8) arc (-180:0:.5) --++ (0,.3)  node [draw,fill=white] {\tiny $a^{-1}_{X}$} --++ (0,.5) node [right] {\tiny $X^{**}$};
\end{tikzpicture}}.
$$
The proof of the second equality is similar.
\end{proof}

\begin{lemma}[\cite{EGNO15}, Exercice 4.7.9] \label{lem:pivo*}
Following Lemma \ref{lem:switch}, $a_X^* = a_{X^*}^{-1}$ and $(a_X^{-1})^* = a_{X^*}$, so $a_X^{**} = a_{X^{**}}$.
\end{lemma}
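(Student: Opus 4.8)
The plan is to isolate $a_X^* = a_{X^*}^{-1}$ as the identity requiring an argument, and to obtain the other two as formal consequences using the properties $(\id_X)^* = \id_{X^*}$, $(f\circ g)^* = g^*\circ f^*$, and $(f^{-1})^* = (f^*)^{-1}$ recorded after the definition of $f^*$.

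To prove $a_X^* = a_{X^*}^{-1}$, I would read both sides off Lemma \ref{lem:switch}. By definition, $a_X^*\in\hom_{\mC}(X^{***},X^*)$ is the morphism obtained from $a_X\colon X\to X^{**}$ by bending its source leg around with $coev_X$ and its target leg around with $ev_{X^{**}}$. Applying one zigzag relation to the resulting picture straightens it into the left-hand side of the first equation of Lemma \ref{lem:switch} (the box $a_{X^*}$ followed by the associated cap); the same manipulation turns the right-hand side of that equation --- the identical bent picture with $a_{X^*}$ replaced by $a_X^{-1}$ --- back into $a_{X^*}^{-1}$. Hence $a_X^* = a_{X^*}^{-1}$. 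Equivalently, and perhaps more convenient to write up, one inserts the first equation of Lemma \ref{lem:switch} into the string diagram for $a_X^*\circ a_{X^*}$ and collapses the two zigzags that appear, obtaining $\id_{X^*}$.

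Finally, applying $a_X^* = a_{X^*}^{-1}$ with $X^*$ in place of $X$ gives $a_{X^*}^* = a_{X^{**}}^{-1}$, whence
$$a_X^{**} = (a_X^*)^* = (a_{X^*}^{-1})^* = (a_{X^*}^*)^{-1} = (a_{X^{**}}^{-1})^{-1} = a_{X^{**}}, \qquad (a_X^{-1})^* = (a_X^*)^{-1} = (a_{X^*}^{-1})^{-1} = a_{X^*}.$$

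I expect the only delicate point to be the bookkeeping in the middle step: matching the legs, the orientations, and the single zigzag move relating the defining diagram of $f^*$ to the diagrams of Lemma \ref{lem:switch}. Once those pictures are aligned the statement is immediate, and the remaining identities are purely formal.
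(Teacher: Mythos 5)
Your proof is correct and follows essentially the same route as the paper: the paper substitutes the second identity of Lemma \ref{lem:switch} into the defining diagram of $a_X^*$ and then straightens one zigzag, while your ``equivalently'' formulation verifies $a_X^*\circ a_{X^*}=\id_{X^*}$ using the first identity of that lemma --- the same computation read in a different order --- and your formal derivation of $a_X^{**}=a_{X^{**}}$ and $(a_X^{-1})^*=a_{X^*}$ coincides with the paper's. One caveat: your first description of the diagrammatic step is imprecise (a zigzag relation alone cannot replace the box $a_X$ by $a_{X^*}$; it is Lemma \ref{lem:switch} that performs this exchange, and what appears is $a_{X^*}^{-1}$ rather than $a_{X^*}$), so the ``equivalently'' version is the one you should actually write up.
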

\begin{proof}
By Lemma \ref{lem:switch} and a zigzag relation:
$$
a_X^* =
\raisebox{-.75cm}{
\begin{tikzpicture}
\draw (0,0) node [right=-.1cm] {\tiny $X^*$} --++ (0,.5) --++ (0,.5) arc (0:180:.35) --++ (0,-.5) node [draw,fill=white] {\tiny $a_{X}$} --++ (0,-.5)  arc (0:-180:.35) --++ (0,1);
\end{tikzpicture}}
=
\raisebox{-.75cm}{
\begin{tikzpicture}
\draw (0,0) node [right=-.1cm] {\tiny $X^*$} --++ (0,.5) node [draw,fill=white] {\tiny $a_{X^*}^{-1}$} --++ (0,.5) arc (0:180:.35) --++ (0,-1) arc (0:-180:.35) --++ (0,1);
\end{tikzpicture}}
=
\raisebox{-.6cm}{
\begin{tikzpicture}
\draw (0,0) node [right=-.1cm] {\tiny $X^*$} --++ (0,.5) node [draw,fill=white] {\tiny $a_{X^*}^{-1}$} --++ (0,.5);
\end{tikzpicture}}
=
a_{X^*}^{-1}.
$$
We already observed that if $f$ is an isomorphism then $(f^*)^{-1} = (f^{-1})^*$. So, $$(a_X^{-1})^* = (a_X^*)^{-1} = (a_{X^*}^{-1})^{-1} = a_{X^*}.$$ Finally, $a_X^{**} = (a_{X^*}^{-1})^* = a_{X^{**}}$.
\end{proof}

\begin{definition}[Trace] \label{def:tr}
Let $\mC$ be a pivotal monoidal category. Let $a$ be the pivotal structure. Let $X$ be an object in $\mC$. Let $\alpha$ be a morphism in $\hc(X,X)$. The \emph{trace} of $\alpha$ according to $a$ is defined as follows:
$$
\tr_a(\alpha):=
\scalebox{1}{\raisebox{-1.2cm}{
\begin{tikzpicture}
\draw (0,.4) --++ (0,.4) node [left] {\tiny $X$} arc (180:0:.4) --++ (0,-.8) node [right] {\tiny $X^*$};
\draw (0,.4) --++ (0,-.4) node [left] {\tiny $X$}  --++ (0,-.4)  --++ (0,-.4) node [left] {\tiny $X^{**}$} arc (-180:0:.4) --++ (0,.8);
\draw (0,.4) node [draw,fill=white] {$\alpha$};
\draw (0,-.4) node [draw,fill=white] {$a_X$};
\end{tikzpicture}}}.
$$
\end{definition}
\begin{definition}[Dimension] \label{def:dim}
Following Definition \ref{def:tr}, $\dim_a(X) := \tr_a(\id_X)$.
\end{definition}
When no confusion is possible, we will simply write $\tr$ and $\dim$ (without $a$).
\begin{lemma} \label{lem:simple}
Following Definition \ref{def:tr}, assume that $\mC$ is $\field$-linear, $X$ is simple and $\dim(X)$ is nonzero. Then
$$ \alpha =  \dim(X)^{-1} \tr(\alpha)\id_X. $$
\end{lemma}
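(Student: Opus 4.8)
The plan is to combine Schur's lemma for the simple object $X$ with the linearity of the pivotal trace. Since $X$ is simple, the endomorphism algebra $\hom_{\mC}(X,X)$ is spanned by $\id_X$; hence there is a scalar $\lambda \in \field$ with $\alpha = \lambda\,\id_X$. The whole task then reduces to identifying this $\lambda$ with $\dim(X)^{-1}\tr(\alpha)$.

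Next I would invoke the $\field$-linearity of $\tr_a$ on $\hom_{\mC}(X,X)$, which is immediate from the graphical Definition \ref{def:tr}: substituting $\alpha = \lambda\,\id_X$ into the defining diagram replaces the box labelled $\alpha$ by $\lambda$ times a bare strand, so the scalar $\lambda$ slides out of the diagram, leaving $\lambda$ times the diagram computing $\tr_a(\id_X) = \dim_a(X)$ (Definition \ref{def:dim}). Therefore $\tr(\alpha) = \lambda\,\dim(X)$.

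Since $\dim(X)$ is assumed nonzero, it is invertible in $\field$, so $\lambda = \dim(X)^{-1}\tr(\alpha)$, and substituting back into $\alpha = \lambda\,\id_X$ gives $\alpha = \dim(X)^{-1}\tr(\alpha)\,\id_X$, as claimed. The only point that requires care is the identification $\hom_{\mC}(X,X) = \field\,\id_X$: under the standing convention that simple objects have one-dimensional endomorphism algebras there is nothing to check, while if one only assumes $X$ has no proper nonzero subobjects one must additionally use $\field$-linearity (and algebraic closedness of $\field$) to upgrade the division-algebra conclusion of Schur's lemma to $\field\,\id_X$. Beyond this the argument is the one-line trace computation above, and I expect no genuine obstacle.
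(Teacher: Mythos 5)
Your argument is exactly the paper's proof: apply Schur's lemma to write $\alpha = \lambda\,\id_X$, use linearity of the trace to get $\tr(\alpha) = \lambda\dim(X)$, and divide by the nonzero $\dim(X)$. The extra remark on which form of Schur's lemma is being used is a reasonable clarification but does not change the argument.
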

\begin{proof}
By Schur's lemma, $\hc(X,X) = \field \id_X$, so there is $k \in \field$ such that $\alpha = k\id_X$. Then $$\tr(\alpha) = k \tr(\id_X) = k \dim(X),$$ and so $k = \dim(X)^{-1}\tr(\alpha).$
\end{proof}

\begin{lemma} \label{lem:trf*} Following Definition \ref{def:tr}, the following equality holds:
$$
\tr(\alpha^*)=
\scalebox{1}{\raisebox{-1.2cm}{
\begin{tikzpicture}
\draw (0,.4) --++ (0,.4) node [right] {\tiny $X^{**}$} arc (0:180:.4) --++ (0,-.8) node [left] {\tiny $X^*$};
\draw (0,.4) --++ (0,-.4) node [right] {\tiny $X$}  --++ (0,-.4)  --++ (0,-.4) node [right] {\tiny $X$} arc (0:-180:.4) --++ (0,.8);
\draw (0,.4) node [draw,fill=white] {\tiny $a^{-1}_X$};
\draw (0,-.4) node [draw,fill=white] {$\alpha$};
\end{tikzpicture}}}.
$$
\end{lemma}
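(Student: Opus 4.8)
The plan is to reduce the identity to the lemmas already established, by transforming the defining diagram of $\tr(\alpha^{*})$ step by step. Applying Definition \ref{def:tr} to $\alpha^{*}\in\hom_{\mC}(X^{*},X^{*})$ writes $\tr(\alpha^{*})$ as a closed diagram built from the boxes $\alpha^{*}$ and $a_{X^{*}}$ and four (co)evaluation arcs living on the objects $X^{*},X^{**},X^{***}$, whereas the target diagram uses $\alpha$, $a_{X}^{-1}$ and arcs on $X,X^{*},X^{**}$. So there are two things to do: trade $\alpha^{*}$ for $\alpha$ and trade $a_{X^{*}}$ for $a_{X}^{-1}$, each step lowering the duality level by one, after which a zigzag cleanup should produce the claimed cap-and-cup picture.

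First I would remove the $\alpha^{*}$ box. Expanding the definition of $\alpha^{*}$ (the bending of $\alpha$ around an $ev_{X}$ and a $coev_{X}$) inside the trace diagram introduces two further arcs, which cancel two of the four original arcs by the zigzag relations; equivalently, Lemma \ref{lem:LeftRight} lets one slide $\alpha^{*}$ across an arc and replace it by $\alpha$. Either way one is left with a closed diagram carrying a single $\alpha$ box (possibly with pivotal corrections) together with $a_{X^{*}}$, on strands labelled only by $X$ and $X^{*}$.

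Next I would remove the $a_{X^{*}}$ box. By Lemma \ref{lem:pivo*}, $a_{X^{*}}=(a_{X}^{-1})^{*}$, and Lemma \ref{lem:switch} is precisely the identity allowing one to push such a box across a cap, turning $(a_{X}^{-1})^{*}$ into $a_{X}^{-1}$ and dropping the object labels from $X^{**},X^{***}$ down to $X,X^{**}$. Using that $a$ is a monoidal natural isomorphism, that $a_{\one}=\id_{\one}$, and that $a_{X}^{**}=a_{X^{**}}$ (Lemma \ref{lem:pivo*}), all pivotal morphisms created along the way collapse to a single $a_{X}^{-1}$; a final zigzag straightens the remaining arcs into the configuration on the right-hand side.

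The hard part is not conceptual but bookkeeping: with triple duals present and several copies of $a$, $a^{-1}$, $a_{X^{*}}$ being created and transported across arcs, one must check that they cancel exactly, leaving no stray pivotal morphism. No new idea beyond careful graphical manipulation with Lemmas \ref{lem:LeftRight}, \ref{lem:pivo*} and \ref{lem:switch} together with the zigzag relations is needed; conceptually, the right-hand side is just the right-handed pivotal trace of $\alpha$, and the computation verifies that dualizing a morphism interchanges the left-handed and right-handed traces.
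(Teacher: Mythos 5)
Your proposal is correct and follows essentially the same route as the paper: expand the definition of $\alpha^{*}$ inside the trace diagram, cancel the resulting arcs with zigzag relations, and use Lemma \ref{lem:switch} to trade $a_{X^{*}}$ on one leg of a cup for $a_{X}^{-1}$ on the other. The only cosmetic difference is that you invoke Lemma \ref{lem:pivo*} as an intermediate, whereas Lemma \ref{lem:switch} already does the conversion in one step, which is all the paper uses.
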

\begin{proof}
By zigzag relation and Lemma \ref{lem:switch}:
$$
\tr(\alpha^*)=
\raisebox{-1.2cm}{
\begin{tikzpicture}
\draw (0,0) node [left]{\tiny $X^*$} -- (0,-1) arc (-180:0:.25) --++ (0,.5) node [draw,fill=white] {$\alpha$} --++ (0,.5) arc (180:0:.25) --++ (0,-1) node [draw,fill=white] {\tiny $a_{X^*}$} --++ (0,-.5) node [left]{\tiny $X^{***}$} arc (-180:0:.25) node [right]{\tiny $X^{**}$} --++ (0,1.5);
\draw (0,0)  arc (180:0:.75);
\end{tikzpicture}}
=
\raisebox{-1.2cm}{
\begin{tikzpicture}
\draw[color = red] (.5,-.5) --++ (0,.5) arc (180:0:.4) --++ (0,-1) node [left]{\tiny $X^{*}$} arc (-180:0:.35) node [right]{\tiny $X$} --++ (0,.3) --++ (0,.4);
\draw (2,-.3)node [draw,fill=white] {\tiny $a_{X}^{-1}$} --++ (0,.3);
\draw (0,0) node [left]{\tiny $X^*$} -- (0,-1) arc (-180:0:.25) --++ (0,.5) node [draw,fill=white] {$\alpha$};
\draw (0,0) arc (180:0:1);
\end{tikzpicture}}
=
\raisebox{-1.2cm}{
\begin{tikzpicture}
\draw (0,.4) --++ (0,.4) node [right] {\tiny $X^{**}$} arc (0:180:.4) --++ (0,-.8) node [left] {\tiny $X^*$};
\draw[color=red] (0,.4) --++ (0,-.4) node [right] {\tiny $X$} --++ (0,-.4);
\draw (0,-.4)  --++ (0,-.4) node [right] {\tiny $X$} arc (0:-180:.4) --++ (0,.8);
\draw (0,.4) node [draw,fill=white] {\tiny $a^{-1}_X$};
\draw (0,-.4) node [draw,fill=white] {$\alpha$};
\end{tikzpicture}}.
\qedhere
$$
\end{proof}

\begin{definition}[Spherical] \label{def:sph}
A pivotal monoidal category $\mC$ is called \emph{spherical} if for any object $X$ in $\mC$,  and for any morphism $\alpha$ in $\hc(X,X)$, then  $\tr(\alpha) = \tr(\alpha^*)$.
\end{definition}
\begin{remark} \label{rk:sph}
By sphericality, for any object $ X $, then $\dim(X) = \dim(X^*)$, since $\tr(\id_X) = \tr(\id^*_X) = \tr(\id_{X^*})$. In a tensor category, these equalities are sufficient to get the sphericality, even when assuming that the objects $X$ are simple, as established in \cite[Theorem 4.7.15]{EGNO15}.
\end{remark}
\begin{lemma} \label{lem:sph2}
Let $\mC$ be a spherical monoidal category, with pivotal structure $a$. Let $X,Y$ be objects in $\mC$. Let $\alpha$ be a morphism in $\hc(\one,Y^* \otimes X^*)$. Let $\beta$ be a morphism in $\hc(\one,X \otimes Y)$. Then
$$
\raisebox{-.6cm}{\begin{tikzpicture}
\draw (-.3,0) --++ (0,-.4) node [left=-.1cm] {\tiny ${Y^*}$} arc (-180:0:1.3 and .75) node [right=-.1cm] {\tiny $Y$} --++ (0,.4);
\draw (.3,0) --++ (0,-.4) node [left=-.1cm] {\tiny ${X^*}$} arc (-180:0:.7 and .35) node [right=-.1cm] {\tiny $X$} --++ (0,.4);
\draw (0,0) node [draw,fill=white,minimum width=1cm] {\small $\alpha$};
\draw (2,0) node [draw,fill=white,minimum width=1cm] {\small $\beta$};
\end{tikzpicture}}
= \
\raisebox{-1cm}{\begin{tikzpicture}
\draw (-.3,0) --++ (0,-.7) node [left=-.1cm] {\tiny ${Y^*}$} arc (-180:0:1.3 and .75) node [right=-.1cm] {\tiny $Y$} --++ (0,.7);
\draw (.3,0) --++ (0,-.6) node [draw,fill=white] {\tiny $a^2_{X^*}$} --++ (0,-.4) arc (-180:0:.25) --++ (0,1) arc (0:180:.8 and .4) --++ (0,-.8) arc (-180:0:1.8 and .8) --++ (0,.8) arc (0:180:.8 and .4)  --++ (0,-1) arc (-180:0:.25) --++ (0,.9);
\draw (0,0) node [draw,fill=white,minimum width=1cm] {\small $\alpha$};
\draw (2,-.1) node [draw,fill=white,minimum width=1cm] {\small $\beta$};
\end{tikzpicture}}
= \
\raisebox{-.85cm}{\begin{tikzpicture}
\draw (-.3,0) --++ (0,-1) arc (0:-180:.25) --++ (0,1.1)  arc (180:0:1.8 and .5)  --++ (0,-1.1) arc (0:-180:.25) --++ (0,.35) node [draw,fill=white] {\tiny $a^2_{Y}$} --++ (0,.65);
\draw (.3,0) --++ (0,-.4) node [left=-.1cm] {\tiny ${X^*}$} arc (-180:0:.7 and .35) node [right=-.1cm] {\tiny $X$} --++ (0,.4);
\draw (0,0) node [draw,fill=white,minimum width=1cm] {\small $\alpha$};
\draw (2,0) node [draw,fill=white,minimum width=1cm] {\small $\beta$};
\end{tikzpicture}},
$$
where $a^2: \id_{\mC} \to (\_)^{****}$ is the natural isomorphism, that equals the square of the pivotal struture $a$.
\end{lemma}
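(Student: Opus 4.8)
The two displayed equalities are of the same nature: each re-routes one of the two strands joining the coupons $\alpha$ and $\beta$ from one side of a coupon to the other, and records the cost of this rotation by a pivotal square $a^2$. The second should follow from the first by an entirely analogous argument (or by the evident left--right symmetry of the configuration, exchanging $\alpha$ and $\beta$ and replacing $X$ by $Y^{*}$, after using Lemma \ref{lem:pivo*} to reconcile the two pivotal squares through the intervening double duals). So I would concentrate on the equality between the left-hand and the middle diagrams.

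The cleanest route, I think, is to read both sides as traces. Cutting the $Y$-strand of the left-hand diagram presents it as $\tr(\phi)$ for a suitable endomorphism $\phi$ of $Y$ built from $\alpha$, $\beta$ and the zigzag maps --- where the pivotal structure entering $\tr$ is the $a_Y$ of Definition \ref{def:tr}, so the "naive" closure of the $Y$-strand drawn in the picture differs from $\tr$ by a controlled insertion of $a_Y^{\pm 1}$. Sphericality (Definition \ref{def:sph}) then gives $\tr(\phi)=\tr(\phi^{*})$, and Lemma \ref{lem:trf*} rewrites $\tr(\phi^{*})$ as a closed diagram in which the $Y$-closure is performed on the opposite side and an $a^{-1}$ appears. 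It remains to massage this back into the middle picture: one slides the $X^{*}$-leg of $\alpha$ around the $\alpha$-coupon using the zigzag relations and the definition of the dual of a morphism, commutes the various $a$, $a^{-1}$ and $a_X^{*}=a_{X^{*}}^{-1}$ past the caps and cups with Lemma \ref{lem:switch}, and reorganizes them with Lemma \ref{lem:pivo*} (in particular $a_X^{**}=a_{X^{**}}$), so that all the pivotal isomorphisms accumulated along the way collapse to exactly one factor $a^2_{X^{*}}=a_{(X^{*})^{**}}\circ a_{X^{*}}$ sitting on the $X^{*}$-strand next to $\alpha$, as in the statement. An alternative, more hands-on proof avoids traces and drags the $X$-strand directly from the inside position to the outside position around the $\alpha$-coupon, invoking Lemma \ref{lem:LeftRight} at each passage past the coupon; but then sphericality has to be introduced by hand precisely when the two resulting closures are identified, which I find less transparent.

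The main obstacle is purely bookkeeping: at each passage of a strand past a coupon, and at each commutation past a cap or cup, one must track which of $a$, $a^{-1}$, $a^{*}$ is produced, in which order these compose, and on which iterated dual of $X$ (or $Y$) each one lives, so that the net decoration is neither $a^{-2}$, nor $a^{2}$ on the wrong object, nor $a^{2}$ on the wrong end of the strand. This is exactly where Lemmas \ref{lem:switch} and \ref{lem:pivo*} do the real work, and it is also the one point at which the hypothesis is genuinely used: with mere pivotality the left and right traces of $\phi$ would remain distinct and the argument could not be closed. Apart from this, the proof is a routine isotopy.
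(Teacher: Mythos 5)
There is a genuine gap, and it sits exactly at the step you dismiss as routine bookkeeping. For the first equality the strand that gets re-routed (and picks up the $a^2_{X^*}$ decoration) is the $X$-strand: in the left picture it runs \emph{inside} the $Y$-arc, in the middle picture it runs \emph{outside} it, while the $Y$-arc is untouched. You instead cut the $Y$-strand and apply sphericality to an endomorphism of $Y$; after Lemma \ref{lem:trf*} this flips the $Y$-closure to the other side, which is (up to bookkeeping) the \emph{third} picture, the one decorated with $a^2_Y$ --- not the middle one. The subsequent ``massage'' you propose --- sliding the $X^*$-leg of $\alpha$ around its coupon, commuting pivotal maps past caps and cups via Lemmas \ref{lem:LeftRight}, \ref{lem:switch} and \ref{lem:pivo*} --- consists entirely of planar moves (all three lemmas are proved from zigzag relations and pivotality alone), and planar moves preserve the winding of the $X$-strand relative to the $Y$-strand: a zigzag only re-routes a leg around its own coupon, and a full coupon rotation shifts the winding of \emph{both} legs of that coupon by the same amount. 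Since the left and middle pictures differ by exactly one unit of the relative winding of $X$ around $Y$ (and your intermediate diagram differs from the middle one by two), no such massage exists; closing it would require invoking sphericality a second time, on the $X$-component, which is the very content of the equality being proved.

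The fix is to apply sphericality to the $X$-strand from the start, which is what the paper does: insert $a_X \circ a_X^{-1} = \id_X$ on the $X$-strand and stretch it with zigzags so that the diagram exhibits a component in $\hc(X,X)$; sphericality together with Lemma \ref{lem:trf*} then flips the closure of \emph{that} component, re-routing the $X$-strand around the outside; Lemma \ref{lem:switch}, applied several times, collects the resulting $a^{\pm 1}$ factors into the single $a^2_{X^*}$ of the middle picture. The third picture is then obtained by the same argument run on the $Y$-strand --- which is, in effect, the computation you actually set up, just aimed at the wrong target. Your surrounding observations (that sphericality is the only genuinely non-planar input, and that the closure drawn in the picture differs from the $\tr$ of Definition \ref{def:tr} by controlled insertions of $a^{\pm 1}$) are correct.
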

\begin{proof}
By zigzag relations and $a_{X} \circ a^{-1}_{X} = \id_{X}$:
$$
\raisebox{-.6cm}{\begin{tikzpicture}
\draw (-.3,0) --++ (0,-.4) node [left=-.1cm] {\tiny ${Y^*}$} arc (-180:0:1.3 and .75) node [right=-.1cm] {\tiny $Y$} --++ (0,.4);
\draw (.3,0) --++ (0,-.4) node [left=-.1cm] {\tiny ${X^*}$} arc (-180:0:.7 and .35) node [right=-.1cm] {\tiny $X$} --++ (0,.4);
\draw (0,0) node [draw,fill=white,minimum width=1cm] {\small $\alpha$};
\draw (2,0) node [draw,fill=white,minimum width=1cm] {\small $\beta$};
\end{tikzpicture}}
=
\raisebox{-.6cm}{\begin{tikzpicture}
\draw (-.3,0) --++ (0,-.4) node [left=-.1cm] {\tiny ${Y^*}$} arc (-180:0:1.3 and .75) node [right=-.1cm] {\tiny $Y$} --++ (0,.4);
\draw (.3,0) --++ (0,-.4) arc (-180:0:.25) --++ (0,.8) arc (180:0:1.4 and .5) --++ (0,-1.3) arc (0:-180:.25) --++ (0,.2) node [draw,fill=white] {\tiny $a_{X}$} --++ (0,.6) node [draw,fill=white] {\tiny $a^{-1}_{X}$} --++ (0,.4) arc (0:180:.95 and .25) --++ (0,-.7)  arc (-180:0:.25) --++ (0,.2);
\draw (0,0) node [draw,fill=white,minimum width=1cm] {\small $\alpha$};
\draw (2,0) node [draw,fill=white,minimum width=1cm] {\small $\beta$};
\end{tikzpicture}}.
$$
By sphericality and Lemma \ref{lem:trf*} applied to the component in $\hc(\textcolor{red}{X},\textcolor{red}{X})$:
$$
\raisebox{-.9cm}{\begin{tikzpicture}
\draw (-.3,0) --++ (0,-.4) node [left=-.1cm] {\tiny ${Y^*}$} arc (-180:0:1.3 and .75) node [right=-.1cm] {\tiny $Y$} --++ (0,.4);
\draw (.3,0) --++ (0,-.4) arc (-180:0:.25) --++ (0,.8) node [left=-.1cm] {\tiny \textcolor{red}{$X$}} arc (180:0:1.4 and .5) --++ (0,-1.6) arc (0:-180:.25) --++ (0,.2) node [draw,fill=white] {\tiny $a_{X}$} --++ (0,.4) node [left=-.1cm] {\tiny \textcolor{red}{$X$}} --++ (0,.5) node [draw,fill=white] {\tiny $a^{-1}_{X}$} --++ (0,.4) arc (0:180:.95 and .25) --++ (0,-.7)  arc (-180:0:.25) --++ (0,.2);
\draw (0,0) node [draw,fill=white,minimum width=1cm] {\small $\alpha$};
\draw (2,0) node [draw,fill=white,minimum width=1cm] {\small $\beta$};
\end{tikzpicture}}
 \ = \
\raisebox{-1cm}{\begin{tikzpicture}
\draw (-.3,0) --++ (0,-.7) node [left=-.1cm] {\tiny ${Y^*}$} arc (-180:0:1.35 and .75) --++ (0,.7);
\draw (.3,0) --++ (0,-.6)  --++ (0,-.4) arc (-180:0:.25) --++ (0,.4) node [draw,fill=white] {\tiny $a^{-1}_{X}$} --++ (0,.6) arc (0:180:.8 and .4) --++ (0,-1) arc (-180:0:1.85 and .8)  --++ (0,.2) node [draw,fill=white] {\tiny $a^{-1}_{X}$} --++ (0,.8) arc (0:180:.8 and .4)  --++ (0,-1) arc (-180:0:.25) --++ (0,.9);
\draw (0,0) node [draw,fill=white,minimum width=1cm] {\small $\alpha$};
\draw (2.1,-.1) node [draw,fill=white,minimum width=1cm] {\small $\beta$};
\end{tikzpicture}}.
$$
The first equality follows by applying Lemma \ref{lem:switch} (several times). We get the third picture with a similar argument.
\end{proof}
Recall that since we are using graphical calculus, the monoidal categories are implicitly assumed to be strict.
\begin{definition}[\cite{NgSch}, Definition 3.1] Let $\mC$ be a $\field$-linear pivotal monoidal category. Let $a$ be the pivotal structure. Let $X$ be an object in $\mC$. Let $E_X^{(n)}$ be the $\field$-linear map from $\Hom_{\mC}(\one,X^{\otimes n})$ to itself (with $n \ge 1$) defined by
$$
E_X^{(n)}(\alpha):=
\raisebox{-.6cm}{
\begin{tikzpicture}
\draw (-.35,0) --++ (0,-.4) node [left=-.07cm] {\tiny $X$} arc (0:-180:.25) --++ (0,.6) arc (180:0:1 and .4) --++ (0,-.3)  node [draw,fill=white] {\small $a^{-1}_X$} --++ (0,-.5) node [right] {\tiny $X$};
\draw (0,-.4) node {$\cdots$};
\draw (.35,0) --++ (0,-.4) node [right=-.1cm] {\tiny $X$};
\draw (0,0) node [draw,fill=white,minimum width=1cm] {$\alpha$};
\end{tikzpicture}}.
$$
The $n$-th \emph{Frobenius-Schur indicator} of $X$ is $\nu_n(X):=Tr(E_X^{(n)})$, where $Tr$ is the matrix trace.
\end{definition}

Note that if $\Hom_{\mC}(\one,X^{\otimes n})$ is one-dimensional then $E_X^{(n)}(\alpha) = \nu_n(X) \alpha$.

\begin{proposition}[\cite{NgSch}, Theorem 5.1] \label{prop:nupm}
Let $\mC$ be a $\field$-linear pivotal monoidal category. Let $X$ be an object in $\mC$ such that $\Hom_{\mC}(\one,X^{\otimes n})$ is one-dimensional. Then $\nu_n(X)^n = 1$.
\end{proposition}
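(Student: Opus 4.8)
The plan is to exhibit $E:=E_X^{(n)}$ as a $\field$-linear automorphism of $V:=\hom_{\mC}(\one,X^{\otimes n})$ satisfying $E^{n}=\id_V$, and then to specialize to the one-dimensional case.

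First I would check that $E$ is invertible. Reading the defining picture, $E$ is the composite of the map $\hom_{\mC}(\one,X\otimes X^{\otimes(n-1)})\to\hom_{\mC}(\one,X^{\otimes(n-1)}\otimes X^{**})$ that drags the leftmost leg of $\alpha$ down, around the left, and over the top to the right-hand end — an isomorphism by the zigzag relations, the inverse being the opposite drag — followed by $\id_{X^{\otimes(n-1)}}\otimes a_X^{-1}$, which is invertible because $a_X$ is. Hence $E\in\mathrm{GL}(V)$. When $\dim_\field V=1$, $E$ is therefore multiplication by some $\mu\in\field^{\times}$, and by the observation recorded just after the definition of $\nu_n$ this scalar is exactly $\nu_n(X)$.

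The heart of the matter is the identity $E^{n}=\id_V$, which expresses that the cyclic rotation of an $n$-valent morphism $\one\to X^{\otimes n}$ has order exactly $n$ — a general feature of pivotal categories, established in \cite{NgSch}. It can also be proved directly from the axioms developed above: one application of $E$ carries the first leg of $\alpha$ over the other $n-1$ legs into last position while correcting the resulting double dual by $a_X^{-1}$, so after $n$ applications every leg has returned to its original slot; pushing the accumulated $a_X^{\pm1}$ boxes through the intervening cups and caps by naturality of the pivotal structure (Lemma \ref{lem:switch}) and matching up the nested wrappings, the $n$ factors $a_X^{-1}$ cancel exactly the double duals produced along the way, leaving $\id_V$. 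I expect this bookkeeping — keeping track of the order in which the wrapped strands sit and of where each $a_X^{\pm1}$ box lands through $n$ iterations — to be the only genuine obstacle; everything else is formal. (In the extreme case $n=1$ one has $E_X^{(1)}=\id_V$ outright, recovering $\nu_1(X)=\dim_\field V$.)

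Granting $E^{n}=\id_V$, the proposition follows at once: picking $0\neq\alpha\in V$ we have $\mu^{n}\alpha=E^{n}(\alpha)=\alpha$, hence $\nu_n(X)^{n}=\mu^{n}=1$.
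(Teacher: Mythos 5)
Your proof is correct and follows essentially the same route as the paper: the paper likewise evaluates $(E_X^{(n)})^{\circ n}(\alpha)$ in two ways, observing that it equals $a_{X^{\otimes n}}^{-1}\circ\alpha^{**}=\alpha$ by pivotality (your identity $E^{n}=\id_V$) and also $\nu_n(X)^n\alpha$ on the one-dimensional hom-space. The rotation bookkeeping you flag as the only genuine obstacle is exactly the step the paper also disposes of in a single line via naturality and monoidality of the pivotal structure, so nothing essential is missing.
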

\begin{proof}
Let $a$ be the pivotal structure. The idea is to evaluate $A:=(E_X^{(n)})^{\circ n}(\alpha)$ in two different ways, with $\alpha$ a nonzero morphism in $\Hom_{\mC}(\one,X^{\otimes n})$. On one hand observe that $A= a^{-1}_{X^{\otimes n}}  \circ \alpha^{**} = \alpha$ by pivotal structure, whereas on the other hand $A = \nu_n(X)^n \alpha$. Then $\alpha = \nu_n(X)^n \alpha$, so $\nu_n(X)^n = 1$.
 \end{proof}

\begin{lemma} \label{lem:evnu}
Let $\mC$ be a $\field$-linear pivotal monoidal category. Let $a$ be the pivotal structure. Let $X$ be an object in $\mC$ such that $\Hom_{\mC}(X \otimes X,\one)$ is one-dimensional. Then for any morphism $\kappa$ in $\Hom_{\mC}(X,X^*)$, the following equality holds:
$$
\ev_{X^*} \circ (a_X \otimes \kappa) = \nu_2(X^*) \ev_X \circ (\kappa \otimes \id_X),
$$
which is depicted as
$$
\scalebox{1}{\raisebox{-1cm}{
\begin{tikzpicture}
\draw (0,0) node [left] {\tiny $X$} --++ (0,-.5)  node [draw,fill=white] {\small $a_X$} --++ (0,-.4)  node [left] {\tiny $X^{**}$} arc (-180:0:.6) node [right] {\tiny $X^{*}$} --++ (0,.4)  node [draw,fill=white] {\small $\kappa$} --++ (0,.5)  node [right] {\tiny $X$};
\end{tikzpicture}}}
= \nu_2(X^*)
\scalebox{1}{\raisebox{-1cm}{
\begin{tikzpicture}
\draw (0,0) node [left] {\tiny $X$} --++ (0,-.5)  node [draw,fill=white] {\small $\kappa$} --++ (0,-.4)  node [left] {\tiny $X^{*}$} arc (-180:0:.6) --++ (0,.9) node [right] {\tiny $X$};
\end{tikzpicture}}}.
$$
By zigzag relation, it is equivalent to $$\kappa^* \circ a_X = \nu_2(X^*) \kappa.$$
In particular,
\begin{itemize}
\item if $X=X^*$ and $\kappa = \id_X$ then $a_X = \nu_2(X)\id_X$,
\item if $X=X^{**}$ and $a_X = \pm \id_X$ then $\kappa^* = \pm \nu_2(X^*) \kappa$, with the same sign.
\end{itemize}
\end{lemma}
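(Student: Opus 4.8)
The plan is to transport the identity into the hom-space $\hom_{\mC}(\one, X^* \otimes X^*)$, where $\nu_2(X^*)$ is, by its very definition, the scalar by which the rotation operator $E_{X^*}^{(2)}$ acts once that space is one-dimensional; the point will be that applying $E_{X^*}^{(2)}$ to a well-chosen vector reproduces precisely the passage from the right-hand side of the asserted equality to its left-hand side.

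First I would fix the dictionary and do the dimension count. The map $\kappa \mapsto ev_X \circ (\kappa \otimes \id_X)$ is an isomorphism $\hom_{\mC}(X, X^*) \xrightarrow{\sim} \hom_{\mC}(X \otimes X, \one)$, with inverse $\phi \mapsto (\phi \otimes \id_{X^*}) \circ (\id_X \otimes coev_X)$ by the zigzag relations; likewise $\kappa \mapsto \beta_\kappa := (\kappa \otimes \id_{X^*}) \circ coev_X$ is an isomorphism $\hom_{\mC}(X, X^*) \xrightarrow{\sim} \hom_{\mC}(\one, X^* \otimes X^*)$. Hence the hypothesis that $\hom_{\mC}(X \otimes X, \one)$ is one-dimensional forces $\hom_{\mC}(\one, X^* \otimes X^*)$ to be one-dimensional as well, and by the observation recorded after the definition of $\nu_n$ we get $E_{X^*}^{(2)}(\beta) = \nu_2(X^*)\,\beta$ for every $\beta \in \hom_{\mC}(\one, X^* \otimes X^*)$; in particular $E_{X^*}^{(2)}(\beta_\kappa) = \nu_2(X^*)\,\beta_\kappa = \beta_{\nu_2(X^*)\kappa}$ for every $\kappa$.

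The core step is to compute $E_{X^*}^{(2)}(\beta_\kappa)$ directly. Unfolding the definition of $E^{(2)}$ for a twofold tensor power, this operator pairs the first leg of $\beta_\kappa$ against one leg of a freshly created $coev_{X^{**}}$ through $ev_{X^*}$, keeps the second leg of $\beta_\kappa$ as the first output, and routes the other leg of the new $coev_{X^{**}}$ through $a_{X^*}^{-1}$ into the second output. I would then simplify this diagram in three moves: (i) Lemma~\ref{lem:LeftRight} (second equation, applied to $f = \kappa$) turns $ev_{X^*} \circ (\id_{X^{**}} \otimes \kappa)$ into $ev_X \circ (\kappa^* \otimes \id_X)$, which transfers $\kappa$ off the $\beta_\kappa$-side as a $\kappa^*$ on the $coev_{X^{**}}$-side; (ii) a zigzag relation for the $coev_X$ internal to $\beta_\kappa$ then collapses; (iii) Lemma~\ref{lem:switch} rewrites $(\id_{X^{**}} \otimes a_{X^*}^{-1}) \circ coev_{X^{**}}$ as $(a_X \otimes \id_{X^*}) \circ coev_X$. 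The upshot is $E_{X^*}^{(2)}(\beta_\kappa) = \beta_{\kappa^* \circ a_X}$. Combining this with $E_{X^*}^{(2)}(\beta_\kappa) = \beta_{\nu_2(X^*)\kappa}$ and the injectivity of $\kappa \mapsto \beta_\kappa$, I conclude $\kappa^* \circ a_X = \nu_2(X^*)\,\kappa$ for all $\kappa \in \hom_{\mC}(X, X^*)$.

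It only remains to translate this back into evaluations. Applying Lemma~\ref{lem:LeftRight} (second equation, $f = \kappa$) once more gives $ev_{X^*} \circ (\id_{X^{**}} \otimes \kappa) = ev_X \circ (\kappa^* \otimes \id_X)$; precomposing with $a_X \otimes \id_X$ and substituting $\kappa^* \circ a_X = \nu_2(X^*)\kappa$ then yields
$$ev_{X^*} \circ (a_X \otimes \kappa) = ev_X \circ \big((\kappa^* \circ a_X) \otimes \id_X\big) = \nu_2(X^*)\, ev_X \circ (\kappa \otimes \id_X),$$
which is the claim (for $\kappa = 0$ both sides vanish, so nothing is lost by passing to generators of the one-dimensional hom-spaces). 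The single genuinely computational ingredient is the diagram chase giving $E_{X^*}^{(2)}(\beta_\kappa) = \beta_{\kappa^* \circ a_X}$; the main nuisance there is bookkeeping which iterated dual of $X$ labels each strand, but given Lemmas~\ref{lem:LeftRight} and \ref{lem:switch} and the zigzag relations the manipulation is routine.
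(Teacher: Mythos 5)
Your proof is correct and follows essentially the same route as the paper: both evaluate $E_{X^*}^{(2)}$ on the canonical generator of the one-dimensional space $\hom_{\mC}(\one, X^*\otimes X^*)$ in two ways (indeed your $\beta_\kappa=(\kappa\otimes\id_{X^*})\circ coev_X$ coincides with the paper's $(\id_{X^*}\otimes\kappa^*)\circ coev_{X^*}$ by the first equation of Lemma \ref{lem:LeftRight}), and both arrive at the same identity $(\kappa^*\otimes a_{X^*}^{-1})\circ coev_{X^{**}}=\nu_2(X^*)\,\beta_\kappa$. The only divergence is in the final translation: you extract $\kappa^*\circ a_X=\nu_2(X^*)\,\kappa$ from injectivity of $\kappa\mapsto\beta_\kappa$ and convert back to evaluations via Lemma \ref{lem:LeftRight}, whereas the paper applies the left-dual functor ${}^*(\_)$ to the same equality.
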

\begin{proof}
Consider $Y:= X^*$ and the morphism $\iota:= \kappa^*$ in $\hc(Y^*,Y)$. Let $\alpha := (\id_Y \otimes \iota) \circ  \coev_{Y}$ in $\Hom_{\mC}(\one, Y \otimes Y)$. Let us evaluate $E_Y^{(2)}(\alpha)$ by two ways. On one hand $E_Y^{(2)}(\alpha) = \nu_2(Y) \alpha$ because $\Hom_{\mC}(\one, Y \otimes Y)$ is one-dimensional. On the other hand, by zigzag relation
$$
E_Y^{(2)}(\alpha) =
\raisebox{-1cm}{
\begin{tikzpicture}
\draw (0,0) node [right=-.1cm] {\tiny $Y$} --++ (0,.5)  node [draw,fill=white] {\small $a^{-1}_Y$} --++ (0,.5)  node [right=-.1cm] {\tiny $Y^{**}$} arc (0:180:1);
\draw[color=red] (-2,1) node [left=-.1cm] {\tiny $Y^{*}$} --++ (0,-.5) arc (-180:0:.3) --++ (0,.5) arc (180:0:.3) node [right=-.1cm] {\tiny $Y^*$} --++ (0,-.5);
\draw (-.8,.5) node [draw,fill=white] {\small $\iota$} --++ (0,-.5) node  [right=-.1cm] {\tiny $Y$};
\end{tikzpicture}}
=
\raisebox{-.7cm}{
\begin{tikzpicture}
\draw (0,0) node [right=-.1cm] {\tiny $Y$} --++ (0,.5)  node [draw,fill=white] {\small $a^{-1}_Y$} --++ (0,.5)  node [right=-.1cm] {\tiny $Y^{**}$} arc (0:180:.6);
\draw[color=red] (-1.2,1) node [left=-.1cm] {\tiny $Y^{*}$} --++ (0,-.5);
\draw (-1.2,.5) node [draw,fill=white] {\small $\iota$} --++ (0,-.5)  node [left=-.1cm] {\tiny $Y$};
\end{tikzpicture}}
= (\iota \otimes a^{-1}_Y) \circ  \coev_{Y^*}.
$$
Then $(\kappa^* \otimes a^{-1}_{X^*}) \circ  \coev_{X^{**}} = \nu_2(X^*) (\id_{X^*} \otimes \kappa^*) \circ  \coev_{X^*},$ and by applying ${^*\hspace{-.07cm}(\_)}$ to this equality, together with Lemmas \ref{lem:ev*} and \ref{lem:pivo*}, the result follows.
\end{proof}

\begin{remark}
If $a_X = \id_X$, $X \simeq X^*$ and $\nu_2(X) = -1$ then $X^* \neq X$, otherwise take $\kappa = \id_X$ in Lemma \ref{lem:evnu}  to get a contradiction.
\end{remark}

\begin{proposition} \label{prop:pivoFS}
Let $\mC$ be a pivotal fusion category. Let $a$ be the pivotal structure. Let $X$ be a simple object in $\mC$ such that $X^* \simeq X$. Then $\nu_2(X)^2=1$. If moreover $X^* = X$ then $a_X = \nu_2(X)\id_X$; in particular, $a^2_X = \id_X$,
where $a^2: \id_{\mC} \to (\_)^{****}$ is the natural isomorphism, that equals square of the pivotal struture $a$.
\end{proposition}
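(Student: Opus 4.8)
The plan is to reduce everything to Schur's lemma and Lemma \ref{lem:evnu} applied to $\kappa = \id_X$. First I would record the consequences of the hypothesis: since the chosen dual of $X$ is $X$ itself, the chosen dual of the object $X^*$ is again $X$ with the same evaluation, so $X^{**} = X$, $ev_{X^*} = ev_X$ and $\nu_2(X^*) = \nu_2(X)$. In particular $a_X$ is an endomorphism of the simple object $X$, hence $a_X = \lambda\,\id_X$ for a scalar $\lambda \in \field$ by Schur's lemma, and it remains to identify $\lambda = \nu_2(X)$.

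To apply Lemma \ref{lem:evnu} I would first check its hypothesis: by the standard duality isomorphism $\hom_{\mC}(X \otimes X,\one) \cong \hom_{\mC}(X,X^*) = \hom_{\mC}(X,X)$, which is one-dimensional because $X$ is simple. Then Lemma \ref{lem:evnu} applied to $\kappa := \id_X \in \hom_{\mC}(X,X^*)$ gives
$$ ev_{X^*}\circ(a_X\otimes\id_X) = \nu_2(X^*)\,ev_X\circ(\id_X\otimes\id_X) = \nu_2(X^*)\,ev_X . $$
The left-hand side equals $\lambda\,ev_{X^*} = \lambda\,ev_X$, while the right-hand side is $\nu_2(X)\,ev_X$. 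Since $ev_X\neq 0$ (otherwise a zigzag relation would force $\id_X = 0$) and $\hom_{\mC}(X\otimes X,\one)$ is one-dimensional, comparing coefficients yields $\lambda = \nu_2(X)$, i.e. $a_X = \nu_2(X)\,\id_X$.

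For the ``in particular'' clause, note that $\hom_{\mC}(\one,X^{\otimes 2}) \cong \hom_{\mC}(X^*,X) = \hom_{\mC}(X,X)$ is one-dimensional, so Proposition \ref{prop:nupm} gives $\nu_2(X)^2 = 1$; as $X^{**} = X$ we get $a^2_X = a_X\circ a_X = \nu_2(X)^2\,\id_X = \id_X$.

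I expect the only subtle point to be the identifications $ev_{X^*} = ev_X$ and $\nu_2(X^*) = \nu_2(X)$, which are immediate once ``$X^* = X$'' is read as equality of objects together with their fixed duality maps (the standard convention for a self-dual simple in a fusion category); under the weaker hypothesis $X \simeq X^*$ one would instead conjugate $a_X$, the evaluations and $\nu_2$ by a fixed isomorphism $X \to X^*$ and keep track of the resulting scalars. Apart from that bookkeeping, the argument is a direct application of Lemma \ref{lem:evnu} and Proposition \ref{prop:nupm}.
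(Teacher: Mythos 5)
Your proof is correct and follows essentially the same route as the paper: Schur's lemma to write $a_X = \lambda\,\id_X$, Lemma \ref{lem:evnu} with $\kappa=\id_X$ to identify $\lambda=\nu_2(X)$ via the nonvanishing of $ev_X$, and Proposition \ref{prop:nupm} for $\nu_2(X)^2=1$. The extra care you take with the identifications $ev_{X^*}=ev_X$ and $\nu_2(X^*)=\nu_2(X)$ under the equality $X^*=X$ is a point the paper passes over silently, but it does not change the argument.
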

\begin{proof}
Clearly, $\Hom_{\mC}(\one,X^{\otimes 2})$ is one-dimensional, so $\nu_2(X)^2 = 1$ by Proposition \ref{prop:nupm}.  If $X^* = X$, we can apply Lemma \ref{lem:evnu} with $\kappa = \id_X$, to get that $a_X = \nu_2(X) \id_X$.
Now, $a^2_X = a_{X^{**}} \circ a_X$, but $X=X^*=X^{**}$, so $a^2_X = \nu_2(X)^2 \id_X = \id_X$.
\end{proof}

\subsection{Oriented graphical calculus} \label{sec:bio}
This subsection was inspired by \cite{Bart}. It will not be used before \S \ref{bitetra}. Let $\mC$ be a pivotal fusion category. Let $a$ be the pivotal structure. Let $X$ be an object in $\mC$. We will define oriented (co)evalulation maps $(co)\ev_{X,\pm}$. First, $$\ev_{X,+}:=\ev_X \in \hc(X^* \otimes X, \one) \ \text{ and } \ \coev_{X,+}:=\coev_X \in \hc(\one, X \otimes X^*)$$ as defined in \S \ref{sec:pre}. Next,
\vspace*{-.5cm}
$$
\ev_{X,-}:=\ev_{X^*} \circ (a_X \otimes \id_{X^*}) =
\raisebox{-.5cm}{
\begin{tikzpicture}
\draw (0,0) node [left] {\tiny $X$} --++ (0,-.4) node [draw,fill=white] {\tiny $a_{X}$}  --++ (0,-.3) arc (-180:0:.5) --++ (0,.7) node [right] {\tiny $X^{*}$};
\end{tikzpicture}}
\in \hc(X \otimes X^*, \one),
$$
$$
\coev_{X,-}:= (\id_{X^*} \otimes a_X^{-1}) \circ \coev_{X^*} =
\raisebox{-.7cm}{
\begin{tikzpicture}
\draw (0,0) node [left] {\tiny $X^{*}$} --++ (0,.8) arc (180:0:.5) --++ (0,-.3)  node [draw,fill=white] {\tiny $a^{-1}_{X}$} --++ (0,-.5) node [right] {\tiny $X$};
\end{tikzpicture}}
\in \hc(\one, X^* \otimes X).
$$
\noindent Let us depict these morphisms as follows
$$
\ev_{X,+}=
\raisebox{-.4cm}{
\begin{tikzpicture}
\draw (0,0)  arc (-180:-90:.5);
\draw [<-] (.5,-.5)  node [right = .25cm] {\tiny $X$} arc (-90:0:.5);
\end{tikzpicture}}
, \ \
\ev_{X,-}=
\raisebox{-.4cm}{
\begin{tikzpicture}
\draw[->] (0,0)  arc (-180:-90:.5);
\draw (.5,-.5)  node [left = .25cm] {\tiny $X$} arc (-90:0:.5);
\end{tikzpicture}}
\ , \ \
\coev_{X,+}= \hspace{-.2cm}
\raisebox{-.4cm}{
\begin{tikzpicture}
\draw[<-] (0,0) arc (90:0:.5);
\draw (-.5,-.5) node [left = -.05cm] {\tiny $X$} arc (180:90:.5);
\end{tikzpicture}}
\ , \ \
\coev_{X,-}=
\raisebox{-.4cm}{
\begin{tikzpicture}
\draw (0,0) arc (90:0:.5) node [right = -.05cm] {\tiny $X$};
\draw[->] (-.5,-.5)  arc (180:90:.5);
\end{tikzpicture}}.
$$
Let us also use the following notations:
$$
\raisebox{-.5cm}{
\begin{tikzpicture}
\draw (0,-.5)--(0,0) node [right] {\tiny $X$};
\draw[<-] (0,0)--(0,.5);
\end{tikzpicture}}:=
\raisebox{-.5cm}{
\begin{tikzpicture}
\draw (0,-.5)--(0,0) node [right] {\tiny $X$};
\draw  (0,0)--(0,.5);
\end{tikzpicture}}
=\id_{X}
\ \ \text{ and } \ \
\raisebox{-.5cm}{
\begin{tikzpicture}
\draw[->] (0,-.5)--(0,0) node [right] {\tiny $X$};
\draw (0,0)--(0,.5);
\end{tikzpicture}}:=
\raisebox{-.5cm}{
\begin{tikzpicture}
\draw (0,-.5)--(0,0) node [right] {\tiny $X^*$};
\draw (0,0)--(0,.5);
\end{tikzpicture}} = \id_{X^*}.
$$
\begin{lemma}[Oriented zigzag relations] \label{lem:bizig} Following the conditions and notations in \S \ref{sec:bio}:
$$
\raisebox{-.5cm}{
\begin{tikzpicture}
\draw (0,0)--++(0,-.5) node [right] {\tiny $X$};
\draw [->] (0,-.5) arc (0:-90:.5);
\draw (-.5,-1) arc (-90:-180:.5);
\draw[->] (-1,-.5) arc (0:90:.5);
\draw (-1.5,0) arc (90:180:.5) node [left] {\tiny $X$} --++(0,-.5);
\end{tikzpicture}}
=
\raisebox{-.5cm}{
\begin{tikzpicture}
\draw (0,-.5)--++(0,.5);
\draw[<-] (0,0)--++(0,.5);
\node at (0,0) [right] {\tiny $X$};
\end{tikzpicture}}
=
\raisebox{-.5cm}{
\begin{tikzpicture}
\draw (0,0)--++(0,-.5) node [left] {\tiny $X$};
\draw [->] (0,-.5) arc (-180:-90:.5);
\draw (.5,-1) arc (-90:0:.5);
\draw[->] (1,-.5) arc (180:90:.5);
\draw (1.5,0) arc (90:0:.5) node [right] {\tiny $X$} --++(0,-.5);
\end{tikzpicture}}
\ \ \text{ and } \ \
\raisebox{-.5cm}{
\begin{tikzpicture}
\draw (0,0)--++(0,-.5);
\draw  (0,-.5) arc (0:-90:.5);
\draw [<-] (-.5,-1) arc (-90:-180:.5);
\draw (-1,-.5) node [left] {\tiny $X$} arc (0:90:.5);
\draw [<-] (-1.5,0) arc (90:180:.5) --++(0,-.5);
\end{tikzpicture}}
=
\raisebox{-.5cm}{
\begin{tikzpicture}
\draw[->] (0,-.5)--++(0,.5);
\draw (0,0)--++(0,.5);
\node at (0,0) [right] {\tiny $X$};
\end{tikzpicture}}
=
\raisebox{-.5cm}{
\begin{tikzpicture}
\draw (0,0)--++(0,-.5);
\draw  (0,-.5) arc (-180:-90:.5);
\draw [<-] (.5,-1) arc (-90:0:.5);
\draw (1,-.5) node [right] {\tiny $X$} arc (180:90:.5);
\draw [<-] (1.5,0) arc (90:0:.5) --++(0,-.5);
\end{tikzpicture}}.
$$
\end{lemma}
\begin{proof}
The first and last equalities are the former zigzag relations (mentioned in \S \ref{sec:pre}). About the two other ones: $$
\raisebox{-.4cm}{
\begin{tikzpicture}
\draw (0,0)--++(0,-.5) node [left] {\tiny $X$};
\draw [->] (0,-.5) arc (-180:-90:.5);
\draw (.5,-1) arc (-90:0:.5);
\draw[->] (1,-.5) arc (180:90:.5);
\draw (1.5,0) arc (90:0:.5) node [right] {\tiny $X$} --++(0,-.5);
\end{tikzpicture}}
=
\raisebox{-.9cm}{
\begin{tikzpicture}
\draw (0,0) node [left] {\tiny $X$} --++ (0,-.4) node [draw,fill=white] {\tiny $a_{X}$}  --++ (0,-.3) arc (-180:0:.5)  arc (180:0:.5) --++ (0,-.3)  node [draw,fill=white] {\tiny $a^{-1}_{X}$} --++ (0,-.5) node [right] {\tiny $X$};
\end{tikzpicture}}
=
\raisebox{-.9cm}{
\begin{tikzpicture}
\draw (0,0) node [left] {\tiny $X$} --++ (0,-.4) node [draw,fill=white] {\tiny $a_{X}$}  --++ (0,-.6) node [draw,fill=white] {\tiny $a^{-1}_{X}$} --++ (0,-.5) node [right] {\tiny $X$};
\end{tikzpicture}}
=
\raisebox{-.5cm}{
\begin{tikzpicture}
\draw (0,-.5)--(0,0) node [right] {\tiny $X$};
\draw  (0,0)--(0,.5);
\end{tikzpicture}},
$$
$$
\raisebox{-.5cm}{
\begin{tikzpicture}
\draw (0,0)--++(0,-.5);
\draw  (0,-.5) arc (0:-90:.5);
\draw [<-] (-.5,-1) arc (-90:-180:.5);
\draw (-1,-.5) node [left] {\tiny $X$} arc (0:90:.5);
\draw [<-] (-1.5,0) arc (90:180:.5) --++(0,-.5);
\end{tikzpicture}}
=
\raisebox{-.9cm}{
\begin{tikzpicture}
\draw (0,0) node [left] {\tiny $X^*$} --++ (0,1) arc (180:0:.5) --++ (0,-.3)  node [draw,fill=white] {\tiny $a^{-1}_{X}$} --++ (0,-.6) node [draw,fill=white] {\tiny $a_{X}$}  --++ (0,-.2) arc (-180:0:.5) --++ (0,1) node [right] {\tiny $X^*$};
\end{tikzpicture}}
=
\raisebox{-.7cm}{
\begin{tikzpicture}
\draw (0,0) node [left] {\tiny $X^*$} --++ (0,.5) arc (180:0:.5) arc (-180:0:.5) --++ (0,.5) node [right] {\tiny $X^*$};
\end{tikzpicture}}
=
\raisebox{-.5cm}{
\begin{tikzpicture}
\draw (0,-.5)--++(0,.5);
\draw (0,0)--++(0,.5);
\node at (0,0) [right] {\tiny $X^*$};
\end{tikzpicture}}. \qedhere
$$
\end{proof}

\begin{lemma}[Pictorial dimension] \label{lem:bidim}
Following the conditions and notations in \S \ref{sec:bio}:

$$
\raisebox{-.4cm}{
\begin{tikzpicture}
\draw[->] (0,0)  arc (-180:-90:.5);
\draw (.5,-.5) arc (-90:0:.5);
\draw[<-] (.5,.5) arc (90:0:.5);
\draw (0,0) node [left = -.05cm] {\tiny $X$} arc (180:90:.5);
\end{tikzpicture}}
= \dim(X)
\ \ \text{ and } \ \
\raisebox{-.4cm}{
\begin{tikzpicture}
\draw (0,0)  arc (-180:-90:.5);
\draw [<-] (.5,-.5)  arc (-90:0:.5);
\draw (.5,.5) arc (90:0:.5) node [right = -.05cm] {\tiny $X$};
\draw[->] (0,0)  arc (180:90:.5);
\end{tikzpicture}}
= \dim(X^*).
$$
\end{lemma}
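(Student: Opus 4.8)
The plan is to unfold each of the two oriented circles into the unoriented graphical calculus of \S\ref{sec:pre} and then to recognise the resulting closed diagram as the one defining $\dim$ in Definition~\ref{def:dim}. Each of the two circles is, by construction, an oriented coevaluation followed by an oriented evaluation closing a single loop labelled $X$; reading off the arrowheads, one of them is $ev_{X,-}\circ coev_{X,+}$ and the other is $ev_{X,+}\circ coev_{X,-}$. Substituting the definitions of the oriented maps from \S\ref{sec:bio},
$$ev_{X,-}\circ coev_{X,+}=ev_{X^*}\circ(a_X\otimes\id_{X^*})\circ coev_X,\qquad ev_{X,+}\circ coev_{X,-}=ev_X\circ(\id_{X^*}\otimes a_X^{-1})\circ coev_{X^*}.$$

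For the first expression, I would note that, after a zigzag straightening (Lemma~\ref{lem:bizig}), it is exactly the diagram of Definition~\ref{def:tr} evaluated at $\alpha=\id_X$: the bottom cup is $coev_X$, its left leg passes through the box $a_X$, and the top cap is $ev_{X^*}$. Hence it equals $\tr_a(\id_X)=\dim(X)$ by Definitions~\ref{def:tr} and~\ref{def:dim}, which gives the first claimed equality. For the second expression, the slickest route is to use $\id_{X^*}=(\id_X)^*$ together with Lemma~\ref{lem:trf*}, which writes $\tr_a(\alpha^*)$ precisely as the loop carrying $a_X^{-1}$ at the top and $\alpha$ at the bottom; specialising $\alpha=\id_X$ identifies that loop (again up to a zigzag) with $ev_X\circ(\id_{X^*}\otimes a_X^{-1})\circ coev_{X^*}$, and $\tr_a(\id_X^*)=\tr_a(\id_{X^*})=\dim(X^*)$. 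Alternatively, one slides the $a_X^{-1}$ across the cap $coev_{X^*}$ by Lemma~\ref{lem:switch} (using also Lemma~\ref{lem:pivo*}), turning it into an $a_{X^*}$ on the $X^*$-strand and producing the trace diagram for $\id_{X^*}$ directly.

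The only point needing care is the first paragraph: correctly matching the arrowheads of each circle to the four oriented (co)evaluation maps, and then the single use of Lemma~\ref{lem:switch} (or Lemma~\ref{lem:trf*}) that reconciles the $a_X^{-1}$ forced by the second circle with the $a_X$ built into the definition of $\dim$. Everything beyond that is a routine application of the oriented and ordinary zigzag relations.
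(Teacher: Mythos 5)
Your proof is correct and follows essentially the same route as the paper: unfold each oriented circle as $ev_{X,-}\circ coev_{X,+}$ and $ev_{X,+}\circ coev_{X,-}$ respectively, recognise the first as the trace diagram of $\id_X$ from Definition \ref{def:tr}, and identify the second via Lemma \ref{lem:trf*} as $\tr(\id_X^*)=\tr(\id_{X^*})=\dim(X^*)$. The only cosmetic difference is that no zigzag straightening is actually needed for the first circle, since $ev_{X,-}\circ coev_{X,+}$ is already literally the diagram defining $\tr_a(\id_X)$.
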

\begin{proof}
By Definitions \ref{def:tr}, \ref{def:dim} and Lemma \ref{lem:trf*}
$$
\raisebox{-.4cm}{
\begin{tikzpicture}
\draw[->] (0,0)  arc (-180:-90:.5);
\draw (.5,-.5) arc (-90:0:.5);
\draw[<-] (.5,.5) arc (90:0:.5);
\draw (0,0) node [left = -.05cm] {\tiny $X$} arc (180:90:.5);
\end{tikzpicture}}
=
\raisebox{-.7cm}{
\begin{tikzpicture}
\draw (0,0) node [left] {\tiny $X$} --++ (0,-.4) node [draw,fill=white] {\tiny $a_{X}$}  --++ (0,-.3) arc (-180:0:.5) --++ (0,.7) node [right] {\tiny $X^{*}$} arc (0:180:.5);
\end{tikzpicture}}
=\tr(\id_X) = \dim(X),
$$
$$
\raisebox{-.4cm}{
\begin{tikzpicture}
\draw (0,0)  arc (-180:-90:.5);
\draw [<-] (.5,-.5)  arc (-90:0:.5);
\draw (.5,.5) arc (90:0:.5) node [right = -.05cm] {\tiny $X$};
\draw[->] (0,0)  arc (180:90:.5);
\end{tikzpicture}}
=
\raisebox{-.8cm}{
\begin{tikzpicture}
\draw (0,0) node [left] {\tiny $X^{*}$} --++ (0,.8) arc (180:0:.5) --++ (0,-.3)  node [draw,fill=white] {\tiny $a^{-1}_{X}$} --++ (0,-.5) node [right] {\tiny $X$} arc (0:-180:.5);
\end{tikzpicture}}
= \tr(\id_{X}^*) =  \tr(\id_{X^*}) = \dim(X^*). \qedhere
$$
\end{proof}

\begin{lemma} \label{lem:biev*}
Following the conditions in \S \ref{sec:bio}, $(\ev_{X,\pm})^* = \coev_{X^*,\pm}$ and $(\coev_{X,\pm})^* = \ev_{X^*,\pm}$.
\end{lemma}
\begin{proof}
The statement with $+$ everywhere is exactly Lemma \ref{lem:ev*}. Next, by Lemma \ref{lem:pivo*} $$
(\ev_{X,-})^* = (\ev_{X^*} \circ (a_X \otimes \id_{X^*}))^* = (a_X \otimes \id_{X^*})^* \circ (\ev_{X^*})^* = (\id_{X^{**}} \otimes a^{-1}_{X^*}) \circ \coev_{X^{**}} = \coev_{X^*,-},
$$
$$
(\coev_{X,-})^* = ((\id_{X^*} \otimes a_X^{-1}) \circ \coev_{X^*})^* = (\coev_{X^*})^* \circ ((\id_{X^*} \otimes a_X^{-1}))^* = \ev_{X^{**}} \circ (a_{X^*} \otimes \id_{X^{**}}) = \ev_{X^*,-}. \qedhere
$$
\end{proof}

\begin{remark}
Let $X$ be a simple object in a pivotal fusion category. By \cite[Proposition 4.8.4]{EGNO15}, $\dim(X)$ is nonzero. Then, by Lemmas \ref{lem:bizig} and \ref{lem:bidim}, the choice of the four above morphisms is unique up to a scalar $k \in \field^*$, i.e.  $$k \cdot \ev_{X,+}, \ k \cdot \ev_{X,-}, \ k^{-1} \cdot \coev_{X,+}, \ k^{-1} \cdot \coev_{X,-}.$$
\end{remark}

\begin{lemma} \label{lem:biFS} Following the conditions in \S \ref{sec:bio}, let $X$ be a simple object.
\begin{itemize}
\item if $X^* = X$ then $\ev_{X,-} = \nu_2(X) \ev_{X,+}$ and $\coev_{X,-} = \nu_2(X) \coev_{X,+}$,
\item if $X^{**} = X$ and $a_X= \pm \id_X$ then $\ev_{X,-} = \pm \ev_{X^*,+}$ and $\coev_{X,-} = \pm \coev_{X^*,+}$
\end{itemize}
\end{lemma}
\begin{proof}
The first equality of each case follows by Lemma \ref{lem:evnu}. Then, apply Lemma \ref{lem:biev*} to get the second equality.
\end{proof}

\section{Preliminaries on fusion categories}  \label{sec:pre2}

This section establishes several preliminary results on fusion categories related to bilinear forms, the resolution of identity, and Fuchs-Runkel-Schweigert theorem.

\begin{remark}[Conventions about the unit] \label{rk:unit}
Let $\mC$ be a fusion category. Recall that $\End_{\mC}(\one) =  \field \id_{\one}$. The morphism $k \id_{\one}$ will sometimes be identified with the scalar $k \in \field$. Moreover, any object isomorphic to $\one$ will be identified with $\one$ (i.e. $X \simeq \one$ implies $X = \one$, in this paper), in particular $\one^* = \one$.
\end{remark}

\subsection{Bilinear forms}

\begin{lemma} \label{lem:bili}
Let $\mC$ be a fusion category. Let $Z$ be an object in $\mC$. Consider the bilinear form
$$
b_Z(\alpha, \beta):=\ev_Z \circ (\alpha \otimes \beta)=
\raisebox{-.5cm}{
\begin{tikzpicture}
\draw (0,0) --++ (0,-.4)  node [left=-.1cm] {\tiny $Z^*$} arc (-180:0:.5) node [right=-.1cm] {\tiny $Z$} --++ (0,.4) node [draw,fill=white] {$\beta$};
\draw (0,0) node [draw,fill=white] {$\alpha$};
\end{tikzpicture}},
$$
where $(\alpha,\beta) \in \hc(\one,Z^*) \times \hc(\one,Z)$. Then there are bases $(e'_i)_{i \in I}$ of $\hc(\one,Z^*)$ and $(e_j)_{j \in I}$ of $\hc(\one,Z)$ such that $b_Z(e'_i,e_j) = \delta_{i,j}$, and for all $(\alpha,\beta) \in \hc(\one,Z^*) \times \hc(\one,Z)$, $\alpha = \sum_{i \in I} b_Z(\alpha,e_i) e'_i$, $\beta = \sum_{i \in I} b_Z(e'_i,\beta) e_i$, and so
$$
  b_Z(\alpha, \beta) = \sum_{i \in I} b_Z(\alpha, e_i) b_Z(e'_i,\beta).
$$
In particular, the bilinear form $b_Z$ is non-degenerate.
\end{lemma}
\begin{proof}
First $\mC$ is semisimple, so $Z = \oplus_{j \in J} Z_j$ with $Z_j$ simple objects, but $\mC$ is additive thus $\alpha = \oplus_{j \in J} \alpha_j$, $\beta = \oplus_{j \in J} \beta_j$ and $\ev_Z = \oplus_{j \in J} \ev_{Z_j}$, where $(\alpha_j,\beta_j) \in \hc(\one,Z_j^*) \times \hc(\one,Z_j)$. Then
$$
  b_Z(\alpha,\beta) = \sum_{j \in J} \ev_{Z_j} \circ (\alpha_j \otimes \beta_j), \text{ and so } b_Z(\tilde{\alpha}_i , \tilde{\beta}_j) = \delta_{i,j} \ev_{Z_j} \circ (\alpha_j \otimes \beta_j),
$$
with $(\tilde{\alpha}_i , \tilde{\beta}_j) \in \hc(\one,Z^*) \times \hc(\one,Z)$ such that  $(\tilde{\alpha}_i)_j = \delta_{i,j}\alpha_i$ and $(\tilde{\beta}_i)_j = \delta_{i,j}\beta_i$.
But if $Z_j$ is not equal to the unit object, then $\alpha_j = \beta_j = 0$ (by Schur's lemma). Let $I$ be the set $\{j \in J \ | \ Z_j = \one \}$. Let $i \in I$ and let $(e'_i,e_i) \in \hc(\one,Z^*) \times \hc(\one,Z)$ such that $(e'_i)_j = (e_i)_j = \delta_{i,j}\id_{\one}$. Now, $\ev_{\one} \circ (\id_{\one} \otimes \id_{\one}) = 1$, so for all $i,j \in I$, $b_Z(e'_i,e_j) = \delta_{i,j}$.
Note that $|I| = \dim_{\field}(\hc(\one,Z^*))=  \dim_{\field}(\hc(\one,Z))$, so by bilinearity, $(e'_i)_{i \in I}$ is a basis of $\hc(\one,Z^*)$ and $(e_j)_{j \in I}$ is a basis of $\hc(\one,Z)$. Thus for all $(\alpha,\beta) \in \hc(\one,Z^*) \times \hc(\one,Z)$, $\alpha = \sum_{i \in I} b_Z(\alpha,e_i) e'_i$, $\beta = \sum_{i \in I} b_Z(e'_i,\beta) e_i$, and so $ b_Z(\alpha, \beta) = \sum_{i \in I} b_Z(\alpha, e_i) b_Z(e'_i,\beta)$.
\end{proof}

\begin{remark} \label{rk:vecdual}
In Lemma \ref{lem:bili}, the sequence $(e'_i)_{i \in I}$ serves as the dual basis to $(e_i)_{i \in I}$ with respect to the bilinear form $b$. Here, \emph{dual} is meant in the context of vector spaces, meaning that $e'_i$ is not the same as $e^*_i$, which refers to the \emph{categorical} dual of the morphism $e_i$.
\end{remark}

\begin{lemma} \label{lem:bilili}
Let $\mC$ be a fusion category. For any object $Z$ in $\mC$, let $b_Z$ be the bilinear form and let $(e_{i,Z})_{i \in I_Z}$ be the basis of $\hc(\one,Z)$, mentioned in Lemma \ref{lem:bili}. For all objects $X$ in $\mC$ and for all $i,j$, we have
$$b_{X}(e_{i,X^*},e_{j,{X}}) = b_{X^*}(e_{i,X^{**}},e_{j,{X^*}}) = \delta_{i,j},$$
where $I_X$, $I_{X^*}$ and $I_{X^{**}}$ are identified.
\end{lemma}
\begin{proof}
In the proof of Lemma \ref{lem:bili}, the chosen basis $(e_{i,{X^*}})_{i \in I_{X^*}}$ does not depend on whether $Z=X$ or $X^*$.
\end{proof}

\begin{lemma} \label{lem:mouv}
Let $\mC$ be a pivotal fusion category, with pivotal structure $a$. Let $X, Y$ be objects in $\mC$. Consider the $\field$-linear maps $f: \hc(\one,Y^*\otimes X^*) \to \hc(X,Y^*)$ and $g:\hc(\one,X\otimes Y) \to  \hc(Y^*,X)$ defined by:
$$
f(\alpha)=
\raisebox{-.6cm}{
\begin{tikzpicture}
\draw (-.15,0) --++ (0,-.6) node [left=-.1cm] {\tiny $Y^*$};;
\draw (.15,0) --++ (0,-.3) arc (-180:0:.2) --++ (0,.8) node [right=-.1cm] {\tiny $X$};

\draw (0,0) node [draw,fill=white,minimum width=.8cm] {$\alpha$};
\end{tikzpicture}}
\ \text{ and } \
g(\beta)=
\raisebox{-.9cm}{
\begin{tikzpicture}
\draw (-.15,0) --++ (0,-.3) arc (0:-180:.225) --++ (0,.3) arc (180:0:.6 and .5) --++ (0,-.7) node [draw,fill=white] {\tiny $a_X^{-1}$} --++ (0,-.5) node [right=-.1cm] {\tiny ${X}$};
\draw (.15,0) --++ (0,-.3) arc (0:-180:.525) --++ (0,.7) node [left=-.1cm] {\tiny $Y^*$};

\draw (0,0) node [draw,fill=white,minimum width=.8cm] {$\beta$};
\end{tikzpicture}}.
$$
Then  $\tr(f(\alpha) \circ g(\beta)) = \tr(g(\beta) \circ f(\alpha)) = b_{X\otimes Y}(\alpha, \beta)$, with $b_{X\otimes Y}$ from Lemma \ref{lem:bili}.
\end{lemma}
\begin{proof}
By Definition \ref{def:tr}, Lemma \ref{lem:switch} and then pivotality ($a_{(X \otimes Y)}^{-1} \circ \beta^{**} = \beta$):
$$
\tr(f(\alpha) \circ g(\beta))=
\raisebox{-1.5cm}{
\begin{tikzpicture}
\draw (-.15,0) --++ (0,-.3) arc (0:-180:.225) --++ (0,.3) arc (180:0:.6 and .5) --++ (0,-.7) node [draw,fill=white] {\tiny $a_X^{-1}$} --++ (0,-.4);
\draw (.15,0) --++ (0,-.3) arc (0:-180:.525) --++ (0,.5) arc (180:0:1 and .5);

\draw (0,0) node [draw,fill=white,minimum width=.8cm] {$\beta$};

\draw (-.15+.05,-1.3) --++ (0,-.8) node [draw,fill=white] {\tiny $a_{Y^*}$} --++ (0,-.3) arc (-180:0:.6 and .25) --++ (0,2.6);
\draw (.15+.05,-1.3) --++ (0,-.4) arc (-180:0:.2 and .15) --++ (0,.6);

\draw (0+.05,-1.3) node [draw,fill=white,minimum width=.8cm] {$\alpha$};
\end{tikzpicture}}
=
\raisebox{-1.3cm}{
\begin{tikzpicture}
\draw (-.15,0) --++ (0,-.3) arc (0:-180:.225) --++ (0,.3) arc (180:0:.6 and .5) --++ (0,-.6) --++ (0,-.5);
\draw (.15,0) --++ (0,-.3) arc (0:-180:.525) --++ (0,.5) arc (180:0:1.15 and .6);

\draw (0,0) node [draw,fill=white,minimum width=.8cm] {$\beta$};

\draw (-.15+.05,-1.3) --++ (0,-.4) arc (-180:0:.75 and .4) --++ (0,1.9);
\draw (.15+.05,-1.3) --++ (0,-.4) arc (-180:0:.2 and .15) --++ (0,.6);

\draw (0+.05,-1.3) node [draw,fill=white,minimum width=.8cm] {$\alpha$};
\draw (.6,-.7) node [draw,fill=white] {\tiny $a_X^{-1}$};
\draw (1.4,-.7) node [draw,fill=white] {\tiny $a_{Y}^{-1}$};
\end{tikzpicture}}
=
\raisebox{-.5cm}{
\begin{tikzpicture}
\draw (0,0) --++ (0,-.4)  node [left=-.1cm] {\tiny $(X\otimes Y)^*$} arc (-180:0:.5) node [right=-.1cm] {\tiny $X\otimes Y$} --++ (0,.4) node [draw,fill=white] {$\beta$};
\draw (0,0) node [draw,fill=white] {$\alpha$};
\end{tikzpicture}}
= b_{X \otimes Y}(\alpha,\beta),
$$
and by zigzag relations
$$
\tr(g(\beta) \circ f(\alpha))=
\raisebox{-1.5cm}{
\begin{tikzpicture}
\draw (-.15,0) --++ (0,-.6);
\draw (.15,0) --++ (0,-.3) arc (-180:0:.2) --++ (0,.8) arc (180:0:.65 and .5);

\draw (0,0) node [draw,fill=white,minimum width=.8cm] {$\alpha$};

\draw (-.15+.75,-1) --++ (0,-.3) arc (0:-180:.225) --++ (0,.3) arc (180:0:.6 and .5) --++ (0,-.65) node [draw,fill=white] {\tiny $a_X^{-1}$} --++ (0,-.55) node [draw,fill=white] {\tiny $a_X$} --++ (0,-.2) arc (-180:0:.25) --++ (0,2.9);
\draw (.15+.75,-1) --++ (0,-.3) arc (0:-180:.525) --++ (0,.7);

\draw (0+.75,-1) node [draw,fill=white,minimum width=.8cm] {$\beta$};
\end{tikzpicture}}
=
\raisebox{-1.5cm}{
\begin{tikzpicture}
\draw (-.15,0) --++ (0,-.6);
\draw (.15,0) --++ (0,-.3) arc (-180:0:.2) --++ (0,.8) arc (180:0:.65 and .5);

\draw (0,0) node [draw,fill=white,minimum width=.8cm] {$\alpha$};

\draw (-.15+.75,-1) --++ (0,-.3) arc (0:-180:.225) --++ (0,.3) arc (180:0:.6 and .5) --++ (0,-.65) --++ (0,-.55) --++ (0,-.2) arc (-180:0:.25) --++ (0,2.9);
\draw (.15+.75,-1) --++ (0,-.3) arc (0:-180:.525) --++ (0,.7);

\draw (0+.75,-1) node [draw,fill=white,minimum width=.8cm] {$\beta$};
\end{tikzpicture}}
=
\raisebox{-1cm}{
\begin{tikzpicture}
\draw (-.15,0) --++ (0,-.6);
\draw (.15,0) --++ (0,-1);

\draw (0,0) node [draw,fill=white,minimum width=.8cm] {$\alpha$};

\draw (-.15+.75,-1) --++ (0,-.3) arc (0:-180:.225) --++ (0,.5);
\draw (.15+.75,-1) --++ (0,-.3) arc (0:-180:.525) --++ (0,.7);

\draw (0+.75,-1) node [draw,fill=white,minimum width=.8cm] {$\beta$};
\end{tikzpicture}}
= b_{X \otimes Y}(\alpha,\beta). \qedhere
$$
\end{proof}
\begin{lemma} \label{lem:basis}
Following Lemma \ref{lem:mouv}, consider the bases $(e'_i)_{i \in I}$ of $\hc(\one,Z^*)$ and $(e_i)_{i \in I}$ of $\hc(\one,Z)$, from Lemma \ref{lem:bili}. Let $\epsilon'_i:=f(e'_i)$ and $\epsilon_i:=g(e_i)$. Then  $\tr(\epsilon'_i \circ \epsilon_j) = \tr(\epsilon_j \circ \epsilon'_i) = \delta_{i,j}$, $(\epsilon'_i)_{i \in I}$ is a basis of $\hc(X,Y^*)$ and $(\epsilon_i)_{i \in I}$ is a basis of $\hc(Y^*,X)$.
\end{lemma}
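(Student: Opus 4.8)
The plan is to derive everything directly from Lemma~\ref{lem:mouv} and Lemma~\ref{lem:bili}, without any further diagrammatic computation. First I would establish the trace identities. By definition $\epsilon'_i = f(e'_i)$ and $\epsilon_j = g(e_j)$, so Lemma~\ref{lem:mouv} applied to $\alpha = e'_i$ and $\beta = e_j$ gives
$$\tr(\epsilon'_i \circ \epsilon_j) = \tr(f(e'_i)\circ g(e_j)) = b(e'_i,e_j) \qquad \text{and} \qquad \tr(\epsilon_j \circ \epsilon'_i) = \tr(g(e_j)\circ f(e'_i)) = b(e'_i,e_j),$$
using the two equalities in Lemma~\ref{lem:mouv} for the two orderings. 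By the choice of the bases $(e'_i)_{i\in I}$ of $\hc(\one,Z^*)$ and $(e_j)_{j\in I}$ of $\hc(\one,Z)$ in Lemma~\ref{lem:bili} we have $b(e'_i,e_j) = \delta_{i,j}$, which settles $\tr(\epsilon'_i \circ \epsilon_j) = \tr(\epsilon_j \circ \epsilon'_i) = \delta_{i,j}$.

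Second, for the two basis statements, I would invoke that $f$ and $g$ are isomorphisms of $\field$-vector spaces, which is part of the statement of Lemma~\ref{lem:mouv}. An isomorphism carries a basis to a basis: since $(e'_i)_{i\in I}$ is a basis of $\hc(\one,Z^*)$ by Lemma~\ref{lem:bili}, its image $(\epsilon'_i)_{i\in I} = (f(e'_i))_{i\in I}$ is a basis of $\hc(X,Y^*)$, and similarly $(\epsilon_i)_{i\in I} = (g(e_i))_{i\in I}$ is a basis of $\hc(Y^*,X)$. Alternatively, one can argue intrinsically: the matrix $\bigl(\tr(\epsilon'_i \circ \epsilon_j)\bigr)_{i,j}$ equals the identity, hence is invertible, forcing linear independence of the families $(\epsilon'_i)_{i\in I}$ and $(\epsilon_j)_{j\in I}$; a dimension count $|I| = \dim\hc(X,Y^*) = \dim\hc(Y^*,X)$ (coming again from the bijectivity of $f,g$, or from $\hc(X,Y^*) \cong \hc(\one,X^*\otimes Y^*) = \hc(\one,Z^*)$ and likewise for $\hc(Y^*,X)$) then promotes independence to bases.

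There is no genuine obstacle here; the statement is essentially a formal corollary of Lemma~\ref{lem:mouv}. The only things needing care are bookkeeping: that the common index set $I$ is legitimate (it is, since Lemma~\ref{lem:bili} produces $(e'_i)$ and $(e_i)$ over one and the same $I$ with $|I| = \dim\hc(\one,Z^*) = \dim\hc(\one,Z)$), and that the two equalities of Lemma~\ref{lem:mouv} are paired with the correct trace expression $\tr(\epsilon'_i\circ\epsilon_j)$ versus $\tr(\epsilon_j\circ\epsilon'_i)$. So the final write-up should be a two-line proof: chain the traces through Lemma~\ref{lem:mouv} down to $b(e'_i,e_j) = \delta_{i,j}$, and then observe that the isomorphisms $f$ and $g$ send the bases of Lemma~\ref{lem:bili} to the claimed bases.
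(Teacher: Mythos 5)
Your proof is correct and matches the paper's approach exactly: the paper's own proof simply reads ``Immediate from Lemmas \ref{lem:bili} and \ref{lem:mouv},'' and your write-up is precisely the spelling-out of that immediacy (traces via Lemma \ref{lem:mouv} reduce to $b(e'_i,e_j)=\delta_{i,j}$ from Lemma \ref{lem:bili}, and the isomorphisms $f,g$ carry bases to bases). No issues.
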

\begin{proof}
Immediate from Lemmas \ref{lem:bili} and \ref{lem:mouv}, and the fact that $f$ and $g$ are bijective $\field$-linear maps.
\end{proof}

\subsection{Resolution of identity} \label{sub:reso}

The aim of this subsection is to explain the categorical realization of the move $||| \mapsto \ThreeSymbol$ as the resolution of identity. While this is widely recognized in the field, we will present our proof here for the sake of completeness and to establish our notations.

\begin{lemma} \label{lem:mat}
Let $\mathcal{A}$ be an algebra over a field $\field$, let $\mathcal{O}$ be a finite set, and for each $x \in \mathcal{O}$, let $I_x$ be a finite set. Consider the elements $(\tau_{x,i,j})_{x \in \mathcal{O}, i,j \in I_x}$, which are nonzero elements in $\mathcal{A}$, satisfying the condition $$\tau_{x,i,j}\tau_{x',k,\ell} =  \delta_{x,x'} \delta_{j,k} d_x \tau_{x,i,\ell},$$ where $d_x$ is a nonzero element in $\field$. Then the elements $(\tau_{x,i,j})_{x \in \mathcal{O}, i,j \in I_x}$ are linearly independent and generate a $\field$-subalgebra isomorphic to $\bigoplus_{x \in \mathcal{O}} M_{n_x}(\field)$, where $n_x := |I_x|$.
\end{lemma}

\begin{proof}
Assume that
$$
  0 = \sum_{x \in \mathcal{O}, i,j \in I_x} \lambda_{x,i,j}\tau_{x,i,j},
$$
with $\lambda_{x,i,j} \in \field$. Notice that $\tau_{x',k,\ell} \tau_{x,i,j} \tau_{x',k,\ell} = d_x^2 \delta_{x,x'} \delta_{\ell,i} \delta_{j,k} \tau_{x',k,\ell}$, thus
$$
  0 = \tau_{x',k,\ell} \left( \sum_{x \in \mathcal{O}, i,j \in I_x} \lambda_{x,i,j}\tau_{x,i,j} \right) \tau_{x',k,\ell} = \sum_{x \in \mathcal{O}, i,j \in I_x} \lambda_{x,i,j} \tau_{x',k,\ell} \tau_{x,i,j} \tau_{x',k,\ell}  = d_{x'}^2 \lambda_{x',k,\ell} \tau_{x',k,\ell},
$$
implying $\lambda_{x',k,\ell} = 0$ for all $x' \in \mathcal{O}, k, \ell \in I_{x'}$. The result follows.
\end{proof}

Recall from \cite[Proposition 4.8.4]{EGNO15} that for any simple object $X$ in a pivotal fusion category, $\dim(X)$ is nonzero.

\begin{proposition}[Resolution of identity] \label{prop:basis2}
Let $\mC$ be a pivotal fusion category with pivotal structure $a$. Let $X, T$ be objects in $\mC$ with $X$ being simple. Let $\mathcal{O}(T)$ denote the set of simple subobjects of $T$, up to isomorphism. There exist bases $(b_{X,i})_{i \in I_X}$ for $\hc(T,X)$ and $(b'_{X,i})_{i \in I_X}$ for $\hc(X,T)$ such that $\tr(b'_{X,i} \circ b_{X,j}) = \tr(b_{X,j} \circ b'_{X,i}) = \delta_{i,j}$. Define $\tau_{X,i,j} := b'_{X,i} \circ b_{X,j}$, ensuring $\tr(\tau_{X,i,j}) = \delta_{i,j}$. Then $$\tau_{X,i,j} \circ \tau_{X',k,\ell} = \delta_{X,X'} \delta_{j,k} \dim(X)^{-1} \tau_{X,i,\ell},$$
and the set $(\tau_{X,i,j})_{X \in \mathcal{O}(T), i,j \in I_X}$ forms a basis of $\hc(T,T)$. For any $\gamma \in \hc(T,T)$, we have: $$\gamma = \sum_{X \in \mathcal{O}(T)} \sum_{i,j \in I_X} \dim(X) \tr(\gamma \circ \tau_{X,i,j}) \tau_{X,i,j}.$$
In particular (\textbf{resolution of identity}): $$\id_{T} = \sum_{X \in \mathcal{O}(T)} \sum_{i \in I_X} \dim(X) \tau_{X,i,i}.$$
\end{proposition}

\begin{proof}
The bases mentioned initially are ensured by Lemma \ref{lem:basis} with $Y = \, ^*T$. Due to associativity, we have $$\tau_{X,i,j} \circ \tau_{X',k,\ell} = b'_{X,i} \circ (b_{X,j} \circ b'_{X',k}) \circ b_{X',\ell}.$$ Here, $b_{X,j} \circ b'_{X',k}$ belongs to $\hc(X',X)$, which is $\delta_{X,X'}$-dimensional by Schur's lemma. Thus, by Lemma \ref{lem:simple},
$$b_{X,j} \circ b'_{X',k} = \delta_{X,X'}\dim(X)^{-1}\tr(b_{X,j} \circ b'_{X,k})\id_X = \delta_{X,X'}\dim(X)^{-1}\delta_{j,k}\id_X.$$
Hence,
$$\tau_{X,i,j} \circ \tau_{X',k,\ell} = \delta_{X,X'} \delta_{j,k} \dim(X)^{-1} \tau_{X,i,\ell}.$$
Since $\tau_{X,i,j}$ is nonzero (because $\tau_{X,i,j} \circ \tau_{X,j,i} = \dim(X)^{-1} \tau_{X,i,i}$ and $\tr(\tau_{X,i,i}) = 1$), Lemma \ref{lem:mat} confirms that the elements in the set $(\tau_{X,i,j})_{X \in \mathcal{O}(T), i,j \in I_X}$ are linearly independent. Moreover,
$$\dim_{\field}(\hc(T,T)) = \sum_{X \in \mathcal{O}(T)} \dim_{\field}(\hc(T,X))^2 = \sum_{X \in \mathcal{O}(T)} |I_X|^2,$$
which equals the cardinality of the set, thereby establishing it as a basis for $\hc(T,T)$.

For any morphism $\gamma$ in $\hc(T,T)$, there exist coefficients $\lambda_{X,i,j}$ in $\field$ such that $$\gamma = \sum_{X \in \mathcal{O}(T), i,j \in I_X} \lambda_{X,i,j} \tau_{X,i,j}.$$ Thus, for all $X' \in \mathcal{O}(T), k,\ell \in I_{X'}$, we have $$\tr(\gamma \circ \tau_{X',k,\ell}) = \sum_{X \in \mathcal{O}(T), i,j \in I_X} \lambda_{X,i,j} \tr(\tau_{X,i,j} \circ \tau_{X',k,\ell}) = \lambda_{X',k,\ell} \dim(X')^{-1}.$$
Hence, $\lambda_{X,i,j} = \dim(X) \tr(\gamma \circ \tau_{X,i,j})$. Finally, $$\tr(\id_T \circ \tau_{X,i,j}) = \tr(\tau_{X,i,j}) = \delta_{i,j}.$$ The result follows.
\end{proof}

\subsection{Fuchs-Runkel-Schweigert theorem} \label{sub:FRS}

This subsection revisits the proof of the Fuchs-Runkel-Schweigert theorem \cite[Theorem, p. L257; Remark, pp. L258-L259]{FuRuSc}:

\begin{theorem} \label{thm:FrobSchur}
Let $\mathcal{C}$ be a pivotal fusion category. Suppose $X, Y$ are objects in $\mathcal{C}$, where $Y$ is simple, selfdual ($Y \simeq Y^*$), and the space $\Hom_{\mathcal{C}}(X^* \otimes X, Y)$ has odd dimension. Then $\nu_2(Y) = 1$.
\end{theorem}

Notably, Theorem \ref{thm:FrobSchur} does not require $X$ to be simple, which can be useful when the Grothendieck ring is noncommutative.
In \S \ref{sub: Counter}, we present a counterexample to the (nonzero) even-dimensional analogue of Theorem \ref{thm:FrobSchur}, specifically disproving \cite[Conjecture 4.26]{wang}. The counterexample arises in the representation category $\Rep(G)$ for \( G = \mathrm{PSU}(3,2) \), which, with \( |G| = 72 \), is the smallest such counterexample in finite group theory (see Remark \ref{rk:WangCounter}). This was independently identified in \cite{yilong}, while an earlier counterexample of order 128, with \( G = C_2^4 \rtimes Q_8 \) (acting faithfully by conjugation), was given in \cite{Mas}.



\begin{proposition} \label{prop:binu}
Consider a $\field$-linear pivotal monoidal category, $\mC$, with a pivotal structure denoted by $a$. Let $Y$ be an object in $\mC$ such that $\Hom_{\mC}(Y \otimes Y,\one)$ is one-dimensional. For any nonzero morphism $\kappa$ in $\Hom_{\mC}(Y,Y^*)$, and for any object $X$ in $\mC$, consider the following bilinear map from $\Hom_{\mC}(\one,X \otimes Y \otimes X^*)^2$ to $\End_{\mC}(\one)$:
$$
\omega(\alpha, \beta) :=
\scalebox{1}{\raisebox{-1.5cm}{
\begin{tikzpicture}
\draw (-.7,0) --++ (0,-.4) node [left] {\tiny $X$}  --++ (0,-.4) node [draw,fill=white] {\small $a_X$} --++ (0,-.4) node [left] {\tiny $X^{**}$}  arc (-180:0:2.2 and 1.1) --++ (0,.4) node [right] {\tiny $X^*$} --++ (0,.8);
\draw (0,0) --++ (0,-.4) node [right] {\tiny $Y$}  --++ (0,-.4) node [draw,fill=white] {\small $\kappa$} --++ (0,-.4) node [right] {\tiny $Y^*$} arc (-180:0:1.5 and .75) --++ (0,.4) node [right] {\tiny $Y$} --++ (0,.8);
\draw (.7,0) --++ (0,-.8) node [right] {\tiny $X^*$} --++ (0,-.4) arc (-180:0:.8 and .4) --++ (0,.4) node [left] {\tiny $X$} --++ (0,.8);
\draw (0,0) node [draw,fill=white,minimum width=2cm] {$\alpha$};
\draw (3,0) node [draw,fill=white,minimum width=2cm] {$\beta$};
\end{tikzpicture}}}.
$$
Then $\omega(\alpha, \beta) = \nu_2(Y^*) \omega(\beta, \alpha)$.
\end{proposition}
\begin{proof}
By Lemma \ref{lem:trans} and the equality $a_{X \otimes Y \otimes X^*} = a_X \otimes a_Y \otimes a_{X^*}$ (from the pivotal structure)
$$
\omega(\alpha, \beta) =
\scalebox{1}{\raisebox{-1.5cm}{
\begin{tikzpicture}
\draw (-.7,0) --++ (0,-.4) node [left] {\tiny $X$}  --++ (0,-.4) node [draw,fill=white] {\tiny $a_X$} --++ (0,-.4) node [left] {\tiny $X^{**}$}  arc (-180:0:2.2 and 1.1) --++ (0,.4) node [right] {\tiny $X^*$} --++ (0,.8);
\draw (-.04,0) --++ (0,-.4) node [right] {\tiny $Y$}  --++ (0,-.4) node [draw,fill=white] {\tiny $a_Y$} --++ (0,-.4) node [right] {\tiny $Y^{**}$} arc (-180:0:1.52 and .75)  node [right] {\tiny $Y^*$} --++ (0,.4) node [draw,fill=white] {\small $\kappa$} --++ (0,.4) --++ (0,.4);
\draw (.7,0) --++ (0,-.4) node [right] {\tiny $X^*$} --++ (0,-.4) node [draw,fill=white] {\tiny $a_{X^*}$} --++ (0,-.4) node [right] {\tiny $X^{***}$}  arc (-180:0:.8 and .4) node [left] {\tiny $X^{**}$} --++ (0,.4)  node [draw,fill=white] {\tiny $a_X$}  --++ (0,.4) --++ (0,.4);
\draw (0,0) node [draw,fill=white,minimum width=2cm] {$\beta$};
\draw (3,0) node [draw,fill=white,minimum width=2cm] {$\alpha$};
\end{tikzpicture}}}.
$$

\noindent Now by Lemma \ref{lem:ev*}, $(\ev_X)^{**} = \ev_{X^{**}}$, and by pivotal structure, $(\ev_X)^{**} \circ a_{X^* \otimes X} = \ev_X$, so $\ev_{X^{**}} \circ a_{X^* \otimes X} = \ev_X$. Moreover by Lemma \ref{lem:evnu}, $\ev_{Y^*} \circ (a_Y \otimes \kappa) = \nu_2(Y^*) \ev_Y \circ (\kappa \otimes \id_Y)$. It follows that

$$
\omega(\alpha, \beta) = \nu_2(Y^*)
\scalebox{1}{\raisebox{-1.5cm}{
\begin{tikzpicture}
\draw (-.7,0) --++ (0,-.4) node [left] {\tiny $X$}  --++ (0,-.4) node [draw,fill=white] {\small $a_X$} --++ (0,-.4) node [left] {\tiny $X^{**}$}  arc (-180:0:2.2 and 1.1) --++ (0,.4) node [right] {\tiny $X^*$} --++ (0,.8);
\draw (0,0) --++ (0,-.4) node [right] {\tiny $Y$}  --++ (0,-.4) node [draw,fill=white] {\small $\kappa$} --++ (0,-.4) node [right] {\tiny $Y^*$} arc (-180:0:1.5 and .75) --++ (0,.4) node [right] {\tiny $Y$} --++ (0,.8);
\draw (.7,0) --++ (0,-.8) node [right] {\tiny $X^*$} --++ (0,-.4) arc (-180:0:.8 and .4) --++ (0,.4) node [left] {\tiny $X$} --++ (0,.8);
\draw (0,0) node [draw,fill=white,minimum width=2cm] {$\beta$};
\draw (3,0) node [draw,fill=white,minimum width=2cm] {$\alpha$};
\end{tikzpicture}}}
= \nu_2(Y^*) \omega(\beta, \alpha). \qedhere
$$
\end{proof}

\begin{corollary} \label{cor:inter}
Let $\mC$ be a $\field$-linear pivotal monoidal category with $\End_{\mC}(\one) \simeq \field$. Suppose $X$ and $Y$ are objects in $\mC$ such that $\Hom_{\mC}(Y \otimes Y, \one)$ is one-dimensional, and the bilinear form $\omega$ from Proposition \ref{prop:binu} is non-degenerate. If $\Hom_{\mC}(\one, X \otimes Y \otimes X^*)$ has an odd dimension, then $\nu_2(Y^*) = 1$.
\end{corollary}

\begin{proof}
Initially, if the field $\field$ has characteristic two, then by Proposition \ref{prop:nupm}, $\nu_2(Y^*) = 1$ since $-1 \equiv 1 \pmod{2}$. Therefore, we can assume that $\field$ does not have characteristic two. Given that $\End_{\mC}(\one) \simeq \field$, the map $\omega$ is a bilinear form. If $\nu_2(Y^*) = -1$, then $\omega$ corresponds to a skew-symmetric matrix $M$ (i.e., $M^T = -M$). However, since $\det(M^T) = \det(M)$ and $\det(-M) = (-1)^n \det(M)$, where $n$ is the assumed odd dimension of $\Hom_{\mC}(\one, X \otimes Y \otimes X^*)$, it follows that $\det(M) = -\det(M)$, leading to $2\det(M) = 0$. Yet, by the non-degeneracy condition, $\det(M)$ is a nonzero element in $\field$, implying $2 = 0$, which contradicts our assumption that $\field$ is not of characteristic two. Hence, $\nu_2(Y^*) \neq -1$, implying $\nu_2(Y^*) = 1$ by Proposition \ref{prop:nupm}.
\end{proof}

\begin{remark}
The proof of Corollary \ref{cor:inter} reaffirms the result that a symplectic vector space over a field with a characteristic not equal to two must be of even dimension. This result is noted without proof in \cite[\S 6.9]{Jac85}.
\end{remark}

We thus arrive at the proof of Theorem \ref{thm:FrobSchur}:
\begin{proof}
By the natural adjunction isomorphism \cite[Proposition 2.10.8]{EGNO15}, we have $\dim_{\field}(\Hom_{\mC}(Y \otimes Y, \one)) = \dim_{\field}(\Hom_{\mC}(Y,Y^*)) = 1$ because $Y^* \simeq Y$ is simple, and $\dim_{\field}(\Hom_{\mC}(X^* \otimes X, Y)) = \dim_{\field}(\Hom_{\mC}(\one, X \otimes Y \otimes X^*))$ as $X^{**} \simeq X$. Furthermore, $\omega(\alpha, \beta) = b_{X^{**}\otimes Y^*\otimes X^*}(f \circ \alpha, \beta)$, where $f = a_X \otimes \kappa \otimes \id_{X^*}$ and $b_{X^{**}\otimes Y^*\otimes X^*}$ is the bilinear form from Lemma \ref{lem:bili} with $Z = X \otimes Y \otimes X^*$, making $\omega$ non-degenerate by Lemma \ref{lem:bili} and the fact that $f$ is an isomorphism. The result follows from Corollary \ref{cor:inter}.
\end{proof}

\section{Monoidal tetrahedron and triangular prism}
\subsection{Monoidal tetrahedron} \label{sec:monotetra}
This subsection introduces the concept of a tetrahedron within a monoidal category and establishes some fundamental results related to the cyclic permutations $(2,3,4)$ and $(3,2,1)$. These permutations generate the orientation-preserving symmetry group $A_4$ of a standard tetrahedron, which is utilized in \S \ref{bitetra}.

\begin{definition} \label{def:3Rot}
Let $\mC$ be a monoidal category with left duals. Let $F$, $G$ and $H$ be functors from $\mC^3$ to $\textbf{Set}$ defined as the composition of usual functors such that $F(X,Y,Z) = \hc(\one,X \otimes Y \otimes Z)$, $G(X,Y,Z)= \hc(\one, Y \otimes Z \otimes X^{**})$ and $H(X,Y,Z)= \hc(\one,X \otimes Y \otimes Z^{**})$, for all objects $X,Y,Z$ in $\mC$. Let $\rho: F \to G$ and $\rho': G \to F$  be natural transformations defined by
$$\rho(\alpha)=
\raisebox{-.5cm}{
\begin{tikzpicture}
\draw (-0.1,0) -- ++(0,-0.4) node[right, scale=0.6, xshift=-2pt]{$X$} arc (0:-180:.4 and .15) --++(0,.5) arc (180:0:1.5 and .4) --++(0,-.5);
\draw (0+0.6,0)--++(0,-.4) node[right, scale=0.6, xshift=-2pt]{$Y$} ;
\draw (.7+0.6,0)--++(0,-.4) node[right, scale=0.6, xshift=-2pt]{$Z$} ;
\draw (0.6,0) node [draw,fill=white,minimum width=2cm] {$\alpha$};
\end{tikzpicture}}
\ \ \text{and}  \ \
\rho'(\beta)=
\raisebox{-.5cm}{
\begin{tikzpicture}
\draw (-.7+0.6,0)--++(0,-.4) node[left, scale=0.6, xshift=2pt]{$Y$} ;
\draw (0+0.6,0)--++(0,-.4) node[left, scale=0.6, xshift=2pt]{$Z$} ;
\draw (.7+0.6,0)--++(0,-.4) node[left, scale=0.6, xshift=2pt]{$X^{**}$}  arc (-180:0:.4 and .15) --++(0,.5) arc (0:180:1.5 and .4) --++(0,-.5);
\draw (0.6,0) node [draw,fill=white,minimum width=2cm] {$\beta$};
\end{tikzpicture}}
$$
for all $\alpha$ in $\hc(\one,X \otimes Y \otimes Z)$ and for all  $\beta$ in $\hc(\one, Y \otimes Z \otimes X^{**})$. Next, assuming the existence of a pivotal structure $a$, let $\sigma: F \to H$ and $\sigma': H \to F$  be natural transformations defined by $$\sigma(\alpha) = (\id_{X \otimes Y} \otimes a_Z) \circ \alpha \ \ \text{ and } \ \ \sigma'(\gamma) = (\id_{X \otimes Y} \otimes a^{-1}_Z) \circ \gamma,$$ for all $\alpha$ in $\hc(\one,X \otimes Y \otimes Z)$ and for all  $\gamma$ in $\hc(\one,X \otimes Y \otimes Z^{**})$.
\end{definition}.

\begin{lemma} \label{lem:3RotBij}
Following Definition \ref{def:3Rot}, $\rho$ and $\sigma$ are natural isomorphisms, with $\rho^{-1} = \rho'$ and $\sigma^{-1} = \sigma'$.
\end{lemma}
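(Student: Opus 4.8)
The plan is to check, for each of the two pairs $(\rho,\rho')$ and $(\sigma,\sigma')$, that the composites in both orders are the relevant identity. Since $\rho,\rho',\sigma,\sigma'$ are already natural transformations by Definition~\ref{def:3Rot}, and the componentwise inverse of a natural transformation all of whose components are invertible is automatically natural, it suffices to verify the four componentwise equalities $\rho'\circ\rho=\id_F$, $\rho\circ\rho'=\id_G$, $\sigma'\circ\sigma=\id_F$, $\sigma\circ\sigma'=\id_H$ on an arbitrary morphism.

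The case of $\sigma$ is immediate from the pivotal structure: for $\alpha$ in $\hc(\one,X\otimes Y\otimes Z)$,
$$\sigma'(\sigma(\alpha))=(\id_{X\otimes Y}\otimes a_Z^{-1})\circ(\id_{X\otimes Y}\otimes a_Z)\circ\alpha=(\id_{X\otimes Y}\otimes(a_Z^{-1}\circ a_Z))\circ\alpha=\alpha$$
by functoriality of $\otimes$ and $a_Z^{-1}\circ a_Z=\id_Z$, and symmetrically $\sigma(\sigma'(\gamma))=\gamma$ for $\gamma$ in $\hc(\one,X\otimes Y\otimes Z^{**})$.

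The case of $\rho$ is the substantive one. First I would rewrite the two pictures in Definition~\ref{def:3Rot} as
$$\rho(\alpha)=(ev_X\otimes\id)\circ(\id_{X^*}\otimes\alpha\otimes\id_{X^{**}})\circ coev_{X^*}$$
$$\rho'(\beta)=(\id\otimes ev_{X^*})\circ(\id_X\otimes\beta\otimes\id_{X^*})\circ coev_X$$
where $coev_{X^*}\colon\one\to X^*\otimes X^{**}$ and $ev_{X^*}\colon X^{**}\otimes X^*\to\one$ are the (co)evaluations of the left dual $X^{**}=(X^*)^*$ of $X^*$. Substituting the first formula into the second and flattening, $\rho'(\rho(\alpha))$ becomes a single string diagram in which the $Y$- and $Z$-strands of $\alpha$ run straight through, while the $X$-strand of $\alpha$ is carried over the top and back by two consecutive cusps: it is capped by $ev_X$ against the $X^*$-leg of $coev_{X^*}$, the $X^{**}$-leg of $coev_{X^*}$ is capped by $ev_{X^*}$ against the $X^*$-leg of $coev_X$, and the $X$-leg of $coev_X$ emerges as the first output. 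Two applications of the zigzag relations then straighten this out: first $(\id_{X^*}\otimes ev_{X^*})\circ(coev_{X^*}\otimes\id_{X^*})=\id_{X^*}$ removes the inner $coev_{X^*}/ev_{X^*}$ pair, and then $(\id_X\otimes ev_X)\circ(coev_X\otimes\id_X)=\id_X$ removes the remaining $coev_X/ev_X$ pair, leaving exactly $\alpha$; hence $\rho'\circ\rho=\id_F$. The identity $\rho\circ\rho'=\id_G$ is obtained by the mirror-image computation, starting from $\beta$ in $\hc(\one,Y\otimes Z\otimes X^{**})$, bending the last leg to the front and then the first leg to the back, and collapsing the two resulting cusps by the same zigzag relations.

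I expect the only genuine work to be the diagrammatic bookkeeping in the $\rho$ case: one must get the left/right order of each $ev$ and $coev$ exactly as dictated by the pictures, so that the two zigzag relations apply verbatim — in particular so that the leg bent ``right-and-over'' in $\rho$ and the leg bent ``left-and-over'' in $\rho'$ compose into two honest cusps on a single strand rather than into a configuration that would require a braiding. Conceptually nothing beyond the snake identities and the pivotal axiom $a_Z^{-1}\circ a_Z=\id_Z$ is involved, and naturality of the inverses $\rho'$ and $\sigma'$ then comes for free.
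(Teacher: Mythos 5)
Your proof is correct and takes essentially the same route as the paper: the paper likewise verifies $\rho'\circ\rho=\id_F$ by applying the two zigzag relations to the composite diagram (drawn pictorially rather than via your explicit formulas $(ev_X\otimes\id)\circ(\id_{X^*}\otimes\alpha\otimes\id_{X^{**}})\circ coev_{X^*}$ and $(\id\otimes ev_{X^*})\circ(\id_X\otimes\beta\otimes\id_{X^*})\circ coev_X$, which do match its pictures), defers $\rho\circ\rho'=\id_G$ to the symmetric computation, and treats the $\sigma$ case as immediate from $a_Z^{-1}\circ a_Z=\id_Z$. The two arguments are the same computation in different notation.
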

\begin{proof}
By zigzag relations
$$(\rho' \circ \rho)(\alpha) =
\raisebox{-.5cm}{
\begin{tikzpicture}
\draw (-.7+0.6,0)--++(0,-.4) arc (0:-180:.4 and .15);
\draw[color=red] (-.9,-.4) --++(0,.5) arc (180:0:1.5 and .4) --++(0,-.5) arc (-180:0:.4 and .15) --++(0,.5);
\draw (3-.1,.1) arc (0:180:2.3 and .6) --++(0,-.5);
\draw (0+0.6,0)--++(0,-.4);
\draw (.7+0.6,0)--++(0,-.4);
\draw (0.6,0) node [draw,fill=white,minimum width=2cm] {$\alpha$};
\end{tikzpicture}}
=
\raisebox{-.5cm}{
\begin{tikzpicture}
\draw[color=blue] (-.7+0.6,0)--++(0,-.4) arc (0:-180:.4 and .15) --++(0,.5) arc (0:180:.4 and .15) --++(0,-.5);
\draw (0+0.6,0)--++(0,-.4);
\draw (.7+0.6,0)--++(0,-.4);
\draw (0.6,0) node [draw,fill=white,minimum width=2cm] {$\alpha$};
\end{tikzpicture}}
=
\raisebox{-.35cm}{
\begin{tikzpicture}
\draw[color=blue] (-.7+0.6,0)--++(0,-.4);
\draw (0+0.6,0)--++(0,-.4);
\draw (.7+0.6,0)--++(0,-.4);
\draw (0.6,0) node [draw,fill=white,minimum width=2cm] {$\alpha$};
\end{tikzpicture}}
= \alpha.
$$
So $\rho' \circ \rho = \id_{F}$. Similarly, $\rho \circ \rho' = \id_{G}$. Finally, we trivially have $\sigma' \circ \sigma =  \id_{F}$ and $\sigma \circ \sigma' =  \id_{H}$.
\end{proof}

\begin{remark} \label{rk:FS3}
If $\mC$ is $\field$-linear pivotal, and $X=Y=Z$, then $(\sigma^{-1} \circ \rho)(\alpha) = E_X^{(3)}(\alpha)$. So, if moreover $\hc(\one,X^{\otimes 3})$ is one-dimensional, then $\rho(\alpha) = \nu_3(X) \sigma(\alpha)$ with $\nu_3(X)^3=1$, by Proposition \ref{prop:nupm}.
\end{remark}

\begin{definition}[Monoidal tetrahedron] \label{def:TetraCat}
Let $\mC$ be a monoidal category with left duals. Let $(X_i)_{i=1,\dots,6}$ be objects in $\mC$. Consider morphisms $\alpha \in \hc(\one,X_1 \otimes X_2 \otimes X_3)$, $\beta \in \hc(\one,X_4^* \otimes X_3^* \otimes X_5^*)$, $\gamma \in \hc(\one,X_5 \otimes X_2^* \otimes X_6^*)$ and $\delta \in \hc(\one,X_6 \otimes X_1^* \otimes X_4)$. Let us define the (labeled) \emph{monoidal tetrahedron} as follows:
$$
T(\alpha, \beta, \gamma, \delta):=
\raisebox{-2cm}{
\begin{tikzpicture}[scale=.9]
\draw (-.7+0.6,0)--++(0,-.4) node [left] {\tiny ${X_1}$} arc (0:-180:.4 and .25) --++(0,.5) arc (180:0:1.5 and .7) arc (-180:0:.6 and .4)  --++(0,2.5);
\draw (0+0.6,0)--++(0,-.4) node [right] {\tiny ${X_2}$} arc (0:-180:1 and .5) --++(0,1.7) arc (180:0:.65 and .3) arc (-180:0:.35 and .2) --++(0,1.3);
\draw (.7+0.6,0)--++(0,-.4) node [right] {\tiny ${X_3}$} arc (0:-180:1.7 and .75) --++(0,3);

\draw (-2.1-.7,2.6)--++(0,-.45) --++(0,-2.7) arc (-180:0:3.4 and 1);
\draw (-2.1+.7,2.6)--++(0,-.45) arc (-180:0:.65 and .3);

\draw (0.6-.7,2.6)--++(0,-.45)  node [left] {\tiny ${X_5}$};
\draw (0.6+.7,2.6)--++(0,-.45) arc (-180:0:.65 and .3);

\draw (3.3-.7,2.6)--++(0,-.45) node [left] {\tiny ${X_6}$};
\draw (3.3+.7,2.6)--++(0,-.45) node [right] {\tiny ${X_4}$} --++(0,-2.7);

\draw (0.6,0) node [draw,fill=white,minimum width=2cm] {$\alpha$};
\draw (-2.1,2.6) node [draw,fill=white,minimum width=2cm] {$\beta$};
\draw (0.6,2.6) node [draw,fill=white,minimum width=2cm] {$\gamma$};
\draw (3.3,2.6) node [draw,fill=white,minimum width=2cm] {$\delta$};
\end{tikzpicture}}.
$$
\end{definition}

\begin{remark}
  From this point on, the unsimplified zigzags in Definition \ref{def:TetraCat} (and other figures from this point onwards) are only there to make the figures easier to draw.
\end{remark}

\begin{proposition} \label{prop:rel1}
Following Definition \ref{def:TetraCat}, $$T(\alpha, \beta, \gamma, \delta) = T(\rho(\alpha), \delta^{**}, \beta, \gamma) = T(\rho^{2}(\alpha),\gamma^{**}, \delta^{**}, \beta) = T(\rho^{3}(\alpha),\beta^{**},\gamma^{**}, \delta^{**}) = T(\alpha^{**},\beta^{**},\gamma^{**}, \delta^{**}),$$
where $\rho$ is the natural transformation from Definition \ref{def:3Rot}.
\end{proposition}
\begin{proof}
We only need to prove the first equality (about the last one, observe that $\rho^{3}(\alpha) = \alpha^{**}$). Apply Lemma \ref{lem:trans} to $T(\alpha, \beta, \gamma, \delta)$ with $X = X_6 \otimes X_1^* \otimes X_4$ and $Y = \one$. Then

$$T(\alpha, \beta, \gamma, \delta)=
\raisebox{-2.5cm}{\begin{tikzpicture}[scale=.9]
\draw (-.7+0.6,0)--++(0,-.4) node [left] {\tiny ${X_1}$} arc (0:-180:.4 and .25) --++(0,.5) arc (180:0:1.5 and .7) --++(0,-.5);
\draw (0+0.6,0)--++(0,-.4) node [right] {\tiny ${X_2}$} arc (0:-180:1 and .5) --++(0,1.7) arc (180:0:.65 and .3) arc (-180:0:.35 and .2) --++(0,1.3);
\draw (.7+0.6,0)--++(0,-.4) node [right] {\tiny ${X_3}$} arc (0:-180:1.7 and .75) --++(0,3);

\draw (-2.1-.7,2.6)--++(0,-.45);
\draw (-2.1+.7,2.6)--++(0,-.45) arc (-180:0:.65 and .3);

\draw (0.6-.7,2.6)--++(0,-.45)  node [left] {\tiny ${X_5}$};
\draw (0.6+.7,2.6)--++(0,-1.3) arc (-180:0:.35 and .2) arc (180:0:.65 and .3) --++(0,-1.7);

\draw (-4.8-.7,2.6)--++(0,-3) arc (-180:0:4.4 and 1.5);
\draw (-4.8+.7,2.6)--++ (0,-.45) arc (-180:0:.65 and .3);

\draw (0.6,0) node [draw,fill=white,minimum width=2cm] {$\alpha$};
\draw (-2.1,2.6) node [draw,fill=white,minimum width=2cm] {$\beta$};
\draw (0.6,2.6) node [draw,fill=white,minimum width=2cm] {$\gamma$};
\draw (-4.8,2.6) node [draw,fill=white,minimum width=2cm] {$\delta^{**}$} --++(0,-.7) --++(0,-2.3)  arc (-180:0:3.45 and 1);
\end{tikzpicture}}
= T(\rho(\alpha), \delta^{**}, \beta, \gamma). \qedhere
$$
\end{proof}

Note that the equality $T(\alpha, \beta, \gamma, \delta) = T(\alpha^{**},\beta^{**},\gamma^{**}, \delta^{**})$ can be proved directly in the pivotal case.

\begin{proposition} \label{prop:rel2}
Following Definition \ref{def:TetraCat}, if $\mC$ is spherical, then
\begin{align*}
T(\alpha, \beta, \gamma, \delta)
& = T(\rho^{-3}(\beta), \rho^{-1}(\sigma^{2}(\gamma)), \rho(\alpha), \rho(\delta)) 
\end{align*}
where $\rho$ and $\sigma$ are the natural transformations from Definition \ref{def:3Rot}.
\end{proposition}
\begin{proof}
By applying Lemma \ref{lem:sph2} with $(X,Y)=(X_6,X_2 \otimes X_3 \otimes X_4$) we get:
$$T(\alpha, \beta, \gamma, \delta)=
\raisebox{-2.5cm}{
\begin{tikzpicture}[scale=.9]
\draw (-.7+0.6,0)--++(0,-.4) node [left=-.1cm] {\tiny ${X_1}$} arc (0:-180:.4 and .25) --++(0,.5) arc (180:0:1.5 and .7) arc (-180:0:.6 and .4)  --++(0,2.5);
\draw (0+0.6,0)--++(0,-.4) node [right=-.1cm] {\tiny ${X_2}$} arc (0:-180:1 and .5) --++(0,1.7) arc (180:0:.65 and .3) arc (-180:0:.35 and .2) --++(0,1.3);
\draw (.7+0.6,0)--++(0,-.4) node [right=-.1cm] {\tiny ${X_3}$} arc (0:-180:1.7 and .75) --++(0,3);

\draw (-2.1-.7,2.6)--++(0,-.45) --++(0,-2.5) arc (-180:0:3.4 and 1);
\draw (-2.1+.7,2.6)--++(0,-.45) arc (-180:0:.65 and .3);

\draw (0.6-.7,2.6)--++(0,-.45) node [left] {\tiny ${X_5}$};
\draw (0.6+.7,2.6)--++(0,-1)  node [draw,fill=white] {\tiny $a^{2}_{X^*_6}$} --++(0,-.5) arc (-180:0:.25) --++ (0,.3) --++ (0,1.45) arc (0:180:2.55 and .5);

\draw (3.3-.7,2.6)--++(0,-.4) arc (0:-180:.25) --++ (0,.6) arc (180:0:1.2 and .4)  --++ (0,-1.25) --++ (0,-.55) node [right=-.1cm] {\tiny ${X_6^{**}}$} --++ (0,-1.45) arc (0:-180:3.9 and 1) --++ (0,3.3);
\draw (3.3+.7,2.6)--++(0,-.45) node [right=-.1cm] {\tiny ${X_4}$} --++(0,-2.5);

\draw (0.6,0) node [draw,fill=white,minimum width=2cm] {$\alpha$};
\draw (-2.1,2.6) node [draw,fill=white,minimum width=2cm] {$\beta$};
\draw (0.6,2.6) node [draw,fill=white,minimum width=2cm] {$\gamma$};
\draw (3.3,2.6) node [draw,fill=white,minimum width=2cm] {$\delta$};
\end{tikzpicture}}.
$$
Next, by some zigzag relations:
$$T(\alpha, \beta, \gamma, \delta)=
\raisebox{-2.5cm}{
\begin{tikzpicture}[scale=.9]
\draw (-.7,0) --++ (0,-.5) arc (-180:0:2 and 1) --++ (0,3) ;
\draw (0,0) --++ (0,-.5) arc (-180:0:1 and .5) --++ (0,1.75) arc (0:180:.5 and .25) arc (0:-180:.5 and .25) --++ (0,1.25);
\draw (.7,0) --++ (0,-.5) arc (-180:0:.5 and .25)  --++ (0,.75) arc (0:180:1.6 and .5) arc (0:-180:1.6 and .5) --++ (0,2.25);

\draw (2.6,2.5) --++ (0,-.5);
\draw (1.9,2.5) --++ (0,-.3) arc (0:-180:.3) --++ (0,.6) arc (180:0:1.3 and .5) --++ (0,-3.5) arc (0:-180:4.6 and 1.5);

\draw (-.7,2.5) --++ (0,-1.35);
\draw (-1.4,2.5) --++ (0,-.5)  arc (0:-180:.3) --++ (0,.8) arc (180:0:1.3 and .5) --++ (0,-.8) arc (-180:0:1 and .35) ;

\draw (-4,2.5) --++ (0,-1.35) arc (-180:0:1.65 and .5);
\draw (-3.3,2.5) --++ (0,-.75) node [draw,fill=white] {\tiny $a^{2}_{X^*_6}$} --++ (0,-.5) arc (-180:0:.3) --++ (0,1.5) arc (0:180:1.3 and .5) --++ (0,-3.5);

\draw (0,0) node [draw,fill=white,minimum width=2cm] {$\beta$};
\draw (2.6,2.5) node [draw,fill=white,minimum width=2cm] {$\delta$};
\draw (-.7,2.5) node [draw,fill=white,minimum width=2cm] {$\alpha$};
\draw (-4,2.5) node [draw,fill=white,minimum width=2cm] {$\gamma$};
\end{tikzpicture}}
$$
and by Lemma \ref{lem:3RotBij} (together with some zigzag relations):
$$T(\alpha, \beta, \gamma, \delta)=
\raisebox{-2.5cm}{
\begin{tikzpicture}[scale=.9]
\draw (-1.4,-.5) --++ (0,-.5) arc (0:-180:.4 and .25) --++ (0,.6) arc (180:0:1.5 and .5) arc (-180:0:1.25 and .5) --++ (0,3);
\draw (-.7,-.5) --++ (0,-.5) arc (0:-180:1 and .5) --++ (0,1.55) arc (180:0:.8 and .25) arc (-180:0:.55 and .25) --++ (0,2);
\draw (0,-.5) --++ (0,-.5) arc (0:-180:2.35 and .75) --++ (0,3.5);

\draw (2.6,2.5) --++ (0,-.5);
\draw (1.9,2.5) --++ (0,-.3) arc (0:-180:.3) --++ (0,.6) arc (180:0:1.3 and .5) --++ (0,-3.5) arc (0:-180:4.6 and 1.5);

\draw (-.7,2.5) --++ (0,-1);
\draw (-1.4,2.5) --++ (0,-.5)  arc (0:-180:.3) --++ (0,.8) arc (180:0:1.3 and .5) --++ (0,-.8) arc (-180:0:1 and .35) ;

\draw (-4,2.5) --++ (0,-1) arc (-180:0:1.65 and .5);
\draw (-3.3,2.5) --++ (0,-.75) node [draw,fill=white] {\tiny $a^{2}_{X^*_6}$} --++ (0,-.4) arc (-180:0:.3 and .15) --++ (0,1.5) arc (0:180:1.3 and .5) --++ (0,-3.6);

\draw (-.7,-.5) node [draw,fill=white,minimum width=2cm] {$\rho^{-3}(\beta)$};
\draw (2.6,2.5) node [draw,fill=white,minimum width=2cm] {$\delta$};
\draw (-.7,2.5) node [draw,fill=white,minimum width=2cm] {$\alpha$};
\draw (-4,2.5) node [draw,fill=white,minimum width=2cm] {$\gamma$};
\end{tikzpicture}}$$
$$
= T(\rho^{-3}(\beta), \rho^{-1}(\sigma^{2}(\gamma)), \rho(\alpha), \rho(\delta)). \qedhere
$$
\end{proof}

\begin{remark} \label{rk:A4}
The order in which the morphisms appear in the RHS of the first equalities in Propositions \ref{prop:rel1} and \ref{prop:rel2} correspond to the cyclic permutations $(2,3,4)$ and $(3,2,1)$, which generate the alternating group $A_4$.
\end{remark}

\subsection{Monoidal triangular prism and equation} \label{sec:monoTPE}
This subsection introduces the concept of a triangular prism within a monoidal category, following the initial representation described in Remark \ref{rk:RepTP} of the triangular prism graph. It derives an equation, referred to as the Triangular Prism Equation (TPE), specifically for a fusion category, by evaluating it in two distinct ways.

\begin{definition}[Monoidal triangular prism] \label{def:TPCat}
Let $\mC$ be a monoidal category with left duals. Let $(X_i)_{i=1,\dots,9}$ be objects in $\mC$. Consider morphisms $\alpha_i \in \hc(\one,W_i)$, with $W_1 = X_4 \otimes X_1^{*} \otimes X_6^{***}$, $W_2 = X_5 \otimes X_2^{*} \otimes X_4^{*}$, $W_3 = X_6 \otimes X_3^{*} \otimes X_5^{*}$, $W_4 = X_9 \otimes X_1 \otimes X_7^{*}$, $W_5 = X_7 \otimes X_2 \otimes X_8^{*}$, $W_6 = X_8 \otimes X_3 \otimes X_9^{***}$. The (labeled) \emph{monoidal triangular prism} is defined as:
$$TP(\alpha_1, \alpha_2, \alpha_3, \alpha_4, \alpha_5, \alpha_6):=
\raisebox{-2.5cm}{
\begin{tikzpicture}
\draw (-.7,0) --++ (0,-.4) node [right=-.1cm] {\tiny ${X_4}$} --++ (0,-.9) arc (-180:0:.7 and .3);
\draw (0,0) --++ (0,-.4) arc (-180:0:1.5 and .5);
\draw (.7,0) --++ (0,-.4) arc (-180:0:.25) --++ (0,.5) arc (0:180:1.2 and .5);

\draw (-.7,-2) --++ (0,-.4) node [right=-.1cm] {\tiny ${X_5}$} --++ (0,-.9) arc (-180:0:.7 and .3);
\draw (0,-2) --++ (0,-.4) arc (-180:0:1.5 and .5);
\draw (.7,-2) --++ (0,-.4) arc (-180:0:.25) --++ (0,1.1) arc (0:180:.25);

\draw (-.7,-4) --++ (0,-.4)  node [right=-.1cm] {\tiny ${X_6}$} arc (0:-180:.25) --++ (0,4.5);
\draw (0,-4) --++ (0,-.4) arc (-180:0:1.5 and .5);
\draw (.7,-4) --++ (0,-.4) arc (-180:0:.25) --++ (0,1.1) arc (0:180:.25);

\draw (3-.7,0) --++ (0,-.4)  node [right=-.1cm] {\tiny ${X_9}$} arc (0:-180:.25) --++ (0,.5);
\draw (3,0) --++ (0,-.4)  node [right=-.1cm] {\tiny ${X_1}$};
\draw (3+.7,0) --++ (0,-1.3) arc (0:-180:.7 and .3);

\draw (3-.7,-2) --++ (0,-.4)  node [right=-.1cm] {\tiny ${X_7}$} arc (0:-180:.25) --++ (0,1.1) arc (180:0:.25);
\draw (3,-2) --++ (0,-.4)  node [right=-.1cm] {\tiny ${X_2}$};
\draw (3+.7,-2) --++ (0,-1.3) arc (0:-180:.7 and .3);

\draw (3-.7,-4) --++ (0,-.4)  node [right=-.1cm] {\tiny ${X_8}$} arc (0:-180:.25) --++ (0,1.1) arc (180:0:.25);
\draw (3,-4) --++ (0,-.4) node [right=-.1cm] {\tiny ${X_3}$};
\draw (3+.7,-4) --++ (0,-.4) arc (-180:0:.25) --++ (0,4.5) arc (0:180:1.2 and .5);

\draw (0,0)  node [draw,fill=white,minimum width=2cm] {$\alpha_1$};
\draw (0,-2)  node [draw,fill=white,minimum width=2cm] {$\alpha_2$};
\draw (0,-4)  node [draw,fill=white,minimum width=2cm] {$\alpha_3$};
\draw (3,0)  node [draw,fill=white,minimum width=2cm] {$\alpha_4$};
\draw (3,-2)  node [draw,fill=white,minimum width=2cm] {$\alpha_5$};
\draw (3,-4)  node [draw,fill=white,minimum width=2cm] {$\alpha_6$};
\end{tikzpicture}}.
$$
\end{definition}

\begin{remark} \label{rk:SymTP}
In the spherical case, we can establish equalities analogous to those in \S\ref{sec:monotetra}, associated with the permutations $(1,2,3)(4,5,6)$ and $(1,4)(2,5)(3,6)$, which generate the orientation-preserving symmetry group $C_6$ of the triangular prism. The detailed proof is omitted, as this remark is purely informative.
\end{remark}

\begin{theorem}[TPE] \label{thm:TPE}
Let $\mC$ be a pivotal fusion category. Let $a$ be the pivotal structure. Let $(X_i)_{i=1,\dots,9}$ and $(\alpha_i)_{i=1,\dots,6}$ be objects and morphisms in $\mC$ as in Definition \ref{def:TPCat}. Then  $$
\sum_{\beta_0 \in B_0} T\left(\rho^{-2}(\alpha_2),\rho(\alpha_3),\rho^{-1}(\alpha_1),\beta_0 \right) T\left(\rho^{-1}(\alpha_5),\beta'_0,\rho(\alpha_4),\rho^{-1}(\alpha_6)\right) = $$ $$ \sum_{X \in \mathcal{O}} \sum_{\substack{\beta_i \in B_i, \\ i=1,2,3}} \dim(X) T\left(\rho^{-2}(\sigma(\beta'_3)),\rho(\alpha_3),\rho^2(\beta_1),\rho^{-1}(\alpha_6) \right) T\left(\rho^{-1}(\alpha_4),\alpha_1,\beta'_1,\beta_2\right) T\left(\rho^{-1}(\alpha_5),\alpha_2,\beta'_2,\beta_3\right),$$
where $B_i$ and $B'_i$ are the bases of $\hc(\one,Z_i)$ and $\hc(\one,Z_i^*)$ from Lemma \ref{lem:bili} ($x \mapsto x'$ denotes the bijection from $B_i$ to $B'_i$), with $Z_0 := X_1 \otimes X_2 \otimes X_3$, and for $i=1,2,3$, $Z_i := X \otimes Y_i$, where $Y_1:= X_9 \otimes X_6^{*}$, $Y_2:=X_7 \otimes^*$ \hspace{-.35cm} $X_4$,  $Y_3:=X_8 \otimes^*$ \hspace{-.35cm} $X_5$, and $\mathcal{O}$ is the set of simple subobjects of both $Y_1^*,Y_2^*$ and $Y_3^*$ (up to isomorphism). As before, $\rho$ and $\sigma$ are the natural transformations from Definition \ref{def:3Rot}.
\end{theorem}
\begin{proof}
The idea is to evaluate $TP:=TP(\alpha_1, \alpha_2, \alpha_3, \alpha_4, \alpha_5, \alpha_6)$ with two different ways, providing the LHS and RHS of above equation. On one hand, observe that we can apply Lemma \ref{lem:bili} (with $Z=Z_0$) to $TP$, and then
$$ TP =  \sum_{\beta_0 \in B_0}
\raisebox{-2.8cm}{
\begin{tikzpicture}
\draw (-.7,0) --++ (0,-.4) node [right=-.1cm] {\tiny ${X_4}$} --++ (0,-.9) arc (-180:0:.7 and .3);
\draw (0,0) --++ (0,-.4) arc (-180:0:.85 and .5) --++ (0,.4);
\draw (.7,0) --++ (0,-.4) arc (-180:0:.25) --++ (0,.5) arc (0:180:1.2 and .5);

\draw (-.7,-2) --++ (0,-.4) node [right=-.1cm] {\tiny ${X_5}$} --++ (0,-.9) arc (-180:0:.7 and .3);
\draw (.7,-2) --++ (0,-.4) arc (-180:0:.25) --++ (0,1.1) arc (0:180:.25);

\draw (-.7,-4) --++ (0,-.4)  node [right=-.1cm] {\tiny ${X_6}$} arc (0:-180:.25) --++ (0,4.5);
\draw (0,-4) --++ (0,-.4) arc (-180:0:1.55 and .5) --++ (0,4.4);
\draw (.7,-4) --++ (0,-.4) arc (-180:0:.25) --++ (0,1.1) arc (0:180:.25);

\draw (0,-2) --++ (0,-.4) arc (-180:0:1.2 and .5) --++ (0,2.4) node [draw,fill=white,minimum width=2cm, minimum height=.6cm] {$\beta_0$};

\draw (0,0)  node [draw,fill=white,minimum width=2cm] {$\alpha_1$};
\draw (0,-2)  node [draw,fill=white,minimum width=2cm] {$\alpha_2$};
\draw (0,-4)  node [draw,fill=white,minimum width=2cm] {$\alpha_3$};
\end{tikzpicture}}
\raisebox{-2.8cm}{
\begin{tikzpicture}
\draw (3-.7,0) --++ (0,-.4)  node [right=-.1cm] {\tiny ${X_9}$} arc (0:-180:.25) --++ (0,.5);
\draw (3,0) --++ (0,-.4)  node [right=-.1cm] {\tiny ${X_1}$} arc (0:-180:.85 and .5) --++ (0,.4);;
\draw (3+.7,0) --++ (0,-1.3) arc (0:-180:.7 and .3);

\draw (3-.7,-2) --++ (0,-.4)  node [right=-.1cm] {\tiny ${X_7}$} arc (0:-180:.25) --++ (0,1.1) arc (180:0:.25);
\draw (3+.7,-2) --++ (0,-1.3) arc (0:-180:.7 and .3);

\draw (3-.7,-4) --++ (0,-.4)  node [right=-.1cm] {\tiny ${X_8}$} arc (0:-180:.25) --++ (0,1.1) arc (180:0:.25);
\draw (3,-4) --++ (0,-.4) node [right=-.1cm] {\tiny ${X_3}$} arc (0:-180:1.55 and .5) --++ (0,4.4);
\draw (3+.7,-4) --++ (0,-.4) arc (-180:0:.25) --++ (0,4.5) arc (0:180:1.2 and .5);

\draw (3,-2) --++ (0,-.4)  node [right=-.1cm] {\tiny ${X_2}$} arc (0:-180:1.2 and .5) --++ (0,2.4) node [draw,fill=white,minimum width=2cm,minimum height=.6cm] {$\beta'_0$};

\draw (3,0)  node [draw,fill=white,minimum width=2cm] {$\alpha_4$};
\draw (3,-2)  node [draw,fill=white,minimum width=2cm] {$\alpha_5$};
\draw (3,-4)  node [draw,fill=white,minimum width=2cm] {$\alpha_6$};
\end{tikzpicture}}
$$
so by some zigzag relations
$$ TP =  \sum_{\beta_0 \in B_0}
\raisebox{-2cm}{
\begin{tikzpicture}
\draw (-.7,0) --++ (0,-.4) node [right=-.1cm] {\tiny ${X_4}$} --++ (0,-.9) arc (-180:0:.7 and .3);
\draw (0,0) --++ (0,-.4) arc (-180:0:.85 and .5) --++ (0,.4);
\draw (.7,0) --++ (0,-.4) arc (-180:0:.25) --++ (0,.5) arc (0:180:2.45 and .5);

\draw (-.7,-2) --++ (0,-.4) node [right=-.1cm] {\tiny ${X_5}$} arc (0:-180:.55 and .5) --++ (0,2.4);
\draw (.7,-2) --++ (0,-.4) arc (-180:0:.25) --++ (0,1.1) arc (0:180:.25);

\draw (-2.5-.7,0) --++ (0,-.4)  node [right=-.1cm] {\tiny ${X_6}$} arc (0:-180:.25) --++ (0,.5);
\draw (-2.5,0) --++ (0,-2.6) arc (-180:0:2.8 and .5) --++ (0,2.6);
\draw (-2.5+.7,0) --++ (0,-.4);

\draw (0,-2) --++ (0,-.4) arc (-180:0:1.2 and .5) --++ (0,2.4) node [draw,fill=white,minimum width=2cm, minimum height=.6cm] {$\beta_0$};

\draw (0,0)  node [draw,fill=white,minimum width=2cm] {$\alpha_1$};
\draw (0,-2)  node [draw,fill=white,minimum width=2cm] {$\alpha_2$};
\draw (-2.5,0)  node [draw,fill=white,minimum width=2cm] {$\alpha_3$};
\end{tikzpicture}}
\raisebox{-2cm}{
\begin{tikzpicture}
\draw (3-.7,0) --++ (0,-.4)  node [right=-.1cm] {\tiny ${X_9}$} arc (0:-180:.25) --++ (0,.5);
\draw (3,0) --++ (0,-.4)  node [right=-.1cm] {\tiny ${X_1}$} arc (0:-180:.85 and .5) --++ (0,.4);;
\draw (3+.7,0) --++ (0,-1.3) arc (0:-180:.7 and .3);

\draw (3-.7,-2) --++ (0,-.4)  node [right=-.1cm] {\tiny ${X_7}$} arc (0:-180:.25) --++ (0,1.1) arc (180:0:.25);
\draw (3+.7,-2) --++ (0,-.4) arc (-180:0:.55 and .5) --++ (0,2.4);

\draw (5.5-.7,0) --++ (0,-.4)  node [right=-.1cm] {\tiny ${X_8}$};
\draw (5.5,0) --++ (0,-.4) node [right=-.1cm] {\tiny ${X_3}$} --++ (0,-2.2) arc (0:-180:2.8 and .5) --++ (0,2.6);
\draw (5.5+.7,0) --++ (0,-.4) arc (-180:0:.25) --++ (0,.5) arc (0:180:2.45 and .5);

\draw (3,-2) --++ (0,-.4)  node [right=-.1cm] {\tiny ${X_2}$} arc (0:-180:1.2 and .5) --++ (0,2.4) node [draw,fill=white,minimum width=2cm,minimum height=.6cm] {$\beta'_0$};

\draw (3,0)  node [draw,fill=white,minimum width=2cm] {$\alpha_4$};
\draw (3,-2)  node [draw,fill=white,minimum width=2cm] {$\alpha_5$};
\draw (5.5,0)  node [draw,fill=white,minimum width=2cm] {$\alpha_6$};
\end{tikzpicture}},
$$
and, again by some zigzag relations, we get that
$$
TP= \sum_{\beta_0 \in B_0} T\left(\rho^{-2}(\alpha_2),\rho(\alpha_3),\rho^{-1}(\alpha_1),\beta_0 \right) T\left(\rho^{-1}(\alpha_5),\beta'_0,\rho(\alpha_4),\rho^{-1}(\alpha_6)\right)=LHS
$$

On the other hand, we can apply Proposition \ref{prop:basis2} (resolution of identity) on $TP$ three times, with $T=T_j = Y_j^*$, $j=1,2,3$, $\mathcal{O}(T_j)$ the set of simple subobjects of $T_j$ (up to isomorphism), and the bases from Lemma \ref{lem:basis}. Then
$$ TP =  \sum_{\substack{S_i \in \mathcal{O}(T_i), \\ i=1,2,3}} \sum_{\substack{\beta_i \in B_i, \\ i=1,2,3}} \prod_{i=1}^3 \dim(S_i)
\raisebox{-5cm}{
\begin{tikzpicture}
\draw (-1-.7,-.5) --++ (0,-.4) node [right=-.1cm] {\tiny ${X_4}$} --++ (0,-2.15) arc (-180:0:.35 and .3) arc (180:0:.4 and .3);
\draw (0-1,-.5) --++ (0,-.4) arc (-180:0:2.5 and .8);
\draw (.7-1,-.5) --++ (0,-.5) arc (-180:0:.55 and .3);

\draw (-.7-1,-4) --++ (0,-.4) node [right=-.1cm] {\tiny ${X_5}$} --++ (0,-2.15) arc (-180:0:.35 and .3) arc (180:0:.4 and .3);
\draw (0-1,-4)--++ (0,-.4) arc (-180:0:2.5 and .8);
\draw (.7-1,-4) --++ (0,-.5) arc (-180:0:.55 and .3);

\draw (-.7-1,-8+.5) --++ (0,-.4)  node [right=-.1cm] {\tiny ${X_6}$} arc (0:-180:.25) --++ (0,8.35) arc (180:0:1 and .7);
\draw (0-1,-8+.5) --++ (0,-.4) arc (-180:0:2.5 and .8);
\draw (.7-1,-8+.5) --++ (0,-.5) arc (-180:0:.55 and .3);

\draw (4-.7,-.5) --++ (0,-.4)  node [right=-.1cm] {\tiny ${X_9}$} --++ (0,-.1) arc (0:-180:.9 and .3);
\draw (4,-.5) --++ (0,-.4) node [right=-.1cm] {\tiny ${X_1}$} --++ (0,0);
\draw (4+.7,-.5) --++ (0,-2) arc (0:-180:.9 and .5) arc (0:180:1.4 and .5) --++ (0,-.55);

\draw (4-.7,-4) --++ (0,-.4)   node [right=-.1cm] {\tiny ${X_7}$} --++ (0,-.1) arc (0:-180:.9 and .3);
\draw (4,-4) --++ (0,-.4)  node [right=-.1cm] {\tiny ${X_2}$} --++ (0,0) ;
\draw (4+.7,-4) --++ (0,-2) arc (0:-180:.9 and .5) arc (0:180:1.4 and .5) --++ (0,-.55);

\draw (4-.7,-8+.5) --++ (0,-.4)  node [right=-.1cm] {\tiny ${X_8}$} --++ (0,-.1) arc (0:-180:.9 and .3);
\draw (4,-8+.5) --++ (0,-.4) node [right=-.1cm] {\tiny ${X_3}$} --++ (0,0);
\draw (4+.7,-8+.5) --++ (0,-.4) arc (-180:0:.25) --++ (0,8.8);

\draw (-1,-.5)  node [draw,fill=white,minimum width=2cm] {$\alpha_1$};
\draw (-1,-4)  node [draw,fill=white,minimum width=2cm] {$\alpha_2$};
\draw (-1,-8+.5)  node [draw,fill=white,minimum width=2cm] {$\alpha_3$};
\draw (4,-.5)  node [draw,fill=white,minimum width=2cm] {$\alpha_4$};
\draw (4,-4)  node [draw,fill=white,minimum width=2cm] {$\alpha_5$};
\draw (4,-8+.5)  node [draw,fill=white,minimum width=2cm] {$\alpha_6$};
\draw (1.5-.7,1.25-.5) --++ (0,-.3) arc (0:-180:.225) --++ (0,.4) arc (180:0:1.15 and .5) --++ (0,-.72) node [draw,fill=white] {\tiny $a_{S_1}^{-1}$} --++ (0,-1.1) node [right=-.1cm] {\tiny $S_1$};
\draw (1.5,1.25-.5) --++ (0,-.3) arc (0:-180:.7 and .35) --++ (0,.44)  arc (180:0:2.55 and .7);
\draw (1.5+.7,1.25-.5) --++ (0,-.3) arc (0:-180:1.2 and .5);

\draw (1.5-.7,0-.5) --++ (0,-.5);
\draw (1.5,0-.5)  --++ (0,-.5);
\draw (1.5+.7,0-.5) --++ (0,-.45) arc (-180:0:.225 and .15);
\draw (1.5-.35,-.6-.5);

\draw (1.5,1.25-.5) node [draw,fill=white,minimum width=2cm] {$\beta_1$};
\draw (1.5,0-.5) node [draw,fill=white,minimum width=2cm] {$\beta'_1$};
\draw (1.5-.7,1.25-4) --++ (0,-.3) arc (0:-180:.225) --++ (0,.4) arc (180:0:1.15 and .5) --++ (0,-.72) node [draw,fill=white] {\tiny $a_{S_2}^{-1}$} --++ (0,-1.1) node [right=-.1cm] {\tiny ${S_2}$};
\draw (1.5,1.25-4) --++ (0,-.3) arc (0:-180:.7 and .35);
\draw (1.5+.7,1.25-4) --++ (0,-.3) arc (0:-180:1.2 and .5);

\draw (1.5-.7,-4) --++ (0,-.5);
\draw (1.5,-4)  --++ (0,-.5);
\draw (1.5+.7,-4) --++ (0,-.45) arc (-180:0:.225 and .15);
\draw (1.5-.35,-.6-4);

\draw (1.5,1.25-4) node [draw,fill=white,minimum width=2cm] {$\beta_2$};
\draw (1.5,-4) node [draw,fill=white,minimum width=2cm] {$\beta'_2$};
\draw (1.5-.7,1.25-8+.5) --++ (0,-.3) arc (0:-180:.225) --++ (0,.4) arc (180:0:1.15 and .5) --++ (0,-.72) node [draw,fill=white] {\tiny $a_{S_3}^{-1}$} --++ (0,-1.1) node [right=-.1cm] {\tiny ${S_3}$};
\draw (1.5,1.25-8+.5) --++ (0,-.3) arc (0:-180:.7 and .35);
\draw (1.5+.7,1.25-8+.5) --++ (0,-.3) arc (0:-180:1.2 and .5);

\draw (1.5-.7,-8+.5) --++ (0,-.5);
\draw (1.5,-8+.5)  --++ (0,-.5);
\draw (1.5+.7,-8+.5) --++ (0,-.45) arc (-180:0:.225 and .15);
\draw (1.5-.35,-.6-8+.5);

\draw (1.5,1.25-8+.5) node [draw,fill=white,minimum width=2cm] {$\beta_3$};
\draw (1.5,-8+.5) node [draw,fill=white,minimum width=2cm] {$\beta'_3$};
\end{tikzpicture}}.
$$
Observe that above picture is of the form
$$
\begin{tikzpicture}
\draw (-.15,0) --++ (0,.3) arc (0:180:.2) --++ (0,-3.6);
\draw (.15,0) --++ (0,.3) arc (180:0:.2) --++ (0,-3.6);
\draw (0,0) --++ (0,-.5) node [right=-.1cm] {\tiny ${S_1}$} --++ (0,-.5);
\draw (0,-1) --++ (0,-.5) node [right=-.1cm] {\tiny ${S_2}$} --++ (0,-.5);
\draw (0,-2) --++ (0,-.5) node [right=-.1cm] {\tiny ${S_3}$} --++ (0,-.5);
\draw (-.15,-3) --++ (0,-.3) arc (0:-180:.2);
\draw (.15,-3) --++ (0,-.3) arc (-180:0:.2);

\draw (0,0) node [draw,fill=white] {$A$};
\draw (0,-1) node [draw,fill=white] {$B$};
\draw (0,-2) node [draw,fill=white] {$C$};
\draw (0,-3) node [draw,fill=white] {$D$};
\end{tikzpicture},
$$
where $(S_i)_{i=1,2,3}$ are simple objects. By Schur's lemma, it is nonzero if (up to isomorphism) $S_1=S_2=S_3=:X \in \mathcal{O}$, in which case, it is equal to the following by Lemma \ref{lem:simple} (applied two times):
$$
\dim(X)^{-2}
\raisebox{-1cm}{
\begin{tikzpicture}
\draw (-.15,0) --++ (0,.3) arc (0:180:.2) --++ (0,-1.6);
\draw (.15,0) --++ (0,.3) arc (180:0:.2) --++ (0,-1.6);
\draw (0,0) --++ (0,-.5) node [right=-.1cm] {\tiny ${X}$} --++ (0,-.5);
\draw (-.15,-1) --++ (0,-.3) arc (0:-180:.2);
\draw (.15,-1) --++ (0,-.3) arc (-180:0:.2);

\draw (0,0) node [draw,fill=white] {$A$};
\draw (0,-1) node [draw,fill=white] {$D$};
\end{tikzpicture}}
\ \tr(B) \tr(C).
$$
Then
$$
TP =  \sum_{X \in \mathcal{O}} \sum_{\substack{\beta_i \in B_i, \\ i=1,2,3}} \dim(X)
\raisebox{-1cm}{
\begin{tikzpicture}
\draw (-.7-1,-8+.5) --++ (0,-.4)  node [right=-.1cm] {\tiny ${X_6}$} arc (0:-180:.25) --++ (0,1.35) arc (180:0:1 and .7);
\draw (0-1,-8+.5) --++ (0,-.4) arc (-180:0:2.5 and .8);
\draw (.7-1,-8+.5) --++ (0,-.5) arc (-180:0:.55 and .3);

\draw (4-.7,-8+.5) --++ (0,-.4)  node [right=-.1cm] {\tiny ${X_8}$} --++ (0,-.1) arc (0:-180:.9 and .3);
\draw (4,-8+.5) --++ (0,-.4) node [right=-.1cm] {\tiny ${X_3}$} --++ (0,0);
\draw (4+.7,-8+.5) --++ (0,-.4) arc (-180:0:.25) --++ (0,1.8);

\draw (1.5-.7,1.25-7.5) --++ (0,-.3) arc (0:-180:.225) --++ (0,.4) arc (180:0:1.15 and .5) --++ (0,-.72) node [draw,fill=white] {\tiny $a_{X}^{-1}$} --++ (0,-1.1);
\draw (1.5,1.25-7.5) --++ (0,-.3) arc (0:-180:.7 and .35) --++ (0,.44)  arc (180:0:2.55 and .7);
\draw (1.5+.7,1.25-7.5) --++ (0,-.3) arc (0:-180:1.2 and .5);

\draw (1.5-.7,-8+.5) --++ (0,-.5);
\draw (1.5,-8+.5)  --++ (0,-.5);
\draw (1.5+.7,-8+.5) --++ (0,-.45) arc (-180:0:.225 and .15);
\draw (1.5-.35,-.6-8+.5);

\draw (-1,-8+.5)  node [draw,fill=white,minimum width=2cm] {$\alpha_3$};
\draw (4,-8+.5)  node [draw,fill=white,minimum width=2cm] {$\alpha_6$};
\draw (1.5,1.25-7.5) node [draw,fill=white,minimum width=2cm] {$\beta_1$};
\draw (1.5,-8+.5) node [draw,fill=white,minimum width=2cm] {$\beta'_3$};
\end{tikzpicture}}
$$

$$
\raisebox{-1cm}{
\begin{tikzpicture}
\draw (-1-.7,-.5) --++ (0,-.4) node [right=-.1cm] {\tiny ${X_4}$} --++ (0,-2.15) arc (-180:0:.35 and .3) arc (180:0:.4 and .3);
\draw (0-1,-.5) --++ (0,-.4) arc (-180:0:2.5 and .8);
\draw (.7-1,-.5) --++ (0,-.5) arc (-180:0:.55 and .3);

\draw (4-.7,-.5) --++ (0,-.4)  node [right=-.1cm] {\tiny ${X_9}$} --++ (0,-.1) arc (0:-180:.9 and .3);
\draw (4,-.5) --++ (0,-.4) node [right=-.1cm] {\tiny ${X_1}$} --++ (0,0);
\draw (4+.7,-.5) --++ (0,-2) arc (0:-180:.9 and .5) arc (0:180:1.4 and .5) --++ (0,-.55);

\draw (1.5-.7,0-.5) --++ (0,-.5);
\draw (1.5,0-.5)  --++ (0,-.5);
\draw (1.5+.7,0-.5) --++ (0,-.45) arc (-180:0:.225 and .15) --++ (0,.6) arc (180:0:1.25 and .3);
\draw (1.5-.35,-.6-.5) ;

\draw (1.5-.7,1.25-4) --++ (0,-.3) arc (0:-180:.225) --++ (0,.4) arc (180:0:1.15 and .5) --++ (0,-.2) arc (-180:0:1.25 and .5) --++ (0,2.5);
\draw (1.5,1.25-4) --++ (0,-.3) arc (0:-180:.7 and .35);
\draw (1.5+.7,1.25-4) --++ (0,-.3) arc (0:-180:1.2 and .5);

\draw (-1,-.5)  node [draw,fill=white,minimum width=2cm] {$\alpha_1$};
\draw (4,-.5)  node [draw,fill=white,minimum width=2cm] {$\alpha_4$};
\draw (1.5,0-.5) node [draw,fill=white,minimum width=2cm] {$\beta'_1$};
\draw (1.5,1.25-4) node [draw,fill=white,minimum width=2cm] {$\beta_2$};
\end{tikzpicture}}
\raisebox{-1cm}{
\begin{tikzpicture}
\draw (-.7-1,-4) --++ (0,-.4) node [right=-.1cm] {\tiny ${X_5}$} --++ (0,-2.15) arc (-180:0:.35 and .3) arc (180:0:.4 and .3);
\draw (0-1,-4)--++ (0,-.4) arc (-180:0:2.5 and .8);
\draw (.7-1,-4) --++ (0,-.5) arc (-180:0:.55 and .3);

\draw (4-.7,-4) --++ (0,-.4)   node [right=-.1cm] {\tiny ${X_7}$} --++ (0,-.1) arc (0:-180:.9 and .3);
\draw (4,-4) --++ (0,-.4)  node [right=-.1cm] {\tiny ${X_2}$} --++ (0,0) ;
\draw (4+.7,-4) --++ (0,-2) arc (0:-180:.9 and .5) arc (0:180:1.4 and .5) --++ (0,-.55);

\draw (1.5-.7,-4) --++ (0,-.5);
\draw (1.5,-4)  --++ (0,-.5);
\draw (1.5+.7,-4) --++ (0,-.45) arc (-180:0:.225 and .15) --++ (0,.6) arc (180:0:1.25 and .3);
\draw (1.5-.35,-.6-4);

\draw (1.5-.7,1.25-8+.5) --++ (0,-.3) arc (0:-180:.225) --++ (0,.4) arc (180:0:1.15 and .5) --++ (0,-.2) arc (-180:0:1.25 and .5) --++ (0,2.5);
\draw (1.5,1.25-8+.5) --++ (0,-.3) arc (0:-180:.7 and .35);
\draw (1.5+.7,1.25-8+.5) --++ (0,-.3) arc (0:-180:1.2 and .5);

\draw (-1,-4)  node [draw,fill=white,minimum width=2cm] {$\alpha_2$};
\draw (4,-4)  node [draw,fill=white,minimum width=2cm] {$\alpha_5$};
\draw (1.5,-4) node [draw,fill=white,minimum width=2cm] {$\beta'_2$};
\draw (1.5,1.25-8+.5) node [draw,fill=white,minimum width=2cm] {$\beta_3$};
\end{tikzpicture}}.
$$
After applying some zigzag relations, Lemma \ref{lem:switch} and pivotality, we get the original form of monoidal tetrahedra:
$$
TP =  \sum_{X \in \mathcal{O}} \sum_{\substack{\beta_i \in B_i, \\ i=1,2,3}} \dim(X)
\raisebox{-2cm}{
\begin{tikzpicture}[scale=.9]
\draw (-.7+0.6,0)--++(0,-.4) node [left] {\tiny ${ }$} arc (0:-180:.4 and .25) --++(0,.5) arc (180:0:1.5 and .7) arc (-180:0:.6 and .4)  --++(0,2.5);
\draw (0+0.6,0)--++(0,-.4) node [right] {\tiny ${ }$} arc (0:-180:1 and .5) --++(0,1.7) arc (180:0:.65 and .3) arc (-180:0:.35 and .2) --++(0,1.3);
\draw (.7+0.6,0)--++(0,-.4) node [right] {\tiny ${ }$} arc (0:-180:1.7 and .75) --++(0,3);

\draw (-2.1-.7,2.6)--++(0,-.45) --++(0,-2.7) arc (-180:0:3.4 and 1);
\draw (-2.1+.7,2.6)--++(0,-.45) arc (-180:0:.65 and .3);

\draw (0.6-.7,2.6)--++(0,-.45)  node [left] {\tiny ${ }$};
\draw (0.6+.7,2.6)--++(0,-.45) arc (-180:0:.65 and .3);

\draw (3.3-.7,2.6)--++(0,-.45) node [left] {\tiny ${ }$};
\draw (3.3+.7,2.6)--++(0,-.45) node [right] {\tiny ${ }$} --++(0,-2.7);

\draw (0.6,0) node [draw,fill=white,minimum width=2cm] {$\rho^{-2}(\sigma(\beta'_3))$};
\draw (-2.1,2.6) node [draw,fill=white,minimum width=2cm] {$\rho(\alpha_3)$};
\draw (0.6,2.6) node [draw,fill=white,minimum width=2cm] {$\rho^2(\beta_1)$};
\draw (3.3,2.6) node [draw,fill=white,minimum width=2cm] {$\rho^{-1}(\alpha_6)$};
\end{tikzpicture}}$$

$$\raisebox{-2cm}{
\begin{tikzpicture}[scale=.9]
\draw (-.7+0.6,0)--++(0,-.4) node [left] {\tiny ${ }$} arc (0:-180:.4 and .25) --++(0,.5) arc (180:0:1.5 and .7) arc (-180:0:.6 and .4)  --++(0,2.5);
\draw (0+0.6,0)--++(0,-.4) node [right] {\tiny ${ }$} arc (0:-180:1 and .5) --++(0,1.7) arc (180:0:.65 and .3) arc (-180:0:.35 and .2) --++(0,1.3);
\draw (.7+0.6,0)--++(0,-.4) node [right] {\tiny ${ }$} arc (0:-180:1.7 and .75) --++(0,3);

\draw (-2.1-.7,2.6)--++(0,-.45) --++(0,-2.7) arc (-180:0:3.4 and 1);
\draw (-2.1+.7,2.6)--++(0,-.45) arc (-180:0:.65 and .3);

\draw (0.6-.7,2.6)--++(0,-.45)  node [left] {\tiny ${ }$};
\draw (0.6+.7,2.6)--++(0,-.45) arc (-180:0:.65 and .3);

\draw (3.3-.7,2.6)--++(0,-.45) node [left] {\tiny ${ }$};
\draw (3.3+.7,2.6)--++(0,-.45) node [right] {\tiny ${ }$} --++(0,-2.7);

\draw (0.6,0) node [draw,fill=white,minimum width=2cm] {$\rho^{-1}(\alpha_4)$};
\draw (-2.1,2.6) node [draw,fill=white,minimum width=2cm] {$\alpha_1$};
\draw (0.6,2.6) node [draw,fill=white,minimum width=2cm] {$\beta'_1$};
\draw (3.3,2.6) node [draw,fill=white,minimum width=2cm] {$\beta_2$};
\end{tikzpicture}}
\raisebox{-2cm}{
\begin{tikzpicture}[scale=.9]
\draw (-.7+0.6,0)--++(0,-.4) node [left] {\tiny ${ }$} arc (0:-180:.4 and .25) --++(0,.5) arc (180:0:1.5 and .7) arc (-180:0:.6 and .4)  --++(0,2.5);
\draw (0+0.6,0)--++(0,-.4) node [right] {\tiny ${ }$} arc (0:-180:1 and .5) --++(0,1.7) arc (180:0:.65 and .3) arc (-180:0:.35 and .2) --++(0,1.3);
\draw (.7+0.6,0)--++(0,-.4) node [right] {\tiny ${ }$} arc (0:-180:1.7 and .75) --++(0,3);

\draw (-2.1-.7,2.6)--++(0,-.45) --++(0,-2.7) arc (-180:0:3.4 and 1);
\draw (-2.1+.7,2.6)--++(0,-.45) arc (-180:0:.65 and .3);

\draw (0.6-.7,2.6)--++(0,-.45)  node [left] {\tiny ${ }$};
\draw (0.6+.7,2.6)--++(0,-.45) arc (-180:0:.65 and .3);

\draw (3.3-.7,2.6)--++(0,-.45) node [left] {\tiny ${ }$};
\draw (3.3+.7,2.6)--++(0,-.45) node [right] {\tiny ${ }$} --++(0,-2.7);

\draw (0.6,0) node [draw,fill=white,minimum width=2cm] {$\rho^{-1}(\alpha_5)$};
\draw (-2.1,2.6) node [draw,fill=white,minimum width=2cm] {$\alpha_2$};
\draw (0.6,2.6) node [draw,fill=white,minimum width=2cm] {$\beta'_2$};
\draw (3.3,2.6) node [draw,fill=white,minimum width=2cm] {$\beta_3$};
\end{tikzpicture}}
$$
and so
$$
  TP = \sum_{X \in \mathcal{O}} \sum_{\substack{\beta_i \in B_i, \\ i=1,2,3}} \dim(X) T\left( \rho^{-2}(\sigma(\beta'_3)),\rho(\alpha_3),\rho^2(\beta_1),\rho^{-1}(\alpha_6) \right) T\left(\rho^{-1}(\alpha_4),\alpha_1,\beta'_1,\beta_2\right) T\left(\rho^{-1}(\alpha_5),\alpha_2,\beta'_2,\beta_3\right) = RHS . \qedhere
$$
\end{proof}

\subsection{Oriented monoidal tetrahedron and spherical TPE} \label{bitetra}
In this subsection, we introduce a novel pictorial notation for the monoidal tetrahedron in the spherical case, utilizing the oriented notations from \S \ref{sec:bio}, along with specific rules. This new approach aligns precisely with the familiar geometric regular tetrahedron and its orientation-preserving symmetry group, $A_4$. Subsequently, we reinterpret TPE using this notation within the context of a spherical fusion category.
%
%
%
\begin{definition}  \label{def:TetraPiv} Following Definition \ref{def:TetraCat},
$$
\raisebox{-2.1cm}{
\begin{tikzpicture}[scale=2]
\draw[->] (2,0) -- (1,0) node [below] {\tiny ${X_4}$};
\draw (1,0) -- (0,0) node [below] {$\beta$};
\draw[->] (2,0) node [below] {$\delta$} -- (1.5,.866) node [right] {\tiny ${X_6}$};
\draw (1.5,.866) -- (1,1.732);
\draw[->] (1,1.732)	node [above] {$\gamma$} -- (.5,.866) node [left] {\tiny ${X_5}$};
\draw (.5,.866) -- (0,0);
\draw[->] (1,0.577) -- (.5,.2885) node [above] {\tiny ${X_3}$};
\draw (.5,.2885) -- (0,0);
\draw[->] (1,0.577) node [below] {$\alpha$} -- (1,1.1545) node [right] {\tiny ${X_2}$};
\draw (1,1.1545) -- (1,1.732);
\draw[->] (1,0.577) -- (1.5,.2885) node [above] {\tiny ${X_1}$};
\draw (1.5,.2885) -- (2,0);
\end{tikzpicture}}
:=T(\alpha, \beta, \gamma, \delta).$$
\end{definition}
The oriented and labeled tetrahedral graph in Definition~\ref{def:TetraPiv} serves only as formal notation. A common pitfall is trying to interpret it directly within the categorical graphical calculus using a directional reading, such as top-to-bottom or left-to-right. Its categorical meaning, however, is determined by Definition~\ref{def:TetraCat}, as $T(\alpha,\beta,\gamma,\delta)$.

While the position of an object label along an edge carries no intrinsic meaning, the choice of the corner at which a morphism label is placed on a trivalent vertex is significant. As before, we should not attempt to interpret the trivalent vertex in a specific direction. What matters is solely which corner carries the morphism label. By construction, we work with morphisms in $\Hom(\one, X\otimes Y\otimes Z)$ for suitable objects $X,Y,Z$. To recover these objects from a labeled vertex, rotate the entire vertex (not merely its label) so that the label is positioned at the top, as in the first trivalent vertex of~(\ref{R1}). In this position, the left, middle, and right edges correspond respectively to $X$, $Y$, and $Z$, or to their duals, as determined by the orientation of each edge.

We introduce two rules,~(\ref{R1}) and~(\ref{R2}), which govern changes in edge orientations and the placement of morphism labels at corners. 
In Definition~\ref{def:TetraPiv}, the tetrahedral graph, together with its prescribed edge orientations and choice of vertex corners, is taken as the \emph{reference pattern}. Whenever an oriented and labeled tetrahedral graph does not coincide with this pattern, these rules allow it to be transformed into the reference form, ensuring that it admits a consistent interpretation as some $T(\alpha',\beta',\gamma',\delta')$.

Finally, an oriented edge labeled by an object $X$ represents the evaluation morphism $\ev_{X,+}$ and is subject to the usual zigzag relations. Within these rules, if only one leg of a trivalent vertex is oriented, the absence of orientation on the remaining legs indicates that they may be assigned either orientation.
\begin{align}
\label{R1}
\raisebox{-.6cm}{
\begin{tikzpicture}
\draw (0,0)  node [above] {$\theta$} -- (0.866,0.5);
\draw[->] (0,0) -- (-0.433,0.25) node [below] {\tiny $X$};
\draw (-0.433,0.25)--(-0.866,0.5);
\draw (0,0)--(0,-1);
\end{tikzpicture}}
\hspace{.5cm} =& \hspace{1cm}
\raisebox{-.6cm}{
\begin{tikzpicture}
\draw (0,0) --(0.866,0.5);
\draw[->] (-0.866,0.5) -- (-0.433,0.25) node [above =.05cm] {\tiny $X^*$};
\draw (-0.433,0.25) -- (0,0);
\draw (0,0)--(0,-1);
\draw (0.15,-.15)  node [left=.1cm] {\small $\rho(\theta)$};
\end{tikzpicture}}
\hspace{.5cm} , \hspace{.5cm}
\raisebox{-.6cm}{
\begin{tikzpicture}
\draw (0,0)  node [above] {$\theta$} --(-0.866,0.5);
\draw[->] (0.866,0.5)--(0.433,0.25) node [below] {\tiny $X^*$};
\draw (0.433,0.25)--(0,0);
\draw (0,0)--(0,-1);
\end{tikzpicture}}
\hspace{.5cm} = \hspace{.5cm}
\raisebox{-.6cm}{
\begin{tikzpicture}
\draw (0,0) --(-0.866,0.5);
\draw[->] (0,0) -- (0.433,0.25) node [above] {\tiny $X$} ;
\draw (0.433,0.25) -- (0.866,0.5);
\draw (0,0)--(0,-1);
\draw (.55,-.1) node {\small $\rho^{-1}(\theta)$};
\end{tikzpicture}}
\\  \label{R2}
\raisebox{-.6cm}{
\begin{tikzpicture}
\draw (0,0)  node [above] {$\theta$} -- (0.866,0.5);
\draw[->] (-0.866,0.5) -- (-0.433,0.25) node [below] {\tiny $X^*$};
\draw (-0.433,0.25)--(0,0);
\draw (0,0)--(0,-1);
\end{tikzpicture}}
\hspace{.5cm} =& \hspace{.3cm}
\raisebox{-.6cm}{
\begin{tikzpicture}
\draw (0,0) --(0.866,0.5);
\draw[->] (0,0) -- (-0.433,0.25) node [above =0cm] {\tiny $X$};
\draw (-0.433,0.25) -- (-0.866,0.5);
\draw (0,0) --(0,-1);
\draw (0.15,-.1)  node [left=.1cm] {\small $\sigma^{-2}(\rho(\theta))$};
\end{tikzpicture}}
\hspace{.5cm} , \hspace{.5cm}
\raisebox{-.6cm}{
\begin{tikzpicture}
\draw (0,0)  node [above] {$\theta$} --(-0.866,0.5);
\draw[->] (0,0)--(0.433,0.25) node [below] {\tiny $X$};
\draw (0.433,0.25)--(0.866,0.5);
\draw (0,0)--(0,-1);
\end{tikzpicture}}
\hspace{.5cm} = \hspace{.5cm}
\raisebox{-.6cm}{
\begin{tikzpicture}
\draw (0,0) -- (-0.866,0.5);
\draw[->] (0.866,0.5) -- (0.433,0.25) node [above] {\tiny $X^*$} ;
\draw (0.433,0.25) -- (0,0);
\draw (0,0)--(0,-1);
\draw (.8,-.1) node {\small $\rho^{-1}(\sigma^{2}(\theta))$};
\end{tikzpicture}}
\end{align}
Observe that the two rules in each line are in fact equivalent (inverse each other). We justify these rules as follows:

\begin{center}
\raisebox{-.6cm}{
\begin{tikzpicture}
\draw (0,0)  node [above] {$\theta$} -- (0.866,0.5);
\draw[->] (0,0) -- (-0.433,0.25) node [below] {\tiny $X$};
\draw (-0.433,0.25)--(-0.866,0.5);
\draw (0,0)--(0,-1);
\end{tikzpicture}}
 =
\raisebox{-.5cm}{
\begin{tikzpicture}
\draw (-.7+0.6,0)--++(0,-.4) {arc (0:-95:.4 and .4)} [->] coordinate (A) node [right = .15cm] {\tiny $X$};
\draw (A) {arc (-95:-180:.4 and .4)} --++ (0,.4);
\draw (0+0.6,0)--++(0,-.5);
\draw (.7+0.6,0)--++(0,-.5);
\draw (0.6,0) node [draw,fill=white,minimum width=2cm] {$\theta$};
\end{tikzpicture}}
 =
\raisebox{-.5cm}{
\begin{tikzpicture}
\draw (-.7+0.6,0)--++(0,-.4) arc (0:-180:.4 and .4) --++(0,.5) arc (180:0:1.5 and .4) --++(0,-.5) {arc (-180:-95:.4 and .4)} coordinate (A) {arc (-95:0:.4 and .4)} --++ (0,.4);
\draw[->] (A) node [right = .15cm] {\tiny $X^*$} --++ (-0.0001,0);
\draw (0+0.6,0)--++(0,-.5);
\draw (.7+0.6,0)--++(0,-.5);
\draw (0.6,0) node [draw,fill=white,minimum width=2cm] {$\theta$};
\end{tikzpicture}}
=
\raisebox{-.5cm}{
\begin{tikzpicture}
\draw (-.7+0.6,0)--++(0,-.6);
\draw (0+0.6,0)--++(0,-.6);
\draw (.7+0.6,0)--++(0,-.4) {arc (-180:-95:.4 and .4)} coordinate (A) {arc (-95:0:.4 and .4)} --++ (0,.4);
\draw[->] (A) node [right = .15cm] {\tiny $X^*$} --++ (-0.0001,0);
\draw (0.6,0) node [draw,fill=white,minimum width=2cm] {$\rho(\theta)$};
\end{tikzpicture}}
=
\raisebox{-.6cm}{
\begin{tikzpicture}
\draw (0,0) --(0.866,0.5);
\draw[->] (-0.866,0.5) -- (-0.433,0.25) node [above =.05cm] {\tiny $X^*$};
\draw (-0.433,0.25) -- (0,0);
\draw (0,0)--(0,-1);
\draw (0.15,-.15)  node [left=.1cm] {\small $\rho(\theta)$};
\end{tikzpicture}}
\end{center}

\begin{lemma} \label{lem:TetraSym}
Following Definition \ref{def:TetraPiv} and above rules in the spherical case
$$
\raisebox{-2.1cm}{
\begin{tikzpicture}[scale=2]
\draw (1,0) node [below] {\tiny ${X_6}$} -- (2,0);
\draw[->] (0,0) node [below] {$\delta$} -- (1,0);
\draw[->] (2,0) node [below] {$\gamma$} -- (1.5,.866) node [right] {\tiny ${X_5}$};
\draw (1.5,.866) -- (1,1.732);
\draw (.5,.866) -- (1,1.732) node [above] {$\beta$};
\draw[->] (0,0) -- (.5,.866) node [left] {\tiny ${X_4}$} ;
\draw[->] (1,0.577) -- (.5,.2885) node [above] {\tiny ${X_1}$};
\draw (.5,.2885) -- (0,0);
\draw[->] (1,0.577) -- (1,1.1545) node [right] {\tiny ${X_3}$};
\draw (.9,0.62) node {$\alpha$};
\draw (1,1.1545) -- (1,1.732);
\draw[->] (1,0.577) -- (1.5,.2885) node [above] {\tiny ${X_2}$};
\draw (1.5,.2885) -- (2,0);
\end{tikzpicture}}
=
\raisebox{-2.15cm}{
\begin{tikzpicture}[scale=2]
\draw[->] (2,0) -- (1,0) node [below] {\tiny ${X_4}$};
\draw (1,0) -- (0,0) node [below] {$\beta$};
\draw[->] (2,0) node [below] {$\delta$} -- (1.5,.866) node [right] {\tiny ${X_6}$};
\draw (1.5,.866) -- (1,1.732);
\draw[->] (1,1.732)	node [above] {$\gamma$} -- (.5,.866) node [left] {\tiny ${X_5}$};
\draw (.5,.866) -- (0,0);
\draw[->] (1,0.577) -- (.5,.2885) node [above] {\tiny ${X_3}$};
\draw (.5,.2885) -- (0,0);
\draw[->] (1,0.577) node [below] {$\alpha$} -- (1,1.1545) node [right] {\tiny ${X_2}$};
\draw (1,1.1545) -- (1,1.732);
\draw[->] (1,0.577) -- (1.5,.2885) node [above] {\tiny ${X_1}$};
\draw (1.5,.2885) -- (2,0);
\end{tikzpicture}}
=
\raisebox{-2cm}{
\begin{tikzpicture}[scale=2]
\draw[->] (2,0) -- (1,0) node [below] {\tiny ${X_6}$};
\draw (1,0) -- (0,0);
\draw (.3,.075) node {$\gamma$};
\draw (2,0) -- (1.5,.866) node [right] {\tiny ${X_1}$};
\draw (1.7,.075) node {$\delta$};
\draw[->] (1,1.732) -- (1.5,.866);
\draw[->] (1,1.732) -- (.5,.866) node [left] {\tiny ${X_2}$};
\draw (1.075,1.432) node {$\alpha$};
\draw (0,0) -- (.5,.866);
\draw (1,0.577) -- (.5,.2885) node [above] {\tiny ${X_5}$};
\draw[->] (0,0) -- (.5,.2885);
\draw (1,0.577) node [below] {$\beta$} -- (1,1.1545) node [right] {\tiny ${X_3}$};
\draw[->] (1,1.732) -- (1,1.1545);
\draw (1,0.577) -- (1.5,.2885) node [above] {\tiny ${X_4}$};
\draw[->] (2,0) -- (1.5,.2885);
\end{tikzpicture}}
$$
\end{lemma}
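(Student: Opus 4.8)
The plan is to recognize each of the three tetrahedral diagrams as a symbol $T(-,-,-,-)$ of Definition \ref{def:TetraPiv}, after normalizing its edge orientations and its vertex--corner positions to those of the reference pattern, and then to quote Propositions \ref{prop:rel1} and \ref{prop:rel2} to identify all three with $T(\alpha,\beta,\gamma,\delta)$. By construction the middle diagram \emph{is} $T(\alpha,\beta,\gamma,\delta)$, so two equalities remain. Comparing corner positions, the left diagram is the reference pattern with the morphisms cyclically shifted according to $(\alpha,\beta,\gamma,\delta)\mapsto(\alpha,\delta,\beta,\gamma)$ and the right diagram according to $(\alpha,\beta,\gamma,\delta)\mapsto(\beta,\gamma,\alpha,\delta)$; these are the $3$-cycles $(2,3,4)$ and $(3,2,1)$ of Remark \ref{rk:A4}, which generate $A_4$, so (equalities of scalars composing) it suffices to treat the two diagrams one at a time.

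For the left diagram I would reorient its edges one at a time with Rules (\ref{R1}), (\ref{R2}), (\ref{R3}) until its underlying graph, edge orientations and corner positions coincide with those of the reference pattern of Definition \ref{def:TetraPiv}. Each such move at the vertex carrying $\theta$ replaces $\theta$ by $\rho^{\pm1}(\theta)$, $\sigma^{\pm1}(\theta)$, $\sigma^{-2}(\rho(\theta))$ or $\rho^{-1}(\sigma^{2}(\theta))$ and simultaneously dualizes the label of the reoriented edge. Tracking the cumulative substitution vertex by vertex --- using $\rho^{3}(\cdot)=(\cdot)^{**}$ and, for the $\sigma^{2}$ contributions, sphericality --- one finds that the normalized diagram reads $T(\rho(\alpha),\delta^{**},\beta,\gamma)$, which equals $T(\alpha,\beta,\gamma,\delta)$ by Proposition \ref{prop:rel1}. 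The right diagram is handled identically: normalizing it gives $T(\rho^{-3}(\beta),\rho^{-1}(\sigma^{2}(\gamma)),\rho(\alpha),\rho(\delta))$, which equals $T(\alpha,\beta,\gamma,\delta)$ by Proposition \ref{prop:rel2}.

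The genuinely routine part is redrawing the intermediate diagrams; the delicate point --- where I expect most of the effort to go --- is the bookkeeping of orientations and corners, since every edge meets two vertices, so one reorientation move changes the pending substitution at both of its endpoints, and the sequence of moves must be chosen so that the three composite substitutions land \emph{exactly} on the triples in Propositions \ref{prop:rel1} and \ref{prop:rel2}, not merely up to a further automorphism of the tetrahedron. One must also check that the object labels get permuted-with-duals in precisely the pattern forced by the typing of $\rho$ and $\sigma$ in Definition \ref{def:3Rot}, so that the normalized diagram is literally the diagram of the asserted $T(-,-,-,-)$ rather than something equal to it only after further zigzag moves.
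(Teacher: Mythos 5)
Your proposal is correct and follows essentially the same route as the paper: the paper likewise reduces each outer diagram to the reference pattern via Rules (\ref{R1})--(\ref{R2}) and identifies the left and right diagrams with $T(\rho(\alpha),\delta^{**},\beta,\gamma)$ and $T(\rho^{-3}(\beta),\rho^{-1}(\sigma^{2}(\gamma)),\rho(\alpha),\rho(\delta))$, then invokes Propositions \ref{prop:rel1} and \ref{prop:rel2}. The only cosmetic difference is direction: the paper starts from these $T$-symbols depicted in the reference pattern and applies the rules to reach the displayed diagrams, whereas you normalize the displayed diagrams back to the reference pattern; since the rules are invertible equalities, this is the same computation.
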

\begin{proof}
By Proposition \ref{prop:rel1}, $T(\alpha, \beta, \gamma, \delta) = T(\rho(\alpha), \delta^{**}, \beta, \gamma)$, but by Definition \ref{def:TetraPiv}:
$$
T(\rho(\alpha), \delta^{**}, \beta, \gamma) =
\raisebox{-2.1cm}{
\begin{tikzpicture}[scale=2]
\draw[->] (2,0) -- (1,0) node [below] {\tiny ${X_6^*}$};
\draw (1,0) -- (0,0) node [below] {$\delta^{**}$};
\draw[->] (2,0) node [below] {$\gamma$} -- (1.5,.866) node [right] {\tiny ${X_5}$};
\draw (1.5,.866) -- (1,1.732);
\draw[->] (1,1.732)	node [above] {$\beta$} -- (.5,.866) node [left] {\tiny ${X_4^*}$};
\draw (.5,.866) -- (0,0);
\draw[->] (1,0.577) -- (.5,.2885) node [above] {\tiny ${X_1^{**}}$};
\draw (.5,.2885) -- (0,0);
\draw[->] (1,0.577) node [below=.05cm] {$\rho(\alpha)$} -- (1,1.1545) node [right] {\tiny ${X_3}$};
\draw (1,1.1545) -- (1,1.732);
\draw[->] (1,0.577) -- (1.5,.2885) node [above] {\tiny ${X_2}$};
\draw (1.5,.2885) -- (2,0);
\end{tikzpicture}}
$$
Now $\delta^{**} = \rho^3(\delta)$, and we can apply $(\ref{R1})$ three times at its vertex, then one time at the vertex labeled by $\rho(\alpha)$; the first equality follows. Similary, by Proposition \ref{prop:rel2}, $T(\alpha, \beta, \gamma, \delta) = T(\rho^{-3}(\beta), \rho^{-1}(\sigma^{2}(\gamma)), \rho(\alpha), \rho(\delta))$, depict this last using Definition \ref{def:TetraPiv}, then apply one time $(\ref{R1})$ at $\rho(\alpha)$, three times $(\ref{R1})$ at $\rho^{-3}(\beta)$, one time $(\ref{R2})$ at $\rho^{-1}(\sigma^{2}(\gamma))$, and lastly one time $(\ref{R1})$ at $\rho(\delta)$; the second equality follows.
\end{proof}

\begin{remark} \label{rk:A4ction}
Observe that the three pictures in Lemma \ref{lem:TetraSym} use exactly the same labels. What is changing is  the ordering, the orientation of some edges and the corner of the label of some vertices. This result can be extended into an equality between $12$ such pictures. These pictures make a set on which the (orientation-preserving) symmetry group of the regular tetrahedron (i.e. the alternating group $A_4$) acts transitively. The two equalities of Lemma \ref{lem:TetraSym} correspond to two generators of the group $A_4$ (see Remark \ref{rk:A4}). This action can be made precise by encoding the data of such a picture on a ($3$-dimensional) regular tetrahedron. To do so, we just need to replace the choice of a corner to label a vertex by the choice of a face containing this vertex. An inside corner corresponds to the face containing it, whereas an outside corner corresponds to the hidden face (a tetrahedron has four faces, but our planar representation shows only three ones, the remaining one is hidden).
\end{remark}

Let us restate this into the following proposition:

\begin{proposition} \label{prop:A4Sym}
Let the alternating group $A_4$ act on the set of labeled oriented tetrahedral graphs as described in Remark~\ref{rk:A4ction}, and consider the orbit of the graph in Definition~\ref{def:TetraPiv}. Any element of this orbit can be transformed into the reference pattern of Definition~\ref{def:TetraPiv} by applying the rules above one or more times. Moreover, its categorical interpretation agrees with that of the original graph, since Propositions~\ref{prop:rel1} and~\ref{prop:rel2} may be applied repeatedly to relate them.
\end{proposition}

Now let us reformulate the TPE in the spherical case using these new notations.

\begin{theorem}[Spherical TPE] \label{thm:sTPE}
Following Theorem \ref{thm:TPE}, assume that $\mC$ is spherical. Then
$$
\sum_{\beta_0 \in B_0}
\raisebox{-2.1cm}{
\begin{tikzpicture}[scale=2]
\draw[->] (2,0) -- (1,0) node [below] {\tiny ${^*\hspace{-.07cm}X_6}$};
\draw (1,0) -- (0,0) node [below] {$\alpha_3$};
\draw[->] (2,0) node [below] {$\sigma^{-2}(\alpha_1)$} -- (1.5,.866) node [right] {\tiny ${X_4}$};
\draw (1.5,.866) -- (1,1.732);
\draw[->] (1,1.732)	node [above] {$\alpha_2$} -- (.5,.866) node [left] {\tiny ${X_5}$};
\draw (.5,.866) -- (0,0);
\draw[->] (1,0.577) -- (.5,.2885) node [above] {\tiny ${X_3}$};
\draw (.5,.2885) -- (0,0);
\draw[->] (1,0.577) node [below] {$\beta_0$} -- (1,1.1545) node [right] {\tiny ${X_2}$};
\draw (1,1.1545) -- (1,1.732);
\draw[->] (1,0.577) -- (1.5,.2885) node [above] {\tiny ${X_1}$};
\draw (1.5,.2885) -- (2,0);
\end{tikzpicture}}
\raisebox{-2.1cm}{
\begin{tikzpicture}[scale=2]
\draw[->] (2,0) -- (1,0) node [below] {\tiny ${^*\hspace{-.07cm}X_9}$};
\draw (1,0) -- (0,0) node [below] {$\alpha_4$};
\draw[->] (2,0) node [below] {$\sigma^{-2}(\alpha_6)$} -- (1.5,.866) node [right] {\tiny ${X_8}$};
\draw (1.5,.866) -- (1,1.732);
\draw[->] (1,1.732)	node [above] {$\alpha_5$} -- (.5,.866) node [left] {\tiny ${X_7}$};
\draw (.5,.866) -- (0,0);
\draw (1,0.577) -- (.5,.2885) node [above] {\tiny ${X_1}$};
\draw[->] (0,0) -- (.5,.2885);
\draw (1,0.577) node [below] {$\beta_0'$} -- (1,1.1545) node [right] {\tiny ${X_2}$};
\draw[->] (1,1.732) -- (1,1.1545);
\draw (1,0.577) -- (1.5,.2885) node [above] {\tiny ${X_3}$};
\draw[->] (2,0) -- (1.5,.2885);
\end{tikzpicture}}$$
$$
 =  \sum_{X \in \mathcal{O}} \sum_{\substack{\beta_i \in B_i, \\ i=1,2,3}} \dim(X)
 \raisebox{-2cm}{
\begin{tikzpicture}[scale=2]
\draw[->] (2,0) -- (1,0) node [below] {\tiny ${X_3}$};
\draw (1,0) -- (0,0);
\draw (.25,.25) node {\small  $\alpha_3$};
\draw (2,0) -- (1.5,.866) node [right] {\tiny ${X_9^{**}}$};
\draw (1.75,.25) node {\small $\alpha_6$};
\draw[->] (1,1.732) -- (1.5,.866);
\draw (1,1.732)	node [above] {$\rho^2(\beta_1)$} -- (.5,.866) node [left] {\tiny ${X_6}$};
\draw[->] (0,0) -- (.5,.866);
\draw (1,0.577) -- (.5,.2885) node [above] {\tiny ${X_5^*}$};
\draw[->] (0,0) -- (.5,.2885);
\draw[->] (1,0.577) node [below=.1cm] {\small $\rho(\beta'_3)$} -- (1,1.1545) node [right=-.05cm] {\tiny ${X^{*}}$};
\draw(1,1.732) -- (1,1.1545);
\draw (1,0.577) -- (1.5,.2885) node [above] {\tiny ${X_8}$};
\draw[->] (2,0) -- (1.5,.2885);
\end{tikzpicture}}
\raisebox{-2cm}{
\begin{tikzpicture}[scale=2]
\draw[->] (2,0) -- (1,0) node [below] {\tiny ${X_1}$};
\draw (1,0) -- (0,0);
\draw (.25,.25) node {\small  $\alpha_1$};
\draw[->] (2,0) -- (1.5,.866) node [right] {\tiny ${X_7^*}$};
\draw (1.75,.25) node {\small $\alpha_4$};
\draw (1,1.732) -- (1.5,.866);
\draw[->] (1,1.732)	node [above] {$\rho^2(\beta_2)$} -- (.5,.866) node [left] {\tiny ${^*\hspace{-.07cm}X_4}$};
\draw (.5,.866) -- (0,0);
\draw (1,0.577) -- (.5,.2885) node [above] {\tiny ${X_6^{***}}$};
\draw[->] (0,0) -- (.5,.2885);
\draw[->] (1,0.577) node [below=.1cm] {\small $\rho(\beta'_1)$} -- (1,1.1545) node [right] {\tiny ${X^*}$};
\draw (1,1.732) -- (1,1.1545);
\draw (1,0.577) -- (1.5,.2885) node [above] {\tiny ${X_9}$};
\draw[->] (2,0) -- (1.5,.2885);
\end{tikzpicture}}
\raisebox{-2cm}{
\begin{tikzpicture}[scale=2]
\draw[->] (2,0) -- (1,0) node [below] {\tiny ${X_2}$};
\draw (1,0) -- (0,0);
\draw (.25,.25) node {\small  $\alpha_2$};
\draw[->] (2,0) -- (1.5,.866) node [right] {\tiny ${X_8^*}$};
\draw (1.75,.25) node {\small $\alpha_5$};
\draw (1,1.732) -- (1.5,.866);
\draw[->] (1,1.732)	node [above] {$\rho^2(\beta_3)$} -- (.5,.866) node [left] {\tiny ${^*\hspace{-.07cm}X_5}$};
\draw (.5,.866) -- (0,0);
\draw (1,0.577) -- (.5,.2885) node [above] {\tiny ${X_4^{*}}$};
\draw[->] (0,0) -- (.5,.2885);
\draw[->] (1,0.577) node [below=.1cm] {\small $\rho(\beta'_2)$} -- (1,1.1545) node [right] {\tiny ${X^*}$};
\draw (1,1.732) -- (1,1.1545);
\draw (1,0.577) -- (1.5,.2885) node [above] {\tiny ${X_7}$};
\draw[->] (2,0) -- (1.5,.2885);
\end{tikzpicture}}
$$
\end{theorem}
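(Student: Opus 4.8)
## Proof Plan

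The plan is to start from the two-way evaluation of $TP$ already established in Theorem \ref{thm:TPE} and simply translate each of the five monoidal tetrahedra appearing there into the oriented pictorial notation of Definition \ref{def:TetraPiv}, using the rules \eqref{R1}, \eqref{R2}, \eqref{R3} to bring each picture into a form where the labels match those displayed in the statement. Concretely, Theorem \ref{thm:TPE} already gives
$$\sum_{\beta_0 \in B_0} T\!\left(\rho^{-2}(\alpha_2),\rho(\alpha_3),\rho^{-1}(\alpha_1),\beta_0\right) T\!\left(\rho^{-1}(\alpha_5),\beta'_0,\rho(\alpha_4),\rho^{-1}(\alpha_6)\right)$$
on the left, and the triple-sum of products of three $T$'s on the right. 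So the entire content of Theorem \ref{thm:sTPE} is a \emph{bookkeeping} translation: each symbol $T(-,-,-,-)$ is replaced by its oriented tetrahedral graph via Definition \ref{def:TetraPiv}, and then Proposition \ref{prop:A4Sym} (equivalently, repeated application of Lemma \ref{lem:TetraSym}, Propositions \ref{prop:rel1} and \ref{prop:rel2}, via rules \eqref{R1}--\eqref{R3}) is invoked to move within the $A_4$-orbit until the picture has the desired edge labels, orientations, and vertex-corner placements.

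The key steps, in order, are as follows. First, for the LHS: apply Definition \ref{def:TetraPiv} to $T(\rho^{-2}(\alpha_2),\rho(\alpha_3),\rho^{-1}(\alpha_1),\beta_0)$; the morphism at the ``$\alpha$-corner'' carries $\rho^{-2}(\alpha_2)$, so apply \eqref{R1} twice at that vertex to absorb the $\rho^{-2}$ into the edge orientations, turning the label into $\alpha_2$ (possibly picking up dual decorations on the adjacent edges, which is why $X_5$ and $X_3$ appear with the orientations shown); similarly the ``$\beta$-corner'' carries $\rho(\alpha_3)$, so apply \eqref{R1} once more to recover $\alpha_3$; the two remaining labels $\rho^{-1}(\alpha_1)$ and $\beta_0$ are handled by applying \eqref{R1} (once, inversely) at $\gamma$-corner, except that $\alpha_1$ needs $\sigma^{-2}$ rather than a pure $\rho$-shift because of how its hom-space is indexed (the object $X_6$ enters as $X_6^{***}$ in Definition \ref{def:TPCat}, forcing a $\sigma^{\pm 2}$ correction via rule \eqref{R2}) — hence the ``$\sigma^{-2}(\alpha_1)$'' label in the statement and the dual $^*X_6$ on the corresponding edge. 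Do the analogous translation for the second factor $T(\rho^{-1}(\alpha_5),\beta'_0,\rho(\alpha_4),\rho^{-1}(\alpha_6))$: one application of \eqref{R1} at each of the $\alpha$-, $\gamma$-, $\delta$-corners to strip the $\rho^{\pm 1}$'s, plus a $\sigma^{-2}$ correction at the $\alpha_6$-vertex (again because $X_9$ appears as $X_9^{***}$). Second, for the RHS, repeat exactly this procedure on each of the three tetrahedra $T(\rho^{-2}(\sigma(\beta'_3)),\rho(\alpha_3),\rho^2(\beta_1),\rho^{-1}(\alpha_6))$, $T(\rho^{-1}(\alpha_4),\alpha_1,\beta'_1,\beta_2)$, $T(\rho^{-1}(\alpha_5),\alpha_2,\beta'_2,\beta_3)$: translate via Definition \ref{def:TetraPiv}, then normalize each $\rho$- and $\sigma$-shifted label via \eqref{R1}, \eqref{R2}, \eqref{R3} until the displayed labels are reached (the $X^*$, $X^{**}$, $X^{***}$ decorations on the internal $X$-edges and on $X_9$, $X_6$, etc., are precisely the residue of these orientation changes, and the mixed $\sigma$/$\rho$ dressings on $\beta_1,\beta_2,\beta_3$ and on $\sigma(\beta'_3)$ come from rule \eqref{R3} at the vertices where a $\sigma$ appears). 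Third, assemble: since each side of the Theorem \ref{thm:TPE} identity has now been rewritten picture-for-picture, and since by Proposition \ref{prop:A4Sym} every rewriting step preserves the categorical interpretation, the equality of the oriented-graph expressions is exactly the equality of Theorem \ref{thm:TPE}, which is already proven.

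I expect the main obstacle to be purely combinatorial: keeping track of \emph{which} rule (and in which direction) to apply at each of the four vertices of each of the five tetrahedra so that all five pictures land with mutually consistent edge orientations and with the exact vertex-corner conventions shown — in particular reconciling the three $\sigma^{\pm 2}$ corrections forced by the three ``triple-dual'' slots ($X_6^{***}$ in $\alpha_1$, $X_9^{***}$ in $\alpha_6$, and the $\sigma$ inside $\sigma(\beta'_3)$), since $\sigma^2$ acts via $a^2$ and the pictures must display $X_6^{***}$, $X_9^{**}$ consistently wherever these objects recur across different tetrahedra. There is no genuine mathematical difficulty: sphericality is used only through Lemma \ref{lem:TetraSym}/Proposition \ref{prop:rel2} (already available), and no new identity is needed. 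The verification amounts to checking that the $A_4$-orbit reached on each side matches the template in the statement, which one does by the ``tetrahedron-face'' encoding of Remark \ref{rk:A4ction}: assign to each displayed graph its 3D tetrahedron-with-faces data and confirm it coincides with the image of the Theorem \ref{thm:TPE} tetrahedron under the appropriate element of $A_4$.
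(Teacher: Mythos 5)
Your proposal is correct and follows essentially the same route as the paper: the paper's proof likewise takes the identity of Theorem \ref{thm:TPE}, depicts each tetrahedron via Definition \ref{def:TetraPiv}, and then applies Lemma \ref{lem:TetraSym} together with rules (\ref{R1}) and (\ref{R2}) to normalize the $\rho$- and $\sigma$-shifted vertex labels (exhibiting the computation for the first tetrahedron and noting the others are similar). Your additional remarks on where the $\sigma^{-2}$ corrections originate and on tracking the $A_4$-orbit via Proposition \ref{prop:A4Sym} are consistent with, and slightly more explicit than, the paper's own bookkeeping.
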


\begin{proof}
It is just a reformulation of the equality in Theorem \ref{thm:TPE}. By Definition \ref{def:TetraPiv}, Lemma \ref{lem:TetraSym}, (\ref{R1}) and (\ref{R2}):
$$
T\left(\rho^{-2}(\alpha_2),\rho(\alpha_3),\rho^{-1}(\alpha_1),\beta_0 \right) =
\raisebox{-2.1cm}{
\begin{tikzpicture}[scale=2]
\draw[->] (2,0) -- (1,0) node [below] {\tiny ${X_3}$};
\draw (1,0) -- (0,0) node [below] {$\rho(\alpha_3)$};
\draw[->] (2,0) node [below] {$\beta_0$} -- (1.5,.866) node [right] {\tiny ${X_1}$};
\draw (1.5,.866) -- (1,1.732);
\draw[->] (1,1.732)	node [above] {$\rho^{-1}(\alpha_1)$} -- (.5,.866) node [left] {\tiny ${X_6^*}$};
\draw (.5,.866) -- (0,0);
\draw[->] (1,0.577) -- (.5,.2885) node [above] {\tiny ${X_5}$};
\draw (.5,.2885) -- (0,0);
\draw[->] (1,0.577) node [below=.1cm] {\small $\rho\hspace{-.1cm}^{-2}\hspace{-.04cm}(\alpha_2)$} -- (1,1.1545) node [right] {\tiny ${^*\hspace{-.07cm}X_4}$};
\draw (1,1.1545) -- (1,1.732);
\draw[->] (1,0.577) -- (1.5,.2885) node [above] {\tiny ${^*\hspace{-.07cm}X_2}$};
\draw (1.5,.2885) -- (2,0);
\end{tikzpicture}}
= \cdots =
\raisebox{-2.1cm}{
\begin{tikzpicture}[scale=2]
\draw[->] (2,0) -- (1,0) node [below] {\tiny ${^*\hspace{-.07cm}X_6}$};
\draw (1,0) -- (0,0) node [below] {$\alpha_3$};
\draw[->] (2,0) node [below] {$\sigma^{-2}(\alpha_1)$} -- (1.5,.866) node [right] {\tiny ${X_4}$};
\draw (1.5,.866) -- (1,1.732);
\draw[->] (1,1.732)	node [above] {$\alpha_2$} -- (.5,.866) node [left] {\tiny ${X_5}$};
\draw (.5,.866) -- (0,0);
\draw[->] (1,0.577) -- (.5,.2885) node [above] {\tiny ${X_3}$};
\draw (.5,.2885) -- (0,0);
\draw[->] (1,0.577) node [below] {$\beta_0$} -- (1,1.1545) node [right] {\tiny ${X_2}$};
\draw (1,1.1545) -- (1,1.732);
\draw[->] (1,0.577) -- (1.5,.2885) node [above] {\tiny ${X_1}$};
\draw (1.5,.2885) -- (2,0);
\end{tikzpicture}}
$$
The proof of the reformulation of the other tetrahedra in Theorem \ref{thm:TPE} is similar.
\end{proof}

\begin{remark} \label{rk:RepTP}
Here are three representations of the triangular prism graph,
$$
\begin{tikzpicture}[scale=.9]
\draw (1,0) node {\tiny $\bullet$}--(2,0)node {\tiny $\bullet$};
\draw (1,1) node {\tiny $\bullet$}--(2,1)node {\tiny $\bullet$};
\draw (1,2) node {\tiny $\bullet$}--(2,2)node {\tiny $\bullet$};
\draw (1,0) -- (1,2) --++ (0,.1) arc (0:180:.2) --++ (0,-2.2) arc (-180:0:.2) --++ (0,.1);
\draw (2,0) -- (2,2) --++ (0,.1) arc (180:0:.2) --++ (0,-2.2) arc (0:-180:.2) --++ (0,.1);
\end{tikzpicture}
, \hspace{1cm}
\begin{tikzpicture}[scale=1.15]
\draw (0,0) node {\tiny $\bullet$} --(2,0) node {\tiny $\bullet$} --(1,1.732) node {\tiny $\bullet$} --(0,0);
\draw (0,0)--(.5,.2885) node {\tiny $\bullet$} ;
\draw (1,1.732)--(1,1.1545) node {\tiny $\bullet$} ;
\draw (2,0)--(1.5,.2885)--(1,1.1545)--(.5,.2885)--(1.5,.2885) node {\tiny $\bullet$} ;
\end{tikzpicture}
, \hspace{1cm}
\begin{tikzpicture}[scale=1]
\draw (0,2)node {\tiny $\bullet$}--(3,2) node {\tiny $\bullet$};
\draw (1,1) node {\tiny $\bullet$}--(2,1) node {\tiny $\bullet$};
\draw (0,0) node {\tiny $\bullet$}--(3,0) node {\tiny $\bullet$};
\draw (0,0) -- (0,2) -- (1,1) -- (0,0);
\draw (3,0)--(3,2)--(2,1)--(3,0);
\end{tikzpicture},
$$
the first is the one we used in Definition \ref{def:TPCat}, the second is the usual one (already mentioned in \S \ref{sec:intro}), and the last is the one we will use in Definition \ref{def:TPPiv}.
\end{remark}

Without going into details (useless here), following Remark \ref{rk:SymTP}, we can realize the (orientation-preserving) symmetry group $C_6$ of the uniform triangular prism, as for the regular tetrahedron in Remark \ref{rk:A4ction}. The monoidal triangular prism in Definition \ref{def:TPCat} can be represented pictorially (as for Definition \ref{def:TetraPiv}) using Definition \ref{def:TPPiv}.

\begin{definition} \label{def:TPPiv}
Following Definition \ref{def:TPCat},
$$
\raisebox{-1.5cm}{
\begin{tikzpicture}[scale=1.4]
\draw (0,2)--(3,2);
\draw (1,1)--(2,1);
\draw (0,0)--(3,0);
\draw (0,0) -- (0,2) -- (1,1) -- (0,0);
\draw (3,0)--(3,2)--(2,1)--(3,0);
\draw[-<](0,2) -- (1.5,2) node [above] {\tiny $X_1$};
\draw[-<] (1,1) -- (1.5,1) node [above] {\tiny $X_2$};
\draw[-<] (0,0) -- (1.5,0) node [above] {\tiny $X_3$};
\draw[->] (0,2) --++ (.5,-.5) node [right] {\tiny $X_4$};
\draw[->] (1,1) --++ (-.5,-.5) node [right] {\tiny $X_5$};
\draw[->] (0,0) -- (0,1) node [left] {\tiny $X_6$};
\draw[-<] (3,2) --++ (-.5,-.5) node [left] {\tiny $X_7$};
\draw[-<] (2,1) --++ (.5,-.5) node [left] {\tiny $X_8$};
\draw[-<] (3,0) -- (3,1) node [right] {\tiny $X_9$};
\node at (0+.13,2-.3) {\tiny $\alpha_1$};
\node at (1-.2,1) {\tiny $\alpha_2$};
\node at (0+.13,0+.3) {\tiny $\alpha_3$};
\node at (3-.13,2-.3) {\tiny $\alpha_4$};
\node at (2+.2,1) {\tiny $\alpha_5$};
\node at (3-.13,0+.3) {\tiny $\alpha_6$};
\end{tikzpicture}}
:=TP(\alpha_1, \alpha_2, \alpha_3, \alpha_4, \alpha_5, \alpha_6).
$$
\end{definition}

\begin{remark}
In general, we can derive equations, analogous to TPE, from any planar trivalent graph that reflects the symmetries of its \emph{regular} geometric realization, whenever such a realization exists. Indeed, these equations can be obtained by evaluating the graph in different ways using tetrahedra, as illustrated by D. Thurston's figure for the triangular prism in the introduction. For instance, we could start with the planar cubical graph. However, these new equations are not \emph{essentially} new—they lie in the ideal generated by the equations from triangular prisms. Thus, while such graphs could provide useful equations to rule out certain fusion rings from categorification, the triangular prisms alone are necessary and sufficient for a complete categorification. In Remark \ref{rk:nonplanar}, we also discuss the smallest non-planar trivalent graph from which an intriguing braided version of TPE can be derived.
\end{remark}

\begin{remark} \label{rk:StrPiv}
According to \cite{HagHon}, any fusion category $\mC$ can be equivalently represented as a strictified skeletal category where $(\_)^{**} = \id_{\mC}$ applies to objects and $(\_)^{****} = \id_{\mC}$ applies to morphisms. Alternatively, from \cite[Theorem 2.2]{NgSch} it follows that a spherical fusion category is equivalent, as a spherical fusion category, to a \emph{strictly-pivotal} spherical fusion category, meaning $X^{**} = X$ and $a_X = \id_X$ for any object $X$ in $\mC$. Within this framework, even though $X^*$ and $X$ may be isomorphic, they are not identical. However, utilizing Lemma \ref{lem:evnu}, if $X$ is a simple object and $\kappa: X \to X^*$ is an isomorphism, then $\kappa^* = \nu_2(X) \kappa$.
\end{remark}

When addressing the categorification problem, we can, without loss of generality, focus on the cases outlined in Remark \ref{rk:StrPiv}. This simplifies the formulation of TPE. To further simplify, we have refined the definition of certain morphisms ($\alpha_i$ and $\beta_i$) in Theorem \ref{thm:sTPE2}—note that they differ slightly from those in Theorem \ref{thm:sTPE}. Such simplifications would be unattainable without additional assumptions at the outset, as seen in Theorem \ref{thm:TPE}. The modifications to the morphisms $(\alpha_i)$ permits to get the oriented labeled triangular prism graph, which varies in the orientation of $X_i$ for $i=1,2,3,7,8,9$, as depicted in Figure \ref{fig:TPEconfig}.
\begin{figure}[h]
$$
\begin{tikzpicture}[scale=1.4]
\draw (0,2)--(3,2);
\draw (1,1)--(2,1);
\draw (0,0)--(3,0);
\draw (0,0) -- (0,2) -- (1,1) -- (0,0);
\draw (3,0)--(3,2)--(2,1)--(3,0);
\draw[->](0,2) -- (1.5,2) node [above] {\tiny $X_1$};
\draw[->] (1,1) -- (1.5,1) node [above] {\tiny $X_2$};
\draw[->] (0,0) -- (1.5,0) node [above] {\tiny $X_3$};
\draw[->] (0,2) --++ (.5,-.5) node [right] {\tiny $X_4$};
\draw[->] (1,1) --++ (-.5,-.5) node [right] {\tiny $X_5$};
\draw[->] (0,0) -- (0,1) node [left] {\tiny $X_6$};
\draw[->] (3,2) --++ (-.5,-.5) node [left] {\tiny $X_7$};
\draw[->] (2,1) --++ (.5,-.5) node [left] {\tiny $X_8$};
\draw[->] (3,0) -- (3,1) node [right] {\tiny $X_9$};
\node at (0+.13,2-.3) {\tiny $\alpha_1$};
\node at (1-.2,1) {\tiny $\alpha_2$};
\node at (0+.13,0+.3) {\tiny $\alpha_3$};
\node at (3-.13,2-.3) {\tiny $\alpha_4$};
\node at (2+.2,1) {\tiny $\alpha_5$};
\node at (3-.13,0+.3) {\tiny $\alpha_6$};
\end{tikzpicture}
$$
\caption{Triangular prism configuration}
\label{fig:TPEconfig}
\end{figure}
\begin{theorem}[Simplified TPE] \label{thm:sTPE2}
Let $\mC$ be a spherical fusion category. Let $a$ be the spherical structure. Assume (without loss of generality by Remark \ref{rk:StrPiv}) that $X^{**} = X$ and $a_X^2 = \id_X$ for any object $X$ in $\mC$. Let $X_1, \dots, X_9$ be objects in $\mC$. Consider morphisms $\alpha_1 \in \hc(\one,X_4 \otimes X_1 \otimes X_6^*)$, $\alpha_2 \in \hc(\one,X_5 \otimes X_2 \otimes X_4^*)$, $\alpha_3 \in \hc(\one,X_6 \otimes X_3 \otimes X_5^*)$, $\alpha_4 \in \hc(\one,X_9^* \otimes X_1^* \otimes X_7)$, $\alpha_5 \in \hc(\one,X_7^* \otimes X_2^* \otimes X_8)$ and $\alpha_6 \in \hc(\one,X_8^* \otimes X_3^* \otimes X_9)$. Then the following TPE holds:
$$
\sum_{\beta_0 \in B_0} \hspace{-.5cm}
\raisebox{-1.4cm}{
\begin{tikzpicture}[scale=1.45]
\draw (2,0) -- (1,0) node [below] {\tiny ${X_6}$};
\draw[->] (0,0) node [below] {\tiny $\alpha_3$} -- (1,0);
\draw[->] (2,0) node [below] {\tiny $\alpha_1$} -- (1.5,.866) node [right] {\tiny ${X_4}$};
\draw (1.5,.866) -- (1,1.732);
\draw[->] (1,1.732)	node [above] {\tiny $\alpha_2$} -- (.5,.866) node [left] {\tiny ${X_5}$};
\draw (.5,.866) -- (0,0);
\draw (1,0.577) -- (.5,.2885) node [above] {\tiny ${X_3}$};
\draw[->] (0,0) -- (.5,.2885);
\draw (1,0.577) node [below=-.05cm] {\tiny $\beta'_0$} -- (1,1.1545) node [right=-.05cm] {\tiny ${X_2}$};
\draw[->] (1,1.732) -- (1,1.1545);
\draw (1,0.577) -- (1.5,.2885) node [above] {\tiny ${X_1}$};
\draw[->] (2,0) -- (1.5,.2885);
\end{tikzpicture}} \hspace{-.2cm}
\raisebox{-1.4cm}{
\begin{tikzpicture}[scale=1.45]
\draw[->] (2,0) -- (1,0) node [below] {\tiny ${X_9}$};
\draw (0,0) node [below] {\tiny $\alpha_4$} -- (1,0);
\draw (2,0) node [below] {\tiny $\alpha_6$} -- (1.5,.866) node [right] {\tiny ${X_8}$};
\draw[->] (1,1.732) -- (1.5,.866);
\draw (1,1.732)	node [above] {\tiny $\alpha_5$} -- (.5,.866) node [left] {\tiny ${X_7}$};
\draw[->] (0,0) -- (.5,.866);
\draw[->] (1,0.577) -- (.5,.2885) node [above] {\tiny ${X_1}$};
\draw (0,0) -- (.5,.2885);
\draw[->] (1,0.577) node [below=-.05cm] {\tiny $\beta_0$} -- (1,1.1545) node [right=-.05cm] {\tiny ${X_2}$};
\draw (1,1.732) -- (1,1.1545);
\draw[->] (1,0.577) -- (1.5,.2885) node [above] {\tiny ${X_3}$};
\draw (2,0) -- (1.5,.2885);
\end{tikzpicture}}
\hspace{-.4cm} = \hspace{-.1cm} \sum_{\substack{X \in \mathcal{O}, \\ \beta_i \in B_i, \\ i=1,2,3}} \hspace{-.1cm} d_X \hspace{-.3cm}
\raisebox{-1.4cm}{
\begin{tikzpicture}[scale=1.45]
\draw (2,0) -- (1,0) node [below] {\tiny ${X_1}$};
\draw[->] (0,0) -- (1,0);
\draw (.26,.25) node {\tiny $\alpha_1$};
\draw[->] (2,0) -- (1.5,.866) node [right] {\tiny ${X_7}$};
\draw (1.74,.25) node {\tiny $\alpha_4$};
\draw (1,1.732) -- (1.5,.866);
\draw (1,1.732)	node [above] {\tiny $\beta_1$} -- (.5,.866) node [left] {\tiny ${X_4}$};
\draw[->] (0,0) -- (.5,.866);
\draw[->] (1,0.577) -- (.5,.2885) node [above] {\tiny ${X_6}$};
\draw (0,0) -- (.5,.2885);
\draw (1,0.577) node [below=-.05cm] {\tiny $\beta'_3$} -- (1,1.1545) node [right=-.05cm] {\tiny ${X}$};
\draw[->] (1,1.732) -- (1,1.1545);
\draw[->] (1,0.577) -- (1.5,.2885) node [above] {\tiny ${X_9}$};
\draw (2,0) -- (1.5,.2885);
\end{tikzpicture}}
\raisebox{-1.4cm}{
\begin{tikzpicture}[scale=1.45]
\draw (2,0) -- (1,0) node [below] {\tiny ${X_2}$};
\draw[->] (0,0) -- (1,0);
\draw (.26,.25) node {\tiny $\alpha_2$};
\draw[->] (2,0) -- (1.5,.866) node [right] {\tiny ${X_8}$};
\draw (1.74,.25) node {\tiny $\alpha_5$};
\draw (1,1.732) -- (1.5,.866);
\draw (1,1.732)	node [above] {\tiny $\beta_2$} -- (.5,.866) node [left] {\tiny $X_5$};
\draw[->] (0,0) -- (.5,.866);
\draw[->] (1,0.577) -- (.5,.2885) node [above] {\tiny ${X_4}$};
\draw (0,0) -- (.5,.2885);
\draw (1,0.577) node [below=-.05cm] {\tiny $\beta'_1$} -- (1,1.1545) node [right=-.05cm] {\tiny ${X}$};
\draw[->] (1,1.732) -- (1,1.1545);
\draw[->] (1,0.577) -- (1.5,.2885) node [above] {\tiny ${X_7}$};
\draw (2,0) -- (1.5,.2885);
\end{tikzpicture}}
\raisebox{-1.4cm}{
\begin{tikzpicture}[scale=1.45]
\draw (2,0) -- (1,0) node [below] {\tiny ${X_3}$};
\draw[->] (0,0) -- (1,0);
\draw (.26,.25) node {\tiny $\alpha_3$};
\draw[->] (2,0) -- (1.5,.866) node [right] {\tiny ${X_9}$};
\draw (1.74,.25) node {\tiny $\alpha_6$};
\draw (1,1.732) -- (1.5,.866);
\draw (1,1.732)	node [above] {\tiny $\beta_3$} -- (.5,.866) node [left] {\tiny ${X_6}$};
\draw[->] (0,0) -- (.5,.866);
\draw[->] (1,0.577) -- (.5,.2885) node [above] {\tiny ${X_5}$};
\draw (0,0) -- (.5,.2885);
\draw (1,0.577) node [below=-.05cm] {\tiny $\beta'_2$} -- (1,1.1545) node [right=-.05cm] {\tiny ${X}$};
\draw[->] (1,1.732) -- (1,1.1545);
\draw[->] (1,0.577) -- (1.5,.2885) node [above] {\tiny ${X_8}$};
\draw (2,0) -- (1.5,.2885);
\end{tikzpicture}}
$$
where $d_X = \dim(X)$; $\mathcal{O}$ is the set of simple subobjects of both $X_4 \otimes X_7$, $X_5 \otimes X_8$ and $X_6 \otimes X_9$ (up to isomorphism); $B_i$ is a basis of $\hc(\one,Z_i)$ with $Z_0=X_3 \otimes X_2 \otimes X_1$, $Z_1=X^*_4 \otimes X \otimes X^*_7$, $Z_2 = X^*_5 \otimes X \otimes X^*_8$ and $Z_3 = X^*_6 \otimes X \otimes X^*_9$; $\beta'_i \in B'_i$ the dual basis of $B_i$ according to the bilinear form in Lemma \ref{lem:bili}; and $\beta_i \mapsto \beta'_i$ is the usual bijection.
\end{theorem}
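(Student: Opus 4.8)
The plan is to obtain Theorem~\ref{thm:sTPE2} as a bookkeeping reformulation of the Spherical TPE (Theorem~\ref{thm:sTPE}), the only new ingredient being that the extra hypotheses $X^{**}=X$ and $a_X^2=\id_X$ erase almost all of the natural isomorphisms decorating that identity. First I would record their effect on the transformations of Definition~\ref{def:3Rot}: when $X^{**}=X$ the functor $H$ there equals $F$ and $\sigma(\alpha)=(\id_{X\otimes Y}\otimes a_Z)\circ\alpha$, so $\sigma^{2}(\alpha)=(\id\otimes a_Z^{2})\circ\alpha=\alpha$ and hence $\sigma^{\pm2}=\id$; thus every occurrence of $\sigma^{-2}$ in Theorem~\ref{thm:sTPE} simply disappears. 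Likewise $X_9^{**}=X_9$, $X_6^{***}=X_6^{*}$, and --- passing to a strictification in which ${}^{*}X=X^{*}$, which by \cite{HagHon} loses nothing for categorification purposes --- every triple-dual and right-dual symbol in Theorem~\ref{thm:sTPE} collapses to a single left dual. After these substitutions the decorated tetrahedra of Theorem~\ref{thm:sTPE} become the bare tetrahedra of Definition~\ref{def:TetraPiv} that appear in Theorem~\ref{thm:sTPE2}.

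Second I would account for the change of prism configuration, from the one of Definition~\ref{def:TPPiv} used to derive Theorem~\ref{thm:sTPE} to the one of Figure~\ref{fig:TPEconfig} used here: as the paper warns, the new $\alpha_i$ (and $\beta_i$) differ from the old ones by replacing the six boundary objects $X_1,X_2,X_3,X_7,X_8,X_9$ by their duals on the relevant legs, i.e.\ by reversing those six edges, together with a cyclic renaming of the three summation indices $i=1,2,3$. On each individual tetrahedron this reversal is precisely the operation governed by rules (\ref{R1})--(\ref{R3}), equivalently by Propositions~\ref{prop:rel1} and~\ref{prop:rel2}: flipping an incident edge replaces the morphism at a vertex by a $\rho^{\pm1}$- or $\sigma^{\pm2}$-image of it, and since $\sigma^{\pm2}=\id$ in the present setting the net effect is only the relabeling of the ambient hom-space --- which is exactly what defines the $\alpha_i,\beta_i$ of Theorem~\ref{thm:sTPE2}. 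Concretely I would re-run the proof of Theorem~\ref{thm:TPE} directly in the strictified setting with the new $\alpha_i$: evaluate $TP$ twice, once by applying Lemma~\ref{lem:bili} with $Z=Z_0=X_1\otimes X_2\otimes X_3$ to produce the left-hand side, and once by applying the resolution of identity (Proposition~\ref{prop:basis2}) three times --- for the objects $X_4\otimes X_7$, $X_5\otimes X_8$, $X_6\otimes X_9$ --- and collapsing the three sums by Schur's lemma to produce the right-hand side. Along the way one checks the dictionary $Z_1=X_7^{*}\otimes X\otimes X_4^{*}$, $Z_2=X_8^{*}\otimes X\otimes X_5^{*}$, $Z_3=X_9^{*}\otimes X\otimes X_6^{*}$ and $\mathcal O=$ common simple subobjects of $X_4\otimes X_7$, $X_5\otimes X_8$, $X_6\otimes X_9$, which comes from rewriting the objects $Y_j^{*}$ of Theorem~\ref{thm:TPE} via $X^{**}=X$.

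I expect the main obstacle to be purely combinatorial: to track, simultaneously across all four tetrahedra, which corner each morphism label occupies and which way each edge is oriented, so as to confirm that the re-orientation and the index renaming are absorbed into the new $\alpha_i,\beta_i$ with \emph{no residual scalar}. A scalar could in principle enter when one collides $\rho$- and $\sigma$-twists on a one-dimensional hom-space, as in Lemmas~\ref{lem:rot} and~\ref{lem:biFS}; but in Theorems~\ref{thm:TPE} and~\ref{thm:sTPE} all the rotational twists are already carried as honest natural isomorphisms on the full bases $B_i,B'_i$ rather than as eigenvalues, so specializing cannot create such a factor, and the hypothesis $a_X^2=\id_X$ (together with Lemma~\ref{lem:pivo*}, giving $a_X^{*}=a_{X^{*}}$) is exactly what makes the relevant $\sigma$-powers trivial in the first place. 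The cleanest way to keep the bookkeeping honest, as indicated in Remark~\ref{rk:A4ction}, is to record the data of each tetrahedron on an actual $3$-dimensional tetrahedron (reading a corner as the face it lies on) and the whole picture on a prism; once the symmetry action is pinned down this way, the desired identity is just Theorem~\ref{thm:sTPE} with its decorations removed, and no new analytic input is needed.
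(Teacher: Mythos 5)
Your proposal is correct and follows essentially the same route as the paper, which in fact gives no separate proof of Theorem \ref{thm:sTPE2} at all: it presents the statement as a direct reformulation of Theorem \ref{thm:sTPE} (equivalently, a re-run of the proof of Theorem \ref{thm:TPE}) in the strictified setting where $X^{**}=X$ and $a_X^2=\id_X$ kill every $\sigma^{\pm2}$ and collapse the iterated duals, with the redefined $\alpha_i,\beta_i$ absorbing the edge reorientations on $X_1,X_2,X_3,X_7,X_8,X_9$ exactly as you describe. Your attention to the absence of residual scalars (since the $\rho$- and $\sigma$-twists act on whole bases rather than as eigenvalues) supplies the one point the paper leaves implicit.
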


\begin{lemma}[Rotation eigenvalue] \label{lem:rot}
Following Remark \ref{rk:FS3}, if $\hc(\one,X^{\otimes 3})$ is one-dimensional with generator, say $\theta$, $X^{**} = X$, and $a_X = \pm \id_{X}$, then
$$
\raisebox{-.6cm}{
\begin{tikzpicture}
\draw[->] (0,0)  node [above] {$\theta$} -- (0.433,0.25) node [below] {\tiny $X$};
\draw (0.433,0.25) -- (0.866,0.5);
\draw[->] (0,0) -- (-0.433,0.25) node [below] {\tiny $X$};
\draw (-0.433,0.25)--(-0.866,0.5);
\draw[->] (0,0)--(0,-.5) node [right] {\tiny $X$};
\draw (0,-.5) -- (0,-1);
\end{tikzpicture}}
= \omega_X
\raisebox{-.6cm}{
\begin{tikzpicture}
\draw (-.15,-.15)  node {$\theta$};
\draw[->] (0,0) -- (0.433,0.25) node [above] {\tiny $X$};
\draw (0.433,0.25) -- (0.866,0.5);
\draw[->] (0,0) -- (-0.433,0.25) node [above] {\tiny $X$};
\draw (-0.433,0.25)--(-0.866,0.5);
\draw[->] (0,0)--(0,-.5) node [right] {\tiny $X$};
\draw (0,-.5) -- (0,-1);
\end{tikzpicture}},
$$
where $\omega_X := \nu_3(X)$, so that $\omega^3_X = 1$.
\end{lemma}
\begin{proof}
We will apply Rule (\ref{R1}), Remark \ref{rk:FS3} and finally Lemma \ref{lem:biFS} (together with $\sigma(\theta) = \pm \theta$, as $a_X = \pm \id_X$):
$$
\raisebox{-.6cm}{
\begin{tikzpicture}
\draw[->] (0,0)  node [above] {$\theta$} -- (0.433,0.25) node [below] {\tiny $X$};
\draw (0.433,0.25) -- (0.866,0.5);
\draw[->] (0,0) -- (-0.433,0.25) node [below] {\tiny $X$};
\draw (-0.433,0.25)--(-0.866,0.5);
\draw[->] (0,0)--(0,-.5) node [right] {\tiny $X$};
\draw (0,-.5) -- (0,-1);
\end{tikzpicture}}
=
\raisebox{-.6cm}{
\begin{tikzpicture}
\draw (-.35,-.2)  node {$\rho(\theta)$};
\draw[->] (0,0) -- (0.433,0.25) node [above] {\tiny $X$};
\draw (0.866,0.5) -- (0.433,0.25);
\draw (0,0) -- (-0.433,0.25) node [above] {\tiny $X^*$};
\draw[->] (-0.866,0.5)--(-0.433,0.25);
\draw[->] (0,0)--(0,-.5) node [right] {\tiny $X$};
\draw (0,-.5) -- (0,-1);
\end{tikzpicture}}
=  \nu_3(X)
\raisebox{-.6cm}{
\begin{tikzpicture}
\draw (-.35,-.2)  node {$\sigma(\theta)$};
\draw (0,0) -- (0.433,0.25) node [above] {\tiny $X$};
\draw[->] (0.866,0.5) -- (0.433,0.25);
\draw (0,0) -- (-0.433,0.25) node [above] {\tiny $X^*$};
\draw[->] (-0.866,0.5)--(-0.433,0.25);
\draw[->] (0,0)--(0,-.5) node [right] {\tiny $X$};
\draw (0,-.5) -- (0,-1);
\end{tikzpicture}}
= \nu_3(X)
\raisebox{-.6cm}{
\begin{tikzpicture}
\draw (-.15,-.15)  node {$\theta$};
\draw[->] (0,0) -- (0.433,0.25) node [above] {\tiny $X$};
\draw (0.433,0.25) -- (0.866,0.5);
\draw[->] (0,0) -- (-0.433,0.25) node [above] {\tiny $X$};
\draw (-0.433,0.25)--(-0.866,0.5);
\draw[->] (0,0)--(0,-.5) node [right] {\tiny $X$};
\draw (0,-.5) -- (0,-1);
\end{tikzpicture}}. \qedhere
$$
\end{proof}

\begin{remark}[More simplifications] \label{rk:simplified}
Let $\mC$ be a spherical fusion category, and let $X_1, \dots, X_9$ be \emph{simple} objects in $\mC$. By Proposition \ref{prop:pivoFS}, the assumptions in Theorem \ref{thm:sTPE2} are always satisfied if $X=X^*$ for all simple object $X$ in $\mC$. If moreover $\nu_2(X) = 1$ for all simple object $X$, then by Lemma \ref{lem:biFS}, there is no need to orientate the edges:
$$
\sum_{\beta_0 \in B_0} \hspace{-.5cm}
\raisebox{-1.4cm}{
\begin{tikzpicture}[scale=1.45]
\draw (2,0) -- (1,0) node [below] {\tiny ${X_6}$};
\draw (0,0) node [below] {\tiny $\alpha_3$} -- (1,0);
\draw (2,0) node [below] {\tiny $\alpha_1$} -- (1.5,.866) node [right] {\tiny ${X_4}$};
\draw (1.5,.866) -- (1,1.732);
\draw (1,1.732)	node [above] {\tiny $\alpha_2$} -- (.5,.866) node [left] {\tiny ${X_5}$};
\draw (.5,.866) -- (0,0);
\draw (1,0.577) -- (.5,.2885) node [above] {\tiny ${X_3}$};
\draw (0,0) -- (.5,.2885);
\draw (1,0.577) node [below=-.05cm] {\tiny $\beta'_0$} -- (1,1.1545) node [right=-.1cm] {\tiny ${X_2}$};
\draw (1,1.732) -- (1,1.1545);
\draw (1,0.577) -- (1.5,.2885) node [above] {\tiny ${X_1}$};
\draw (2,0) -- (1.5,.2885);
\end{tikzpicture}} \hspace{-.2cm}
\raisebox{-1.4cm}{
\begin{tikzpicture}[scale=1.45]
\draw (2,0) -- (1,0) node [below] {\tiny ${X_9}$};
\draw (0,0) node [below] {\tiny $\alpha_4$} -- (1,0);
\draw (2,0) node [below] {\tiny $\alpha_6$} -- (1.5,.866) node [right] {\tiny ${X_8}$};
\draw (1,1.732) -- (1.5,.866);
\draw (1,1.732)	node [above] {\tiny $\alpha_5$} -- (.5,.866) node [left] {\tiny ${X_7}$};
\draw (0,0) -- (.5,.866);
\draw (1,0.577) -- (.5,.2885) node [above] {\tiny ${X_1}$};
\draw (0,0) -- (.5,.2885);
\draw (1,0.577) node [below=-.05cm] {\tiny $\beta_0$} -- (1,1.1545) node [right=-.1cm] {\tiny ${X_2}$};
\draw (1,1.732) -- (1,1.1545);
\draw (1,0.577) -- (1.5,.2885) node [above] {\tiny ${X_3}$};
\draw (2,0) -- (1.5,.2885);
\end{tikzpicture}}
\hspace{-.4cm} = \hspace{-.1cm} \sum_{\substack{X \in \mathcal{O}, \\ \beta_i \in B_i, \\ i=1,2,3}} \hspace{-.1cm} d_X \hspace{-.3cm}
\raisebox{-1.4cm}{
\begin{tikzpicture}[scale=1.45]
\draw (2,0) -- (1,0) node [below] {\tiny ${X_1}$};
\draw (0,0) -- (1,0);
\draw (.26,.25) node {\tiny $\alpha_1$};
\draw (2,0) -- (1.5,.866) node [right] {\tiny ${X_7}$};
\draw (1.74,.25) node {\tiny $\alpha_4$};
\draw (1,1.732) -- (1.5,.866);
\draw (1,1.732)	node [above] {\tiny $\beta_1$} -- (.5,.866) node [left] {\tiny ${X_4}$};
\draw (0,0) -- (.5,.866);
\draw (1,0.577) -- (.5,.2885) node [above] {\tiny ${X_6}$};
\draw (0,0) -- (.5,.2885);
\draw (1,0.577) node [below=-.05cm] {\tiny $\beta'_3$} -- (1,1.1545) node [right=-.1cm] {\tiny ${X}$};
\draw (1,1.732) -- (1,1.1545);
\draw (1,0.577) -- (1.5,.2885) node [above] {\tiny ${X_9}$};
\draw (2,0) -- (1.5,.2885);
\end{tikzpicture}}
\raisebox{-1.4cm}{
\begin{tikzpicture}[scale=1.45]
\draw (2,0) -- (1,0) node [below] {\tiny ${X_2}$};
\draw (0,0) -- (1,0);
\draw (.26,.25) node {\tiny $\alpha_2$};
\draw (2,0) -- (1.5,.866) node [right] {\tiny ${X_8}$};
\draw (1.74,.25) node {\tiny $\alpha_5$};
\draw (1,1.732) -- (1.5,.866);
\draw (1,1.732)	node [above] {\tiny $\beta_2$} -- (.5,.866) node [left] {\tiny $X_5$};
\draw (0,0) -- (.5,.866);
\draw (1,0.577) -- (.5,.2885) node [above] {\tiny ${X_4}$};
\draw (0,0) -- (.5,.2885);
\draw (1,0.577) node [below=-.05cm] {\tiny $\beta'_1$} -- (1,1.1545) node [right=-.1cm] {\tiny ${X}$};
\draw (1,1.732) -- (1,1.1545);
\draw (1,0.577) -- (1.5,.2885) node [above] {\tiny ${X_7}$};
\draw (2,0) -- (1.5,.2885);
\end{tikzpicture}}
\raisebox{-1.4cm}{
\begin{tikzpicture}[scale=1.45]
\draw (2,0) -- (1,0) node [below] {\tiny ${X_3}$};
\draw (0,0) -- (1,0);
\draw (.26,.25) node {\tiny $\alpha_3$};
\draw (2,0) -- (1.5,.866) node [right] {\tiny ${X_9}$};
\draw (1.74,.25) node {\tiny $\alpha_6$};
\draw (1,1.732) -- (1.5,.866);
\draw (1,1.732)	node [above] {\tiny $\beta_3$} -- (.5,.866) node [left] {\tiny ${X_6}$};
\draw (0,0) -- (.5,.866);
\draw (1,0.577) -- (.5,.2885) node [above] {\tiny ${X_5}$};
\draw (0,0) -- (.5,.2885);
\draw (1,0.577) node [below=-.05cm] {\tiny $\beta'_2$} -- (1,1.1545) node [right=-.1cm] {\tiny ${X}$};
\draw (1,1.732) -- (1,1.1545);
\draw (1,0.577) -- (1.5,.2885) node [above] {\tiny ${X_8}$};
\draw (2,0) -- (1.5,.2885);
\end{tikzpicture}}
$$

Alternatively, the morphism label of a one-dimensional Hom-space can be replaced by a bullet ${\tiny \bullet}$, so if every involved Hom-space is one-dimensional (in particular, if the Grothendieck ring is of multiplicity one) then:
$$
\raisebox{-1.4cm}{
\begin{tikzpicture}[scale=1.45]
\draw (2,0) -- (1,0) node [below] {\tiny ${X_6}$};
\draw[->] (0,0) node [below] {\tiny $\bullet$} -- (1,0);
\draw[->] (2,0) node [below] {\tiny $\bullet$} -- (1.5,.866) node [right] {\tiny ${X_4}$};
\draw (1.5,.866) -- (1,1.732);
\draw[->] (1,1.732)	node [above] {\tiny $\bullet$} -- (.5,.866) node [left] {\tiny ${X_5}$};
\draw (.5,.866) -- (0,0);
\draw (1,0.577) -- (.5,.2885) node [above] {\tiny ${X_3}$};
\draw[->] (0,0) -- (.5,.2885);
\draw (1,0.577) node [below=-.05cm] {\tiny $\bullet$} -- (1,1.1545) node [right=-.05cm] {\tiny ${X_2}$};
\draw[->] (1,1.732) -- (1,1.1545);
\draw (1,0.577) -- (1.5,.2885) node [above] {\tiny ${X_1}$};
\draw[->] (2,0) -- (1.5,.2885);
\end{tikzpicture}} \hspace{-.2cm}
\raisebox{-1.4cm}{
\begin{tikzpicture}[scale=1.45]
\draw[->] (2,0) -- (1,0) node [below] {\tiny ${X_9}$};
\draw (0,0) node [below] {\tiny $\bullet$} -- (1,0);
\draw (2,0) node [below] {\tiny $\bullet$} -- (1.5,.866) node [right] {\tiny ${X_8}$};
\draw[->] (1,1.732) -- (1.5,.866);
\draw (1,1.732)	node [above] {\tiny $\bullet$} -- (.5,.866) node [left] {\tiny ${X_7}$};
\draw[->] (0,0) -- (.5,.866);
\draw[->] (1,0.577) -- (.5,.2885) node [above] {\tiny ${X_1}$};
\draw (0,0) -- (.5,.2885);
\draw[->] (1,0.577) node [below=-.05cm] {\tiny $\bullet$} -- (1,1.1545) node [right=-.05cm] {\tiny ${X_2}$};
\draw (1,1.732) -- (1,1.1545);
\draw[->] (1,0.577) -- (1.5,.2885) node [above] {\tiny ${X_3}$};
\draw (2,0) -- (1.5,.2885);
\end{tikzpicture}}
\hspace{-.4cm} = \hspace{-.1cm} \sum_{X \in \mathcal{O}} \hspace{-.1cm} d_X \hspace{-.3cm}
\raisebox{-1.4cm}{
\begin{tikzpicture}[scale=1.45]
\draw (2,0) -- (1,0) node [below] {\tiny ${X_1}$};
\draw[->] (0,0) -- (1,0);
\draw (.26,.25) node {\tiny $\bullet$};
\draw[->] (2,0) -- (1.5,.866) node [right] {\tiny ${X_7}$};
\draw (1.74,.25) node {\tiny $\bullet$};
\draw (1,1.732) -- (1.5,.866);
\draw (1,1.732)	node [above] {\tiny $\bullet$} -- (.5,.866) node [left] {\tiny ${X_4}$};
\draw[->] (0,0) -- (.5,.866);
\draw[->] (1,0.577) -- (.5,.2885) node [above] {\tiny ${X_6}$};
\draw (0,0) -- (.5,.2885);
\draw (1,0.577) node [below=-.05cm] {\tiny $\bullet$} -- (1,1.1545) node [right=-.05cm] {\tiny ${X}$};
\draw[->] (1,1.732) -- (1,1.1545);
\draw[->] (1,0.577) -- (1.5,.2885) node [above] {\tiny ${X_9}$};
\draw (2,0) -- (1.5,.2885);
\end{tikzpicture}}
\raisebox{-1.4cm}{
\begin{tikzpicture}[scale=1.45]
\draw (2,0) -- (1,0) node [below] {\tiny ${X_2}$};
\draw[->] (0,0) -- (1,0);
\draw (.26,.25) node {\tiny $\bullet$};
\draw[->] (2,0) -- (1.5,.866) node [right] {\tiny ${X_8}$};
\draw (1.74,.25) node {\tiny $\bullet$};
\draw (1,1.732) -- (1.5,.866);
\draw (1,1.732)	node [above] {\tiny $\bullet$} -- (.5,.866) node [left] {\tiny $X_5$};
\draw[->] (0,0) -- (.5,.866);
\draw[->] (1,0.577) -- (.5,.2885) node [above] {\tiny ${X_4}$};
\draw (0,0) -- (.5,.2885);
\draw (1,0.577) node [below=-.05cm] {\tiny $\bullet$} -- (1,1.1545) node [right=-.05cm] {\tiny ${X}$};
\draw[->] (1,1.732) -- (1,1.1545);
\draw[->] (1,0.577) -- (1.5,.2885) node [above] {\tiny ${X_7}$};
\draw (2,0) -- (1.5,.2885);
\end{tikzpicture}}
\raisebox{-1.4cm}{
\begin{tikzpicture}[scale=1.45]
\draw (2,0) -- (1,0) node [below] {\tiny ${X_3}$};
\draw[->] (0,0) -- (1,0);
\draw (.26,.25) node {\tiny $\bullet$};
\draw[->] (2,0) -- (1.5,.866) node [right] {\tiny ${X_9}$};
\draw (1.74,.25) node {\tiny $\bullet$};
\draw (1,1.732) -- (1.5,.866);
\draw (1,1.732)	node [above] {\tiny $\bullet$} -- (.5,.866) node [left] {\tiny ${X_6}$};
\draw[->] (0,0) -- (.5,.866);
\draw[->] (1,0.577) -- (.5,.2885) node [above] {\tiny ${X_5}$};
\draw (0,0) -- (.5,.2885);
\draw (1,0.577) node [below=-.05cm] {\tiny $\bullet$} -- (1,1.1545) node [right=-.05cm] {\tiny ${X}$};
\draw[->] (1,1.732) -- (1,1.1545);
\draw[->] (1,0.577) -- (1.5,.2885) node [above] {\tiny ${X_8}$};
\draw (2,0) -- (1.5,.2885);
\end{tikzpicture}}
$$

Finally, if it is of multiplicity one, with $X^*=X$, $\nu_2(X) = \nu_3(X) = 1$ for all simple object $X$, then (by Lemma \ref{lem:rot})
$$
\raisebox{-1.4cm}{
\begin{tikzpicture}[scale=1.45]
\draw (2,0) -- (1,0) node [below] {\tiny ${X_6}$};
\draw (0,0) node [below] { } -- (1,0);
\draw (2,0) node [below] { } -- (1.5,.866) node [right] {\tiny ${X_4}$};
\draw (1.5,.866) -- (1,1.732);
\draw (1,1.732)	node [above] { } -- (.5,.866) node [left] {\tiny ${X_5}$};
\draw (.5,.866) -- (0,0);
\draw (1,0.577) -- (.5,.2885) node [above] {\tiny ${X_3}$};
\draw (0,0) -- (.5,.2885);
\draw (1,0.577) node [below=-.05cm] { } -- (1,1.1545) node [right=-.1cm] {\tiny ${X_2}$};
\draw (1,1.732) -- (1,1.1545);
\draw (1,0.577) -- (1.5,.2885) node [above] {\tiny ${X_1}$};
\draw (2,0) -- (1.5,.2885);
\end{tikzpicture}} \hspace{-.2cm}
\raisebox{-1.4cm}{
\begin{tikzpicture}[scale=1.45]
\draw (2,0) -- (1,0) node [below] {\tiny ${X_9}$};
\draw (0,0) node [below] { } -- (1,0);
\draw (2,0) node [below] { } -- (1.5,.866) node [right] {\tiny ${X_8}$};
\draw (1,1.732) -- (1.5,.866);
\draw (1,1.732)	node [above] { } -- (.5,.866) node [left] {\tiny ${X_7}$};
\draw (0,0) -- (.5,.866);
\draw (1,0.577) -- (.5,.2885) node [above] {\tiny ${X_1}$};
\draw (0,0) -- (.5,.2885);
\draw (1,0.577) node [below=-.05cm] { } -- (1,1.1545) node [right=-.1cm] {\tiny ${X_2}$};
\draw (1,1.732) -- (1,1.1545);
\draw (1,0.577) -- (1.5,.2885) node [above] {\tiny ${X_3}$};
\draw (2,0) -- (1.5,.2885);
\end{tikzpicture}}
\hspace{-.4cm} = \hspace{-.1cm} \sum_{X \in \mathcal{O}} \hspace{-.1cm} d_X \hspace{-.3cm}
\raisebox{-1.4cm}{
\begin{tikzpicture}[scale=1.45]
\draw (2,0) -- (1,0) node [below] {\tiny ${X_1}$};
\draw (0,0) -- (1,0);
\draw (.26,.25) node { };
\draw (2,0) -- (1.5,.866) node [right] {\tiny ${X_7}$};
\draw (1.74,.25) node { };
\draw (1,1.732) -- (1.5,.866);
\draw (1,1.732)	node [above] { } -- (.5,.866) node [left] {\tiny ${X_4}$};
\draw (0,0) -- (.5,.866);
\draw (1,0.577) -- (.5,.2885) node [above] {\tiny ${X_6}$};
\draw (0,0) -- (.5,.2885);
\draw (1,0.577) node [below=-.05cm] { } -- (1,1.1545) node [right=-.1cm] {\tiny ${X}$};
\draw (1,1.732) -- (1,1.1545);
\draw (1,0.577) -- (1.5,.2885) node [above] {\tiny ${X_9}$};
\draw (2,0) -- (1.5,.2885);
\end{tikzpicture}}
\raisebox{-1.4cm}{
\begin{tikzpicture}[scale=1.45]
\draw (2,0) -- (1,0) node [below] {\tiny ${X_2}$};
\draw (0,0) -- (1,0);
\draw (.26,.25) node { };
\draw (2,0) -- (1.5,.866) node [right] {\tiny ${X_8}$};
\draw (1.74,.25) node { };
\draw (1,1.732) -- (1.5,.866);
\draw (1,1.732)	node [above] { } -- (.5,.866) node [left] {\tiny $X_5$};
\draw (0,0) -- (.5,.866);
\draw (1,0.577) -- (.5,.2885) node [above] {\tiny ${X_4}$};
\draw (0,0) -- (.5,.2885);
\draw (1,0.577) node [below=-.05cm] { } -- (1,1.1545) node [right=-.1cm] {\tiny ${X}$};
\draw (1,1.732) -- (1,1.1545);
\draw (1,0.577) -- (1.5,.2885) node [above] {\tiny ${X_7}$};
\draw (2,0) -- (1.5,.2885);
\end{tikzpicture}}
\raisebox{-1.4cm}{
\begin{tikzpicture}[scale=1.45]
\draw (2,0) -- (1,0) node [below] {\tiny ${X_3}$};
\draw (0,0) -- (1,0);
\draw (.26,.25) node { };
\draw (2,0) -- (1.5,.866) node [right] {\tiny ${X_9}$};
\draw (1.74,.25) node { };
\draw (1,1.732) -- (1.5,.866);
\draw (1,1.732)	node [above] { } -- (.5,.866) node [left] {\tiny ${X_6}$};
\draw (0,0) -- (.5,.866);
\draw (1,0.577) -- (.5,.2885) node [above] {\tiny ${X_5}$};
\draw (0,0) -- (.5,.2885);
\draw (1,0.577) node [below=-.05cm] { } -- (1,1.1545) node [right=-.1cm] {\tiny ${X}$};
\draw (1,1.732) -- (1,1.1545);
\draw (1,0.577) -- (1.5,.2885) node [above] {\tiny ${X_8}$};
\draw (2,0) -- (1.5,.2885);
\end{tikzpicture}}
$$
\end{remark}

\begin{remark} \label{rk:nonplanar}
Let us conclude this section with some remarks on non-planar trivalent graphs and braiding. Besides the tetrahedral and triangular prism graphs, there exists a third trivalent graph with no more than six vertices, which is \emph{non-planar}: the complete bipartite graph $K_{3,3}$. Below are three representations of this graph:
$$
\begin{tikzpicture}[scale=1.05]
\draw (0,0) node {\tiny $\bullet$} -- (1,0) node {\tiny $\bullet$} -- (0,-1) -- (1,-2);
\draw (0,-1) node  {\tiny $\bullet$}-- (1,-1) node {\tiny $\bullet$} -- (0,-2) -- (1,0);
\draw (0,-2)  node  {\tiny $\bullet$}-- (1,-2)  node  {\tiny $\bullet$}  -- (0,0) -- (1,-1);
\end{tikzpicture}
, \hspace{1cm}
\begin{tikzpicture}[scale=1.2]
\draw (0,0) node {\tiny $\bullet$} -- (1.5,.2885) node {\tiny $\bullet$};
\draw (2,0) --(1,1.732) node {\tiny $\bullet$}--(0,0);
\draw (0,0)--(.5,.2885) node {\tiny $\bullet$};
\draw (1,1.732)--(1,1.1545) node {\tiny $\bullet$};
\draw (2,0)--(1.5,.2885)--(1,1.1545)--(.5,.2885)-- (2,0) node {\tiny $\bullet$};
\end{tikzpicture}
, \hspace{1cm}
\begin{tikzpicture}[scale=.775]
\draw (0,-1)-- (0,0) node {\tiny $\bullet$} -- (1,0) node {\tiny $\bullet$} -- (1,-1);
\draw (0,-2)--(0,-1) node  {\tiny $\bullet$}-- (1,-1) node {\tiny $\bullet$} -- (1,-2);
\draw (0,-2) node  {\tiny $\bullet$}-- (1,-2)  node  {\tiny $\bullet$};
\draw (0,-2) arc (0:-180:.25) --++ (0,2) arc (180:0:.75);
\draw (1,-2) arc (-180:0:.25) --++ (0,2) arc (0:180:.75);
\end{tikzpicture},
$$
The first representation is the conventional one, the second is crafted to resemble the typical depiction of the triangular prism graph, and the third is used to derive the following monoidal category version, assuming the presence of a \emph{braiding}: 
$$
\begin{tikzpicture}
\draw (-.7,0) --++ (0,-.4) --++ (0,-.9) arc (-180:0:.7 and .3);
\draw (0,0) --++ (0,-.4) arc (-180:0:1.307 and .5);
\draw (.7,0) --++ (0,-.4) arc (-180:0: .25) arc (180:140:.35); 
\draw (-.7,-2) --++ (0,-.4) --++ (0,-.9) arc (-180:0:.7 and .3);
\draw (0,-2) --++ (0,-.4) arc (-180:0:1.307 and .5);
\draw (.7,-2) --++ (0,-.4) arc (-180:0:.25) --++ (0,1.1) arc (0:180:.25);

\draw (-.7,-4) --++ (0,-.4) arc (0:-180:.25) --++ (0,4.51);
\draw (0,-4) --++ (0,-.4) arc (-180:0:1.307 and .5);
\draw (.7,-4) --++ (0,-.4) arc (-180:0:.25) --++ (0,1.1) arc (0:180:.25);

\draw (3-.385-.7,0) --++ (0,-.4) arc (0:-180: .25) arc (0:46:.35) arc (226:180:.35) arc (0:180:1.2 and .5);%
\draw (3-.385,0) --++ (0,-.4) ;
\draw (3-.385+.7,0) --++ (0,-1.3) arc (0:-180:.7 and .3);

\draw (3-.385-.7,-2) --++ (0,-.4) arc (0:-180:.25) --++ (0,1.1) arc (180:0:.25);
\draw (3-.385,-2) --++ (0,-.4) ;
\draw (3-.385+.7,-2) --++ (0,-1.3) arc (0:-180:.7 and .3);

\draw (3-.385-.7,-4) --++ (0,-.4) arc (0:-180:.25) --++ (0,1.1) arc (180:0:.25);
\draw (3-.385,-4) --++ (0,-.4) ;
\draw (3-.385+.7,-4) --++ (0,-.4) arc (-180:0:.25) --++ (0,4.5) arc (0:180:1.2 and .5) arc (0:-40:.35);

\draw (0,0)  node [draw,fill=white,minimum width=2cm,minimum height=.4cm] { };
\draw (0,-2)  node [draw,fill=white,minimum width=2cm,minimum height=.4cm] { };
\draw (0,-4)  node [draw,fill=white,minimum width=2cm,minimum height=.4cm] { };
\draw (3-.385,0)  node [draw,fill=white,minimum width=2cm,minimum height=.4cm] { };
\draw (3-.385,-2)  node [draw,fill=white,minimum width=2cm,minimum height=.4cm] { };
\draw (3-.385,-4)  node [draw,fill=white,minimum width=2cm,minimum height=.4cm] { };
\end{tikzpicture}
$$
This illustration closely resembles the one in Definition \ref{def:TPCat}, with the primary difference being the inclusion of braiding. It suggests a \emph{braided version} of Theorem \ref{thm:TPE} (TPE). In general, we might derive such equations from any trivalent graph.
\end{remark}

\section{Pentagon and triangular prism equations}
We adopt the following notations in this section:

\begin{notation} \label{not:1}
In a fusion category, we denote the simple objects (up to isomorphism) by $(X_i)_{i \in I}$, with $X_1 = \one$. The dimension of the Hom-space $\hc(X_i \otimes X_j, X_k)$ is written as $N_{i,j}^k$. An object $X$ is called \emph{selfdual} if $X = X^*$. Any edge labeled by an object $X_i$ will be simply labeled by its index $i$. Finally, we set $d_i := d_{X_i} = \dim(X_i)$ to denote the dimension of $X_i$.
\end{notation}

\subsection{Pentagon equations} \label{sec:PE}
This subsection recalls the explicit way to write the Pentagon Equations (see \cite{Bon07, DaHaWa, wang}).
The chosen basis of $\hc(X_i \otimes X_j, X_k)$ will be denoted $\mB(i,j;k)$, and a morphism in there, represented as
\begin{equation}\label{Fig: alpha}
\raisebox{-.8cm}{
\begin{tikzpicture}
\begin{scope}[scale=.8]
\draw (0,2)--(.5,1.5);
\draw (1,1)--(.5,1.5);
\draw (2,2)--(1.5,1.5);
\draw (1,1)--(1.5,1.5);
\draw (1,1)--(1,.5);
\draw (1,0)--(1,.5);
\node at (1,1.3) {$\alpha$};
\node at (.2,1.5) {${i}$};
\node at (1.9,1.5) {${j}$};
\node at (.8,.5) {${k}$};
\end{scope}
\end{tikzpicture}}.
\end{equation}
%

For indices $i_1, i_2, \ldots, i_6 \in I$ and nonzero morphisms $\mu_1 \in \mathcal{B}(i_1,i_2;i_3)$, $\mu_2 \in \mathcal{B}(i_3,i_4;i_5)$, $\mu_3 \in \mathcal{B}(i_2,i_4;i_6)$, and $\mu_4 \in \mathcal{B}(i_1,i_6;i_5)$, the F-symbol $\left(
\begin{array}{ccc | cc}
i_1& i_2 & i_3 & \mu_1 & \mu_2 \\
i_4& i_5 & i_6 & \mu_3 & \mu_4
\end{array}
\right)$  is defined as follows (however, it is zero if any $\mu_i$ is zero):
\[
\mu_2(\mu_1 \otimes \id_{i_4})
=
\sum_{i_6}\sum_{\mu_3,\mu_4}
\left(
\begin{array}{ccc | cc}
i_1& i_2 & i_3 & \mu_1 & \mu_2 \\
i_4& i_5 & i_6 & \mu_3 & \mu_4
\end{array}
\right)
\mu_4(\id_{i_1} \otimes \mu_3),
\]
with $i_6$, $\mu_3$ and $\mu_4$ summing over their respective set.
Pictorially,
\begin{equation} \label{Equ: F-symbols}
\raisebox{-1.25cm}{
\begin{tikzpicture}
\begin{scope}[scale=1.25]
\node at (0-.2,0) {$i_1$};
\node at (1-.2,0) {$i_2$};
\node at (2-.2,0) {$i_4$};
\node at (.75-.2,-.75) {$i_3$};
\node at (1-.2,-1.5) {$i_5$};
\node at (.5,-.5+.2) {$\textcolor{orange}{\mu_1}$};
\node at (1,-1+.2) {$\textcolor{orange}{\mu_2}$};
\draw (0,0)--++(1,-1);
\draw (1,0)--++(-.5,-.5);
\draw (2,0)--++(-1,-1)--++(0,-.5);
\end{scope}
\end{tikzpicture}}
=
\sum_{i_6}\sum_{\mu_3,\mu_4}
\left(
\begin{array}{ccc | cc}
i_1& i_2 & i_3 & \mu_1 & \mu_2 \\
i_4& i_5 & i_6 & \mu_3 & \mu_4
\end{array}
\right)
%
\raisebox{-1.25cm}{
\begin{tikzpicture}
\begin{scope}[scale=1.25]
\node at (0-.2,0) {$i_1$};
\node at (1-.2,0) {$i_2$};
\node at (2-.2,0) {$i_4$};
\node at (1.25+.2,-.75) {$i_6$};
\node at (1-.2,-1.5) {$i_5$};
\node at (1.5,-.5+.2) {$\textcolor{orange}{\mu_3}$};
\node at (1,-1+.2) {$\textcolor{orange}{\mu_4}$};
\draw (0,0)--++(1,-1);
\draw (1,0)--++(.5,-.5);
\draw (2,0)--++(-1,-1)--++(0,-.5);
\end{scope}
\end{tikzpicture}}.
\end{equation}
The F-symbols satisfy the \emph{Pentagon Equations} (PE) written below, with a pictorial interpretation in Figure~\ref{Fig: Pentagon Equation}
\begin{align}\label{Equ: Pentagon Equation}
&
\sum_{\mu_0} \left(
\begin{array}{ccc | cc}
i_2& i_7 & i_8 & \mu_5 & \mu_6 \\
i_9& i_3 & i_1 & \mu_4 & \mu_0
\end{array}
\right)
\left(
\begin{array}{ccc | cc}
i_5& i_4 & i_2 & \mu_2 & \mu_0 \\
i_1& i_3 & i_6 & \mu_1 & \mu_3
\end{array}
\right)
\\ \nonumber
=&
\sum_{\spec}
\sum_{\mu_7,\mu_8,\mu_9}
\left(
\begin{array}{ccc | cc}
i_5& i_4 & i_2 & \mu_2 & \mu_5 \\
i_7& i_8 & \spec & \mu_7 & \mu_8
\end{array}
\right)
\left(
\begin{array}{ccc | cc}
i_5& \spec & i_8 & \mu_8 & \mu_6 \\
i_9& i_3 & i_6 & \mu_9 & \mu_3
\end{array}
\right)
\left(
\begin{array}{ccc | cc}
i_4& i_7 & \spec & \mu_7 & \mu_9 \\
i_9& i_6 & i_1 & \mu_4 & \mu_1
\end{array}
\right)
\end{align}
for $i_0, i_1,i_2,\ldots, i_9 \in I$ and morphisms
$\mu_1 \in B(i_4,i_1;i_6)$, $\mu_2 \in B(i_5,i_4;i_2)$, $\mu_3 \in B(i_5,i_6;i_3)$, $\mu_4 \in B(i_7,i_9;i_1)$,  $\mu_5 \in B(i_2,i_7;i_8)$, $\mu_6 \in B(i_8,i_9;i_3)$,  $\mu_7 \in B(i_4,i_7;i_0)$,  $\mu_8 \in B(i_5,i_0;i_8)$,  $\mu_9 \in B(i_0,i_9;i_6)$,  $\mu_0 \in B(i_2,i_1;i_3)$,
with $\spec$ and $\mu_{k}$ summing over their respective set. We will see in \S \ref{sec:TPEvsPE} that the PE can be interpreted as the TPE of a TP with a specific configuration (see Figure \ref{fig:PEconfig}).

\begin{figure}[h]
\begin{tikzpicture}
\begin{scope}[scale=1.3]
\begin{scope}
\node at (0-.2,0) {$i_5$};
\node at (1-.2,0) {$i_4$};
\node at (2-.2,0) {$i_7$};
\node at (3-.2,0) {$i_9$};
\node at (1.75-.2,-.75) {$\spec$};
\node at (1.75+.2,-1.25) {$i_6$};
\node at (1.5-.2,-1.75) {$i_3$};
\node at (1.5,-.5+.2) {$\textcolor{orange}{\mu_7}$};
\node at (2,-1+.2) {$\textcolor{orange}{\mu_9}$};
\node at (1.5,-1.5+.2) {$\textcolor{orange}{\mu_3}$};
\draw (0,0) --++ (1.5,-1.5)--++(0,-.5);
\draw (1,0) --++ (1,-1);
\draw (2,0) --++ (-.5,-.5);
\draw (3,0) --++ (-1.5,-1.5);
\end{scope}
\begin{scope}[shift={(5,0)}]
\node at (0-.2,0) {$i_5$};
\node at (1-.2,0) {$i_4$};
\node at (2-.2,0) {$i_7$};
\node at (3-.2,0) {$i_9$};
\node at (2.25+.2,-.75) {$i_1$};
\node at (1.75+.2,-1.25) {$i_6$};
\node at (1.5-.2,-1.75) {$i_3$};
\node at (2.5,-.5+.2) {$\textcolor{orange}{\mu_4}$};
\node at (2,-1+.2) {$\textcolor{orange}{\mu_1}$};
\node at (1.5,-1.5+.2) {$\textcolor{orange}{\mu_3}$};
\draw (0,0) --++ (1.5,-1.5)--++(0,-.5);
\draw (1,0) --++ (1,-1);
\draw (2,0) --++ (.5,-.5);
\draw (3,0) --++ (-1.5,-1.5);
\end{scope}
\begin{scope}[shift={(-2.5,-3)}]
\node at (0-.2,0) {$i_5$};
\node at (1-.2,0) {$i_4$};
\node at (2-.2,0) {$i_7$};
\node at (3-.2,0) {$i_9$};
\node at (1.25+.2,-.75) {$\spec$};
\node at (1.25-.2,-1.25) {$i_8$};
\node at (1.5-.2,-1.75) {$i_3$};
\node at (1.5,-.5+.2) {$\textcolor{orange}{\mu_7}$};
\node at (1,-1+.2) {$\textcolor{orange}{\mu_8}$};
\node at (1.5,-1.5+.2) {$\textcolor{orange}{\mu_6}$};
\draw (0,0) --++ (1.5,-1.5)--++(0,-.5);
\draw (1,0) --++ (.5,-.5);
\draw (2,0) --++ (-1,-1);
\draw (3,0) --++ (-1.5,-1.5);
\end{scope}
\begin{scope}[shift={(7.5,-3)}]
\node at (0-.2,0) {$i_5$};
\node at (1-.2,0) {$i_4$};
\node at (2-.2,0) {$i_7$};
\node at (3-.2,0) {$i_9$};
\node at (.75-.2,-.75) {$i_2$};
\node at (2.25-.2,-.75) {$i_1$};
\node at (1.5-.2,-1.75) {$i_3$};
\node at (.5,-.5+.2) {$\textcolor{orange}{\mu_2}$};
\node at (2.5,-.5+.2) {$\textcolor{orange}{\mu_4}$};
\node at (1.5,-1.5+.2) {$\textcolor{orange}{\mu_0}$};
\draw (0,0) --++ (1.5,-1.5)--++(0,-.5);
\draw (1,0) --++ (-.5,-.5);
\draw (2,0) --++ (.5,-.5);
\draw (3,0) --++ (-1.5,-1.5);
\end{scope}
\begin{scope}[shift={(2.5,-5)}]
\node at (0-.2,0) {$i_5$};
\node at (1-.2,0) {$i_4$};
\node at (2-.2,0) {$i_7$};
\node at (3-.2,0) {$i_9$};
\node at (.75-.2,-.75) {$i_2$};
\node at (1.25-.2,-1.25) {$i_8$};
\node at (1.5-.2,-1.75) {$i_3$};
\node at (.5,-.5+.2) {$\textcolor{orange}{\mu_2}$};
\node at (1,-1+.2) {$\textcolor{orange}{\mu_5}$};
\node at (1.5,-1.5+.2) {$\textcolor{orange}{\mu_6}$};
\draw (0,0) --++ (1.5,-1.5)--++(0,-.5);
\draw (1,0) --++ (-.5,-.5);
\draw (2,0) --++ (-1,-1);
\draw (3,0) --++ (-1.5,-1.5);
\end{scope}
\draw [red, thick,->] (2.4,-6.5)--(-1,-5.2);
\draw [red, thick,->] (-1,-2.5)--(1,-1.5);
\draw [red, thick,->] (3.2,-.75)--(5,-.75);
\draw [blue, thick,->] (9,-2.5)--(7,-1.5);
\draw [blue, thick,->] (5.6,-6.5)--(9,-5.2);
\end{scope}
\end{tikzpicture}
\caption{Pentagon Equation}
\label{Fig: Pentagon Equation}
\end{figure}
\subsection{TPE versus PE} \label{sec:TPEvsPE}

This subsection shows that in the spherical case, TPE equals PE, up to a change of basis. Note that it is not used in this paper, in particular \S \ref{sec:SpeCrit} and \S \ref{sec:app} are independent of it, but it is added for information, because it should be useful for future work.

\begin{definition} \label{def:213}
Let $\mC$ be a monoidal category with left duals. Let $G$ and $H$ be functors from $\mC^3$ to $\textbf{Set}$ defined as the composition of usual functors such that $G(X,Y,Z) = \hc(X \otimes Y, Z)$ and $H(X,Y,Z)= \hc(\one, Z \otimes Y^* \otimes X^*)$, for all objects $X,Y,Z$ in $\mC$. Consider the natural transformation $\mu \mapsto \tilde{\mu}$  from $G$ to $H$ defined by
$$\tilde{\mu}=
\raisebox{-.5cm}{
\begin{tikzpicture}
\draw (-.2,0)--++(0,.3) arc (180:0:.6 and .3) --++(0,-.7) node[right=-.1cm] {\tiny $X^*$};
\draw (.2,0)--++(0,.3)  arc (180:0:.2 and .1) --++(0,-.7) node[left=-.13cm] {\tiny $Y^*$};
\draw (0,0)--++ (0,-.4) node[left=-.1cm] {\tiny $Z$};
\draw (0,0) node [draw,fill=white,minimum width=.8cm] {\small $\mu$};
\end{tikzpicture}}.
$$
\end{definition}
\noindent It is a natural isomorphism by applying natural adjunction isomorphisms \cite[Proposition 2.10.8]{EGNO15}, and
$$\mu=
\raisebox{-.5cm}{
\begin{tikzpicture}
\draw (.4,0)--++(0,-.4) arc (-180:0:.2 and .1) --++(0,.7) node[left=-.12cm] {\tiny $X$};
\draw (0,0)--++(0,-.4)  arc (-180:0:.6 and .3) --++(0,.7) node[right=-.1cm] {\tiny $Y$};
\draw (-.4,0)--++ (0,-.4) node[left=-.1cm] {\tiny $Z$};
\draw (0,0) node [draw,fill=white,minimum width=1cm] {\small $\tilde{\mu}$};
\end{tikzpicture}}.
$$
Now let us reformulate (\ref{Equ: F-symbols}) using above natural isomorphism:
$$\raisebox{-.8cm}{
\begin{tikzpicture}
\draw (.4,0)--++(0,-.4) arc (-180:0:.2 and .1) --++(0,.3);
\draw (0,0)--++(0,-.4)  arc (-180:0:1.4 and .5) --++(0,.8) node[right=-.1cm] {\tiny $i_4$};
\draw (-.4,0)--++ (0,-.4) node[left=-.1cm] {\tiny $i_5$};
\draw (0,0) node [draw,fill=white,minimum width=1cm] {\small $\tilde{\mu}_2$};
\draw (.4+1.2,0)--++(0,-.4) arc (-180:0:.2 and .1) --++(0,.8) node[right=-.1cm] {\tiny $i_1$};
\draw (0+1.2,0)--++(0,-.4)  arc (-180:0:.6 and .3) --++(0,.8) node[right=-.1cm] {\tiny $i_2$};
\draw (-.4+1.2,0)--++ (0,-.4) node[right=-.1cm] {\tiny $i_3$};
\draw (0+1.2,0) node [draw,fill=white,minimum width=1cm] {\small $\tilde{\mu}_1$};
\end{tikzpicture}}
=
\sum_{i_6}\sum_{\mu_3,\mu_4}
\left(
\begin{array}{ccc | cc}
i_1& i_2 & i_3 & \mu_1 & \mu_2 \\
i_4& i_5 & i_6 & \mu_3 & \mu_4
\end{array}
\right)
\raisebox{-.6cm}{
\begin{tikzpicture}
\draw (.4,0)--++(0,-.4) arc (-180:0:.15 and .1) --++(0,.8) node[right=-.1cm] {\tiny $i_1$};
\draw (0,0)--++(0,-.4)  arc (-180:0:.5 and .25) node[right=-.1cm] {\tiny $i_6$} --++(0,.3);
\draw (-.4,0)--++ (0,-.4) node[left=-.1cm] {\tiny $i_5$};
\draw (0,0) node [draw,fill=white,minimum width=1cm] {\small $\tilde{\mu}_4$};
\draw (.4+1.4,0)--++(0,-.4) arc (-180:0:.2 and .1) --++(0,.8) node[right=-.1cm] {\tiny $i_2$};
\draw (0+1.4,0)--++(0,-.4)  arc (-180:0:.6 and .3) --++(0,.8) node[right=-.1cm] {\tiny $i_4$};
\draw (0+1.4,0) node [draw,fill=white,minimum width=1cm] {\small $\tilde{\mu}_3$};
\end{tikzpicture}}.
$$
\noindent Next, we eliminate all the terms on the RHS except one by composing with morphisms from the right dual bases (denoted by the mapping $\alpha \mapsto {'\hspace*{-.05cm}\alpha}$ for the usual bijection) and by applying the corresponding bilinear form (as for \S \ref{sec:pre2}). We get:
$$\raisebox{-.8cm}{
\begin{tikzpicture}
\draw (.4,0)--++(0,-.4) arc (-180:0:.2 and .1) --++(0,.3);
\draw (0,0)--++(0,-.4)  arc (-180:0:1.3 and .4) --++(0,.3) node[right=-.1cm] {\tiny $i_4$};
\draw (-.4,0)--++ (0,-.4) node[left=-.1cm] {\tiny $i_5$} arc (-180:0:2.4 and .6);
\draw (0,0) node [draw,fill=white,minimum width=1cm] {\small $\tilde{\mu}_2$};
\draw (.4+1.2,0)--++(0,-.4) arc (-180:0:.15 and .1) --++(0,.6) arc (180:0:.7 and .3);
\draw (0+1.2,0)--++(0,-.4)  arc (-180:0:.5 and .25) --++(0,.3) node[right=-.1cm] {\tiny $i_2$};
\draw (-.4+1.2,0)--++ (0,-.4) node[right=-.1cm] {\tiny $i_3$};
\draw (0+1.2,0) node [draw,fill=white,minimum width=1cm] {\small $\tilde{\mu}_1$};
\draw (.4+2.6,0)--++(0,-.4);
\draw (0+2.6,0)--++(0,-.4);
\draw (0+2.6,0) node [draw,fill=white,minimum width=1cm] {\small $'\hspace*{-.05cm}\tilde{\mu}_3$};
\draw (-.4+4,0)--++(0,-.4) arc (0:-180:.15 and .1) --++(0,.6);
\draw (0+4,0)--++(0,-.4)  arc (0:-180:.5 and .25);
\draw (.4+4,0)--++ (0,-.4);
\draw (0+4,0) node [draw,fill=white,minimum width=1cm] {\small $'\hspace*{-.05cm}\tilde{\mu}_4$};
\end{tikzpicture}}
=
\left(
\begin{array}{ccc | cc}
i_1& i_2 & i_3 & \mu_1 & \mu_2 \\
i_4& i_5 & i_6 & \mu_3 & \mu_4
\end{array}
\right)
\raisebox{-.8cm}{
\begin{tikzpicture}
\draw (.4,0)--++(0,-.4) arc (-180:0:.15 and .1) --++(0,.53) arc (180:0:1.3 and .4);
\draw (0,0)--++(0,-.4)  arc (-180:0:.5 and .25) node[right=-.1cm] {\tiny $i_6$} --++(0,.3);
\draw (-.4,0)--++ (0,-.4) node[left=-.1cm] {\tiny $i_5$} arc (-180:0:2.4 and .6);
\draw (0,0) node [draw,fill=white,minimum width=1cm] {\small $\tilde{\mu}_4$};
\draw (.4+1.4,0)--++(0,-.4) arc (-180:0:.2 and .1) --++(0,.3);
\draw (0+1.4,0)--++(0,-.4)  arc (-180:0:.6 and .3) --++(0,.3);
\draw (0+1.4,0) node [draw,fill=white,minimum width=1cm] {\small $\tilde{\mu}_3$};
\draw (.4+2.6,0)--++(0,-.4);
\draw (0+2.6,0)--++(0,-.4);
\draw (0+2.6,0) node [draw,fill=white,minimum width=1cm] {\small $'\hspace*{-.05cm}\tilde{\mu}_3$};
\draw (-.4+4,0)--++(0,-.4) arc (0:-180:.15 and .1) --++(0,.53);
\draw (0+4,0)--++(0,-.4)  arc (0:-180:.5 and .25);
\draw (.4+4,0)--++ (0,-.4);
\draw (0+4,0) node [draw,fill=white,minimum width=1cm] {\small $'\hspace*{-.05cm}\tilde{\mu}_4$};
\end{tikzpicture}}.
$$
Then, by some zigzag relations and the fact that $\hc(\one, X_{i_6} \otimes X_{i_6}^*) = \field \coev_{X_{i_6}}$:
$$ T(\rho^{-1}('\hspace*{-.05cm}\tilde{\mu}_3),\tilde{\mu}_2,\tilde{\mu}_1,{'\hspace*{-.05cm}\tilde{\mu}_4})
=
\left(
\begin{array}{ccc | cc}
i_1& i_2 & i_3 & \mu_1 & \mu_2 \\
i_4& i_5 & i_6 & \mu_3 & \mu_4
\end{array}
\right)
d_{i_6}^{-1}
\raisebox{-.6cm}{
\begin{tikzpicture}
\draw (.4,0)--++(0,-.4) arc (-180:0:.2 and .1) --++(0,.4);
\draw (0,0)--++(0,-.4)  arc (-180:0:.6 and .25) --++(0,.4);
\draw (-.4,0)--++ (0,-.4) arc (-180:0:1 and .4) --++(0,.4);
\draw (0,0) node [draw,fill=white,minimum width=1cm] {\small $\tilde{\mu}_4$};
%
\draw (0+1.2,0) node [draw,fill=white,minimum width=1cm] {\small $'\hspace*{-.05cm}\tilde{\mu}_4$};
\end{tikzpicture}}.
$$
\noindent
It follows that:
$$
\left(
\begin{array}{ccc | cc}
i_1& i_2 & i_3 & \mu_1 & \mu_2 \\
i_4& i_5 & i_6 & \mu_3 & \mu_4
\end{array}
\right) = d_{i_6} T(\rho^{-1}('\hspace*{-.05cm}\tilde{\mu}_3),\tilde{\mu}_2,\tilde{\mu}_1,{'\hspace*{-.05cm}\tilde{\mu}_4}).
$$
\noindent Now we can reformulate PE (\ref{Equ: Pentagon Equation}) as follows:
$$
\sum_{\mu_0}
T(\rho^{-1}('\hspace*{-.05cm}\tilde{\mu}_4),\tilde{\mu}_6,\tilde{\mu}_5,{'\hspace*{-.05cm}\tilde{\mu}_0})
T(\rho^{-1}('\hspace*{-.05cm}\tilde{\mu}_1),\tilde{\mu}_0,\tilde{\mu}_2,{'\hspace*{-.05cm}\tilde{\mu}_3})
=$$
$$
\sum_{i_0}
\sum_{\mu_7,\mu_8,\mu_9}
d_{i_0}T(\rho^{-1}('\hspace*{-.05cm}\tilde{\mu}_7),\tilde{\mu}_5,\tilde{\mu}_2,{'\hspace*{-.05cm}\tilde{\mu}_8})
T(\rho^{-1}('\hspace*{-.05cm}\tilde{\mu}_9),\tilde{\mu}_6,\tilde{\mu}_8,{'\hspace*{-.05cm}\tilde{\mu}_3})
T(\rho^{-1}('\hspace*{-.05cm}\tilde{\mu}_4),\tilde{\mu}_9,\tilde{\mu}_7,{'\hspace*{-.05cm}\tilde{\mu}_1})
$$
Then following Definition \ref{def:TetraPiv}, we get:
$$
\sum_{\mu_0}
\raisebox{-1cm}{
\begin{tikzpicture}[scale=1]
\draw[->] (2,0) -- (1,0) node [below] {\tiny ${ }$};
\draw (1,0) -- (0,0) node [below] {\tiny $\tilde{\mu}_6$};
\draw[->] (2,0) node [below] {\tiny $'\hspace*{-.05cm}\tilde{\mu}_0$} -- (1.5,.866) node [right] {\tiny ${ }$};
\draw (1.5,.866) -- (1,1.732);
\draw[->] (1,1.732)	node [above] {\tiny $\tilde{\mu}_5$} -- (.5,.866) node [left] {\tiny ${ }$};
\draw (.5,.866) -- (0,0);
\draw[->] (1,0.577) -- (.5,.2885) node [above] {\tiny ${ }$};
\draw (.5,.2885) -- (0,0);
\draw[->] (1,0.577) node [below=.1cm] {\tiny $\rho^{-1}('\hspace*{-.05cm}\tilde{\mu}_4)$} -- (1,1.1545) node [right] {\tiny ${ }$};
\draw (1,1.1545) -- (1,1.732);
\draw[->] (1,0.577) -- (1.5,.2885) node [above] {\tiny ${ }$};
\draw (1.5,.2885) -- (2,0);
\end{tikzpicture}}
\raisebox{-1cm}{
\begin{tikzpicture}[scale=1]
\draw[->] (2,0) -- (1,0) node [below] {\tiny ${ }$};
\draw (1,0) -- (0,0) node [below] {\tiny $\tilde{\mu}_0$};
\draw[->] (2,0) node [below] {\tiny $'\hspace*{-.05cm}\tilde{\mu}_3$} -- (1.5,.866) node [right] {\tiny ${ }$};
\draw (1.5,.866) -- (1,1.732);
\draw[->] (1,1.732)	node [above] {\tiny $\tilde{\mu}_2$} -- (.5,.866) node [left] {\tiny ${ }$};
\draw (.5,.866) -- (0,0);
\draw[->] (1,0.577) -- (.5,.2885) node [above] {\tiny ${ }$};
\draw (.5,.2885) -- (0,0);
\draw[->] (1,0.577) node [below=.1cm] {\tiny $\rho^{-1}('\hspace*{-.05cm}\tilde{\mu}_1)$} -- (1,1.1545) node [right] {\tiny ${ }$};
\draw (1,1.1545) -- (1,1.732);
\draw[->] (1,0.577) -- (1.5,.2885) node [above] {\tiny ${ }$};
\draw (1.5,.2885) -- (2,0);
\end{tikzpicture}}
=
\sum_{i_0}
\sum_{\mu_7,\mu_8,\mu_9}
d_{i_0}
\raisebox{-1cm}{
\begin{tikzpicture}[scale=1]
\draw[->] (2,0) -- (1,0) node [below] {\tiny ${ }$};
\draw (1,0) -- (0,0) node [below] {\tiny $\tilde{\mu}_5$};
\draw[->] (2,0) node [below] {\tiny $'\hspace*{-.05cm}\tilde{\mu}_8$} -- (1.5,.866) node [right] {\tiny ${ }$};
\draw (1.5,.866) -- (1,1.732);
\draw[->] (1,1.732)	node [above] {\tiny $\tilde{\mu}_2$} -- (.5,.866) node [left] {\tiny ${ }$};
\draw (.5,.866) -- (0,0);
\draw[->] (1,0.577) -- (.5,.2885) node [above] {\tiny ${ }$};
\draw (.5,.2885) -- (0,0);
\draw[->] (1,0.577) node [below=.1cm] {\tiny $\rho^{-1}('\hspace*{-.05cm}\tilde{\mu}_7)$} -- (1,1.1545) node [right] {\tiny ${ }$};
\draw (1,1.1545) -- (1,1.732);
\draw[->] (1,0.577) -- (1.5,.2885) node [above] {\tiny ${ }$};
\draw (1.5,.2885) -- (2,0);
\end{tikzpicture}}
\raisebox{-1cm}{
\begin{tikzpicture}[scale=1]
\draw[->] (2,0) -- (1,0) node [below] {\tiny ${ }$};
\draw (1,0) -- (0,0) node [below] {\tiny $\tilde{\mu}_6$};
\draw[->] (2,0) node [below] {\tiny $'\hspace*{-.05cm}\tilde{\mu}_3$} -- (1.5,.866) node [right] {\tiny ${ }$};
\draw (1.5,.866) -- (1,1.732);
\draw[->] (1,1.732)	node [above] {\tiny $\tilde{\mu}_8$} -- (.5,.866) node [left] {\tiny ${ }$};
\draw (.5,.866) -- (0,0);
\draw[->] (1,0.577) -- (.5,.2885) node [above] {\tiny ${ }$};
\draw (.5,.2885) -- (0,0);
\draw[->] (1,0.577) node [below=.1cm] {\tiny $\rho^{-1}('\hspace*{-.05cm}\tilde{\mu}_9)$} -- (1,1.1545) node [right] {\tiny ${ }$};
\draw (1,1.1545) -- (1,1.732);
\draw[->] (1,0.577) -- (1.5,.2885) node [above] {\tiny ${ }$};
\draw (1.5,.2885) -- (2,0);
\end{tikzpicture}}
\raisebox{-1cm}{
\begin{tikzpicture}[scale=1]
\draw[->] (2,0) -- (1,0) node [below] {\tiny ${ }$};
\draw (1,0) -- (0,0) node [below] {\tiny $\tilde{\mu}_9$};
\draw[->] (2,0) node [below] {\tiny $'\hspace*{-.05cm}\tilde{\mu}_1$} -- (1.5,.866) node [right] {\tiny ${ }$};
\draw (1.5,.866) -- (1,1.732);
\draw[->] (1,1.732)	node [above] {\tiny $\tilde{\mu}_7$} -- (.5,.866) node [left] {\tiny ${ }$};
\draw (.5,.866) -- (0,0);
\draw[->] (1,0.577) -- (.5,.2885) node [above] {\tiny ${ }$};
\draw (.5,.2885) -- (0,0);
\draw[->] (1,0.577) node [below=.1cm] {\tiny $\rho^{-1}('\hspace*{-.05cm}\tilde{\mu}_4)$} -- (1,1.1545) node [right] {\tiny ${ }$};
\draw (1,1.1545) -- (1,1.732);
\draw[->] (1,0.577) -- (1.5,.2885) node [above] {\tiny ${ }$};
\draw (1.5,.2885) -- (2,0);
\end{tikzpicture}}
$$
and by Rule (\ref{R1}):
$$
\sum_{\mu_0}
\raisebox{-1cm}{
\begin{tikzpicture}[scale=1]
\draw[->] (2,0) -- (1,0) node [below] {\tiny ${ }$};
\draw (1,0) -- (0,0) node [below] {\tiny $\tilde{\mu}_6$};
\draw[->] (2,0) node [below] {\tiny $'\hspace*{-.05cm}\tilde{\mu}_0$} -- (1.5,.866) node [right] {\tiny ${ }$};
\draw (1.5,.866) -- (1,1.732);
\draw[->] (1,1.732)	node [above] {\tiny $\tilde{\mu}_5$} -- (.5,.866) node [left] {\tiny ${ }$};
\draw (.5,.866) -- (0,0);
\draw[->] (1,0.577) -- (.5,.2885) node [above] {\tiny ${ }$};
\draw (.5,.2885) -- (0,0);
\draw[->] (1,0.577) -- (1,1.1545) node [right] {\tiny ${ }$};
\draw (1,1.1545) -- (1,1.732);
\draw (1,0.577) -- (1.5,.2885) node [above] {\tiny ${ }$};
\draw (1.2,0.677)  node {\tiny $'\hspace*{-.05cm}\tilde{\mu}_4$};
\draw[->] (2,0) -- (1.5,.2885);
\end{tikzpicture}}
\raisebox{-1cm}{
\begin{tikzpicture}[scale=1]
\draw[->] (2,0) -- (1,0) node [below] {\tiny ${ }$};
\draw (1,0) -- (0,0) node [below] {\tiny $\tilde{\mu}_0$};
\draw[->] (2,0) node [below] {\tiny $'\hspace*{-.05cm}\tilde{\mu}_3$} -- (1.5,.866) node [right] {\tiny ${ }$};
\draw (1.5,.866) -- (1,1.732);
\draw[->] (1,1.732)	node [above] {\tiny $\tilde{\mu}_2$} -- (.5,.866) node [left] {\tiny ${ }$};
\draw (.5,.866) -- (0,0);
\draw[->] (1,0.577) -- (.5,.2885) node [above] {\tiny ${ }$};
\draw (.5,.2885) -- (0,0);
\draw[->] (1,0.577) -- (1,1.1545) node [right] {\tiny ${ }$};
\draw (1,1.1545) -- (1,1.732);
\draw (1,0.577) -- (1.5,.2885) node [above] {\tiny ${ }$};
\draw (1.2,0.677)  node {\tiny $'\hspace*{-.05cm}\tilde{\mu}_1$};
\draw[->] (2,0) -- (1.5,.2885);
\end{tikzpicture}}
=
\sum_{i_0}
\sum_{\mu_7,\mu_8,\mu_9}
d_{i_0}
\raisebox{-1cm}{
\begin{tikzpicture}[scale=1]
\draw[->] (2,0) -- (1,0) node [below] {\tiny ${ }$};
\draw (1,0) -- (0,0) node [below] {\tiny $\tilde{\mu}_5$};
\draw[->] (2,0) node [below] {\tiny $'\hspace*{-.05cm}\tilde{\mu}_8$} -- (1.5,.866) node [right] {\tiny ${ }$};
\draw (1.5,.866) -- (1,1.732);
\draw[->] (1,1.732)	node [above] {\tiny $\tilde{\mu}_2$} -- (.5,.866) node [left] {\tiny ${ }$};
\draw (.5,.866) -- (0,0);
\draw[->] (1,0.577) -- (.5,.2885) node [above] {\tiny ${ }$};
\draw (.5,.2885) -- (0,0);
\draw[->] (1,0.577) -- (1,1.1545) node [right] {\tiny ${ }$};
\draw (1,1.1545) -- (1,1.732);
\draw (1,0.577) -- (1.5,.2885) node [above] {\tiny ${ }$};
\draw (1.2,0.677)  node {\tiny $'\hspace*{-.05cm}\tilde{\mu}_7$};
\draw[->] (2,0) -- (1.5,.2885);
\end{tikzpicture}}
\raisebox{-1cm}{
\begin{tikzpicture}[scale=1]
\draw[->] (2,0) -- (1,0) node [below] {\tiny ${ }$};
\draw (1,0) -- (0,0) node [below] {\tiny $\tilde{\mu}_6$};
\draw[->] (2,0) node [below] {\tiny $'\hspace*{-.05cm}\tilde{\mu}_3$} -- (1.5,.866) node [right] {\tiny ${ }$};
\draw (1.5,.866) -- (1,1.732);
\draw[->] (1,1.732)	node [above] {\tiny $\tilde{\mu}_8$} -- (.5,.866) node [left] {\tiny ${ }$};
\draw (.5,.866) -- (0,0);
\draw[->] (1,0.577) -- (.5,.2885) node [above] {\tiny ${ }$};
\draw (.5,.2885) -- (0,0);
\draw[->] (1,0.577) -- (1,1.1545) node [right] {\tiny ${ }$};
\draw (1,1.1545) -- (1,1.732);
\draw (1,0.577) -- (1.5,.2885) node [above] {\tiny ${ }$};
\draw (1.2,0.677)  node {\tiny $'\hspace*{-.05cm}\tilde{\mu}_9$};
\draw[->] (2,0) -- (1.5,.2885);
\end{tikzpicture}}
\raisebox{-1cm}{
\begin{tikzpicture}[scale=1]
\draw[->] (2,0) -- (1,0) node [below] {\tiny ${ }$};
\draw (1,0) -- (0,0) node [below] {\tiny $\tilde{\mu}_9$};
\draw[->] (2,0) node [below] {\tiny $'\hspace*{-.05cm}\tilde{\mu}_1$} -- (1.5,.866) node [right] {\tiny ${ }$};
\draw (1.5,.866) -- (1,1.732);
\draw[->] (1,1.732)	node [above] {\tiny $\tilde{\mu}_7$} -- (.5,.866) node [left] {\tiny ${ }$};
\draw (.5,.866) -- (0,0);
\draw[->] (1,0.577) -- (.5,.2885) node [above] {\tiny ${ }$};
\draw (.5,.2885) -- (0,0);
\draw[->] (1,0.577) -- (1,1.1545) node [right] {\tiny ${ }$};
\draw (1,1.1545) -- (1,1.732);
\draw (1,0.577) -- (1.5,.2885) node [above] {\tiny ${ }$};
\draw (1.2,0.677)  node {\tiny $'\hspace*{-.05cm}\tilde{\mu}_4$};
\draw[->] (2,0) -- (1.5,.2885);
\end{tikzpicture}}
$$
\noindent Then by applying Proposition \ref{prop:A4Sym}, we can get the following form:
$$
\sum_{\mu_0}
\raisebox{-1cm}{
\begin{tikzpicture}[scale=1]
\draw[->] (2,0) -- (1,0) node [below] {\tiny ${ }$};
\draw (1,0) -- (0,0);
\draw (.475,.325) node [below] {\tiny $\tilde{\mu}_2$};
\draw[->] (2,0) -- (1.5,.866) node [right] {\tiny ${ }$};
\draw (2-.45,.13) node {\tiny $'\hspace*{-.05cm}\tilde{\mu}_3$};
\draw (1.5,.866) -- (1,1.732);
\draw[->] (1,1.732)	node [above=-.05cm] {\tiny $'\hspace*{-.05cm}\tilde{\mu}_1$} -- (.5,.866) node [left] {\tiny ${ }$};
\draw (.5,.866) -- (0,0);
\draw (1,0.577) -- (.5,.2885) node [above] {\tiny ${ }$};
\draw[->] (0,0) -- (.5,.2885);
\draw (1,0.577) -- (1,1.1545) node [right] {\tiny ${ }$};
\draw[->] (1,1.732) -- (1,1.1545);
\draw (1,0.577) node [below=-.05cm]  {\tiny $\tilde{\mu}_0$} -- (1.5,.2885) node [above] {\tiny ${ }$};
\draw[->] (2,0) -- (1.5,.2885);
\end{tikzpicture}}
\raisebox{-1cm}{
\begin{tikzpicture}[scale=1]
\draw[->] (2,0) -- (1,0) node [below] {\tiny ${ }$};
\draw (1,0) -- (0,0);
\draw (.475,.325) node [below] {\tiny $\tilde{\mu}_6$};
\draw (2,0) -- (1.5,.866) node [right] {\tiny ${ }$};
\draw (2-.45,.1) node {\tiny $\tilde{\mu}_5$};
\draw[->] (1,1.732) -- (1.5,.866);
\draw[->] (1,1.732) -- (.5,.866) node [left] {\tiny ${ }$};
\draw (1.175,1.2)	node {\tiny $'\hspace*{-.05cm}\tilde{\mu}_4$};
\draw (.5,.866) -- (0,0);
\draw[->] (1,0.577) -- (.5,.2885) node [above] {\tiny ${ }$};
\draw (.5,.2885) -- (0,0);
\draw[->] (1,0.577) -- (1,1.1545) node [right] {\tiny ${ }$};
\draw (1,1.1545) -- (1,1.732);
\draw[->] (1,0.577) node [below=-.05cm]  {\tiny $'\hspace*{-.05cm}\tilde{\mu}_0$} -- (1.5,.2885) node [above] {\tiny ${ }$};
\draw (2,0) -- (1.5,.2885);
\end{tikzpicture}}
=
\sum_{i_0}
\sum_{\mu_7,\mu_8,\mu_9}
d_{i_0}
\raisebox{-1cm}{
\begin{tikzpicture}[scale=1]
\draw (2,0) -- (1,0) node [below] {\tiny ${ }$};
\draw[->] (0,0) -- (1,0);
\draw (.6,.37) node [below] {\tiny $'\hspace*{-.05cm}\tilde{\mu}_3$};
\draw (2,0) -- (1.5,.866) node [right] {\tiny ${ }$};
\draw (2-.45,.1) node {\tiny $\tilde{\mu}_6$};
\draw[->] (1,1.732) -- (1.5,.866);
\draw (1,1.732) -- (.5,.866) node [left] {\tiny ${ }$};
\draw (.825,1.025) node {\tiny $'\hspace*{-.05cm}\tilde{\mu}_9$};
\draw [->] (0,0) -- (.5,.866);
\draw (1,0.577) -- (.5,.2885) node [above] {\tiny ${ }$};
\draw[->] (0,0) -- (.5,.2885);
\draw (1,0.577) -- (1,1.1545) node [right] {\tiny ${ }$};
\draw[->]  (1,1.732) -- (1,1.1545);
\draw[->] (1,0.577) node [below=-.05cm]  {\tiny $\tilde{\mu}_8$} -- (1.5,.2885) node [above] {\tiny ${ }$};
\draw (2,0) -- (1.5,.2885);
\end{tikzpicture}}
\raisebox{-1.17cm}{
\begin{tikzpicture}[scale=1]
\draw (2,0) -- (1,0) node [below] {\tiny ${ }$};
\draw[->] (0,0) -- (1,0);
\draw (.5,.5) node {\tiny $'\hspace*{-.05cm}\tilde{\mu}_1$};
\draw[->] (2,0) node [below=-.05cm] {\tiny $'\hspace*{-.05cm}\tilde{\mu}_4$} -- (1.5,.866) node [right] {\tiny ${ }$};
\draw (1.5,.866) -- (1,1.732);
\draw (1,1.732) -- (.5,.866) node [left] {\tiny ${ }$};
\draw (.865,1.2) node {\tiny $\tilde{\mu}_7$};
\draw[->] (0,0) -- (.5,.866);
\draw (1,0.577) -- (.5,.2885) node [above] {\tiny ${ }$};
\draw[->] (0,0) -- (.5,.2885);
\draw (1,0.577) -- (1,1.1545) node [right] {\tiny ${ }$};
\draw[->] (1,1.732) -- (1,1.1545);
\draw (1,0.577) -- (1.5,.2885) node [above] {\tiny ${ }$};
\draw (.85,0.65)  node {\tiny $\tilde{\mu}_9$};
\draw[->] (2,0) -- (1.5,.2885);
\end{tikzpicture}}
\raisebox{-1.17cm}{
\begin{tikzpicture}[scale=1]
\draw (2,0) -- (1,0) node [below] {\tiny ${ }$};
\draw[->] (0,0) node [below] {\tiny $\tilde{\mu}_2$} -- (1,0);
\draw (2,0) node [below] {\tiny $\tilde{\mu}_5$} -- (1.5,.866) node [right] {\tiny ${ }$};
\draw[->]  (1,1.732) -- (1.5,.866);
\draw[->] (1,1.732)	node [above] {\tiny $'\hspace*{-.05cm}\tilde{\mu}_8$} -- (.5,.866) node [left] {\tiny ${ }$};
\draw (.5,.866) -- (0,0);
\draw[->] (1,0.577) -- (.5,.2885) node [above] {\tiny ${ }$};
\draw (.5,.2885) -- (0,0);
\draw (1,0.577) -- (1,1.1545) node [right] {\tiny ${ }$};
\draw[->] (1,1.732) -- (1,1.1545);
\draw[->] (1,0.577) -- (1.5,.2885) node [above] {\tiny ${ }$};
\draw (.8,0.677)  node {\tiny $'\hspace*{-.05cm}\tilde{\mu}_7$};
\draw (2,0) -- (1.5,.2885);
\end{tikzpicture}}
$$
\noindent Next, by applying Rules (\ref{R1}) and (\ref{R2}) several times, we can put the labels in the same corners than in Theorem \ref{thm:sTPE2}:
$$
\sum_{\mu_0}
\raisebox{-1cm}{
\begin{tikzpicture}[scale=1]
\draw[->] (2,0) -- (1,0) node [below] {\tiny ${ }$};
\draw (1,0) -- (0,0) node [below] {\tiny $\rho^{-1}(\tilde{\mu}_2)$};
\draw[->] (2,0)  node [below] {\tiny $\sigma^{-2}(\rho('\hspace*{-.05cm}\tilde{\mu}_3))$} -- (1.5,.866) node [right] {\tiny ${ }$};
\draw (1.5,.866) -- (1,1.732);
\draw[->] (1,1.732)	node [above=-.05cm] {\tiny $'\hspace*{-.05cm}\tilde{\mu}_1$} -- (.5,.866) node [left] {\tiny ${ }$};
\draw (.5,.866) -- (0,0);
\draw (1,0.577) -- (.5,.2885) node [above] {\tiny ${ }$};
\draw[->] (0,0) -- (.5,.2885);
\draw (1,0.577) -- (1,1.1545) node [right] {\tiny ${ }$};
\draw[->] (1,1.732) -- (1,1.1545);
\draw (1,0.577) node [below=-.05cm]  {\tiny $\tilde{\mu}_0$} -- (1.5,.2885) node [above] {\tiny ${ }$};
\draw[->] (2,0) -- (1.5,.2885);
\end{tikzpicture}}
\raisebox{-1cm}{
\begin{tikzpicture}[scale=1]
\draw[->] (2,0) -- (1,0) node [below] {\tiny ${ }$};
\draw (1,0) -- (0,0) node [below] {\tiny $\rho^{-1}(\tilde{\mu}_6)$};
\draw[->] (2,0) node [below] {\tiny $\sigma^{-2}(\rho(\tilde{\mu}_5))$} -- (1.5,.866) node [right] {\tiny ${ }$};
\draw (1,1.732) node [above] {\tiny $\rho('\hspace*{-.05cm}\tilde{\mu}_4)$} -- (1.5,.866);
\draw[->] (1,1.732) -- (.5,.866) node [left] {\tiny ${ }$};
\draw (.5,.866) -- (0,0);
\draw[->] (1,0.577) -- (.5,.2885) node [above] {\tiny ${ }$};
\draw (.5,.2885) -- (0,0);
\draw[->] (1,0.577) -- (1,1.1545) node [right] {\tiny ${ }$};
\draw (1,1.1545) -- (1,1.732);
\draw[->] (1,0.577) node [below=-.05cm]  {\tiny $'\hspace*{-.05cm}\tilde{\mu}_0$} -- (1.5,.2885) node [above] {\tiny ${ }$};
\draw (2,0) -- (1.5,.2885);
\end{tikzpicture}}
=
\sum_{i_0}
\sum_{\mu_7,\mu_8,\mu_9}
d_{i_0}
\raisebox{-.8cm}{
\begin{tikzpicture}[scale=1]
\draw (2,0) -- (1,0) node [below] {\tiny ${ }$};
\draw[->] (0,0) -- (1,0);
\draw (.5,.5) node {\tiny $\rho('\hspace*{-.05cm}\tilde{\mu}_3)$};
\draw (2,0) -- (1.5,.866) node [right] {\tiny ${ }$};
\draw (2-.25,.3) node {\tiny $\rho^{-1}(\tilde{\mu}_6)$};
\draw[->] (1,1.732) -- (1.5,.866);
\draw[->] (1,1.732) node [above] {\tiny $\rho^{-1}('\hspace*{-.05cm}\tilde{\mu}_9)$} -- (.5,.866) node [left] {\tiny ${ }$};
\draw  (0,0) -- (.5,.866);
\draw[->] (1,0.577) -- (.5,.2885) node [above] {\tiny ${ }$};
\draw (0,0) -- (.5,.2885);
\draw (1,0.577) -- (1,1.1545) node [right] {\tiny ${ }$};
\draw[->]  (1,1.732) -- (1,1.1545);
\draw (1,0.577) node [below=-.05cm]  {\tiny $\tilde{\mu}_8$} -- (1.5,.2885) node [above] {\tiny ${ }$};
\draw[->] (2,0) -- (1.5,.2885);
\end{tikzpicture}}
\raisebox{-.8cm}{
\begin{tikzpicture}[scale=1]
\draw (2,0) -- (1,0) node [below] {\tiny ${ }$};
\draw[->] (0,0) -- (1,0);
\draw (.5,.5) node {\tiny $'\hspace*{-.05cm}\tilde{\mu}_1$};
\draw (2,0) -- (1.5,.866) node [right] {\tiny ${ }$};
\draw (2-.45,.5) node {\tiny $\rho('\hspace*{-.05cm}\tilde{\mu}_4)$};
\draw[->] (1,1.732) -- (1.5,.866);
\draw[->] (1,1.732) node [above] {\tiny $\rho^{-1}(\tilde{\mu}_7)$} -- (.5,.866) node [left] {\tiny ${ }$};
\draw (0,0) -- (.5,.866);
\draw[->] (1,0.577) -- (.5,.2885) node [above] {\tiny ${ }$};
\draw (0,0) -- (.5,.2885);
\draw (1,0.577) -- (1,1.1545) node [right] {\tiny ${ }$};
\draw[->] (1,1.732) -- (1,1.1545);
\draw (1,0.577) node [below=.15cm] {\tiny $\sigma^{-2}(\rho(\tilde{\mu}_9))$} -- (1.5,.2885) node [above] {\tiny ${ }$};
\draw[->] (2,0) -- (1.5,.2885);
\end{tikzpicture}}
\raisebox{-.8cm}{
\begin{tikzpicture}[scale=1]
\draw (2,0) -- (1,0) node [below] {\tiny ${ }$};
\draw[->] (0,0) -- (1,0);
\draw (.15,.15) node {\tiny $\rho^{-1}(\tilde{\mu}_2)$};
\draw[->] (2,0) -- (1.5,.866) node [right] {\tiny ${ }$};
\draw (2-.15,.15) node {\tiny $\sigma^{-2}(\rho(\tilde{\mu}_5))$};
\draw  (1,1.732) -- (1.5,.866);
\draw (1,1.732)	node [above] {\tiny $'\hspace*{-.05cm}\tilde{\mu}_8$} -- (.5,.866) node [left] {\tiny ${ }$};
\draw[->] (0,0) -- (.5,.866);
\draw (1,0.577) -- (.5,.2885) node [above] {\tiny ${ }$};
\draw[->] (0,0) -- (.5,.2885);
\draw (1,0.577) -- (1,1.1545) node [right] {\tiny ${ }$};
\draw[->] (1,1.732) -- (1,1.1545);
\draw[->] (1,0.577)  node [below] {\tiny $\rho('\hspace*{-.05cm}\tilde{\mu}_7)$} -- (1.5,.2885) node [above] {\tiny ${ }$};
\draw (2,0) -- (1.5,.2885);
\end{tikzpicture}}
$$
\noindent Observe that $({'\hspace*{-.05cm}}\alpha)' = {'\hspace*{-.05cm}(\alpha')} = \alpha$, $({'\hspace*{-.05cm}}\alpha)^{**} = \alpha'$ and ${^{**}\hspace*{-.05cm}(\alpha')} = {'\hspace*{-.05cm}\alpha}$. Then
$$\rho('\hspace*{-.05cm}\tilde{\mu}_7) = \rho^{-2} \rho^{3}('\hspace*{-.05cm}\tilde{\mu}_7) = \rho^{-2}(('\hspace*{-.05cm}\tilde{\mu}_7)^{**}) =  \rho^{-2}(\tilde{\mu}_7').$$
Now, under the assumptions of Theorem \ref{thm:sTPE2}, $\sigma^2 = \id$, so by Lemmas \ref{lem:sph2} and  \ref{lem:switch}, $$(\rho^{-1}(\alpha))' = \sigma^{-2} \circ \rho (\alpha') =  \rho (\alpha').$$ Then by applying the rules:
$$
\sum_{\mu_0}
\raisebox{-1cm}{
\begin{tikzpicture}[scale=1]
\draw[->] (2,0) -- (1,0) node [below] {\tiny ${ }$};
\draw (1,0) -- (0,0) node [below] {\tiny $\rho^{-1}(\tilde{\mu}_2)$};
\draw[->] (2,0)  node [below] {\tiny $\rho('\hspace*{-.05cm}\tilde{\mu}_3)$} -- (1.5,.866) node [right] {\tiny ${ }$};
\draw (1.5,.866) -- (1,1.732);
\draw[->] (1,1.732)	node [above=-.05cm] {\tiny $'\hspace*{-.05cm}\tilde{\mu}_1$} -- (.5,.866) node [left] {\tiny ${ }$};
\draw (.5,.866) -- (0,0);
\draw (1,0.577) -- (.5,.2885) node [above] {\tiny ${ }$};
\draw[->] (0,0) -- (.5,.2885);
\draw (1,0.577) -- (1,1.1545) node [right] {\tiny ${ }$};
\draw[->] (1,1.732) -- (1,1.1545);
\draw (1,0.577) node [below=-.05cm]  {\tiny $\tilde{\mu}_0$} -- (1.5,.2885) node [above] {\tiny ${ }$};
\draw[->] (2,0) -- (1.5,.2885);
\end{tikzpicture}}
\raisebox{-1cm}{
\begin{tikzpicture}[scale=1]
\draw[->] (2,0) -- (1,0) node [below] {\tiny ${ }$};
\draw (1,0) -- (0,0) node [below] {\tiny $\rho^{-1}(\tilde{\mu}_6)$};
\draw[->] (2,0) node [below] {\tiny $\rho(\tilde{\mu}_5)$} -- (1.5,.866) node [right] {\tiny ${ }$};
\draw (1,1.732) node [above] {\tiny $\rho('\hspace*{-.05cm}\tilde{\mu}_4)$} -- (1.5,.866);
\draw[->] (1,1.732) -- (.5,.866) node [left] {\tiny ${ }$};
\draw (.5,.866) -- (0,0);
\draw[->] (1,0.577) -- (.5,.2885) node [above] {\tiny ${ }$};
\draw (.5,.2885) -- (0,0);
\draw[->] (1,0.577) -- (1,1.1545) node [right] {\tiny ${ }$};
\draw (1,1.1545) -- (1,1.732);
\draw[->] (1,0.577) node [below=-.05cm]  {\tiny $'\hspace*{-.05cm}\tilde{\mu}_0$} -- (1.5,.2885) node [above] {\tiny ${ }$};
\draw (2,0) -- (1.5,.2885);
\end{tikzpicture}}
=
\sum_{i_0}
\sum_{\mu_7,\mu_8,\mu_9}
d_{i_0}
\raisebox{-.8cm}{
\begin{tikzpicture}[scale=1]
\draw (2,0) -- (1,0) node [below] {\tiny ${ }$};
\draw[->] (0,0) -- (1,0);
\draw (.5,.5) node {\tiny $\rho('\hspace*{-.05cm}\tilde{\mu}_3)$};
\draw (2,0) -- (1.5,.866) node [right] {\tiny ${ }$};
\draw (2-.25,.3) node {\tiny $\rho^{-1}(\tilde{\mu}_6)$};
\draw[->] (1,1.732) -- (1.5,.866);
\draw[->] (1,1.732) node [above] {\tiny $\rho^{-1}('\hspace*{-.05cm}\tilde{\mu}_9)$} -- (.5,.866) node [left] {\tiny ${ }$};
\draw  (0,0) -- (.5,.866);
\draw[->] (1,0.577) -- (.5,.2885) node [above] {\tiny ${ }$};
\draw (0,0) -- (.5,.2885);
\draw (1,0.577) -- (1,1.1545) node [right] {\tiny ${ }$};
\draw[->]  (1,1.732) -- (1,1.1545);
\draw (1,0.577) node [below=-.05cm]  {\tiny $\tilde{\mu}_8$} -- (1.5,.2885) node [above] {\tiny ${ }$};
\draw[->] (2,0) -- (1.5,.2885);
\end{tikzpicture}}
\raisebox{-.8cm}{
\begin{tikzpicture}[scale=1]
\draw (2,0) -- (1,0) node [below] {\tiny ${ }$};
\draw[->] (0,0) -- (1,0);
\draw (.5,.5) node {\tiny $'\hspace*{-.05cm}\tilde{\mu}_1$};
\draw (2,0) -- (1.5,.866) node [right] {\tiny ${ }$};
\draw (2-.45,.5) node {\tiny $\rho('\hspace*{-.05cm}\tilde{\mu}_4)$};
\draw[->] (1,1.732) -- (1.5,.866);
\draw[->] (1,1.732) node [above] {\tiny $\rho^{-1}(\tilde{\mu}_7)$} -- (.5,.866) node [left] {\tiny ${ }$};
\draw (0,0) -- (.5,.866);
\draw[->] (1,0.577) -- (.5,.2885) node [above] {\tiny ${ }$};
\draw (0,0) -- (.5,.2885);
\draw (1,0.577) -- (1,1.1545) node [right] {\tiny ${ }$};
\draw[->] (1,1.732) -- (1,1.1545);
\draw (1,0.577) node [below=.05cm] {\tiny $\rho(\tilde{\mu}_9)$} -- (1.5,.2885) node [above] {\tiny ${ }$};
\draw[->] (2,0) -- (1.5,.2885);
\end{tikzpicture}}
\raisebox{-.8cm}{
\begin{tikzpicture}[scale=1]
\draw (2,0) -- (1,0) node [below] {\tiny ${ }$};
\draw[->] (0,0) -- (1,0);
\draw (.15,.15) node {\tiny $\rho^{-1}(\tilde{\mu}_2)$};
\draw[->] (2,0) -- (1.5,.866) node [right] {\tiny ${ }$};
\draw (2-.15,.15) node {\tiny $\rho(\tilde{\mu}_5)$};
\draw  (1,1.732) -- (1.5,.866);
\draw (1,1.732)	node [above] {\tiny $'\hspace*{-.05cm}\tilde{\mu}_8$} -- (.5,.866) node [left] {\tiny ${ }$};
\draw[->] (0,0) -- (.5,.866);
\draw[->] (1,0.577) -- (.5,.2885) node [above] {\tiny ${ }$};
\draw (0,0) -- (.5,.2885);
\draw[->] (1,0.577) -- (1,1.1545) node [right] {\tiny ${ }$};
\draw (1,1.732) -- (1,1.1545);
\draw (1,0.577)  node [below=-.15cm] {\tiny $\rho^{-1}(\tilde{\mu}_7)'$} -- (1.5,.2885) node [above] {\tiny ${ }$};
\draw[->] (2,0) -- (1.5,.2885);
\end{tikzpicture}}
$$
\noindent Finally, by adjusting the orientation using the natural isomorphisms $\sigma_i$ ($i=1,2,3$) defined as for $\sigma$ but for the $i$th leg (so that $\sigma = \sigma_3$), we recover the TPE from Theorem \ref{thm:sTPE2}:

\begin{theorem}[PE-TPE, Change of Basis] \label{thm:PE-TPE}
Following Theorem \ref{thm:sTPE2}, its TPE is exactly PE (\ref{Equ: Pentagon Equation}) under the following change of basis:
$$\alpha_1 = \rho('\hspace*{-.05cm}\tilde{\mu}_3),\ \alpha_2 = \sigma_1^{-1}({'\hspace*{-.05cm}\tilde{\mu}_1}),\ \alpha_3 = \rho^{-1}(\tilde{\mu}_2),\ \alpha_4 = \sigma_1(\sigma_3^{-1}(\rho^{-1}(\tilde{\mu}_6))),\ \alpha_5 = \sigma_1(\sigma_2(\sigma_3^{-1}(\rho('\hspace*{-.05cm}\tilde{\mu}_4)))),\ \alpha_6 = \sigma_1(\rho(\tilde{\mu}_5)),$$
$$\beta_0 = {'\hspace*{-.05cm}\tilde{\mu}_0},\ \beta_1 = \rho^{-1}('\hspace*{-.05cm}\tilde{\mu}_9),\ \beta_2 =  \rho^{-1}(\tilde{\mu}_7),\ \beta_3 = {'\hspace*{-.05cm}\tilde{\mu}_8},$$
$$X_{i_0} = X,\ X_1 = X_{i_3}^*,\ X_2 = X_{i_1},\ X_3 = X_{i_2},\ X_4 = X_{i_6},\ X_5 = X_{i_4},\ X_6 = X_{i_5}^*,\ X_7 = X_{i_9}^*,\ X_8 = X_{i_7},\ X_9 = X_{i_8}.$$
\end{theorem}
\noindent The change of basis in Theorem \ref{thm:PE-TPE} can be depicted as a TP with a specific (PE) configuration, see Figure \ref{fig:PEconfig} (which is rotated for a better matching with Figure \ref{fig:TPEconfig}).
\begin{figure}[h]
$$
\begin{tikzpicture}[scale=1.4]
\draw (0,2)--(3,2);
\draw (1,1)--(2,1);
\draw (0,0)--(3,0);
\draw (0,0) -- (0,2) -- (1,1) -- (0,0);
\draw (3,0)--(3,2)--(2,1)--(3,0);
\draw[->](0,2) -- (1.5,2) node [above] {\tiny $i_1$};
\draw[->] (1,1) -- (1.5,1) node [above] {\tiny $i_2$};
\draw[->] (0,0) -- (1.5,0) node [above] {\tiny $i_3$};
\draw[->] (0,2) --++ (.5,-.5) node [right] {\tiny $i_4$};
\draw[->] (1,1) --++ (-.5,-.5) node [right] {\tiny $i_5$};
\draw[->] (0,0) -- (0,1) node [left] {\tiny $i_6$};
\draw[->] (3,2) --++ (-.5,-.5) node [left] {\tiny $i_7$};
\draw[->] (2,1) --++ (.5,-.5) node [left] {\tiny $i_8$};
\draw[->] (3,0) -- (3,1) node [right] {\tiny $i_9$};
\node at (0+.13,2-.3) {\tiny $\alpha_1$};
\node at (1-.2,1) {\tiny $\alpha_2$};
\node at (0+.13,0+.3) {\tiny $\alpha_3$};
\node at (3-.13,2-.3) {\tiny $\alpha_4$};
\node at (2+.2,1) {\tiny $\alpha_5$};
\node at (3-.13,0+.3) {\tiny $\alpha_6$};
\end{tikzpicture}
\hspace*{1cm}
\raisebox{-.22cm}{
\scalebox{1}{
\begin{tikzpicture}[scale=1.4]
\draw (0,2)--(3,2);
\draw (1,1)--(2,1);
\draw (0,0)--(3,0);
\draw (0,0) -- (0,2) -- (1,1) -- (0,0);
\draw (3,0)--(3,2)--(2,1)--(3,0);
\draw[-<] (0,2) -- (1.5,2) node [above] {\tiny $i_1^*$};
\draw[->] (1,1) -- (1.5,1) node [above] {\tiny $i_2$};
\draw[-<] (0,0) -- (1.5,0) node [above] {\tiny $i_3$};
\draw[-<] (0,2) --++ (.5,-.5) node [right] {\tiny $i_4^*$};
\draw[->] (1,1) --++ (-.5,-.5) node [right] {\tiny $i_5^*$};
\draw[->] (0,0) -- (0,1) node [left] {\tiny $i_6$};
\draw[-<] (3,2) --++ (-.5,-.5) node [left] {\tiny $i_7^*$};
\draw[-<] (2,1) --++ (.5,-.5) node [left] {\tiny $i_8^*$};
\draw[-<] (3,0) -- (3,1) node [right] {\tiny $i_9$};
\node at (0+.15,2-.3) {\tiny $'\hspace*{-.05cm}\tilde{\mu}_1$};
\node at (1+.05,1-.125) {\tiny $\tilde{\mu}_2$};
\node at (0-.15,0) {\tiny $'\hspace*{-.05cm}\tilde{\mu}_3$};
\node at (3-.35,2-.12) {\tiny $'\hspace*{-.05cm}\tilde{\mu}_4$};
\node at (2-.05 ,1-.125) {\tiny $\tilde{\mu}_5$};
\node at (3+.15,0) {\tiny $\tilde{\mu}_6$};
\end{tikzpicture}}}
$$
\caption{TP configuration (on the left) versus PE configuration (on the right)}
\label{fig:PEconfig}
\end{figure}

Let $\mathcal{F}$ be a fusion ring with basis $(b_i)_{i \in I}$ and fusion coefficients $N_{i,j}^k$. In \S \ref{sec:FirstLoc}, we will employ the TPE to establish criteria for the spherical categorification of $\mathcal{F}$. Applying the results of Theorem \ref{thm:PE-TPE} and \cite[Proposition 3.7]{DaHaWa}, the TPE can indeed be used to categorify $\mathcal{F}$, assuming it is the Grothendieck ring of a spherical fusion category. It is important to note that the proof of Theorem \ref{thm:PE-TPE} hinges on the property of sphericality; therefore, the categorification requires not only the verification of all TPE but also additional assumptions to ensure sphericality.
The TP are considered up to $A_4$-symmetry, meaning they are spherically invariant. Recall that if $X^{**} = X$ and $a_X^2 = \id_X$
(as assumed in Theorem \ref{thm:sTPE2}),
then $a_X = \pm \id_X$. Thus, we define $\epsilon_i = \pm 1$ for each $i \in I$ to represent the pivotal structure, ensuring that $a_{X_i} = \epsilon_i \id_{X_i}$ and that $\epsilon_{i^*} = \epsilon_{i}$, in accordance with Lemma \ref{lem:pivo*}, and \cite[Equation (35)]{DaHaWa} regarding the TP. Consequently, we have $\ev_{X_i,\pm} = \epsilon_i \ev_{X_{i^*},\mp}$ (Lemma \ref{lem:biFS}), which defines the TP up to the following equalities on the edges:
$$
\raisebox{-.4cm}{
\begin{tikzpicture}[scale=.5]
\draw[->] (0,0)--(1,1) node [above] {$i$};
\draw (1,1)--(2,2);
\end{tikzpicture}}
= \epsilon_i
\raisebox{-.4cm}{
\begin{tikzpicture}[scale=.5]
\draw (0,0)--(1,1) node [above] {$i^*$};
\draw[->] (2,2)--(1,1);
\end{tikzpicture}}
$$
Lastly, according to \cite[Proposition 4.7.12]{EGNO15}, we may assume that the dimension function $i \mapsto d_i$ is a character of the fusion ring. The relation $d_{i^*} = d_i$ ensures sphericality, as further explained in Remark \ref{rk:sph}.

\section{Localization}

\subsection{Localization strategy} \label{sec:loc}

Solving the pentagon equations (PE) for a fusion ring is computationally demanding.  
A standard technique to control this complexity is the \emph{localization strategy}: one carefully selects and solves small subsystems of the PE, then uses the resulting partial solutions to simplify the remaining equations.  
We describe a general algorithmic form of this strategy, followed by limitations of the method, and finally explain why the TPE perspective provides additional structural insight.

\subsubsection*{General algorithm}

The localization strategy for determining $F$-symbols proceeds iteratively:

\begin{enumerate}
\item[(a)] {Complexity measure}:  
Specify a complexity measure for $F$-symbols and for the PEs. Already known $F$-symbols are assigned the lowest complexity.  

\item[(b)] {Subsystem selection}:  
Choose a small set $V$ of low-complexity, currently unknown $F$-symbols.  
Construct an overdetermined system $E$ of PE that involves only the variables in $V$.

\item[(c)] {Gröbner basis computation}:  
Compute a Gröbner basis for the system $E$.

\item[(d)] {Resolution and iteration}:  If $E$ admits solutions, extract the resulting values for $V$, regard these symbols as known constants, and return to step~(a). If $E$ has no solutions, the fusion ring is not categorifiable.
\end{enumerate}

A practical way to measure the complexity of an $F$-symbol is to count the number of distinct simple objects (up to isomorphism and duality) that appear in it.

To use this effectively, the implementation should generate $V$ and $E$ incrementally so that the Gröbner basis computation in step~(c) remains tractable.  
In practice, it is often more efficient to read off the values of the variables in $V$ directly from the Gröbner basis rather than solving $E$ completely.  
Different complexity criteria can be applied to target different classes of $F$-symbols, and partial solutions can be combined.  

\subsubsection*{Limitations of the approach}
Although this approach applies in principle to fusion rings of arbitrary rank, the worst-case complexity is exponential (and frequently doubly exponential), which naturally limits practical computations. 

It has been highly effective in concrete classifications; for example, it was central in \cite{gvfuscat} for the classification of multiplicity-free fusion categories up to rank~$7$.  
There, the reduction steps relied on subsystems consisting of binomial equations, linear equations in specific variables, or low-degree polynomial relations.

However, the effectiveness of the method depends crucially on the quality of the chosen subsystem.  
A ``good'' subsystem yields simple solutions that dramatically simplify the remaining equations.  
A ``bad'' subsystem, on the other hand, may produce numerous complicated solutions involving large algebraic extensions, causing a double-exponential blowup in time and memory.  
Because one cannot reliably predict in advance whether a subsystem will behave well, it is advantageous to have a diverse collection of subsystem types available for comparison.

\subsubsection*{TPE-based localization}

Geometrically, the monoidal triangular prism represents the trace of multiplying three ``H''-shaped diagrams within the algebroid of Hom-spaces  
\(
\hc(X \otimes Y,\, Z \otimes T),
\)
as explained in Remark~\ref{rk:RepTP}.  
The ``I''-shaped diagrams correspond to matrix units in these Hom-spaces.  
A $90^\circ$ rotation—known as the \emph{string Fourier transform} \cite{JafLiu18}—interchanges the ``H'' and ``I'' configurations.  
Thus, tetrahedral configurations can be understood as matrix entries of the string Fourier transform with respect to the ``I''-shaped diagrammatic basis.

In this TPE approach, localization uses equations in which the LHS is known, while the RHS consists of unknown matrix entries of the string Fourier transform.  
Let $X \in \mC$ be a selfdual simple object such that $\hc(X \otimes X,\, X \otimes X)$ has dimension $n$ with basis $B$.  
The string Fourier transform then introduces $n^2$ unknown matrix entries.  
There are $n^3$ standard triangular prism configurations (Figure~\ref{fig:TPEconfig}) satisfying $X_{4} = \cdots = X_{9} = X$.  
After accounting for the $C_3$ rotational symmetry, one obtains roughly $n^3/3$ TPEs involving these $n^2$ unknowns, producing a highly overdetermined system well-suited for local resolution.

We believe that TPE provide a natural and effective way to implement localization, as demonstrated in \cite{LLPRnear} for the near-group fusion rings $G + |G|$.  
Moreover, applying the first iteration of such a TPE-guided localization yields a necessary categorification criterion (see \S\ref{sec:FirstLoc}), already successfully used to prove the non-categorifiability of $\mathcal{F}_{210}$ (see \ref{sub:F210}).

\subsection{Categorification criteria from localization} \label{sec:FirstLoc}

This subsection, following Notation \ref{not:1}, introduces a categorification criterion that serves as an initial iteration in the localization strategy outlined in \S \ref{sec:loc}. The primary focus is on choosing a set of TPEs that not only produce meaningful equations but also minimize the number of variables (F-symbols) involved. The objective is to create a small yet significant subsystem for which the Gröbner basis can be readily calculated. Following this, the strategy involves building upon the solutions obtained and incrementally expanding the subsystem through an inductive process until the entire system is addressed.

\begin{theorem} \label{thm:loc}
Let $\mC$ be a spherical fusion category. Consider $X_k$, a selfdual simple object within $\mC$, such that for every simple object $X_a$, the condition $N_{k,k}^{a} \le 1$ holds. Assume that if $N_{k,k}^{a} = 1$, then $X_a$ is selfdual. Define $S_k$ as the set of such indices $a$.
Assume further that there exists $b$ such that $N_{b,b^*}^k$ is odd.
Let $S'_k$ be a subset of $S_k$.
There exist functions $x: S_k \times S_k' \times S_k' \to \field$ and $y: S_k \times S_k' \to \field$, satisfying the following equations:
\begin{align}
\label{equ:Ibis} x(a,b,c)=  & \sum_{i \in S_k} d_i y(i,a)y(i,b)y(i,c), \\
\label{equ:IIbis} y(a,b)y(a,c) = & \sum_{i \in S_k} d_i y(i,a)x(i,b,c),
\end{align}
\noindent with $x(a,k,k) = y(a,k)^2$, $x(a,b,c) = 0$ if $N_{b,c}^a  =0$, $y(a,b) = y(b,a)$, $y(1,b) = d_k^{-1}$, $x(1,b,c) = \delta_{b,c} (d_bd_k)^{-1}$.
\end{theorem}
\begin{proof}
Let $a,b,c \in S'_k$. Consider the following two types of triangular prisms (TP), labeled respectively as $(a,b,c,k,k,k,k,k,k)$ and $(k,k,a,b,k,k,c,k,k)$, developed according to Theorem \ref{thm:sTPE2} and Remark \ref{rk:simplified}. Since the objects are selfdual and the multiplicities do not exceed 1, we can label the morphisms simply with a bullet in the TP (though not always in the TPE, as seen in Equations \ref{equ:I} and \ref{equ:II}). According to Theorem \ref{thm:FrobSchur}, for all $a \in S_k$, we have $\nu_2(X_a) = 1$ (since 1 is odd), eliminating the need for orienting the edges.
$$
\scalebox{.75}{
\begin{tikzpicture}[scale=2]
\draw (0,2)--(3,2);
\draw (1,1)--(2,1);
\draw (0,0)--(3,0);
\draw (0,0) -- (0,2) -- (1,1) -- (0,0);
\draw (3,0)--(3,2)--(2,1)--(3,0);
\draw (0,2) -- (1.5,2) node [above] {$a$};
\draw (1,1) -- (1.5,1) node [above] {$b$};
\draw (0,0) -- (1.5,0) node [above] {$c$};
\draw (0,2) --++ (.5,-.5) node [right] {$k$};
\draw (1,1) --++ (-.5,-.5) node [right] {$k$};
\draw (0,0) -- (0,1) node [left] {$k$};
\draw (3,2) --++ (-.5,-.5) node [left] {$k$};
\draw (2,1) --++ (.5,-.5) node [left] {$k$};
\draw (3,0) -- (3,1) node [right] {$k$};
\node at (0+.15,2-.3) {$\bullet$};
\node at (1-.2,1) {$\bullet$};
\node at (0+.15,0+.3) {$\bullet$};
\node at (3-.15,2-.3) {$\bullet$};
\node at (2+.2,1) {$\bullet$};
\node at (3-.15,0+.3) {$\bullet$};
\end{tikzpicture}} $$
\begin{equation}\label{equ:I}
\sum_{\beta_0 \in B_0}
\raisebox{-1.25cm}{
\scalebox{.75}{
\begin{tikzpicture}[scale=1.5]
\draw (0,0) node [below] {$\bullet$} -- (1,0)  node [below] 	{$k$}; 		\draw (1,0) -- (2,0);
\draw (2,0) node [below] {$\bullet$} -- (1.5,.866) node [right] {$k$}; 		\draw (1.5,.866)--(1,1.732);
\draw (1,1.732)	node [above] {$\bullet$} -- (.5,.866) 	node [left] {$k$}; 	\draw (.5,.866) -- (0,0);
\draw (0,0) -- (.5,.2885) 	node [above] {$c$}; 							\draw (.5,.2885) -- (1,0.577);
\draw (1,0.577) node [below] {$\beta_0'$} -- (1,1.1545) node [right] {$b$}; 	\draw (1,1.1545)-- (1,1.732);
\draw (1,0.577) -- (1.5,.2885) 	node [above] {$a$}; 						\draw (1.5,.2885) -- (2,0);
\end{tikzpicture}
\begin{tikzpicture}[scale=1.5]
\draw (0,0) node [below] {$\bullet$} -- (1,0) node [below] {$k$}; 			\draw (1,0) -- (2,0);
\draw (2,0) node [below] {$\bullet$} -- (1.5,.866) 	node [right] {$k$};		\draw (1.5,.866) -- (1,1.732);
\draw (1,1.732)	node [above] {$\bullet$} -- (.5,.866) 	node [left] {$k$};	\draw (.5,.866) -- (0,0);
\draw (0,0) -- (.5,.2885) node [above] {$a$};								\draw (.5,.2885) 	-- (1,0.577);
\draw (1,0.577) node [below] {$\beta_0$} -- (1,1.1545) 	node [right] {$b$};	\draw (1,1.1545) -- (1,1.732);
\draw (1,0.577) -- (1.5,.2885) 	node [above] {$c$};							\draw (1.5,.2885) -- (2,0);
\end{tikzpicture}}}
\hspace*{0cm} =
\sum_{i \in S_k} d_i
\raisebox{-1.25cm}{
\scalebox{.75}{
\begin{tikzpicture}[scale=1.5]
\draw (0,0)  	-- (1,0) 		node [below] 	{$a$}; \draw (1,0) 		-- (2,0);
\draw (2,0) 	-- (1.5,.866) 	node [right] 	{$k$}; \draw (1.5,.866) 	-- (1,1.732);
\draw (1,1.732)	node [above] {$\bullet$} 	-- (.5,.866) 	node [left] 	{$k$}; \draw (.5,.866) 	-- (0,0);
\draw (0,0) 	-- (.75,.433) 	node [above] 	{$k$}; \draw (.75,.433) -- (1,0.577);
\draw (1,0.577) node [below] {$\bullet$} -- (1,1.1545) 	node [left]	{$i$}; \draw (1,1.1545) 	-- (1,1.732);
\draw (1,0.577) -- (1.25,.433) 	node [above] 	{$k$}; \draw (1.25,.433) -- (2,0);
\node at (.36,.35) {$\bullet$};
\node at (2-.36,.35) {$\bullet$};
\end{tikzpicture}
\begin{tikzpicture}[scale=1.5]
\draw (0,0) 	-- (1,0) 		node [below] 	{$b$}; \draw (1,0) 		-- (2,0);
\draw (2,0) 	-- (1.5,.866) 	node [right] 	{$k$}; \draw (1.5,.866) 	-- (1,1.732);
\draw (1,1.732)	node [above] {$\bullet$} 	-- (.5,.866) 	node [left] 	{$k$}; \draw (.5,.866) 	-- (0,0);
\draw (0,0) 	-- (.75,.433) 	node [above] 	{$k$}; \draw (.75,.433) -- (1,0.577);
\draw (1,0.577) node [below] {$\bullet$} -- (1,1.1545) 	node [left]	{$i$}; \draw (1,1.1545) 	-- (1,1.732);
\draw (1,0.577) -- (1.25,.433) 	node [above] 	{$k$}; \draw (1.25,.433) -- (2,0);
\node at (.36,.35) {$\bullet$};
\node at (2-.36,.35) {$\bullet$};
\end{tikzpicture}
\begin{tikzpicture}[scale=1.5]
\draw (0,0) -- (1,0) 		node [below] 	{$c$}; \draw (1,0) 		-- (2,0);
\draw (2,0) -- (1.5,.866) 	node [right] 	{$k$}; \draw (1.5,.866) 	-- (1,1.732);
\draw (1,1.732)	node [above] {$\bullet$} 	-- (.5,.866) 	node [left] 	{$k$}; \draw (.5,.866) 	-- (0,0);
\draw (0,0) 	-- (.75,.433) 	node [above] 	{$k$}; \draw (.75,.433) -- (1,0.577);
\draw (1,0.577) node [below] {$\bullet$} -- (1,1.1545) 	node [left]	{$i$}; \draw (1,1.1545) 	-- (1,1.732);
\draw (1,0.577) -- (1.25,.433) 	node [above] 	{$k$}; \draw (1.25,.433) -- (2,0);
\node at (.36,.35) {$\bullet$};
\node at (2-.36,.35) {$\bullet$};
\end{tikzpicture}
}}
\end{equation}
$$
\scalebox{.75}{
\begin{tikzpicture}[scale=2]
\draw (0,2)--(3,2);
\draw (1,1)--(2,1);
\draw (0,0)--(3,0);
\draw (0,0) -- (0,2) -- (1,1) -- (0,0);
\draw (3,0)--(3,2)--(2,1)--(3,0);
\draw (0,2) -- (1.5,2) node [above] {$k$};
\draw (1,1) -- (1.5,1) node [above] {$k$};
\draw (0,0) -- (1.5,0) node [above] {$a$};
\draw (0,2) --++ (.5,-.5) node [right] {$b$};
\draw (1,1) --++ (-.5,-.5) node [right] {$k$};
\draw (0,0) -- (0,1) node [left] {$k$};
\draw (3,2) --++ (-.5,-.5) node [left] {$c$};
\draw (2,1) --++ (.5,-.5) node [left] {$k$};
\draw (3,0) -- (3,1) node [right] {$k$};
\node at (0+.15,2-.3) {$\bullet$};
\node at (1-.2,1) {$\bullet$};
\node at (0+.15,0+.3) {$\bullet$};
\node at (3-.15,2-.3) {$\bullet$};
\node at (2+.2,1) {$\bullet$};
\node at (3-.15,0+.3) {$\bullet$};
\end{tikzpicture}}
$$
\begin{equation}\label{equ:II}
\raisebox{-1.25cm}{
\scalebox{.75}{
\begin{tikzpicture}[scale=1.5]
\draw (0,0) node [below] {$\bullet$} -- (1,0) node [below] {$k$}; \draw (1,0) -- (2,0);
\draw (2,0) node [below] {$\bullet$} -- (1.5,.866) node [right] {$b$}; \draw (1.5,.866) -- (1,1.732);
\draw (1,1.732) node [above] {$\bullet$} -- (.5,.866) node [left] {$k$}; \draw (.5,.866) -- (0,0);
\draw (0,0) -- (.5,.2885) node [above] {$a$}; \draw (.5,.2885) -- (1,0.577);
\draw (1,0.577) node [below] {$\bullet$} -- (1,1.1545) node [right] {$k$}; \draw (1,1.1545) -- (1,1.732);
\draw (1,0.577) -- (1.5,.2885) node [above] {$k$}; \draw (1.5,.2885) -- (2,0);
\end{tikzpicture}
\begin{tikzpicture}[scale=1.5]
\draw (0,0) node [below] {$\bullet$} -- (1,0) node [below] {$k$}; \draw (1,0) -- (2,0);
\draw (2,0) node [below] {$\bullet$} -- (1.5,.866) node [right] {$k$}; \draw (1.5,.866) -- (1,1.732);
\draw (1,1.732) node [above] {$\bullet$} -- (.5,.866) node [left] {$c$}; \draw (.5,.866) -- (0,0);
\draw (0,0) -- (.5,.2885) node [above] {$k$}; \draw (.5,.2885) -- (1,0.577);
\draw (1,0.577) node [below] {$\bullet$} -- (1,1.1545) node [right] {$k$}; \draw (1,1.1545) -- (1,1.732);
\draw (1,0.577) -- (1.5,.2885) node [above] {$a$}; \draw (1.5,.2885) -- (2,0);
\end{tikzpicture}}}
 =
\sum_{i \in S_k} d_i \sum_{\beta \in B}
\raisebox{-1.25cm}{
\scalebox{.75}{
\begin{tikzpicture}[scale=1.5]
\draw (0,0)  -- (1,0) node [below] {$k$}; \draw (1,0) -- (2,0);
\draw (2,0) -- (1.5,.866) node [right] {$c$}; \draw (1.5,.866) -- (1,1.732);
\draw (1,1.732) node [above] {$\beta$} -- (.5,.866) node [left] {$b$}; \draw (.5,.866) -- (0,0);
\draw (0,0) -- (.75,.433) node [above] {$k$}; \draw (.75,.433) -- (1,0.577);
\draw (1,0.577) node [below] {$\bullet$} -- (1,1.1545) node [left] {$i$}; \draw (1,1.1545) -- (1,1.732);
\draw (1,0.577) -- (1.25,.433) node [above] {$k$}; \draw (1.25,.433) -- (2,0);
\node at (.36,.35) {$\bullet$};
\node at (2-.36,.35) {$\bullet$};
\end{tikzpicture}
\begin{tikzpicture}[scale=1.5]
\draw (0,0) -- (1,0) node [below] {$k$}; \draw (1,0) -- (2,0);
\draw (2,0) -- (1.5,.866) node [right] {$k$}; \draw (1.5,.866) -- (1,1.732);
\draw (1,1.732) node [above] {$\bullet$} -- (.5,.866) node [left] {$k$}; \draw (.5,.866) -- (0,0);
\draw (0,0) -- (.75,.433) node [above] {$b$}; \draw (.75,.433) -- (1,0.577);
\draw (1,0.577) node [below] {$\beta'$} -- (1,1.1545) node [left] {$i$}; \draw (1,1.1545) -- (1,1.732);
\draw (1,0.577) -- (1.25,.433) node [above] {$c$}; \draw (1.25,.433) -- (2,0);
\node at (.36,.35) {$\bullet$};
\node at (2-.36,.35) {$\bullet$};
\end{tikzpicture}
\begin{tikzpicture}[scale=1.5]
\draw (0,0) -- (1,0) node [below] {$a$}; \draw (1,0) -- (2,0);
\draw (2,0) -- (1.5,.866) node [right] {$k$}; \draw (1.5,.866) -- (1,1.732);
\draw (1,1.732) node [above] {$\bullet$} -- (.5,.866) node [left] {$k$}; \draw (.5,.866) -- (0,0);
\draw (0,0) -- (.75,.433) node [above] {$k$}; \draw (.75,.433) -- (1,0.577);
\draw (1,0.577) node [below] {$\bullet$} -- (1,1.1545) node [left] {$i$}; \draw (1,1.1545) -- (1,1.732);
\draw (1,0.577) -- (1.25,.433) node [above] {$k$}; \draw (1.25,.433) -- (2,0);
\node at (.36,.35) {$\bullet$};
\node at (2-.36,.35) {$\bullet$};
\end{tikzpicture}
}}
\end{equation}
%
Let $i \in S_k$, let $b,c \in S'_k$, and consider the following two functions:
$$x(i,b,c):= \sum_{\beta \in B}
\raisebox{-1.25cm}{
\scalebox{.75}{
\begin{tikzpicture}[scale=1.5]
\draw (0,0)  -- (1,0) node [below] {$k$}; \draw (1,0) -- (2,0);
\draw (2,0) -- (1.5,.866) node [right] {$c$}; \draw (1.5,.866) -- (1,1.732);
\draw (1,1.732) node [above] {$\beta$} -- (.5,.866) node [left] {$b$}; \draw (.5,.866) -- (0,0);
\draw (0,0) -- (.75,.433) node [above] {$k$}; \draw (.75,.433) -- (1,0.577);
\draw (1,0.577) node [below] {$\bullet$} -- (1,1.1545) node [left] {$i$}; \draw (1,1.1545) -- (1,1.732);
\draw (1,0.577) -- (1.25,.433) node [above] {$k$}; \draw (1.25,.433) -- (2,0);
\node at (.36,.35) {$\bullet$};
\node at (2-.36,.35) {$\bullet$};
\end{tikzpicture}
\begin{tikzpicture}[scale=1.5]
\draw (0,0) -- (1,0) node [below] {$k$}; \draw (1,0) -- (2,0);
\draw (2,0) -- (1.5,.866) node [right] {$k$}; \draw (1.5,.866) -- (1,1.732);
\draw (1,1.732) node [above] {$\bullet$} -- (.5,.866) node [left] {$k$}; \draw (.5,.866) -- (0,0);
\draw (0,0) -- (.75,.433) node [above] {$b$}; \draw (.75,.433) -- (1,0.577);
\draw (1,0.577) node [below] {$\beta'$} -- (1,1.1545) node [left] {$i$}; \draw (1,1.1545) -- (1,1.732);
\draw (1,0.577) -- (1.25,.433) node [above] {$c$}; \draw (1.25,.433) -- (2,0);
\node at (.36,.35) {$\bullet$};
\node at (2-.36,.35) {$\bullet$};
\end{tikzpicture}}}
$$

$$y(i,b):=
\raisebox{-1.25cm}{
\scalebox{.75}{
\begin{tikzpicture}[scale=1.5]
\draw (0,0) -- (1,0) node [below] {$b$}; \draw (1,0) -- (2,0);
\draw (2,0) -- (1.5,.866) node [right] {$k$}; \draw (1.5,.866) -- (1,1.732);
\draw (1,1.732) node [above] {$\bullet$} -- (.5,.866) node [left] {$k$}; \draw (.5,.866) -- (0,0);
\draw (0,0) -- (.75,.433) node [above] {$k$}; \draw (.75,.433) -- (1,0.577);
\draw (1,0.577) node [below] {$\bullet$} -- (1,1.1545) node [left] {$i$}; \draw (1,1.1545) -- (1,1.732);
\draw (1,0.577) -- (1.25,.433) node [above] {$k$}; \draw (1.25,.433) -- (2,0);
\node at (.36,.35) {$\bullet$};
\node at (2-.36,.35) {$\bullet$};
\end{tikzpicture}}} $$

We still need to reformulate Equations (\ref{equ:I}) and (\ref{equ:II}) using the variables introduced above. The RHS of these equations are already satisfactory; our focus is on addressing the rotation eigenvalues and the $A_4$ symmetry for the LHS. For $a \in S'_k$, sphericality, along with Rule (\ref{R1}) and Lemma \ref{lem:rot}, implies that:

$$x(a,b,c):= \sum_{\beta \in B} \hspace*{-.15cm}
\raisebox{-1.25cm}{
\scalebox{.75}{
\begin{tikzpicture}[scale=1.5]
\draw (0,0)  -- (1,0) node [below] {$k$}; \draw (1,0) -- (2,0);
\draw (2,0) -- (1.5,.866) node [right] {$c$}; \draw (1.5,.866) -- (1,1.732);
\draw (1,1.732) node [above] {$\beta$} -- (.5,.866) node [left] {$b$}; \draw (.5,.866) -- (0,0);
\draw (0,0) -- (.75,.433) node [above] {$k$}; \draw (.75,.433) -- (1,0.577);
\draw (1,0.577) node [below] {$\bullet$} -- (1,1.1545) node [left] {$a$}; \draw (1,1.1545) -- (1,1.732);
\draw (1,0.577) -- (1.25,.433) node [above] {$k$}; \draw (1.25,.433) -- (2,0);
\node at (.36,.35) {$\bullet$};
\node at (2-.36,.35) {$\bullet$};
\end{tikzpicture}
\begin{tikzpicture}[scale=1.5]
\draw (0,0) -- (1,0) node [below] {$k$}; \draw (1,0) -- (2,0);
\draw (2,0) -- (1.5,.866) node [right] {$k$}; \draw (1.5,.866) -- (1,1.732);
\draw (1,1.732) node [above] {$\bullet$} -- (.5,.866) node [left] {$k$}; \draw (.5,.866) -- (0,0);
\draw (0,0) -- (.75,.433) node [above] {$b$}; \draw (.75,.433) -- (1,0.577);
\draw (1,0.577) node [below] {$\beta'$} -- (1,1.1545) node [left] {$a$}; \draw (1,1.1545) -- (1,1.732);
\draw (1,0.577) -- (1.25,.433) node [above] {$c$}; \draw (1.25,.433) -- (2,0);
\node at (.36,.35) {$\bullet$};
\node at (2-.36,.35) {$\bullet$};
\end{tikzpicture}}}  = \cdots =
\omega_k^{\delta_{k,b}} \omega_k^{2\delta_{k,c}} \omega_k^{2\delta_{k,b}} \omega_k^{\delta_{k,c}} \sum_{\beta_0 \in B_0} \hspace*{-.3cm}
\raisebox{-1.25cm}{
\scalebox{.75}{
\begin{tikzpicture}[scale=1.5]
\draw (0,0) node [below] {$\bullet$} -- (1,0)  node [below] 	{$k$}; 		\draw (1,0) -- (2,0);
\draw (2,0) node [below] {$\bullet$} -- (1.5,.866) node [right] {$k$}; 		\draw (1.5,.866)--(1,1.732);
\draw (1,1.732)	node [above] {$\bullet$} -- (.5,.866) 	node [left] {$k$}; 	\draw (.5,.866) -- (0,0);
\draw (0,0) -- (.5,.2885) 	node [above] {$c$}; 							\draw (.5,.2885) -- (1,0.577);
\draw (1,0.577) node [below] {$\beta_0'$} -- (1,1.1545) node [right] {$b$}; 	\draw (1,1.1545)-- (1,1.732);
\draw (1,0.577) -- (1.5,.2885) 	node [above] {$a$}; 						\draw (1.5,.2885) -- (2,0);
\end{tikzpicture}
\begin{tikzpicture}[scale=1.5]
\draw (0,0) node [below] {$\bullet$} -- (1,0) node [below] {$k$}; 			\draw (1,0) -- (2,0);
\draw (2,0) node [below] {$\bullet$} -- (1.5,.866) 	node [right] {$k$};		\draw (1.5,.866) -- (1,1.732);
\draw (1,1.732)	node [above] {$\bullet$} -- (.5,.866) 	node [left] {$k$};	\draw (.5,.866) -- (0,0);
\draw (0,0) -- (.5,.2885) node [above] {$a$};								\draw (.5,.2885) 	-- (1,0.577);
\draw (1,0.577) node [below] {$\beta_0$} -- (1,1.1545) 	node [right] {$b$};	\draw (1,1.1545) -- (1,1.732);
\draw (1,0.577) -- (1.5,.2885) 	node [above] {$c$};							\draw (1.5,.2885) -- (2,0);
\end{tikzpicture}}},
$$
where  $\beta_0 = \rho^{-1}(\beta)$, $\beta'_0 = \rho(\beta')$, and $\omega_k^{\delta_{k,b}} \omega_k^{2\delta_{k,c}} \omega_k^{2\delta_{k,b}} \omega_k^{\delta_{k,c}} = \omega_k^{3\delta_{k,b}}\omega_k^{3\delta_{k,c}} = 1$. In fact, the use of rotation eigenvalues can be avoided by using specific morphism labels $\rho^{s}(\alpha_t)$ for $s \in \{-1,0,1\}$ and $t \in \{a,b,c\}$, in conjunction with Rule (\ref{R1}).

Similarly,  $$ y(a,b)y(a,c) =  \raisebox{-1.25cm}{
\scalebox{.75}{
\begin{tikzpicture}[scale=1.5]
\draw (0,0) node [below] {$\bullet$} -- (1,0) node [below] {$k$}; \draw (1,0) -- (2,0);
\draw (2,0) node [below] {$\bullet$} -- (1.5,.866) node [right] {$b$}; \draw (1.5,.866) -- (1,1.732);
\draw (1,1.732) node [above] {$\bullet$} -- (.5,.866) node [left] {$k$}; \draw (.5,.866) -- (0,0);
\draw (0,0) -- (.5,.2885) node [above] {$a$}; \draw (.5,.2885) -- (1,0.577);
\draw (1,0.577) node [below] {$\bullet$} -- (1,1.1545) node [right] {$k$}; \draw (1,1.1545) -- (1,1.732);
\draw (1,0.577) -- (1.5,.2885) node [above] {$k$}; \draw (1.5,.2885) -- (2,0);
\end{tikzpicture}
\begin{tikzpicture}[scale=1.5]
\draw (0,0) node [below] {$\bullet$} -- (1,0) node [below] {$k$}; \draw (1,0) -- (2,0);
\draw (2,0) node [below] {$\bullet$} -- (1.5,.866) node [right] {$k$}; \draw (1.5,.866) -- (1,1.732);
\draw (1,1.732) node [above] {$\bullet$} -- (.5,.866) node [left] {$c$}; \draw (.5,.866) -- (0,0);
\draw (0,0) -- (.5,.2885) node [above] {$k$}; \draw (.5,.2885) -- (1,0.577);
\draw (1,0.577) node [below] {$\bullet$} -- (1,1.1545) node [right] {$k$}; \draw (1,1.1545) -- (1,1.732);
\draw (1,0.577) -- (1.5,.2885) node [above] {$a$}; \draw (1.5,.2885) -- (2,0);
\end{tikzpicture}}}, $$
$x(a,k,k) = y(a,k)^2$ and $y(a,b) = y(b,a)$. The result follows.
\end{proof}

\begin{corollary} \label{cor:loc}
Following Theorem \ref{thm:loc}, there are functions $x(i,b):S_k\times S_k' \to \field $ and $y(i,b):S_k\times S_k' \to \field$ such that for all $a,b \in S'_k$
\begin{align}
\label{equ:Ia}   \delta_{a,b} = d_b & \sum_{i \in S_k} d_i y(i,a)y(i,b), \\
\label{equ:Ib}   x(a,b) = & \sum_{i \in S_k} d_i y(i,a)y(i,b)^2,         \\
\label{equ:IIa} y(a,b)^2 = & \sum_{i \in S_k} d_i y(i,a)x(i,b),
\end{align}
with $x(a,k) = y(a,k)^2$; $x(a,b) = 0$ if $N_{b,b}^a = 0$; $y(a,b) = y(b,a)$; $y(1,b) = d_k^{-1}$; $x(1,b) = (d_bd_k)^{-1}$.
\end{corollary}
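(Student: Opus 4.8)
The plan is to read off Corollary~\ref{cor:loc} from Theorem~\ref{thm:loc} by collapsing the last two arguments of $x(\cdot,\cdot,\cdot)$. For $(i,b)\in S_k\times S'_k$ I would set $x(i,b):=x(i,b,b)$ and keep the $y(i,b)$ exactly as in Theorem~\ref{thm:loc}. Putting $c=b$ in Equation~(\ref{equ:Ibis}) gives $x(a,b)=\sum_{i\in S_k}d_i\,y(i,a)y(i,b)^2$, which is Equation~(\ref{equ:Ib}), and putting $c=b$ in Equation~(\ref{equ:IIbis}) gives $y(a,b)^2=\sum_{i\in S_k}d_i\,y(i,a)x(i,b)$, which is Equation~(\ref{equ:IIa}). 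The side conditions specialise in the same way: $x(a,k,k)=y(a,k)^2$ becomes $x(a,k)=y(a,k)^2$, reading $x(a,k)$ as $x(a,k,k)$ exactly as this quantity is already used in Theorem~\ref{thm:loc}; $x(a,b,c)=0$ when $N_{b,c}^a=0$ becomes $x(a,b)=0$ when $N_{b,b}^a=0$; $y(a,b)=y(b,a)$ and $y(1,b)=d_k^{-1}$ carry over verbatim; and $x(1,b,c)=\delta_{b,c}(d_bd_k)^{-1}$ with $c=b$ gives $x(1,b)=(d_bd_k)^{-1}$, using $\delta_{b,b}=1$.

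The only relation of Corollary~\ref{cor:loc} that is not a plain specialisation is the orthogonality identity~(\ref{equ:Ia}), and I would obtain it from the $a=1$ instance of Equation~(\ref{equ:Ibis}). First note that $\one=X_1\in S_k$, since $N_{k,k}^1=\dim\hc(X_k\otimes X_k,\one)=\dim\hc(X_k,X_k^*)=1$ by simplicity and self-duality of $X_k$, and that replacing $S'_k$ by $S'_k\cup\{1\}$ preserves its defining property, because $N_{1,c}^a=\delta_{a,c}$, $N_{b,1}^a=\delta_{a,b}$ and $N_{1,1}^a=\delta_{a,1}$ are all $\le 1$. Hence we may assume $1\in S'_k$ and apply Equation~(\ref{equ:Ibis}) with $a=1$; using $y(i,1)=y(1,i)=d_k^{-1}$ on the right-hand side and $x(1,b,c)=\delta_{b,c}(d_bd_k)^{-1}$ on the left, it reads
$$\delta_{b,c}(d_bd_k)^{-1}=d_k^{-1}\sum_{i\in S_k}d_i\,y(i,b)y(i,c),$$
that is, $\delta_{b,c}=d_b\sum_{i\in S_k}d_i\,y(i,b)y(i,c)$; relabelling $(b,c)\mapsto(a,b)$ yields Equation~(\ref{equ:Ia}) (the prefactor $d_b$ may be replaced by $d_a$, as they agree whenever $\delta_{a,b}\ne0$).

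I do not anticipate a genuine obstacle, as the corollary is essentially a bookkeeping consequence of Theorem~\ref{thm:loc}; the only point worth a remark is the legitimacy of enlarging $S'_k$ by $\{1\}$, but the tetrahedron pictures in the proof of Theorem~\ref{thm:loc} defining $x(i,b,c)$ and $y(i,b)$ still make sense when a unit index is present, and the associated hom-spaces remain finite-dimensional with the fixed basis $B$ already used there, so nothing is disturbed. If one prefers not to modify $S'_k$ at all, Equation~(\ref{equ:Ia}) can instead be read directly off the picture for $y(i,b)$: the sum $\sum_{i\in S_k}d_i\,y(i,a)y(i,b)$ amounts to gluing two such tetrahedra and collapsing the internal sum over $X_i$ by the resolution of identity (Proposition~\ref{prop:basis2}), after which Schur's lemma applied to the closed strand joining $X_a$ and $X_b$ produces the factor $\delta_{a,b}$; this is the same device behind the $\sum_{i\in S_k}d_i$ sums of Theorem~\ref{thm:loc}, and I would keep it as a self-contained fallback.
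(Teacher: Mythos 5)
Your proof is correct and matches the paper's, which likewise sets $x(i,b)=x(i,b,b)$ and obtains the three displayed identities by specializing Equations (\ref{equ:Ibis}) and (\ref{equ:IIbis}) at $b=c$ and at a unit argument. The only (immaterial) difference is that the paper places the unit in the third slot ($c=1$) rather than the first ($a=1$); your choice has the small advantage of invoking the stated normalization $x(1,b,c)=\delta_{b,c}(d_bd_k)^{-1}$ directly, without appealing to the cyclic symmetry of $x(a,b,c)$.
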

\begin{proof}
Apply Theorem \ref{thm:loc} with $x(i,b) = x(i,b,b)$, Equation (\ref{equ:Ibis}) with $c=1$, Equation (\ref{equ:Ibis}) with $b=c$, and Equation (\ref{equ:IIbis}) with $b=c$.
\end{proof}

\begin{remark} Note that we can restrict Equation (\ref{equ:Ia}) to $a \neq 1$ and $a \ge b$ (after fixing an order on $S_k$), Equation (\ref{equ:Ib}) to $a,b \neq 1$, and Equation (\ref{equ:IIa}) to $b \neq 1, k$. \end{remark}

\begin{theorem} \label{thm:locextra}
Following Theorem \ref{thm:loc}, consider $k, S_k$ and $S'_k$, and let $E_k$ be the subsystem given by Corollary  \ref{cor:loc}. Let $l \in S'_k$ with $l \neq k$, and $S_l, S'_l, E_l$ be as above. Then there is an extra equation linking the subsystems $E_k$ and $E_l$:
$$ x_k(l,l) = \sum_{i \in S_k \cap S_l} d_i y_l(i,l)x_k(i,l)$$
\end{theorem}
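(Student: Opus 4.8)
The plan is to imitate the proof of Theorem~\ref{thm:loc}: build a single monoidal triangular prism, develop it in two ways via the simplified TPE (Theorem~\ref{thm:sTPE2} and Remark~\ref{rk:simplified}), and read off that one side gives $x_k(l,l)$ while the other gives $\sum_{i\in S_k\cap S_l}d_i\,y_l(i,l)\,x_k(i,l)$.

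Concretely, I would take the prism $TP(\alpha_1,\dots,\alpha_6)$ of Theorem~\ref{thm:sTPE2} with the object labels
$$X_1=X_4=X_6=X_7=X_9=X_l,\qquad X_2=X_3=X_5=X_8=X_k.$$
This is legitimate: $l\in S'_k$ gives $N_{k,k}^{l}=1$ and $X_l$ self-dual, and the appearance of $y_l(i,l)$, $x_k(i,l)$ in the statement forces $l\in S'_l$, hence $N_{l,l}^{l}=1$; together with $N_{k,k}^{a},N_{l,l}^{a}\le1$ this makes every hom-space carrying an $\alpha_i$ one-dimensional, so all six vertices of the prism are bullets. Moreover $X_k$, $X_l$ and every $X_i$ with $i\in S_k\cap S_l$ are self-dual simples $X$ with $\hom_{\mC}(X_k\otimes X_k,X)$ or $\hom_{\mC}(X_l\otimes X_l,X)$ one- (hence odd-) dimensional, so $\nu_2(X)=1$ by Theorem~\ref{thm:FrobSchur}; thus, exactly as in the proof of Theorem~\ref{thm:loc}, all tetrahedra occurring may be drawn without edge-orientations, and $a_X^2=\id_X$ on the objects at hand by Proposition~\ref{prop:pivoFS}, so Theorem~\ref{thm:sTPE2} is available.

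Developing the right-hand side of Theorem~\ref{thm:sTPE2} for this prism, the object $X$ ranges over the simple subobjects common to $X_4\otimes X_7=X_l\otimes X_l$, $X_5\otimes X_8=X_k\otimes X_k$ and $X_6\otimes X_9=X_l\otimes X_l$, which by the multiplicity hypotheses are $\{X_i:i\in S_l\}$ and $\{X_i:i\in S_k\}$; so the sum is over $i\in S_k\cap S_l$, as required. For such an $i$ the bases $B_1,B_2,B_3$ are $\le1$-dimensional (their hom-spaces have dimensions $N_{l,l}^{i}$, $N_{k,k}^{i}$, $N_{l,l}^{i}$), so the $\beta_j$-sums degenerate, and reading the edge-labels off Definition~\ref{def:TetraPiv} one sees that one of the three tetrahedra has five edges $X_l$ and central edge $X_i$, i.e.\ equals $y_l(i,l)$, while the remaining two — each with edge-multiset $\{X_k,X_k,X_k,X_l,X_l,X_i\}$ — are, as a product, exactly $x_k(i,l)=x_k(i,l,l)$. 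Similarly, the left-hand side of Theorem~\ref{thm:sTPE2} has its $\beta_0$-sum over a basis of $\hom_{\mC}(\one,X_l\otimes X_k\otimes X_k)$, one-dimensional since $N_{k,k}^{l}=1$, and its two tetrahedra, each with edge-multiset $\{X_k,X_k,X_k,X_l,X_l,X_l\}$, form $x_k(l,l,l)=x_k(l,l)$ (the $x$ built from the distinguished object $X_k$). Bringing every tetrahedron to the reference pattern of Definition~\ref{def:TetraPiv} uses the symmetry relations Propositions~\ref{prop:rel1}--\ref{prop:rel2} (equivalently rules (\ref{R1})--(\ref{R3})) and introduces rotation eigenvalues $\omega_{X_k}$, $\omega_{X_l}$ (Lemma~\ref{lem:rot}); just as in the proof of Theorem~\ref{thm:loc} these occur in triples, so their total is a product of terms $\omega_\bullet^{3\delta}=1$. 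Equating the two developments then yields $x_k(l,l)=\sum_{i\in S_k\cap S_l}d_i\,y_l(i,l)\,x_k(i,l)$.

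The main obstacle is precisely this last step of bookkeeping: one must verify that, after the re-orientations and corner-moves prescribed by rules (\ref{R1})--(\ref{R3}), the four tetrahedra appearing coincide \emph{on the nose} with the specific tetrahedra used to define $x_k(\cdot,\cdot)$ and $y_l(\cdot,\cdot)$ in the proof of Theorem~\ref{thm:loc}, and that the accumulated powers of $\omega_{X_k}$ and $\omega_{X_l}$ cancel for the mixed $(k,l)$-labelling. This is routine in view of Lemma~\ref{lem:rot} and the cancellation already carried out for Equations (\ref{equ:I}) and (\ref{equ:II}), but it does have to be done explicitly.
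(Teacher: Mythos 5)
Your proposal is correct and follows essentially the same route as the paper: the paper's proof applies the simplified TPE to the prism labeled $(k,k,l,k,l,l,k,l,l)$, which is exactly your labeling $(l,k,k,l,k,l,l,k,l)$ up to the $C_3$ rotational symmetry of the triangular prism, and then identifies the resulting tetrahedra with $x_k(l,l)$, $y_l(i,l)$ and $x_k(i,l)$ while invoking sphericality and Lemma \ref{lem:rot} for the rotation-eigenvalue bookkeeping, just as you describe. The only difference is presentational, so there is nothing further to add.
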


\begin{proof}
Consider the TPE labeled with $(k,k,l,k,l,l,k,l,l)$:

$$
\scalebox{.75}{
\begin{tikzpicture}[scale=2]
\draw (0,2)--(3,2);
\draw (1,1)--(2,1);
\draw (0,0)--(3,0);
\draw (0,0) -- (0,2) -- (1,1) -- (0,0);
\draw (3,0)--(3,2)--(2,1)--(3,0);
\draw (0,2) -- (1.5,2) node [above] {$k$};
\draw (1,1) -- (1.5,1) node [above] {$k$};
\draw (0,0) -- (1.5,0) node [above] {$l$};
\draw (0,2) --++ (.5,-.5) node [right] {$k$};
\draw (1,1) --++ (-.5,-.5) node [right] {$l$};
\draw (0,0) -- (0,1) node [left] {$l$};
\draw (3,2) --++ (-.5,-.5) node [left] {$k$};
\draw (2,1) --++ (.5,-.5) node [left] {$l$};
\draw (3,0) -- (3,1) node [right] {$l$};
\node at (0+.15,2-.3) {$\bullet$};
\node at (1-.2,1) {$\bullet$};
\node at (0+.15,0+.3) {$\bullet$};
\node at (3-.15,2-.3) {$\bullet$};
\node at (2+.2,1) {$\bullet$};
\node at (3-.15,0+.3) {$\bullet$};
\end{tikzpicture}} $$

\begin{equation}
\raisebox{-1.25cm}{
\scalebox{.75}{
\begin{tikzpicture}[scale=1.5]
\draw (0,0) node [below] {$\bullet$} -- (1,0)  node [below] {$l$}; \draw (1,0)  -- (2,0);
\draw (2,0) node [below] {$\bullet$} -- (1.5,.866) node [right] {$k$}; \draw (1.5,.866) -- (1,1.732);
\draw (1,1.732) node [above] {$\bullet$} -- (.5,.866) node [left] {$l$}; \draw (.5,.866) -- (0,0);
\draw (0,0) -- (.5,.2885) node [above] {$l$}; \draw (.5,.2885) -- (1,0.577);
\draw (1,0.577) node [below] {$\bullet$} -- (1,1.1545) node [right] {$k$}; \draw (1,1.1545) -- (1,1.732);
\draw (1,0.577) -- (1.5,.2885) node [above] {$k$}; \draw (1.5,.2885) -- (2,0);
\end{tikzpicture}
\begin{tikzpicture}[scale=1.5]
\draw (0,0) node [below] {$\bullet$} -- (1,0)  node [below] {$l$}; \draw (1,0)  -- (2,0);
\draw (2,0) node [below] {$\bullet$} -- (1.5,.866) node [right] {$l$}; \draw (1.5,.866) -- (1,1.732);
\draw (1,1.732) node [above] {$\bullet$} -- (.5,.866) node [left] {$k$}; \draw (.5,.866) -- (0,0);
\draw (0,0) -- (.5,.2885) node [above] {$k$}; \draw (.5,.2885) -- (1,0.577);
\draw (1,0.577) node [below] {$\bullet$} -- (1,1.1545) node [right] {$k$}; \draw (1,1.1545) -- (1,1.732);
\draw (1,0.577) -- (1.5,.2885) node [above] {$l$}; \draw (1.5,.2885) -- (2,0);
\end{tikzpicture}}}
\hspace*{-.2cm} =
\sum_{i \in S_k \cap S_l} d_i
\raisebox{-1.25cm}{
\scalebox{.75}{
\begin{tikzpicture}[scale=1.5]
\draw (0,0)  -- (1,0)  node [below] {$k$}; \draw (1,0)  -- (2,0);
\draw (2,0) -- (1.5,.866) node [right] {$k$}; \draw (1.5,.866) -- (1,1.732);
\draw (1,1.732) node [above] {$\bullet$} -- (.5,.866) node [left] {$k$}; \draw (.5,.866) -- (0,0);
\draw (0,0) -- (.75,.433) node [above] {$l$}; \draw (.75,.433) -- (1,0.577);
\draw (1,0.577) node [below] {$\bullet$} -- (1,1.1545) node [left] {$i$}; \draw (1,1.1545) -- (1,1.732);
\draw (1,0.577) -- (1.25,.433) node [above] {$l$}; \draw (1.25,.433) -- (2,0);
\node at (.3,.3) {$\bullet$};
\node at (2-.3,.3) {$\bullet$};
\end{tikzpicture}
\begin{tikzpicture}[scale=1.5]
\draw (0,0) -- (1,0)  node [below] {$k$}; \draw (1,0)  -- (2,0);
\draw (2,0) -- (1.5,.866) node [right] {$l$}; \draw (1.5,.866) -- (1,1.732);
\draw (1,1.732) node [above] {$\bullet$} -- (.5,.866) node [left] {$l$}; \draw (.5,.866) -- (0,0);
\draw (0,0) -- (.75,.433) node [above] {$k$}; \draw (.75,.433) -- (1,0.577);
\draw (1,0.577) node [below] {$\bullet$} -- (1,1.1545) node [left] {$i$}; \draw (1,1.1545) -- (1,1.732);
\draw (1,0.577) -- (1.25,.433) node [above] {$k$}; \draw (1.25,.433) -- (2,0);
\node at (.3,.3) {$\bullet$};
\node at (2-.3,.3) {$\bullet$};
\end{tikzpicture}
\begin{tikzpicture}[scale=1.5]
\draw (0,0) -- (1,0)  node [below] {$l$}; \draw (1,0)  -- (2,0);
\draw (2,0) -- (1.5,.866) node [right] {$l$}; \draw (1.5,.866) -- (1,1.732);
\draw (1,1.732) node [above] {$\bullet$} -- (.5,.866) node [left] {$l$}; \draw (.5,.866) -- (0,0);
\draw (0,0) -- (.75,.433) node [above] {$l$}; \draw (.75,.433) -- (1,0.577);
\draw (1,0.577) node [below] {$\bullet$} -- (1,1.1545) node [left] {$i$}; \draw (1,1.1545) -- (1,1.732);
\draw (1,0.577) -- (1.25,.433) node [above] {$l$}; \draw (1.25,.433) -- (2,0);
\node at (.3,.3) {$\bullet$};
\node at (2-.3,.3) {$\bullet$};
\end{tikzpicture}
}}
\end{equation}
The result follows by sphericality and Lemma \ref{lem:rot}, using the notation of the variables as in Corollary \ref{cor:loc}, indexed by $k$ (resp. $l$) for $E_k$ (resp. $E_l$).
\end{proof}
The results proved in this section will be used in \S \ref{sub:F210}.

\subsection{Zero and one spectrum criteria} \label{sec:SpeCrit}
This subsection retains the notation introduced in \S \ref{sec:PE} and assumes that $X^{**} = X$ for every object $X$, which establishes a bijection $i \mapsto i^*$ on the set $I$, ensuring that $X_i^* = X_{i^*}$. The morphisms $\ev_{X_i}$ and $\coev_{X_i}$ are denoted by $\cup_i$ and $\cap_i$, respectively. According to \eqref{Equ: F-symbols}, a summand on the RHS of the PE \eqref{Equ: Pentagon Equation} is non-zero only when $\spec \in \SpecS:=\{i \in I : N_{i_4,i_7}^{i}, N_{i_5^*,i_8}^{i}, N_{i_6,i_9^*}^{i} >0 \}$, which we refer to as the \emph{spectrum} of this PE. The cardinality $|\SpecS|$ is considered a measure of this PE's complexity. We will provide two categorification criteria related to the existence of equations in the form $xy=0$ or $0=xyz$, where $x, y, z \neq 0$, corresponding to $|\SpecS| = 0$ or $1$.

\begin{notation} \label{not:2}
We will use the standard bijection $\alpha \mapsto \alpha'$ from $B(i,j;k)$ to its dual basis in $\hc(X_k,X_i \otimes X_j)$, as defined by the bilinear form in Lemma \ref{lem:bili}, following certain natural adjunction isomorphisms \cite[Proposition 2.10.8]{EGNO15}.
\end{notation}

\begin{lemma}[One-dimensional trick]\label{Lem: one-dim}
Consider non-zero morphisms $\mu_1 \in \mB(i_1,i_2;i_3)$, $\mu_2 \in \mB(i_3,i_4;i_5)$, $\mu_3 \in \mB(i_2,i_4;i_6)$ and $\mu_4 \in \mB(i_1,i_6;i_5)$. If
$$
\sum_{k\in I} N_{i_1,i_6}^{k} N_{i_3,i_4}^{k} = 1
~\text{or}~
\sum_{k \in I} N_{i_1,i_2}^{k} N_{i_5,i_4^*}^{k} =1
~\text{or}~
\sum_{k\in I} N_{i_3,i_2^*}^{k} N_{i_5,i_6^*}^{k} = 1,
$$
then
$$
\left(
\begin{array}{ccc | cc}
i_1& i_2 & i_3 & \mu_1 & \mu_2 \\
i_4& i_5 & i_6 & \mu_3 & \mu_4
\end{array}
\right) \neq 0.
$$
\end{lemma}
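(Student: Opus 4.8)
The plan is to exploit the defining property of the F-symbols together with the fact that the three sums in the hypothesis count the dimensions of natural hom-spaces, so that one of them being equal to $1$ forces a one-dimensional hom-space somewhere in the picture. More precisely, by the natural adjunction isomorphisms used throughout the preliminaries (e.g. \cite[Proposition 2.10.8]{EGNO15}), we have
$$
\sum_{k \in I} N_{i_1,i_2}^{k} N_{i_5,i_4^*}^{k} = \dim \hc(X_{i_1} \otimes X_{i_2}, X_{i_5} \otimes X_{i_4}),
$$
and similarly the second sum equals $\dim \hc(X_{i_3} \otimes X_{i_2^*}, X_{i_5} \otimes X_{i_6^*})$ (a space containing the morphism $\mu_2(\mu_1 \otimes \id)$ suitably reinterpreted) and the third equals $\dim \hc(X_{i_1^*} \otimes X_{i_3}, X_{i_6} \otimes X_{i_4^*})$. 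The key observation is that the composite $\mu_2 \circ (\mu_1 \otimes \id_{i_4})$, which lives in $\hc(X_{i_1} \otimes X_{i_2} \otimes X_{i_4}, X_{i_5})$ and is transported to one of these three hom-spaces by adjunction, is \emph{nonzero}: indeed $\mu_1$ and $\mu_2$ are nonzero morphisms between simple-ish objects, and in a semisimple category the composite of a nonzero morphism into a simple object with a nonzero morphism out of it (after tensoring with an identity, which is faithful on a fusion category) is nonzero — this is exactly the kind of fact proved via Schur's lemma and Lemma \ref{lem:simple} style arguments.

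Granting that $\mu_2 \circ (\mu_1 \otimes \id_{i_4}) \neq 0$, I would argue as follows. Rewrite the defining equation \eqref{Equ: F-symbols} in the form
$$
\mu_2(\mu_1 \otimes \id_{i_4}) = \sum_{i_6}\sum_{\mu_3,\mu_4}
\left(
\begin{array}{ccc | cc}
i_1& i_2 & i_3 & \mu_1 & \mu_2 \\
i_4& i_5 & i_6 & \mu_3 & \mu_4
\end{array}
\right)
\mu_4(\id_{i_1} \otimes \mu_3),
$$
where the $\mu_4(\id_{i_1} \otimes \mu_3)$ form (part of) a basis of the relevant hom-space as $i_6, \mu_3, \mu_4$ range. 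If the second displayed sum in the hypothesis equals $1$, then after the adjunction identifying $\hc(X_{i_1}\otimes X_{i_2}\otimes X_{i_4},X_{i_5})$ with $\hc(X_{i_3}\otimes X_{i_2^*}\otimes X_{i_4}, \dots)$ — or more directly, by counting, the whole hom-space in which both sides of \eqref{Equ: F-symbols} live is one-dimensional, hence there is exactly one admissible triple $(i_6,\mu_3,\mu_4)$ and the single F-symbol in the sum must be nonzero, for otherwise the nonzero left-hand side would equal $0$. The analogous counting argument handles the first and third sums: the first sum equals $1$ means $\mu_2(\mu_1\otimes\id)$ lies in a one-dimensional space $\hc(X_{i_1}\otimes X_{i_2}, X_{i_5}\otimes X_{i_4})$ after adjunction, forcing the expansion to have a single nonzero term; the third sum being $1$ controls the space $\hc(X_{i_1^*}\otimes X_{i_3}, X_{i_6}\otimes X_{i_4^*})$ which is precisely the target of the right-hand side basis vectors $\mu_4(\id\otimes\mu_3)$ after adjunction, so again there is a unique term and its coefficient cannot vanish.

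The step I expect to be the main obstacle is bookkeeping the adjunction isomorphisms carefully enough to be sure that in each of the three cases the dimension-$1$ hypothesis really does pin down the hom-space \emph{in which the F-symbol expansion \eqref{Equ: F-symbols} takes place}, rather than some a priori different space. Concretely, one must check that $\dim\hc(X_{i_1}\otimes X_{i_2}\otimes X_{i_4}, X_{i_5})$ equals each of the three displayed sums (this is a routine application of associativity of $\otimes$ on the Grothendieck ring: $N_{i_1,i_2}^k N_{i_5,i_4^*}^k$ summed over $k$ and $N_{i_1^*,i_3}^k N_{i_6,i_4^*}^k$ summed over $k$ with $i_6$ also summed, etc., all compute the multiplicity of $\one$ in $X_{i_1}\otimes X_{i_2}\otimes X_{i_4}\otimes X_{i_5^*}$), but spelling out which summation index is free in each case requires attention. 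Once that identification is made, the nonvanishing of $\mu_2(\mu_1\otimes\id_{i_4})$ — which follows because $X_{i_3}$ is a summand of $X_{i_1}\otimes X_{i_2}$ witnessed by $\mu_1$, $X_{i_5}$ is a summand of $X_{i_3}\otimes X_{i_4}$ witnessed by $\mu_2$, and composition through a common simple summand is nonzero in a semisimple category — finishes the argument: a nonzero vector expanded in a one-element basis has nonzero coordinate.
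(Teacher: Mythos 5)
Your argument works for the first of the three alternative hypotheses but has a genuine gap for the other two, and the step you flag as ``routine bookkeeping'' is precisely where it fails. The three sums are \emph{not} all equal to $\dim\hc(X_{i_1}\otimes X_{i_2}\otimes X_{i_4},X_{i_5})$. By Frobenius reciprocity the first sum is $\sum_k N_{i_1,i_2}^{k}N_{k,i_4}^{i_5}$, which is indeed that dimension; but the second sum equals $\dim\hc(X_{i_5},X_{i_3}\otimes X_{i_2}^*\otimes X_{i_6})$ and the third equals $\dim\hc(X_{i_1}\otimes X_{i_6},X_{i_3}\otimes X_{i_4})$. These are three different spaces, built from the index sets $\{i_1,i_2,i_4,i_5\}$, $\{i_2,i_3,i_5,i_6\}$ and $\{i_1,i_3,i_4,i_6\}$ respectively, and their dimensions are independent in general. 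For a concrete failure, take $\mC=\Rep(S_3)$ with $\rho$ the $2$-dimensional simple, $i_1=i_2=i_4=i_5=\rho$ and $i_3=i_6=\one$: the second and third sums equal $1$, while $\dim\hc(\rho\otimes\rho\otimes\rho,\rho)=3$. So under hypotheses (b) or (c) the ambient space of the expansion \eqref{Equ: F-symbols} can have several admissible triples $(i_6,\mu_3,\mu_4)$, and ``a nonzero vector expanded in a one-element basis has nonzero coordinate'' does not apply. (This is also why the lemma lists three alternatives: if they all computed the same dimension, one condition would suffice.)

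The paper's proof avoids this by first isolating the single coefficient via pairing: it forms the scalar $\mu_2(\mu_1\otimes\id_{i_4})(\id_{i_1}\otimes\mu'_3)\mu'_4$, which by the dual-basis property of $\mu'_3,\mu'_4$ equals the desired F-symbol times the nonzero quantity $\mu_4(\id_{i_1}\otimes\mu_3)(\id_{i_1}\otimes\mu'_3)\mu'_4$, so it suffices to show this scalar is nonzero. That is then done case by case: in each case the hypothesis makes a specific hom-space one-dimensional, two explicitly exhibited nonzero morphisms in it must be proportional with nonzero ratio $\lambda$ (e.g.\ for the third hypothesis, $(\mu_1\otimes\id_{i_4})(\id_{i_1}\otimes\mu'_3)$ and $\mu'_2\mu_4$ in $\hc(X_{i_1}\otimes X_{i_6},X_{i_3}\otimes X_{i_4})$), and the scalar collapses to $\lambda$ times a product of nonzero dual pairings such as $\mu_2\mu'_2\mu_4\mu'_4$. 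Your case (a) coincides with the paper's first bullet; to repair (b) and (c) you need an argument of this pairing type rather than a dimension count of the ambient space.
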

\begin{proof}
Take $\mu'_3$ and $\mu'_4$ from Notation \ref{not:2}, then
$$
\mu_2(\mu_1 \otimes \id_{i_4}) (\id_{i_1} \otimes \mu'_3) \mu'_4
=
\left(
\begin{array}{ccc | cc}
i_1& i_2 & i_3 & \mu_1 & \mu_2 \\
i_4& i_5 & i_6 & \mu_3 & \mu_4
\end{array}
\right)
\mu_4(\id_{i_1} \otimes \mu_3) (\id_{i_1} \otimes \mu'_3) \mu'_4 .
$$
To show \begin{align*}
\left(
\begin{array}{ccc | cc}
i_1& i_2 & i_3 & \mu_1 & \mu_2 \\
i_4& i_5 & i_6 & \mu_3 & \mu_4
\end{array}
\right) \neq 0,
\end{align*}
it is enough to show that $
\mu_2(\mu_1 \otimes \id_{i_4}) (\id_{i_1} \otimes \mu'_3) \mu'_4 \neq 0.$ The sums stated in the assumption correspond to the dimensions of Hom-spaces. The sums presented below are identical to those in the assumption (up to certain natural adjunction isomorphisms of the Hom-spaces).
\begin{itemize}
\item If $\sum_{k} N_{i_1,i_6}^{k} N_{i_3,i_4}^{k} =1$, then $\Hom_{\mC}(X_{i_1}\otimes X_{i_6}, X_{i_3} \otimes X_{i_4})$ is one-dimensional. But $(\mu_1 \otimes \id_{i_4}) (\id_{i_1} \otimes \mu'_3)$ and $\mu'_2\mu_4$ are non-zero morphisms in this Hom-space, so $(\mu_1 \otimes \id_{i_4}) (\id_{i_1} \otimes \mu'_3)=\lambda \mu'_2\mu_4$, for some non-zero $\lambda$. Therefore,
$$
\mu_2(\mu_1 \otimes \id_{i_4}) (\id_{i_1} \otimes \mu'_3) \mu'_4=\lambda \mu_2\mu'_2\mu_4\mu'_4 \neq 0.
$$
\item If $\sum_{k} N_{i_1,i_2}^{k} N_{k,i_4}^{i_5} =1$, then $\Hom_{\mC}(X_{i_1}\otimes X_{i_2} \otimes X_{i_4}, X_{i_5})$ is one-dimensional. But $\mu_2(\mu_1 \otimes \id_{i_4})$ and $\mu_4(\id_{i_1} \otimes \mu_3)$ are non-zero morphisms in this Hom-space, so $\mu_2(\mu_1 \otimes \id_{i_4})=\lambda \mu_4(\id_{i_1} \otimes \mu_3)$, for some non-zero $\lambda$. Therefore,
$$
\mu_2(\mu_1 \otimes \id_{i_4}) (\id_{i_1} \otimes \mu'_3) \mu'_4=\lambda \mu_4(\id_{i_1} \otimes \mu_3) (\id_{i_1} \otimes \mu'_3) \mu'_4 \neq 0.
$$
\item If $\sum_{k} N_{i_3,i_2^*}^{k} N_{k,i_6}^{i_5} =1$, then $\Hom_{\mC}(X_{i_5},  X_{i_3} \otimes X_{i_2}^* \otimes X_{i_6})$ is one-dimensional. But $(((\mu_1 \otimes \id_{i_2^*}) (\id_{i_1}\otimes \cap_{i_2} )) \otimes \id_{i_6}) \mu'_4$ and $(\id_{i_3} \otimes \zeta') \mu'_2$ are non-zero morphisms in this Hom-space, where $\zeta:=(\cup_{i_2^*} \otimes \id_{i_4}) (\id_{i_2^*} \otimes \mu'_3)$ is non-zero in $\hc(X_{i_2}^*\otimes X_{i_6}, X_{i_4} )$, and $\zeta'$ is given by Notation \ref{not:2}. So
$$(((\mu_1 \otimes \id_{i_2^*}) (\id_{i_1}\otimes \cap_{i_2} )) \otimes \id_{i_6}) \mu'_4=\lambda (\id_{i_3} \otimes \zeta') \mu'_2,$$ for some non-zero $\lambda.$
Therefore,
\begin{align*}
\mu_2(\mu_1 \otimes \id_{i_4}) (\id_{i_1} \otimes \mu'_3) \mu'_4 &=
\mu_2 (\id_{i_3} \otimes \zeta) (((\mu_1 \otimes \id_{i_2^*}) (\id_{i_1}\otimes \cap_{i_2} )) \otimes \id_{i_6}) \mu'_4 \\
&= \lambda \mu_2 (\id_{i_3} \otimes \zeta) (\id_{i_3} \otimes \zeta') \mu'_2 \neq 0. \qedhere
\end{align*}
\end{itemize}
\end{proof}

\begin{theorem}[Zero spectrum criterion]\label{Thm: Ob-0}
For a fusion ring $R$, if there exist indices $i_j \in I$ for $1 \leq j \leq 9$, such that the fusion coefficients $N_{i_4,i_1}^{i_6}$, $N_{i_5,i_4}^{i_2}$, $N_{i_5,i_6}^{i_3}$, $N_{i_7,i_9}^{i_1}$, $N_{i_2,i_7}^{i_8}$, and $N_{i_8,i_9}^{i_3}$  are non-zero, and if the following conditions hold:
\begin{align} \label{Equ: 01}
\sum_{k} N_{i_4,i_7}^{k} N_{i_5^*,i_8}^{k} N_{i_6,i_9^*}^{k}&=0,\\  \label{Equ: 02}
N_{i_2,i_1}^{i_{3}}&=1, \\ \label{Equ: 03}
\sum_{k} N_{i_5,i_6}^{k} N_{i_2,i_1}^{k} =1 
~\text{or}~
\sum_{k \in I} N_{i_5,i_4}^{k} N_{i_3,i_1^*}^{k} &=1 
~\text{or}~
\sum_{k} N_{i_2,i_4^*}^{k} N_{i_3,i_6^*}^{k} =1, 
\\
\label{Equ: 04}
\sum_{k} N_{i_2,i_1}^{k} N_{i_8,i_9}^{k} =1 
~\text{or}~
\sum_{k \in I} N_{i_2,i_7}^{k} N_{i_3,i_9^*}^{k} &=1 
~\text{or}~
\sum_{k} N_{i_8,i_7^*}^{k} N_{i_3,i_1^*}^{k} =1, 
\end{align}
then $R$ cannot be categorified, meaning that it is not the Grothendieck ring of a fusion category, over any field.
\end{theorem}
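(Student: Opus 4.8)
The plan is to argue by contradiction: assume $R=\mathrm{Gr}(\mC)$ for some fusion category $\mC$ over a field $\field$, and derive a contradiction by exhibiting a Pentagon Equation (with the prescribed indices $i_1,\dots,i_9$) whose two sides cannot both hold. First I would invoke \cite{HagHon} to replace $\mC$ by an equivalent strictified skeletal fusion category, so that $X^{**}=X$ on objects; this is harmless since categorifiability is preserved under equivalence, and it lets me use the natural adjunction isomorphisms \cite[Proposition 2.10.8]{EGNO15} freely to identify the various hom-spaces appearing in the fusion coefficients. I would then write down the PE \eqref{Equ: Pentagon Equation} for the chosen tuple $(i_1,\dots,i_9)$. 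The key observation is that by hypothesis \eqref{Equ: 01}, the spectrum $\SpecS=\{i : N_{i_4,i_7}^{i}>0,\, N_{i_5^*,i_8}^{i}>0,\, N_{i_6,i_9^*}^{i}>0\}$ is empty, so the entire right-hand side of the PE vanishes. Hence the PE reduces to the single equation
\[
\sum_{\mu_0} \left(
\begin{array}{ccc | cc}
i_2& i_7 & i_8 & \mu_5 & \mu_6 \\
i_9& i_3 & i_1 & \mu_4 & \mu_0
\end{array}
\right)
\left(
\begin{array}{ccc | cc}
i_5& i_4 & i_2 & \mu_2 & \mu_0 \\
i_1& i_3 & i_6 & \mu_1 & \mu_3
\end{array}
\right) = 0
\]
for every admissible choice of the remaining $\mu$'s.

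Next I would use hypothesis \eqref{Equ: 02}, namely $N_{i_2,i_1}^{i_3}=1$, to collapse the sum over $\mu_0$: since $\mu_0$ ranges over a basis of the one-dimensional space $B(i_2,i_1;i_3)$, the sum has exactly one term. Fixing once and for all non-zero morphisms $\mu_1\in\mB(i_4,i_1;i_6)$, $\mu_2\in\mB(i_5,i_4;i_2)$, $\mu_3\in\mB(i_5,i_6;i_3)$, $\mu_4\in\mB(i_7,i_9;i_1)$, $\mu_5\in\mB(i_2,i_7;i_8)$, $\mu_6\in\mB(i_8,i_9;i_3)$ (these exist because the listed fusion coefficients are non-zero), and letting $\mu_0$ be the unique basis element of $B(i_2,i_1;i_3)$, the PE becomes the product of two F-symbols equal to zero. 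Now I apply Lemma \ref{Lem: one-dim} (the one-dimensional trick) twice: hypothesis \eqref{Equ: 03} is exactly the trio of alternative one-dimensionality conditions needed to conclude that the second F-symbol $\left(\begin{smallmatrix} i_5& i_4 & i_2 & \mu_2 & \mu_0 \\ i_1& i_3 & i_6 & \mu_1 & \mu_3\end{smallmatrix}\right)\neq 0$ (here one must match the labels: the roles of $(i_1,\dots,i_6)$ in Lemma \ref{Lem: one-dim} are played by $(i_5,i_4,i_2,i_1,i_3,i_6)$, and $\sum_k N_{i_1,i_2}^k N_{i_5,i_4^*}^k$ translates to $\sum_k N_{i_5,i_4}^k N_{i_3,i_1^*}^k$ after adjunction, etc.). Similarly, hypothesis \eqref{Equ: 04} ensures by Lemma \ref{Lem: one-dim} that the first F-symbol $\left(\begin{smallmatrix} i_2& i_7 & i_8 & \mu_5 & \mu_6 \\ i_9& i_3 & i_1 & \mu_4 & \mu_0\end{smallmatrix}\right)\neq 0$, again after transcribing the labels $(i_1,\dots,i_6)\leftrightarrow(i_2,i_7,i_8,i_9,i_3,i_1)$ and the sums through adjunction. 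Since $\field$ is a field (hence an integral domain), the product of two non-zero scalars is non-zero, contradicting the vanishing forced by the empty spectrum. Therefore no such $\mC$ exists, and $R$ is not categorifiable over any $\field$.

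The step I expect to be the main obstacle is the \emph{bookkeeping of label identifications}: verifying that the three alternative conditions in \eqref{Equ: 03} and in \eqref{Equ: 04} really do match, under the natural adjunction isomorphisms and the identification $X^{**}=X$, the three alternative hypotheses of Lemma \ref{Lem: one-dim} applied to the two specific F-symbols appearing in this PE. One has to be careful about which leg gets dualized when passing between $\hom_\mC(A\otimes B,C)$ and $\hom_\mC(\one, C\otimes B^*\otimes A^*)$, and about the ordering conventions in the definition of the F-symbol, so that e.g. $\sum_k N_{i_5,i_4}^k N_{i_3,i_1^*}^k$ is genuinely $\dim\hom_\mC(X_{i_5}\otimes X_{i_4}\otimes X_{i_1}, X_{i_3})$. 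Once the dictionary is set up correctly this is routine, but it is the place where a sign or a dual could slip. Everything else — strictification, collapsing the $\mu_0$-sum, and the integral-domain argument — is immediate given the results already established in the excerpt.
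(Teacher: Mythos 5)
Your proposal is correct and follows essentially the same route as the paper's own proof: empty spectrum kills the RHS of the Pentagon Equation, $N_{i_2,i_1}^{i_3}=1$ collapses the $\mu_0$-sum on the LHS to a single product of two F-symbols, and Lemma \ref{Lem: one-dim} applied with the substitutions $(i_1,\dots,i_6)\mapsto(i_5,i_4,i_2,i_1,i_3,i_6)$ and $(i_1,\dots,i_6)\mapsto(i_2,i_7,i_8,i_9,i_3,i_1)$ shows both factors are non-zero, giving the contradiction. Your label dictionary matches conditions \eqref{Equ: 03} and \eqref{Equ: 04} exactly to the three alternatives of the lemma, so the bookkeeping you flagged as the main risk in fact goes through.
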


\begin{proof}
Assume that $\mathcal{C}$ is a categorification of $R$. Given that the fusion coefficients $N_{i_4,i_1}^{i_6}$, $N_{i_5,i_4}^{i_2}$, $N_{i_5,i_6}^{i_3}$, $N_{i_7,i_9}^{i_1}$, $N_{i_2,i_7}^{i_8}$, and $N_{i_8,i_9}^{i_3}$ are non-zero, we can select non-zero morphisms $\mu_1, \mu_2, \ldots, \mu_6$ in the corresponding Hom-spaces of $\mathcal{C}$.

According to \eqref{Equ: 01}, the spectrum of the PE \eqref{Equ: Pentagon Equation} is empty, which implies that its RHS is zero. On the other hand, by \eqref{Equ: 02}, its LHS is given by the following product:
$$
\left(
\begin{array}{ccc | cc}
i_2 & i_7 & i_8 & \mu_5 & \mu_6 \\
i_9 & i_3 & i_1 & \mu_4 & \mu_0
\end{array}
\right)
\left(
\begin{array}{ccc | cc}
i_5 & i_4 & i_2 & \mu_2 & \mu_0 \\
i_1 & i_3 & i_6 & \mu_1 & \mu_3
\end{array}
\right),
$$
where $\mu_0$ is a non-zero morphism in the Hom-space $\Hom_{\mathcal{C}}(X_{i_2} \otimes X_{i_1}, X_{i_3})$ which is one-dimensional by \eqref{Equ: 02}.

However, Equations \eqref{Equ: 03} and \eqref{Equ: 04}, together with Lemma \ref{Lem: one-dim}, indicate that each F-symbol on the LHS is non-zero. This result contradicts with the fact that the RHS is zero, thereby concluding that $R$ cannot be a categorified.
\end{proof}

\begin{theorem}[One spectrum criterion]\label{Thm: Ob-1}
For a fusion ring $R$,
if there are indices $i_j \in I$, $0\leq j \leq 9$, such that the fusion coefficients
$N_{i_4,i_1}^{i_6}$, $N_{i_5,i_4}^{i_2}$, $N_{i_5,i_6}^{i_3}$, $N_{i_7,i_9}^{i_1}$, $N_{i_2,i_7}^{i_8}$, $N_{i_8,i_9}^{i_3}$ are non-zero, and
\begin{align}\label{Equ: 11}
\sum_{k} N_{i_4,i_7}^{k} N_{i_5^*,i_8}^{k} N_{i_6,i_9^*}^{k}&=1, \\ \label{Equ: 11'}
N_{i_4,i_7}^{\spec}=N_{i_5^*,i_8}^{\spec}=N_{i_6,i_9^*}^{\spec}&=1,
\\ \label{Equ: 12}
N_{i_2,i_1}^{i_{3}}&=0, \\ \label{Equ: 13}
\sum_{k} N_{i_5,\spec}^{k} N_{i_2,i_7}^{k} =1 
~\text{or}~
\sum_{k \in I} N_{i_5,i_4}^{k} N_{i_8,i_7^*}^{k} &=1 
~\text{or}~
\sum_{k} N_{i_2,i_4^*}^{k} N_{i_8,\spec^*}^{k} =1, 
\\ \label{Equ: 14}
\sum_{k} N_{i_5,i_6}^{k} N_{i_8,i_9}^{k} =1 
~\text{or}~
\sum_{k \in I} N_{i_5,\spec}^{k} N_{i_3,i_9^*}^{k} &=1 
~\text{or}~
\sum_{k} N_{i_8,\spec^*}^{k} N_{i_3,i_6^*}^{k} =1, 
\\ \label{Equ: 15}
\sum_{k} N_{i_4,i_1}^{k} N_{\spec,i_9}^{k} =1 
~\text{or}~
\sum_{k \in I} N_{i_4,i_7}^{k} N_{i_6,i_9^*}^{k} &=1 
~\text{or}~
\sum_{k} N_{\spec,i_7^*}^{k} N_{i_6,i_1^*}^{k} =1, 
\end{align}
then $R$ cannot be categorified.
\end{theorem}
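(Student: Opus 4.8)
The plan is to mimic the proof of Theorem~\ref{Thm: Ob-0}, but now with a one-element spectrum so that the Pentagon Equation becomes an identity of the form $0 = xyz$ with each of $x,y,z$ forced to be nonzero. Assume $\mC$ is a categorification of $R$. Since $N_{i_4,i_1}^{i_6}$, $N_{i_5,i_4}^{i_2}$, $N_{i_5,i_6}^{i_3}$, $N_{i_7,i_9}^{i_1}$, $N_{i_2,i_7}^{i_8}$, $N_{i_8,i_9}^{i_3}$ are all nonzero, choose nonzero morphisms $\mu_1,\dots,\mu_6$ in the corresponding one-dimensional-or-larger hom-spaces (the indices are picked so that these particular multiplicity spaces will actually be forced one-dimensional by \eqref{Equ: 13}--\eqref{Equ: 15} where needed). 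By \eqref{Equ: 12}, $N_{i_2,i_1}^{i_3} = 0$, so $\hom_{\mC}(X_{i_2}\otimes X_{i_1}, X_{i_3}) = 0$; hence the morphism $\mu_0$ appearing on the LHS of \eqref{Equ: Pentagon Equation} lives in a zero space and the entire LHS of the Pentagon Equation vanishes. This is the analogue of using \eqref{Equ: 02} in Theorem~\ref{Thm: Ob-0}, but with the roles of LHS and RHS swapped: here it is the LHS, not the RHS, that collapses.

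Next I would analyze the RHS. By \eqref{Equ: 11}, the spectrum $\SpecS = \{i \in I : N_{i_4,i_7}^i > 0,\ N_{i_5^*,i_8}^i > 0,\ N_{i_6,i_9^*}^i > 0\}$ has exactly one element, call it $\spec = i_0$, and by \eqref{Equ: 11'} the three relevant multiplicities $N_{i_4,i_7}^{\spec}$, $N_{i_5^*,i_8}^{\spec}$, $N_{i_6,i_9^*}^{\spec}$ are each equal to $1$. Therefore the double sum $\sum_{\spec}\sum_{\mu_7,\mu_8,\mu_9}$ on the RHS reduces to a single term: $\spec$ is fixed, and the summation over $\mu_7 \in B(i_4,i_7;i_0)$, $\mu_8 \in B(i_5,i_0;i_8)$ (equivalently $B(i_5^*,i_8;i_0)$ up to adjunction, which is one-dimensional by \eqref{Equ: 11'}), $\mu_9 \in B(i_0,i_9;i_6)$ (equivalently $B(i_6,i_9^*;i_0)$, one-dimensional) each collapses to a single nonzero choice. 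So the RHS of \eqref{Equ: Pentagon Equation} equals a single product of three F-symbols, and combining with the vanishing LHS we get
$$
0 =
\left(
\begin{array}{ccc | cc}
i_5& i_4 & i_2 & \mu_2 & \mu_5 \\
i_7& i_8 & \spec & \mu_7 & \mu_8
\end{array}
\right)
\left(
\begin{array}{ccc | cc}
i_5& \spec & i_8 & \mu_8 & \mu_6 \\
i_9& i_3 & i_6 & \mu_9 & \mu_3
\end{array}
\right)
\left(
\begin{array}{ccc | cc}
i_4& i_7 & \spec & \mu_7 & \mu_9 \\
i_9& i_6 & i_1 & \mu_4 & \mu_1
\end{array}
\right).
$$

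Finally I would show each of these three F-symbols is nonzero, which yields the desired contradiction. This is exactly where the one-dimensional trick, Lemma~\ref{Lem: one-dim}, enters: conditions \eqref{Equ: 13}, \eqref{Equ: 14}, \eqref{Equ: 15} are designed so that, for each of the three F-symbols respectively, at least one of the three alternative ``$\sum_k N_{\cdots}^k N_{\cdots}^k = 1$'' hypotheses of Lemma~\ref{Lem: one-dim} holds for that symbol's six indices (after matching up the pattern $(i_1,i_2,i_3,i_4,i_5,i_6)$ of Lemma~\ref{Lem: one-dim} with the six entries of each F-symbol and applying the natural adjunction isomorphisms of \cite[Proposition 2.10.8]{EGNO15}). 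Concretely: \eqref{Equ: 13} makes the first F-symbol nonzero, \eqref{Equ: 14} makes the second nonzero, and \eqref{Equ: 15} makes the third nonzero. Hence the RHS is a product of nonzero scalars in $\field$, so it is nonzero, contradicting the displayed equation.

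\textbf{Main obstacle.} The conceptual content is identical to Theorem~\ref{Thm: Ob-0}; the only genuine work is bookkeeping: verifying that the index substitutions in \eqref{Equ: 13}--\eqref{Equ: 15} are precisely the ones required to invoke Lemma~\ref{Lem: one-dim} on the three F-symbols of the reduced RHS, and checking that the one-dimensionality statements from \eqref{Equ: 11'} correctly collapse the $\mu_7,\mu_8,\mu_9$ summations (including tracking which hom-space each $\mu_j$ lives in and that the reinterpretation via adjunction does not change the relevant dimension). I expect this index-matching — ensuring that ``$N_{i_8,\spec^*}^k$'', ``$N_{i_5,\spec}^k$'', etc., in the hypotheses really are the dimensions produced by Lemma~\ref{Lem: one-dim} applied to the correct F-symbol — to be the step demanding the most care, though it is purely combinatorial once the dictionary is set up.
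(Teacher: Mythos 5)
Your proposal is correct and follows essentially the same route as the paper's own proof: the LHS of the Pentagon Equation vanishes because $B(i_2,i_1;i_3)$ is empty by \eqref{Equ: 12}, the RHS collapses to a single product of three F-symbols by \eqref{Equ: 11} and \eqref{Equ: 11'}, and each factor is shown nonzero via Lemma \ref{Lem: one-dim} using \eqref{Equ: 13}--\eqref{Equ: 15}. The index bookkeeping you flag as the main obstacle is indeed all that remains, and it matches the paper's argument.
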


\begin{proof}
Assume that $\mC$ is the categorification of $R$.
As $N_{i_4,i_1}^{i_6}$, $N_{i_5,i_4}^{i_2}$, $N_{i_5,i_6}^{i_3}$, $N_{i_7,i_9}^{i_1}$, $N_{i_2,i_7}^{i_8}$, $N_{i_8,i_9}^{i_3}$ are non-zero, we can take non-zero morphisms $\mu_1,\mu_2,\ldots,\mu_6$ in the corresponding Hom-spaces.
By \eqref{Equ: 12}, the LHS of PE \eqref{Equ: Pentagon Equation} is zero.
By \eqref{Equ: 11}, there is a unique $k$ such that $N_{i_4,i_7}^{k} N_{i_5^*,i_8}^{k} N_{i_6,i_9^*}^{k}$ is nonzero, and by \eqref{Equ: 11'}, this $k$ is precisely $\spec$, so the single element of the spectrum of the PE \eqref{Equ: Pentagon Equation}.
Thus the RHS of the PE is
\[
\left(
\begin{array}{ccc | cc}
i_5& i_4 & i_2 & \mu_2 & \mu_5 \\
i_7& i_8 & \spec & \mu_7 & \mu_8
\end{array}
\right)
\left(
\begin{array}{ccc | cc}
i_5& \spec & i_8 & \mu_8 & \mu_6 \\
i_9& i_3 & i_6 & \mu_9 & \mu_3
\end{array}
\right)
\left(
\begin{array}{ccc | cc}
i_4& i_7 & \spec & \mu_7 & \mu_9 \\
i_9& i_6 & i_1 & \mu_4 & \mu_1
\end{array}
\right)
\]
for some non-zero morphisms $\mu_7,\mu_8,\mu_9$ in the corresponding Hom-spaces which are one-dimensional by \eqref{Equ: 11'}.
By Equations \eqref{Equ: 13}, \eqref{Equ: 14}, \eqref{Equ: 15} and Lemma \ref{Lem: one-dim}, the F-symbols in the RHS of the PE are non-zero, contradiction.
\end{proof}


The criteria proved in this section will be applied in \S \ref{sub:F660}.
\begin{proposition}
The zero and one spectrum criteria are not equivalent; furthermore, neither implies the other.
\end{proposition}
\begin{proof}
Consider the following two rank-$6$ fusion data drawn from the dataset presented in \cite{SliVer}: 
$$ \large{\begin{smallmatrix}1 & 0 & 0 & 0 & 0 & 0 \\0 & 1 & 0 & 0 & 0 & 0 \\0 & 0 & 1 & 0 & 0 & 0 \\0 & 0 & 0 & 1 & 0 & 0 \\0 & 0 & 0 & 0 & 1 & 0 \\0 & 0 & 0 & 0 & 0 & 1\end{smallmatrix} , \ \begin{smallmatrix}0 & 1 & 0 & 0 & 0 & 0 \\1 & 0 & 0 & 0 & 0 & 0 \\0 & 0 & 1 & 0 & 0 & 0 \\0 & 0 & 0 & 1 & 0 & 0 \\0 & 0 & 0 & 0 & 1 & 0 \\0 & 0 & 0 & 0 & 0 & 1\end{smallmatrix} , \ \begin{smallmatrix}0 & 0 & 1 & 0 & 0 & 0 \\0 & 0 & 1 & 0 & 0 & 0 \\1 & 1 & 2 & 1 & 0 & 1 \\0 & 0 & 1 & 2 & 2 & 1 \\0 & 0 & 0 & 2 & 1 & 0 \\0 & 0 & 1 & 1 & 0 & 0\end{smallmatrix} , \ \begin{smallmatrix}0 & 0 & 0 & 1 & 0 & 0 \\0 & 0 & 0 & 1 & 0 & 0 \\0 & 0 & 1 & 2 & 2 & 1 \\1 & 1 & 2 & 3 & 1 & 1 \\0 & 0 & 2 & 1 & 1 & 1 \\0 & 0 & 1 & 1 & 1 & 0\end{smallmatrix} , \ \begin{smallmatrix}0 & 0 & 0 & 0 & 1 & 0 \\0 & 0 & 0 & 0 & 1 & 0 \\0 & 0 & 0 & 2 & 1 & 0 \\0 & 0 & 2 & 1 & 1 & 1 \\1 & 1 & 1 & 1 & 0 & 0 \\0 & 0 & 0 & 1 & 0 & 1\end{smallmatrix} , \ \begin{smallmatrix}0 & 0 & 0 & 0 & 0 & 1 \\0 & 0 & 0 & 0 & 0 & 1 \\0 & 0 & 1 & 1 & 0 & 0 \\0 & 0 & 1 & 1 & 1 & 0 \\0 & 0 & 0 & 1 & 0 & 1 \\1 & 1 & 0 & 0 & 1 & 0\end{smallmatrix}} $$

$$ \large{\begin{smallmatrix}1 & 0 & 0 & 0 & 0 & 0 \\0 & 1 & 0 & 0 & 0 & 0 \\0 & 0 & 1 & 0 & 0 & 0 \\0 & 0 & 0 & 1 & 0 & 0 \\0 & 0 & 0 & 0 & 1 & 0 \\0 & 0 & 0 & 0 & 0 & 1\end{smallmatrix} , \ \begin{smallmatrix}0 & 1 & 0 & 0 & 0 & 0 \\1 & 0 & 0 & 0 & 0 & 0 \\0 & 0 & 1 & 0 & 0 & 0 \\0 & 0 & 0 & 1 & 0 & 0 \\0 & 0 & 0 & 0 & 0 & 1 \\0 & 0 & 0 & 0 & 1 & 0\end{smallmatrix} , \ \begin{smallmatrix}0 & 0 & 1 & 0 & 0 & 0 \\0 & 0 & 1 & 0 & 0 & 0 \\1 & 1 & 2 & 0 & 0 & 0 \\0 & 0 & 0 & 0 & 1 & 1 \\0 & 0 & 0 & 1 & 1 & 1 \\0 & 0 & 0 & 1 & 1 & 1\end{smallmatrix} , \ \begin{smallmatrix}0 & 0 & 0 & 1 & 0 & 0 \\0 & 0 & 0 & 1 & 0 & 0 \\0 & 0 & 0 & 0 & 1 & 1 \\1 & 1 & 0 & 2 & 0 & 0 \\0 & 0 & 1 & 0 & 1 & 1 \\0 & 0 & 1 & 0 & 1 & 1\end{smallmatrix} , \ \begin{smallmatrix}0 & 0 & 0 & 0 & 1 & 0 \\0 & 0 & 0 & 0 & 0 & 1 \\0 & 0 & 0 & 1 & 1 & 1 \\0 & 0 & 1 & 0 & 1 & 1 \\1 & 0 & 1 & 1 & 0 & 2 \\0 & 1 & 1 & 1 & 2 & 0\end{smallmatrix} , \ \begin{smallmatrix}0 & 0 & 0 & 0 & 0 & 1 \\0 & 0 & 0 & 0 & 1 & 0 \\0 & 0 & 0 & 1 & 1 & 1 \\0 & 0 & 1 & 0 & 1 & 1 \\0 & 1 & 1 & 1 & 2 & 0 \\1 & 0 & 1 & 1 & 0 & 2\end{smallmatrix}} $$
The first one is ruled out by Theorem \ref{Thm: Ob-0} with $(i_1,i_2,\dots,i_9)= (4, 5, 6, 3, 5, 6, 4, 5, 6)$, but not by Theorem \ref{Thm: Ob-1} (computer-assisted checking), and vice versa for the second one with $(i_0,i_1,\dots,i_9)= (1, 5, 5, 5, 3, 4, 5, 3, 4, 5)$.
\end{proof}

\section{Applications}\label{sec:app}
In this section, we leverage the findings from earlier sections to demonstrate that certain fusion rings cannot be categorified. Specifically, we use the results from \S \ref{sec:FirstLoc} and \S \ref{sec:SpeCrit} to rule out the categorification of $\mathcal{F}_{210}$ and $\mathcal{F}_{660}$, respectively. Consequently, the classification presented in Theorem \ref{thm:ClassSimpleIntegral} is established.

\subsection{Fusion ring $\mathcal{F}_{210}$} \label{sub:F210}
In this section, we discuss the simple integral fusion ring $\mathcal{F}_{210}$. It is of rank $7$, $\mathrm{FPdim}$  $210$ and type $[[1,1],[5,3],[6,1],[7,2]]$, with fusion matrices as follows,
$$ \large{\begin{smallmatrix}
1 & 0 & 0 & 0& 0& 0& 0 \\
0 & 1 & 0 & 0& 0& 0& 0 \\
0 & 0 & 1 & 0& 0& 0& 0 \\
0 & 0 & 0 & 1& 0& 0& 0 \\
0 & 0 & 0 & 0& 1& 0& 0 \\
0 & 0 & 0 & 0& 0& 1& 0 \\
0 & 0 & 0 & 0& 0& 0& 1
\end{smallmatrix} , \
\begin{smallmatrix}
0 & 1 & 0 & 0& 0& 0& 0 \\
1 & 1 & 0 & 1& 0& 1& 1 \\
0 & 0 & 1 & 0& 1& 1& 1 \\
0 & 1 & 0 & 0& 1& 1& 1 \\
0 & 0 & 1 & 1& 1& 1& 1 \\
0 & 1 & 1 & 1& 1& 1& 1 \\
0 & 1 & 1 & 1& 1& 1& 1
\end{smallmatrix} , \
\begin{smallmatrix}
0 & 0 & 1 & 0& 0& 0& 0 \\
0 & 0 & 1 & 0& 1& 1& 1 \\
1 & 1 & 1 & 0& 0& 1& 1 \\
0 & 0 & 0 & 1& 1& 1& 1 \\
0 & 1 & 0 & 1& 1& 1& 1 \\
0 & 1 & 1 & 1& 1& 1& 1 \\
0 & 1 & 1 & 1& 1& 1& 1
\end{smallmatrix} , \
\begin{smallmatrix}
0 & 0 & 0 & 1& 0& 0& 0 \\
0 & 1 & 0 & 0& 1& 1& 1 \\
0 & 0 & 0 & 1& 1& 1& 1 \\
1 & 0 & 1 & 1& 0& 1& 1 \\
0 & 1 & 1 & 0& 1& 1& 1 \\
0 & 1 & 1 & 1& 1& 1& 1 \\
0 & 1 & 1 & 1& 1& 1& 1
\end{smallmatrix} , \
\begin{smallmatrix}
0 & 0 & 0 & 0& 1& 0& 0 \\
0 & 0 & 1 & 1& 1& 1& 1 \\
0 & 1 & 0 & 1& 1& 1& 1 \\
0 & 1 & 1 & 0& 1& 1& 1 \\
1 & 1 & 1 & 1& 1& 1& 1 \\
0 & 1 & 1 & 1& 1& 2& 1 \\
0 & 1 & 1 & 1& 1& 1& 2
\end{smallmatrix} , \
\begin{smallmatrix}
0 & 0 & 0 & 0& 0& 1& 0 \\
0 & 1 & 1 & 1& 1& 1& 1 \\
0 & 1 & 1 & 1& 1& 1& 1 \\
0 & 1 & 1 & 1& 1& 1& 1 \\
0 & 1 & 1 & 1& 1& 2& 1 \\
1 & 1 & 1 & 1& 2& 1& 2 \\
0 & 1 & 1 & 1& 1& 2& 2
\end{smallmatrix} , \
\begin{smallmatrix}
0 & 0 & 0 & 0& 0& 0& 1 \\
0 & 1 & 1 & 1& 1& 1& 1 \\
0 & 1 & 1 & 1& 1& 1& 1 \\
0 & 1 & 1 & 1& 1& 1& 1 \\
0 & 1 & 1 & 1& 1& 1& 2 \\
0 & 1 & 1 & 1& 1& 2& 2 \\
1 & 1 & 1 & 1& 2& 2& 1
\end{smallmatrix}}$$

\begin{remark} \label{rk:interpolated}
It is worth mentioning a conceptual reason why typical criteria fail to exclude $\mathcal{F}_{210}$. The character table of $\Rep(\PSL(2,q))$ can be described uniformly, depending only on $q$ being a prime power (see, e.g., \cite[\S 5.2]{FuHa} or \cite[\S 12.5]{DiMi}). This table can be interpolated to any integer $q>1$, and applying the Schur orthogonality relations then yields an infinite family of simple integral fusion rings (simple for $q \ge 4$). When $q$ is a prime power, these coincide with the Grothendieck ring of $\Rep(\PSL(2,q))$, but for other integers they give new rings with the same favorable arithmetic properties (see \cite{LPRinter}). The case $q=6$ corresponds precisely to $\mathcal{F}_{210}$. It is expected that similar interpolated simple integral fusion rings exist for every family of finite simple groups of Lie type.
\end{remark}

Let us label (and order) the simple objects by $1, 5_1, 5_2, 5_3, 6_1, 7_1, 7_2$, with FPdim $1,5,5,5,6,7,7$ (respectively). Observe that all the fusion matrices are self-adjoint, so the simple objects are selfdual.



\begin{theorem} \label{thm: F210NoCat}
The fusion ring $\mathcal{F}_{210}$ cannot be categorified in characteristic zero.
\end{theorem}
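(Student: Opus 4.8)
The plan is to execute the localization Strategy \ref{st:loc} for $\mathcal{F}_{210}$ around a carefully chosen self-dual simple object, using the machinery of \S\ref{sec:FirstLoc}, and to show that the resulting polynomial system is inconsistent over every field of characteristic zero. The first step is a reduction to the spherical case: a fusion category categorifying $\mathcal{F}_{210}$ in characteristic zero may be assumed pivotal, and since \emph{every} simple object of $\mathcal{F}_{210}$ is self-dual, any pivotal structure automatically satisfies $\dim(X_i)=\dim(X_i^*)$, hence is spherical by Remark \ref{rk:sph}; moreover (as in the discussion at the end of \S\ref{sec:TPEvsPE}) one may take $(\_)^{**}=\id$ and $a_{X_i}=\pm\id_{X_i}$. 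The dimension function $i\mapsto d_i=\dim(X_i)$ is then a character of $\mathcal{F}_{210}$ with $d_{i^*}=d_i$ and $d_i\neq 0$; it is cleanest to keep the $d_i$ as indeterminates subject only to these constraints and let the final computation pin them down (the Frobenius–Perron character $(1,5,5,5,6,7,7)$ being the relevant solution).

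Next I would set up the local system. Take $X_k$ to be one of the three simple objects of Frobenius–Perron dimension $5$, say $k=5_1$. Reading off the fusion matrices, $5_1\otimes 5_1=\one+5_1+5_3+7_1+7_2$ with all multiplicities $\leq 1$, and all constituents are self-dual, so the hypotheses of Theorem \ref{thm:loc} hold with $S_k=\{1,5_1,5_3,7_1,7_2\}$ and $k\in S_k$. Since $N_{7_1,7_1}^{7_2}=2$, the set $S_k$ itself fails the multiplicity-one condition, so one picks a proper subset, e.g. $S'_k=\{1,5_1,5_3,7_1\}$, on which one checks $N_{b,c}^a\leq 1$ for all $a,b,c$. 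Corollary \ref{cor:loc} then provides the equations \eqref{equ:Ia}, \eqref{equ:Ib}, \eqref{equ:IIa} in the variables $x(a,b)$ and $y(a,b)$, together with the normalizations $y(1,b)=d_k^{-1}$, $x(1,b)=(d_bd_k)^{-1}$, $x(a,k)=y(a,k)^2$, $y(a,b)=y(b,a)$, and the vanishings $x(a,b)=0$ whenever $N_{b,b}^a=0$, which (again reading the fusion matrices) eliminate a substantial fraction of the $x$-variables. If this system is not yet contradictory, I would adjoin the linking relations of Theorem \ref{thm:locextra} for a second admissible self-dual object $X_l$ with $l\in S'_k\setminus\{k\}$ and $N_{l,l}^a\leq 1$ for all simple $X_a$ — e.g. $l=5_3$, for which $5_3\otimes 5_3=\one+5_2+5_3+7_1+7_2$ — thereby coupling the subsystems $E_k$ and $E_l$ into an overdetermined system.

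The final step is to compute a Gröbner basis of the resulting ideal over $\mathbb{Q}$ (with a computer algebra system) and verify that it equals $(1)$; equivalently, the affine variety cut out by the system is empty over $\overline{\mathbb{Q}}$, hence over every field of characteristic zero. This contradicts the existence of a spherical categorification of $\mathcal{F}_{210}$, and by the first-step reduction, of any categorification in characteristic zero. Note that the Gröbner certificate $1=\sum f_jg_j$ (with $f_j$ the generators of the system) involves only finitely many rational coefficients, and the primes occurring in their denominators — coming from the $d_i^{\pm 1}$ and from the elimination — are precisely why the conclusion is stated only for characteristic zero; the positive-characteristic analogue is recovered separately, in the pivotal case, via lifting theory.

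The main obstacle I expect is not any single conceptual point but the interaction of steps two and three: the raw localization system can easily be too large for the Gröbner basis computation to terminate in practice, so the real work lies in choosing $k$, $S'_k$ and the auxiliary $l$ so that, after exploiting all forced zeros and normalizations, the system in $x,y$ is small enough to compute yet still captures enough of the pentagon constraints to be inconsistent. A secondary delicate point is the handling of the dimension function: one must either justify reducing to the Frobenius–Perron character or carry the $d_i$ along as constrained indeterminates and check that the emptiness of the variety is unaffected — and, as noted, tracking which denominators enter the Gröbner certificate is what delimits the conclusion to characteristic zero.
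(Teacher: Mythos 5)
Your overall strategy coincides with the paper's: localize at $k=5_1$ via Corollary \ref{cor:loc} (the paper takes the smaller admissible set $S'_k=\{1,5_1,5_3\}$ rather than your $\{1,5_1,5_3,7_1\}$, which keeps the subsystem $E_k$ at 10 variables and 12 equations), observe that $E_k$ alone is consistent (it has 14 solutions in characteristic $0$), adjoin the subsystem $E_l$ for $l=5_3$ together with the linking equation of Theorem \ref{thm:locextra}, and verify by a Gr\"obner basis computation over $\mathbb{Q}$ that the combined ideal is the unit ideal. All of this matches the paper's proof.

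There is, however, one genuine gap in your reduction step: you write that a categorification of $\mathcal{F}_{210}$ in characteristic zero ``may be assumed pivotal.'' The existence of a pivotal structure on an arbitrary fusion category is an open conjecture, and the theorem asserts that \emph{no} categorification exists, not merely no pivotal one, so this assumption cannot be made for free. The correct route, which the paper uses, is that $\mathcal{F}_{210}$ is integral, hence any categorification over $\mathbb{C}$ is pseudo-unitary by \cite[Proposition 9.6.5]{EGNO15}, and a pseudo-unitary fusion category carries a canonical spherical structure with $\dim=\FPdim$ by \cite[Proposition 9.5.1]{EGNO15}. This also disposes of the secondary issue you raise about the dimension function: pseudo-unitarity pins the $d_i$ to the Frobenius--Perron character $(1,5,5,5,6,7,7)$ outright, so there is no need to carry them as constrained indeterminates (though, as you suspect, the character table of $\mathcal{F}_{210}$ has a zero entry in every column except the first, so the nonvanishing constraint would force the same character anyway). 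With this repair the rest of your argument goes through exactly as in the paper.
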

\begin{proof}
We can assume the field to be $\mathbb{C}$. Suppose the existence of a fusion category $\mC$ over $\mathbb{C}$ whose Grothendieck ring is $\mathcal{F}_{210}$. Since $\mC$ is integral, it is pseudo-unitary and then spherical by \cite[Propositions 9.6.5 and 9.5.1]{EGNO15}. We can apply Corollary \ref{cor:loc} with $k=5_1$, $S_k = \{1, 5_1, 5_3, 7_1, 7_2 \}$, and $S'_k = \{1, 5_1, 5_3\}$. We get the following subsystem $E_k$ of $10$ variables and $12$ equations:
\[
\setlength{\arraycolsep}{4pt}
\begin{array}{rcl@{\qquad}rcl}
5u_0 + 7u_1 + 7u_2 - 4/25 & = & 0, &
5v_1 + 5v_2 + 7v_4 + 7v_6 + 1/5 & = & 0, \\
5v_0 + 5v_1 + 7v_3 + 7v_5 + 1/5 & = & 0, &
25v_0 v_1 + 25v_1 v_2 + 35v_3 v_4 + 35v_5 v_6 + 1/5 & = & 0, \\
25v_0^2 + 25v_1^2 + 35v_3^2 + 35v_5^2 - 4/5 & = & 0, &
5v_0^2 v_1 + 5v_1^2 v_2 + 7v_3^2 v_4 + 7v_5^2 v_6 - v_1^2 + 1/125 & = & 0, \\
5v_0^3 + 5v_1^3 + 7v_3^3 + 7v_5^3 - v_0^2 + 1/125 & = & 0, &
25v_1^2 + 25v_2^2 + 35v_4^2 + 35v_6^2 - 4/5 & = & 0, \\
5v_0 v_1^2 + 5v_1 v_2^2 + 7v_3 v_4^2 + 7v_5 v_6^2 + 1/125 & = & 0, &
5v_1^3 + 5v_2^3 + 7v_4^3 + 7v_6^3 - u_0 + 1/125 & = & 0, \\
5u_0 v_1 - v_1^2 + 7u_1 v_3 + 7u_2 v_5 + 1/125 & = & 0, &
5u_0 v_2 - v_2^2 + 7u_1 v_4 + 7u_2 v_6 + 1/125 & = & 0,
\end{array}
\]
where $u_0 = x_k(5_3,5_3), u_1=x_k(7_1,5_3), u_2=x_k(7_2,5_3), v_0=y_k(5_1,5_1), v_1=y_k(5_3,5_1), v_2=y_k(5_3,5_3), v_3=y_k(7_1,5_1), v_4=y_k(7_1,5_3), v_5=y_k(7_2,5_1), v_6=y_k(7_2,5_3)$. This subsystem admits $14$ solutions in characteristic $0$, which can be express as a Gr\"obner basis (see \S \ref{sub: TwoParallel}).

Next we can apply Theorem \ref{thm:locextra} with $k, S_k, S'_k$ as above, together with $l=5_3$, $S_l = \{1, 5_2, 5_3, 7_1, 7_2 \}$ and $S'_l = \{1, 5_2, 5_3\}$. We get a subsystem $E_l$ (equivalent to $E_k$, see \S \ref{sub: TwoParallel})
with $w_0=x_l(5_2, 5_2), w_1=x_l(7_1,5_2), w_2=x_l(7_2,5_2), z_0=y_l(5_2,5_2), z_1=y_l(5_3,5_2), z_2=y_l(5_3,5_3), z_3=y_l(7_1,5_2), z_4=y_l(7_1,5_3), z_5=y_l(7_2,5_2), z_6=y_l(7_2,5_3)$; together with the following extra equation
\begin{equation} 5u_0z_2 + 7u_1z_4 + 7u_2z_6 - u_0 + 1/125 = 0.  \label{eq:extra}
\end{equation}
It remains to show that for all solutions of $E_k$ and of $E_l$, Equation (\ref{eq:extra}) is never satisfied. That can be done formally and quickly (less than 1min) as follows: compute a Gr\"obner basis for $E_k$, for $E_l$, put them together with Equation (\ref{eq:extra}), then you get a system with a trivial Gr\"obner basis (see the code for TwoParallel in Subection \ref{sub: TwoParallel}). The following code shows that the Gr\"obner basis equals the list containing only the constant polynomial $1$.
\begin{verbatim}
sage: %time TwoParallel(0)
CPU times: user 48.5 s, sys: 0 ns, total: 48.5 s
Wall time: 48.5 s
[1]
\end{verbatim}  \vspace*{-.825cm}
\end{proof}

\begin{remark} \label{rk:504}
The next open candidate, mentioned in the introduction, has rank $9$, $\FPdim \ 504$ and shares the same type as $\Rep(\PSL(2,8))$, albeit with slightly different fusion data. Applying Corollary \ref{cor:loc} to it yields a subsystem of 50 equations with 39 variables.
\end{remark}

\begin{remark} \label{rk:ind}
Following discussions with Scott Morrison and Pavel Etingof, we wonder whether $\mathcal{F}_{210}$ admits an induction matrix---a fusion-ring analogue of the induction functor (the left adjoint of the forgetful functor $\mathcal{Z}(\mathcal{C}) \to \mathcal{C}$); see \cite[\S 3]{ABDP}.
A negative answer would give a more direct argument to rule out this fusion ring, but such an approach falls outside the scope of the methods in \cite{MW17, ABDP}. Our attempt to list all possible induction matrices produced $17843535$ candidates for the lower square alone, suggesting that this approach might also present significant challenges.
\end{remark}

The rest of this section will focus on exclusions in positive characteristic. As preliminaries, we will discuss pseudo-unitary fusion categories, character tables, formal codegrees, and the dual-Burnside property over the complex field.

\begin{definition}[\cite{EGNO15}, Definition 9.4.4] \label{def:pseudounit}
A fusion category $\mathcal{C}$ over $\mathbb{C}$ is termed \emph{pseudo-unitary} if $\dim(\mathcal{C}) = \FPdim(\mathcal{C})$.
\end{definition}

According to \cite[Proposition 9.5.1]{EGNO15}, a pseudo-unitary fusion category $\mathcal{C}$ possesses a spherical structure such that $\dim(X) = \FPdim(X)$ for every simple object $X$ in $\mathcal{C}$.

\begin{definition} \label{def:eigentable}
Let $\mathcal{F}$ be a commutative fusion ring. Denote its fusion matrices by $M_1, \dots, M_r$, and let $D_i = \text{diag}(\lambda_{i,j})$, $i=1, \dots, r$ be their simultaneous diagonalization (which is well-defined because $M_i^*M_i = M_i M_i^*$). The \emph{character table} of $\mathcal{F}$ consists of the entries $(\lambda_{i,j})$. Conventionally, we take $\lambda_{i,1} = \Vert M_i \Vert$, which represents the Frobenius-Perron dimension of the corresponding simple object.
\end{definition}

Here is the character table of $\mathcal{F}_{210}$:
$$
\left[ \begin{matrix}
1 & 1 & 1 & 1 & 1 & 1 & 1  \\
5 & -1 & -\zeta_7 -\zeta_7^6 & -\zeta_7^5 - \zeta_7^2 & -\zeta_7^4 - \zeta_7^3 & 0 & 0 \\
5 & -1 & -\zeta_7^5 - \zeta_7^2 & -\zeta_7^4 - \zeta_7^3 & -\zeta_7 -\zeta_7^6 & 0 & 0  \\
5 & -1 & -\zeta_7^4 - \zeta_7^3 & -\zeta_7 -\zeta_7^6 & -\zeta_7^5 - \zeta_7^2 & 0 & 0  \\
6 & 0 & -1 & -1 & -1 & 1 & 1  \\
7 & 1 & 0 & 0 & 0 & \zeta_5+\zeta_5^4 & \zeta_5^2+\zeta_5^3  \\
7 & 1 & 0 & 0 & 0 & \zeta_5^2+\zeta_5^3 & \zeta_5+\zeta_5^4
\end{matrix} \right].
$$

\begin{lemma} \label{lem:char}
Let $\mathcal{F}$ be a commutative fusion ring with basis $\{ b_1, \dots, b_r \}$. A map $\chi: \mathcal{F} \to \mathbb{C}$ is a linear character of $\mathcal{F}$ if and only if it is given by a column of its character table, i.e. there is $j$ such that $\chi(b_i) = \lambda_{i,j}$.
\end{lemma}
\begin{proof}
A linear character $\chi$ is a ring homomorphism, so $\chi(b_i)\chi(b_j) = \sum_k N_{i,j}^k \chi(b_k)$; in other words,  $M_iv = \chi(b_i)v$ where $v$ is the vector $(\chi(b_s))_s$, which turns out to be a common eigenvector for every fusion matrix $M_i$, with eigenvalue $\chi(b_i)$. So by definition of the character table, there is $j$ such that $\chi(b_i) = \lambda_{i,j}$. Conversely, let $v_t$ be a nonzero vector such that $M_iv_t = \lambda_{i,t}v_t$, for all $i$. By associativity, $M_i M_j = \sum_k N_{i,j}^k M_k$. Thus $M_i M_jv_t = \sum_k N_{i,j}^k M_k v_t$, so $\lambda_{i,t} \lambda_{j,t} = \sum_k N_{i,j}^k \lambda_{k,t}$. It follows that $\chi_j: b_i \to \lambda_{i,j}$ induces a linear character ($M_1$ being identity, $\chi_j(b_1) = 1$).
\end{proof}

\begin{definition}[\cite{Os15}, \S 2.3] \label{def:formalco}
Let $\mathcal{F}$ be a commutative fusion ring of basis $\{b_1, \dots, b_r \}$. The \emph{formal codegree} of a linear character $\chi$ is $n_{\chi}:=\sum_i \chi(b_i) \chi(b_i^*)$.
\end{definition}

Following Lemma \ref{lem:char} and Definition \ref{def:formalco}, the formal codegrees of a commutative fusion ring with character table $(\lambda_{i,j})$ are $(n_j)=(\sum_i |\lambda_{i,j}|^2)$. Note that the formal codegrees are the eigenvalues of the left multiplication matrix of $\sum_i b_i b_i^*$.

\begin{definition}[\cite{BuPa23}] \label{defdualburn}
A commmutative fusion ring $\mathcal{F}$ is called \emph{dual-Burnside} if for all linear characters $\chi$, the following are equivalent:
\begin{itemize}
\item $\chi$ is \emph{non-vanishing}, meaning that for all basic element $b$ then $\chi(b)$ is nonzero,
\item the formal codegree $n_{\chi} = \FPdim(\mathcal{F})$
\end{itemize}
\end{definition}
Recall that a linear character of the character ring ${\rm ch}(G)$ of a finite group $G$ corresponds to a conjugacy class $C$, and its formal codegree is the order of the centralizer subgroup $C_G(g)$, with $g \in C$. The equality between the formal codegree and the global $\FPdim$ (which is the order of $G$) means that $g$ is central. In particular, if $G$ is centerless, then ${\rm ch}(G)$ is dual-Burnside if and only if all the columns of the character table, the first excepted, have a zero entry. The character ring of every non-abelian and non-alternating finite simple group is dual-Burnside except Mathieu groups $M_{22}$ and $M_{24}$ \cite{PalMO}.


\begin{lemma} \label{lem:DualBurnPseudoU}
A pivotal categorification $\mC$ over $\mathbb{C}$ of a dual-Burnside commmutative fusion ring is pseudo-unitary.
\end{lemma}
\begin{proof}
By \cite[Proposition 4.7.12]{EGNO15} (involving a pivotal structure), the dimension function $\dim$ for the objects of $\mC$ induces a linear character $\chi$ on its Grothendieck ring, and (\cite[Definition 7.21.3]{EGNO15}) the categorical dimension $\dim(\mC)$ equals the formal codegree $n_{\chi}$. By \cite[Proposition 4.8.4]{EGNO15}, $\dim(X)$ is nonzero for all simple object $X$ in $\mathbb{C}$. Thus $\chi(b)$ is nonzero for all basic element $b$, and by dual-Burnside assumption, $\dim(\mC) = n_{\chi} = \FPdim(\mathcal{F})$, meaning pseudo-unitary.
\end{proof}


Now, we are ready to consider the positive characteristic.

\begin{theorem}  \label{thm:lift}
Let $\mathcal{F}$ be a fusion ring with a pivotal categorification $\mathcal{C}$ over $\overline{\mathbb{F}_p}$. Then there is a non-vanishing (Definition \ref{defdualburn}) linear character $\chi: \mathcal{F} \to \overline{\mathbb{F}_p}$. Moreover, if all the formal codegrees (Definition \ref{def:formalco}) are integers coprime with $p$, then $\mathcal{F}$ admits a categorification over $\mathbb{C}$.
\end{theorem}
\begin{proof}
By \cite[Proposition 4.7.12]{EGNO15} (again), the dimension function $\dim$ on $\mathcal{C}$ induces a linear character $\chi: \mathcal{F} \to \overline{\mathbb{F}_p}$. By \cite[Proposition 4.8.4]{EGNO15}, $\dim(X)$ is nonzero for all simple object $X$ in $\mathcal{C}$, meaning that $\chi$ is non-vanishing. Let $r$ be the rank of $\mathcal{F}$. The usual ring epimorphism $n \mapsto [n]_p = n+p\mathbb{Z}$ from $\mathbb{Z}$ to $\mathbb{Z}/p\mathbb{Z} = \mathbb{F}_p$ induces a ring epimorphism $M \mapsto [M]_p$ from $M_r(\mathbb{Z})$ to $M_r(\mathbb{F}_p)$, and a ring epimorphism $P \mapsto [P]_p$ from $\mathbb{Z}[X]$ to $\mathbb{F}_p[X]$. The formal codegrees are the eigenvalues of the left multiplication matrix $M \in M_r(\mathbb{Z})$ of $\sum_i b_i b_i^*$. But $\dim(\mathcal{C}) = \chi(\sum_i b_i b_i^*)$ is an eigenvalue of $[M]_p$. Let $P$ be the characteristic polynomial of $M$, then $[P]_p$ is the characteristic polynomial of $[M]_p$. Let $(n_i)$ be the eigenvalues of $M$ then $P(X) = \prod_i (X-n_i)$. If the numbers $(n_i)$ are all integers, then $[P]_p(X) =  \prod_i (X-[n_i]_p)$, meaning that $([n_i]_p)$ are the eigenvalues of $[M]_p$. If moreover the numbers $(n_i)$ are all coprime with $p$ then the numbers $([n_i]_p)$ are all nonzero, in particular, $\dim(\mathcal{C})$ is nonzero. Thus, by \cite[Theorem 9.16.1]{EGNO15}, $\mC$ lifts to characteristic zero (with the same Grothendieck ring).

\end{proof}

\begin{corollary} \label{cor:PosChar}
The fusion ring $\mathcal{F}_{210}$ admits no pivotal categorification in positive characteristic.
\end{corollary}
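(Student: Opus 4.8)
The plan is to apply Theorem~\ref{thm:lift} directly to $\mathcal{F}_{210}$, so the proof reduces to verifying the three hypotheses of that theorem. First I would check that $\mathcal{F}_{210}$ satisfies the hypothesis of Lemma~\ref{lem:PivoPseudoU}: inspecting the displayed character table of $\mathcal{F}_{210}$, one sees that every column except the first contains a zero entry — indeed the columns indexed by the two $5$-dimensional non-principal characters and the $6$-dimensional one have a $0$ in the $7_1$ or $7_2$ row, and the columns indexed by $7_1,7_2$ have $0$ in the three $5$-dimensional rows. So the hypothesis of Lemma~\ref{lem:PivoPseudoU} holds, hence any pivotal categorification of $\mathcal{F}_{210}$ is pseudo-unitary.

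Second, the "no categorification in characteristic zero" hypothesis is exactly Theorem~\ref{thm: F210NoCat}, which we may assume. Third, I would check the divisibility condition: $\FPdim(\mathcal{F}_{210}) = 210 = 2 \cdot 3 \cdot 5 \cdot 7$, and for each prime $p \in \{2,3,5,7\}$ there is a simple object whose Frobenius-Perron dimension is divisible by $p$ — namely $6_1$ has dimension $6$ (divisible by $2$ and $3$), $5_1$ has dimension $5$ (divisible by $5$), and $7_1$ has dimension $7$ (divisible by $7$). So all three hypotheses of Theorem~\ref{thm:lift} are met, and the theorem yields that $\mathcal{F}_{210}$ admits no pivotal categorification in positive characteristic.

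There is essentially no obstacle here once Theorem~\ref{thm:lift} and Theorem~\ref{thm: F210NoCat} are in hand; the only thing to be careful about is reading off the character table correctly to confirm the zero-entry condition, and making sure the list of simple-object dimensions $\{1,5,5,5,6,7,7\}$ indeed covers all prime divisors of $210$ — which it does. So the proof is just the following short assembly.

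\begin{proof}
Apply Theorem~\ref{thm:lift} to $\mathcal{F} = \mathcal{F}_{210}$. By the character table of $\mathcal{F}_{210}$ displayed above, every column other than the first has a zero entry (the non-principal $5$-dimensional and the $6$-dimensional characters vanish on $7_1$ or $7_2$, and the $7$-dimensional characters vanish on the $5$-dimensional simple objects), so $\mathcal{F}_{210}$ satisfies the hypothesis of Lemma~\ref{lem:PivoPseudoU}. By Theorem~\ref{thm: F210NoCat}, $\mathcal{F}_{210}$ admits no categorification in characteristic zero. Finally, $\FPdim(\mathcal{F}_{210}) = 210 = 2\cdot 3\cdot 5\cdot 7$, and the simple objects $6_1$, $5_1$, $7_1$ have Frobenius-Perron dimensions $6$, $5$, $7$, so every prime dividing $\FPdim(\mathcal{F}_{210})$ divides the dimension of some simple object. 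Thus all hypotheses of Theorem~\ref{thm:lift} hold, and $\mathcal{F}_{210}$ admits no pivotal categorification in positive characteristic.
\end{proof}
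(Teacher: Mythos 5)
Your proof is correct and is exactly the paper's argument: the paper's own proof of Corollary~\ref{cor:PosChar} is the one-line ``Immediate by Theorem~\ref{thm:lift} and above character table,'' and you have simply spelled out the three hypotheses (every non-first column of the character table has a zero entry, Theorem~\ref{thm: F210NoCat} for characteristic zero, and the primes $2,3,5,7$ of $210$ dividing the dimensions $6,5,7$). One tiny quibble: your labelling of the columns ``by characters $5_i$, $6_1$, $7_i$'' conflates rows with columns (the zero in column~2 sits in the $6_1$ row, while the $\zeta_7$-columns have zeros in the $7_1,7_2$ rows), but the substantive claim that every column other than the first contains a zero is verified correctly.
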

\begin{proof}
The formal codegrees of $\mathcal{F}_{210}$ are $(5,5,6,7,7,7,210)$ as computed below:
{\footnotesize
\begin{verbatim}
sage: M=[
....: [[1,0,0,0,0,0,0],[0,1,0,0,0,0,0],[0,0,1,0,0,0,0],[0,0,0,1,0,0,0],[0,0,0,0,1,0,0],[0,0,0,0,0,1,0],[0,0,0,0,0,0,1]],
....: [[0,1,0,0,0,0,0],[1,1,0,1,0,1,1],[0,0,1,0,1,1,1],[0,1,0,0,1,1,1],[0,0,1,1,1,1,1],[0,1,1,1,1,1,1],[0,1,1,1,1,1,1]],
....: [[0,0,1,0,0,0,0],[0,0,1,0,1,1,1],[1,1,1,0,0,1,1],[0,0,0,1,1,1,1],[0,1,0,1,1,1,1],[0,1,1,1,1,1,1],[0,1,1,1,1,1,1]],
....: [[0,0,0,1,0,0,0],[0,1,0,0,1,1,1],[0,0,0,1,1,1,1],[1,0,1,1,0,1,1],[0,1,1,0,1,1,1],[0,1,1,1,1,1,1],[0,1,1,1,1,1,1]],
....: [[0,0,0,0,1,0,0],[0,0,1,1,1,1,1],[0,1,0,1,1,1,1],[0,1,1,0,1,1,1],[1,1,1,1,1,1,1],[0,1,1,1,1,2,1],[0,1,1,1,1,1,2]],
....: [[0,0,0,0,0,1,0],[0,1,1,1,1,1,1],[0,1,1,1,1,1,1],[0,1,1,1,1,1,1],[0,1,1,1,1,2,1],[1,1,1,1,2,1,2],[0,1,1,1,1,2,2]],
....: [[0,0,0,0,0,0,1],[0,1,1,1,1,1,1],[0,1,1,1,1,1,1],[0,1,1,1,1,1,1],[0,1,1,1,1,1,2],[0,1,1,1,1,2,2],[1,1,1,1,2,2,1]]
....: ]
sage: MM=sum(matrix(m)*(matrix(m).transpose()) for m in M)
sage: MM.eigenvalues()
[210, 6, 5, 5, 7, 7, 7]
\end{verbatim}}
But $210 = 2^1 3^1 5^1 7^1$, thus the formal codegrees are integers coprime with $p \not \in \{2,3,5,7\}$. So by Theorems \ref{thm: F210NoCat} and \ref{thm:lift}, we are reduce to consider $p \in \{2,3,5,7\}$. But in these cases, the following computation shows that for all linear characters $\chi: \mathcal{F} \to \overline{\mathbb{F}_p}$ there is a basic element $b$ such that $\chi(b)$ is zero.
{\footnotesize
\begin{verbatim}
sage: for p in [2,3,5,7]:
....:     F=GF(p); n=len(M)
....:     R = PolynomialRing(F, n, 'd'); dim = R.gens()
....:     Eq=[dim[i]*dim[j]-sum(M[i][j][k]*dim[k] for k in range(n)) for i in range(n) for j in range(n)]
....:     Eq.append(dim[0]-1)
....:     Id=R.ideal(Eq); G=Id.groebner_basis()
....:     FF=Id.variety(F.algebraic_closure())
....:     print(p,[prod(f[d] for d in dim) for f in FF])
....:
2 [0, 0, 0, 0, 0, 0]
3 [0, 0, 0, 0, 0, 0]
5 [0, 0, 0, 0, 0]
7 [0, 0, 0, 0]
\end{verbatim}}
The result follows by Theorem \ref{thm:lift}.
\end{proof}
Observe that the number of linear characters in characteristic $p \in \{2,3,5,7\}$ is less than the rank of $\mathcal{F}_{210}$, suggesting that the ring $[\mathcal{F}_{210}]_p$, induced by $\mathbb{Z} \to \mathbb{Z}/p\mathbb{Z}$ is not semisimple.

\begin{question} 
Is there a non-pivotal and/or non-semisimple categorification of $\mathcal{F}_{210}$ in positive characteristic?
\end{question}

The fusion ring $\mathcal{F}_{210}$ belongs to the family of interpolated simple integral fusion rings as described in \cite{LPRinter}, specifically corresponding to $q=6$. We are interested in whether Theorem \ref{thm: F210NoCat} can be generalized to all non-prime-power values of $q$. This remains an open question for any $q \neq 6$. Should the theorem hold for these values, Corollary \ref{cor:PosChar} would likely extend as well. Applying Corollary \ref{cor:loc} to the case where $q=10$, having rank $11$, $\FPdim \ 990$, and type $[[1,1],[9,5],[10,1],[11,4]]$, results in five overdetermined subsystems. Each of these subsystems comprises 59 variables and 69 equations.

\subsection{Fusion ring $\mathcal{F}_{660}$}  \label{sub:F660}
In this section, we discuss the simple integral fusion ring $\mathcal{F}_{660}$. It is of rank $8$, $\mathrm{FPdim}$  $660$ and type $[[1,1],[5,2],[10,2],[11,1],[12,2]]$, with fusion matrices as follows,
$$ \normalsize{\begin{smallmatrix}1 & 0 & 0 & 0 & 0 & 0 & 0 & 0 \\ 0 & 1 & 0 & 0 & 0 & 0 & 0 & 0 \\ 0 & 0 & 1 & 0 & 0 & 0 & 0 & 0 \\ 0 & 0 & 0 & 1 & 0 & 0 & 0 & 0 \\ 0 & 0 & 0 & 0 & 1 & 0 & 0 & 0 \\ 0 & 0 & 0 & 0 & 0 & 1 & 0 & 0 \\ 0 & 0 & 0 & 0 & 0 & 0 & 1 & 0 \\ 0 & 0 & 0 & 0 & 0 & 0 & 0 & 1\end{smallmatrix},  \ \begin{smallmatrix}0 & 1 & 0 & 0 & 0 & 0 & 0 & 0 \\ 0 & 0 & 1 & 1 & 1 & 0 & 0 & 0 \\ 1 & 0 & 0 & 0 & 0 & 0 & 1 & 1 \\ 0 & 0 & 1 & 0 & 1 & 1 & 1 & 1 \\ 0 & 0 & 1 & 1 & 0 & 1 & 1 & 1 \\ 0 & 0 & 0 & 1 & 1 & 1 & 1 & 1 \\ 0 & 1 & 0 & 1 & 1 & 1 & 1 & 1 \\ 0 & 1 & 0 & 1 & 1 & 1 & 1 & 1\end{smallmatrix},  \ \begin{smallmatrix}0 & 0 & 1 & 0 & 0 & 0 & 0 & 0 \\ 1 & 0 & 0 & 0 & 0 & 0 & 1 & 1 \\ 0 & 1 & 0 & 1 & 1 & 0 & 0 & 0 \\ 0 & 1 & 0 & 0 & 1 & 1 & 1 & 1 \\ 0 & 1 & 0 & 1 & 0 & 1 & 1 & 1 \\ 0 & 0 & 0 & 1 & 1 & 1 & 1 & 1 \\ 0 & 0 & 1 & 1 & 1 & 1 & 1 & 1 \\ 0 & 0 & 1 & 1 & 1 & 1 & 1 & 1\end{smallmatrix},  \ \begin{smallmatrix}0 & 0 & 0 & 1 & 0 & 0 & 0 & 0 \\ 0 & 0 & 1 & 0 & 1 & 1 & 1 & 1 \\ 0 & 1 & 0 & 0 & 1 & 1 & 1 & 1 \\ 1 & 0 & 0 & 3 & 1 & 1 & 2 & 2 \\ 0 & 1 & 1 & 1 & 1 & 2 & 2 & 2 \\ 0 & 1 & 1 & 1 & 2 & 2 & 2 & 2 \\ 0 & 1 & 1 & 2 & 2 & 2 & 2 & 2 \\ 0 & 1 & 1 & 2 & 2 & 2 & 2 & 2\end{smallmatrix}, \begin{smallmatrix}0 & 0 & 0 & 0 & 1 & 0 & 0 & 0 \\ 0 & 0 & 1 & 1 & 0 & 1 & 1 & 1 \\ 0 & 1 & 0 & 1 & 0 & 1 & 1 & 1 \\ 0 & 1 & 1 & 1 & 1 & 2 & 2 & 2 \\ 1 & 0 & 0 & 1 & 3 & 1 & 2 & 2 \\ 0 & 1 & 1 & 2 & 1 & 2 & 2 & 2 \\ 0 & 1 & 1 & 2 & 2 & 2 & 2 & 2 \\ 0 & 1 & 1 & 2 & 2 & 2 & 2 & 2\end{smallmatrix},  \ \begin{smallmatrix}0 & 0 & 0 & 0 & 0 & 1 & 0 & 0 \\ 0 & 0 & 0 & 1 & 1 & 1 & 1 & 1 \\ 0 & 0 & 0 & 1 & 1 & 1 & 1 & 1 \\ 0 & 1 & 1 & 1 & 2 & 2 & 2 & 2 \\ 0 & 1 & 1 & 2 & 1 & 2 & 2 & 2 \\ 1 & 1 & 1 & 2 & 2 & 2 & 2 & 2 \\ 0 & 1 & 1 & 2 & 2 & 2 & 3 & 2 \\ 0 & 1 & 1 & 2 & 2 & 2 & 2 & 3\end{smallmatrix},  \ \begin{smallmatrix}0 & 0 & 0 & 0 & 0 & 0 & 1 & 0 \\ 0 & 1 & 0 & 1 & 1 & 1 & 1 & 1 \\ 0 & 0 & 1 & 1 & 1 & 1 & 1 & 1 \\ 0 & 1 & 1 & 2 & 2 & 2 & 2 & 2 \\ 0 & 1 & 1 & 2 & 2 & 2 & 2 & 2 \\ 0 & 1 & 1 & 2 & 2 & 2 & 3 & 2 \\ 1 & 1 & 1 & 2 & 2 & 3 & 2 & 3 \\ 0 & 1 & 1 & 2 & 2 & 2 & 3 & 3\end{smallmatrix},  \ \begin{smallmatrix}0 & 0 & 0 & 0 & 0 & 0 & 0 & 1 \\ 0 & 1 & 0 & 1 & 1 & 1 & 1 & 1 \\ 0 & 0 & 1 & 1 & 1 & 1 & 1 & 1 \\ 0 & 1 & 1 & 2 & 2 & 2 & 2 & 2 \\ 0 & 1 & 1 & 2 & 2 & 2 & 2 & 2 \\ 0 & 1 & 1 & 2 & 2 & 2 & 2 & 3 \\ 0 & 1 & 1 & 2 & 2 & 2 & 3 & 3 \\ 1 & 1 & 1 & 2 & 2 & 3 & 3 & 2\end{smallmatrix}} $$

\noindent Note that this fusion ring passed all former categorification criteria.

\begin{theorem}
The fusion ring $\mathcal{F}_{660}$ cannot be categorified.
\end{theorem}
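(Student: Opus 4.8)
The plan is to apply the Zero Spectrum Criterion (Theorem \ref{Thm: Ob-0}) or the One Spectrum Criterion (Theorem \ref{Thm: Ob-1}) to $\mathcal{F}_{660}$. Since the statement asserts categorification is impossible \emph{over any field}, and both criteria are characteristic-free, this is the natural route — and indeed the section header ``Zero and One Spectrum Criteria'' together with the remark that ``the results proved in this section will be used in \S \ref{sub:F660}'' signals exactly this. First I would record the structural features of $\mathcal{F}_{660}$: it has rank $8$, all fusion matrices are self-adjoint (so every simple object is self-dual, i.e. $i^* = i$ for all $i$), and its type is $[[1,1],[5,2],[10,2],[11,1],[12,2]]$. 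Labelling the simple objects $1, 5_1, 5_2, 10_1, 10_2, 11, 12_1, 12_2$ as ordered by the fusion matrices, I would read off the needed fusion coefficients $N_{i,j}^k$ directly from the given matrices.

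The core of the proof is a \emph{search}: one must exhibit indices $i_1, \dots, i_9$ (and, for the one-spectrum case, also $i_0$) satisfying the hypotheses of Theorem \ref{Thm: Ob-0} or \ref{Thm: Ob-1}. Concretely, I would run a computer search over all $8^9$ (or $8^{10}$) tuples — pruned heavily by first imposing that $N_{i_4,i_1}^{i_6}$, $N_{i_5,i_4}^{i_2}$, $N_{i_5,i_6}^{i_3}$, $N_{i_7,i_9}^{i_1}$, $N_{i_2,i_7}^{i_8}$, $N_{i_8,i_9}^{i_3}$ are all nonzero (these say the six ``input'' hom-spaces of the pentagon are nonzero). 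For the Zero Spectrum Criterion I then need the spectrum sum $\sum_k N_{i_4,i_7}^k N_{i_5^*,i_8}^k N_{i_6,i_9^*}^k$ to vanish, $N_{i_2,i_1}^{i_3} = 1$, and one of the three ``one-dimensional trick'' conditions \eqref{Equ: 03} and one of \eqref{Equ: 04} to hold, so that Lemma \ref{Lem: one-dim} forces both F-symbols on the LHS of the pentagon to be nonzero while the RHS is empty — a contradiction. For the One Spectrum Criterion the analogous bookkeeping applies with a one-element spectrum $\{i_0\}$, $N_{i_2,i_1}^{i_3} = 0$, and conditions \eqref{Equ: 13}--\eqref{Equ: 15}. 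Since all objects are self-dual here, the starred indices simplify and the conditions become purely arithmetic in the $N_{i,j}^k$.

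Before invoking the criteria I should verify the standing hypotheses of this section are met, namely $X^{**} = X$ for all objects; this holds because $\mathcal{F}_{660}$ is commutative with self-dual basis, so any categorification (split over the algebraic closure) has $X^{**}\simeq X$, and we may strictify so that $X^{**} = X$ on the nose via \cite{HagHon}. Then Theorems \ref{Thm: Ob-0} and \ref{Thm: Ob-1} apply verbatim: if a categorification $\mC$ of $\mathcal{F}_{660}$ existed, picking nonzero morphisms $\mu_1, \dots, \mu_6$ in the six input hom-spaces and analyzing the pentagon equation \eqref{Equ: Pentagon Equation} for the chosen configuration yields $0 = (\text{nonzero})$. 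I would present the explicit witnessing configuration $(i_1, \dots, i_9)$ (and $i_0$ if needed) in the final writeup and quote the relevant fusion coefficients so the reader can check the hypotheses by hand.

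\textbf{Main obstacle.} The conceptual content is already packaged in Theorems \ref{Thm: Ob-0}, \ref{Thm: Ob-1} and Lemma \ref{Lem: one-dim}; the real work — and the only place something could go wrong — is the combinatorial search for a valid configuration. The danger is that \emph{no} zero-spectrum or one-spectrum configuration exists for $\mathcal{F}_{660}$ (this fusion ring ``passed all former categorification criteria'', so it is genuinely delicate), in which case one would have to fall back on the full localization machinery of \S \ref{sec:FirstLoc} as was done for $\mathcal{F}_{210}$, computing a Gröbner basis of an overdetermined TPE subsystem and showing it is trivial. I expect, however, that the higher rank and richer fusion data of $\mathcal{F}_{660}$ (note the entries equal to $3$ appearing in several fusion matrices, which create many $1$-dimensional hom-spaces and empty spectra) make a zero-spectrum configuration available, so the proof should reduce to exhibiting one explicit tuple of indices and a short arithmetic verification.
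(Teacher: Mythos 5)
Your plan matches the paper's proof exactly: the paper applies the Zero Spectrum Criterion (Theorem \ref{Thm: Ob-0}) with the explicit configuration $(i_1,\dots,i_9)=(1,3,4,1,1,3,4,2,2)$, so the search you describe does succeed and no fallback to the localization machinery is needed. The only missing ingredient in your writeup is that explicit witnessing tuple, which the computer search you outline would produce.
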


\begin{proof}
Apply Theorem \ref{Thm: Ob-0} with $(i_1,i_2,i_3,i_4,i_5,i_6,i_7,i_8,i_9)= (1, 3, 4, 1, 1, 3, 4, 2, 2)$. \end{proof}

In our extensive database \cite{FusionAtlas}, which contains thousands of fusion rings characterized as either simple, perfect integral, or of small rank and multiplicity, $\mathcal{F}_{660}$ stands out as the only fusion ring excluded from categorification by Theorem \ref{Thm: Ob-0} or \ref{Thm: Ob-1}, among those excluded from \emph{unitary} categorification by the Schur product criterion \cite{LPW20}.

\section{Appendix}  \label{sec: CompSage}

\subsection{Counterexamples}	\label{sub: Counter} Let us check by GAP \cite{gap} what we stated about $\mathrm{PSU}(3,2)$ in \S \ref{sub:FRS}.
{\footnotesize
\begin{verbatim}
gap> G:=PSU(3,2);; Order(G);
72
gap> Indicator(CharacterTable(G),2);
[ 1, 1, 1, 1, -1, 1 ]
gap> M:=RepGroupFusionRing(G);;
gap> M[6][6];
[ 1, 1, 1, 1, 2, 7 ]
\end{verbatim}}

The function RepGroupFusionRing computes the fusion matrices of the representation ring of a finite group.

{\footnotesize
\begin{verbatim}
RepGroupFusionRing:=function(g)
    local irr,n,M;
    irr:=Irr(g); n:=Size(irr);
    M:=List([1..n],i->List([1..n],j->List([1..n],k->ScalarProduct(irr[i]*irr[j],irr[k]))));
    return M;
end;;
\end{verbatim}}

The following remark confirms that $\mathrm{PSU}(3,2)$ is the counterexample of smallest order.

\begin{remark} \label{rk:WangCounter}
There are exactly four counterexamples \( G \) with \( |G| \le 128 \):
\verb|SmallGroup(o,i)| with \((o,i) = (72,42)\) or \(o = 128\) and \(i \in \{764, 801, 802\}\), as shown below.
\verb|SmallGroup(128,764)| corresponds to the example from \cite{Mas}.
{\footnotesize
\begin{verbatim}
gap> for o in [1..128] do for i in [1..NrSmallGroups(o)] do
G:=SmallGroup(o,i); M:=RepGroupFusionRing(G); Ind:=Indicator(CharacterTable(G),2); c:=Length(Ind);
for j in [1..c] do if Ind[j]=-1 then for a in [1..c] do for b in [1..c] do
if M[a][b][1]=1 and M[a][b][j]<>0 and M[a][b][j] mod 2=0 then Print([o,i,j,a,b]); fi;
od; od; fi; od; od; od;
[ 72, 41, 5, 6, 6 ][ 128, 764, 9, 23, 23 ][ 128, 764, 10, 23, 23 ][ 128, 801, 9, 23, 23 ]
[ 128, 801, 10, 23, 23 ][ 128, 802, 9, 23, 23 ][ 128, 802, 10, 23, 23 ]
\end{verbatim}}
In addition, the smallest counterexample among the finite simple groups is $\mathrm{PSU}(3,5)$, of order $126000$.
\end{remark}


\subsection{TwoParallel code}	\label{sub: TwoParallel} Here is the SageMath code \cite{sage} of the function TwoParallel (used in the proof of Theorem \ref{thm: F210NoCat}):
{\footnotesize
\begin{verbatim}
def TwoParallel(p):
     if p==0:
          F=QQ
     else:
          F=GF(p)
     R1.<u0,u1,u2,v0,v1,v2,v3,v4,v5,v6>=PolynomialRing(F,10)
     E1=[u0+7/F(5)*u1+7/F(5)*u2-4/F(125),
     5*v0+5*v1+7*v3+7*v5+1/F(5),
     25*v0^2+25*v1^2+35*v3^2+35*v5^2-4/F(5),
     5*v0^3+5*v1^3+7*v3^3+7*v5^3-v0^2+1/F(125),
     5*v0*v1^2+5*v1*v2^2+7*v3*v4^2+7*v5*v6^2+1/F(125),
     5*u0*v1-v1^2+7*u1*v3+7*u2*v5+1/F(125),
     5*v1+5*v2+7*v4+7*v6+1/F(5),
     25*v0*v1+25*v1*v2+35*v3*v4+35*v5*v6+1/F(5),
     5*v0^2*v1+5*v1^2*v2+7*v3^2*v4+7*v5^2*v6-v1^2+1/F(125),
     25*v1^2+25*v2^2+35*v4^2+35*v6^2-4/F(5),
     5*v1^3+5*v2^3+7*v4^3+7*v6^3-u0+1/F(125),
     5*u0*v2-v2^2+7*u1*v4+7*u2*v6+1/F(125)]
     Id1=R1.ideal(E1)
     G1=Id1.groebner_basis() #list of solutions: print(Id1.variety(F.algebraic_closure()))
     C1=[g for g in G1]		 #explicit Groebner basis: print(C1)
     R2.<w0,w1,w2,z0,z1,z2,z3,z4,z5,z6>=PolynomialRing(F,10)
     E2=[w0+7/F(5)*w1+7/F(5)*w2-4/F(125),
     5*z0+5*z1+7*z3+7*z5+1/F(5),
     25*z0^2+25*z1^2+35*z3^2+35*z5^2-4/F(5),
     5*z0^3+5*z1^3+7*z3^3+7*z5^3-w0+1/F(125),
     5*w0*z0-z0^2+7*w1*z3+7*w2*z5+1/F(125),
     5*z0*z1^2+5*z1*z2^2+7*z3*z4^2+7*z5*z6^2-z1^2+1/F(125),
     5*z1+5*z2+7*z4+7*z6+1/F(5),
     25*z0*z1+25*z1*z2+35*z3*z4+35*z5*z6+1/F(5),
     5*z0^2*z1+5*z1^2*z2+7*z3^2*z4+7*z5^2*z6+1/F(125),
     5*w0*z1-z1^2+7*w1*z4+7*w2*z6+1/F(125),
     25*z1^2+25*z2^2+35*z4^2+35*z6^2-4/F(5),
     5*z1^3+5*z2^3+7*z4^3+7*z6^3-z2^2+1/F(125)]
     Id2=R2.ideal(E2)
     G2=Id2.groebner_basis()
     C2=[g for g in G2]
     R.<u0,u1,u2,v0,v1,v2,v3,v4,v5,v6,w0,w1,w2,z0,z1,z2,z3,z4,z5,z6>=PolynomialRing(F,20)
     C=C1+C2+[5*u0*z2 + 7*u1*z4 + 7*u2*z6 - u0 + 1/F(125)]
     Id=R.ideal(C)
     G=Id.groebner_basis()
     return G
\end{verbatim}}

\vspace*{.5cm}

\noindent \textbf{Availability of data.} Data for the computations in this paper are available on reasonable request from the authors.


\noindent \textbf{Conflict of interest statement.} On behalf of all authors, the corresponding author declares that there are no conflicts of interest.

\end{document}